\documentclass[12pt,a4paper]{article}

\usepackage[margin=1in]{geometry}
\usepackage{amsmath,amssymb,amsthm,enumerate}
\usepackage{tikz-cd}
\usepackage{xcolor}
\usepackage{hyperref}

\newtheorem{thm}{Theorem}[section]

\numberwithin{equation}{section}
\numberwithin{figure}{section}

\theoremstyle{plain}
\newtheorem{cor}[thm]{Corollary}
\newtheorem{prop}[thm]{Proposition}
\newtheorem{lem}[thm]{Lemma}

\theoremstyle{definition}
\newtheorem{defi}[thm]{Definition}
\newtheorem{ex}[thm]{Example}
\newtheorem{question}[thm]{Question}

\theoremstyle{remark}
\newtheorem{rem}[thm]{Remark}

\title{Transcendental Numerical Dimension and Rational Quotients in Kähler Geometry}
\author{Songchen Liu}
\date{August 2026}

\begin{document}

\maketitle

\begin{abstract}
We study properties of the transcendental numerical dimension on compact Kähler manifolds. In particular, we establish a connection between this invariant and degenerate divisors in fibrations, extending few  results from the projective setting.

We also study fibrations between compact Kähler manifolds with rationally connected general fibre. We prove that every pseudo-effective line bundle contained in a tensor power of the cotangent bundle comes from a pseudo-effective line bundle contained in the corresponding tensor power on the base, up to an explicit relation involving degenerate divisors. 

Finally, combining these results with a theorem of Cao--Păun, we answer a question they posed on rational quotients. More precisely, let $q : X\dashrightarrow Q$ be the rational quotient of a compact Kähler manifold $X$, and let $L$ be a pseudo-effective line bundle on $X$ admitting an injection
$L\to(\Omega_X^1)^{\otimes m},~m \geq 1$.  We prove that
\[
\nu(L,X)\leq\nu(K_Q,Q).
\]
To the best of our knowledge, both the descent theorem and this inequality are new even in the projective setting.

\end{abstract}

\section{Introduction}

A compact Kähler manifold \(X\) is called uniruled if it is covered by rational curves. Uniruled manifolds play a fundamental role in birational classification; see, for instance, \cite{Mor82,BCHM10}. A celebrated result of Miyaoka--Mori and Boucksom--Demailly--Păun--Peternell \cite{MM86,BDPP13} states that a projective manifold \(X\) is uniruled if and only if its canonical bundle \(K_X\) is not pseudo-effective. Very recently, Ou \cite{Ou25} extended this characterization to arbitrary compact Kähler manifolds.

A key ingredient in Ou's approach is an algebraicity criterion for foliations. Motivated by Ou's work, Cao--Păun \cite{CP26} obtained a generalization of this criterion. As an application, they proved that if $X$ is a compact Kähler manifold with \(K_X\) pseudo-effective, then every torsion-free quotient
\[
    (\Omega_X^1)^{\otimes m}\twoheadrightarrow\mathcal Q,
    ~~m\geq1,
\]
has pseudo-effective determinant $\det \mathcal Q$; see \cite[Theorem 1.4]{CP26}.

The numerical dimension is a fundamental invariant in algebraic geometry. It admits a transcendental definition for pseudo-effective classes on compact Kähler manifolds, which agrees with the classical algebraic definition in the projective case; see, for instance, Section \ref{sec 212}. 

The Cao--Păun's result has a consequence for numerical dimension.
Let \(X\) be a compact Kähler manifold with \(K_X\) pseudo-effective, and
suppose that a pseudo-effective line bundle \(L\) admits an injection
$L\to(\Omega_X^1)^{\otimes m}$.
After saturating the image of \(L\), the resulting quotient is torsion-free and hence has pseudo-effective determinant by the result above. Since
the saturation differs from \(L\) by an effective divisor, there exists a pseudo-effective line bundle \(P\) such that
\[
    c_mK_X = L + P,~c_m = m(\dim X)^{m-1}.
\]
The monotonicity of numerical dimension therefore yields the elegant
inequality
\begin{equation}\label{ineq 1.1}
    \nu(L,X)\leq \nu(K_X,X).
\end{equation}

Seeking an analogue without assuming that $K_X$ is pseudo-effective, Cao--Păun posed the following question \cite[Question~5.6]{CP26}.

\begin{question}\label{question 1.1}
Let \(q :  X\dashrightarrow Q\) be the rational quotient of a compact Kähler manifold \(X\). Suppose that a pseudo-effective line bundle \(L\) on \(X\) admits an injection $L\to(\Omega_X^1)^{\otimes m}$ for some \(m\geq1\). Does one have
\[
    \nu(L,X)\leq\nu(K_Q,Q)?
\]
\end{question}
\noindent We refer to Section~\ref{sec 2.2.2} for the definition and relevant properties of rational quotients. In particular, by choosing a suitable base, we may assume that \(Q\) is also a compact Kähler manifold.

This question is a natural extension of the above  inequality. Indeed, if \(K_X\) is pseudo-effective, then \(X\) is not uniruled, and the identity map
$\operatorname{id}_X :  X\to X$ is a rational quotient. Thus, (\ref{ineq 1.1}) is precisely the special case in which \(X\) itself is the base of its rational quotient.

To answer this question, we prove the following main Theorem.

\begin{thm}(=Theorem \ref{Main thm 1.2 in intro})
Let \(f :  X\to Q\) be a fibration between connected
compact Kähler manifolds whose general fibre is rationally connected,
and assume that \(K_Q\) is pseudo-effective. Let \(m\geq1\), and let \(L\) be a pseudo-effective
line bundle admitting an injective morphism
$ \varphi :  L\to(\Omega_X^1)^{\otimes m}$. 
Then
\[
    \nu(L,X)\leq\nu(K_Q,Q).
\]
\end{thm}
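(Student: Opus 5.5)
The plan is to reduce to the Cao--Păun inequality \eqref{ineq 1.1} on the base $Q$. We will show that $L$ is controlled by the pullback of a pseudo-effective line bundle on $Q$ sitting inside a tensor power of $\Omega_Q^1$, with an error supported on divisors that $f$ does not dominate or maps degenerately. There are three steps: a descent statement on the generic fibre, an extension across codimension one in $Q$, and a comparison of numerical dimensions that absorbs the degenerate divisors.

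\emph{Step 1 (descent over the smooth locus).} Let $Q^\circ\subset Q$ be a Zariski open set over which $f$ is smooth with rationally connected fibres, and put $X^\circ=f^{-1}(Q^\circ)$. On $X^\circ$ we have the relative cotangent sequence $0\to f^*\Omega_{Q}^1\to\Omega_X^1\to\Omega_{X/Q}^1\to0$, which induces a filtration of $(\Omega_X^1)^{\otimes m}$ whose graded pieces are $f^*(\Omega_Q^1)^{\otimes a}\otimes(\Omega^1_{X/Q})^{\otimes b}$ with $a+b=m$.

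Restrict to a general fibre $F$, which is rationally connected. Through a general point of $F$ there is a very free rational curve $C$, and on $C$ the bundle $\Omega_F^1$ is a direct sum of negative line bundles. Meanwhile $L|_C$ has degree $\ge0$, because $L$ is pseudo-effective and such curves are movable. It follows that the composite of $\varphi$ with every graded piece having $b>0$ vanishes on $C$, hence on $F$, hence on $X^\circ$. So $\varphi$ factors through $f^*(\Omega_Q^1)^{\otimes m}$ over $X^\circ$.

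Moreover $f^*(\Omega^1_Q)^{\otimes m}|_F$ is trivial and $L|_F$ is pseudo-effective. A nonzero map $L|_F\to\mathcal O_F^{\oplus N}$ then forces $L|_F$ to be trivial, since $F$ is simply connected and $h^1(\mathcal O_F)=0$. Pushing forward, $L|_{X^\circ}\cong f^*M^\circ$ for a line subsheaf $M^\circ\subset(\Omega_{Q^\circ}^1)^{\otimes m}$.

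\emph{Step 2 (extension).} Take $M$ to be the saturation of the reflexive extension of $M^\circ$ in $(\Omega_Q^1)^{\otimes m}$. Since $Q$ is smooth, $M$ is a line bundle. On $X$ we then get $L = f^*M + E^+ - E^-$. Here $E^-$ is effective and supported on $f$-exceptional divisors or on degenerate divisors, namely components of $f^{-1}(D)$ with multiplicity, where $D\subset Q\setminus Q^\circ$. The saturation step contributes only positively on the $f^*M$ side, because $f^*M\to(\Omega_X^1)^{\otimes m}$ and $L$ has saturated image.

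We then need $M$ to be pseudo-effective. For this I would use the pseudo-effectivity of $L$ together with the fact that $f_*$ of a positive current in $c_1(L)$, suitably twisted, gives a positive current on $Q$. Concretely, a Kähler version of the statement that if $f^*M+E$ is pseudo-effective with $E$ degenerate then $M$ is pseudo-effective, which is the transcendental extension of the projective degenerate-divisor lemmas that the paper says it proves.

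\emph{Step 3 (numerical dimension).} Given that $M$ is pseudo-effective and $M\hookrightarrow(\Omega_Q^1)^{\otimes m}$ with $K_Q$ pseudo-effective, \eqref{ineq 1.1} applied on $Q$ yields $\nu(M,Q)\le\nu(K_Q,Q)$. It then remains to show $\nu(L,X)\le\nu(f^*M+E,X)=\nu(M,Q)$ for $E$ an effective degenerate or exceptional divisor. The first inequality is monotonicity. The equality is the key property linking transcendental numerical dimension with degenerate divisors: adding an $f$-degenerate divisor to a pullback does not raise $\nu$. I would prove it by showing that the divisorial part of the positive part of $f^*\alpha+[E]$ contains $E$, using that such an $E$ is not movable relative to $f$ (a Zariski-lemma type negativity). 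This gives $\langle (f^*\alpha+E)^k\rangle=\langle f^*\alpha^k\rangle$, and $\nu(f^*\alpha)=\nu(\alpha)$ for a fibration.

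I expect Step 3's invariance of $\nu$ under degenerate divisors in the Kähler setting, together with the pseudo-effectivity of $M$ in Step 2, to be the main obstacle. Both require working with movable intersection products rather than algebraic tools like Nakayama's $\sigma$-decomposition.
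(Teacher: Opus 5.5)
There is a genuine gap, at the point where the rational connectedness of the fibres has to be used a second time. Replace $L$ by the saturation of $\varphi(L)$, which only increases $\nu$, and write $L=f^*M+E^+-E^-$ as you do. Monotonicity gives $\nu(L,X)\le\nu(L+E^-,X)=\nu(f^*M+E^+,X)$. The pseudo-effectivity of $M$ also has to come from $f^*M+E^+=L+E^-$ being psef. So both Step~2 and Step~3 need $E^+$ to be $f$-degenerate, where $E^+$ is the defect $D$ in $(f^*M)^{\rm sat}\simeq f^*M\otimes\mathcal O_X(D)$ inside $(\Omega_X^1)^{\otimes m}$.

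You attach the degeneracy claim to $E^-$ instead. Nothing is needed of $E^-$; it may even contain full pullbacks $f^*B$ over divisors $B\subset Q^\circ$ where $f_*\varphi$ vanishes. Your only justification is that these divisors lie over $Q\setminus Q^\circ$. That gives verticality, not $f$-degeneracy, and verticality is not enough: if $D$ contains every component over a prime divisor $P\subset Q$, then $f^*M+D$ can have strictly larger numerical dimension than $M$ (compare the second example in Section~\ref{Sec 3.2}). This really happens without the RC hypothesis. Over a multiple fibre $f^*P=2F$ one has $f^*dt=2s\,ds$, so the saturation of $f^*\Omega_Q^1$ absorbs $F$, the only component over $P$, and $f^*M+F=f^*(M+\tfrac12P)$.

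The missing idea is Theorem~\ref{prop:pullback_saturation_is_degenerate}. It has two parts.
\begin{enumerate}
\item $f^*M\to(\Omega_X^1)^{\otimes m}$ is a subbundle on the submersion locus of $f$, away from the codimension-two set where $M$ fails to be a subbundle. Hence $\operatorname{Supp}D\subset\operatorname{Crit}(f)$.
\item Every divisor contained in $\operatorname{Crit}(f)$ is $f$-degenerate, because for each prime divisor $P\subset Q$ the pullback $f^*P$ has a component of multiplicity one (Theorem~\ref{thm:multiplicity_one_component}).
\end{enumerate}
The second part is the substantial step, and it is exactly where your argument never uses RC. In the Kähler setting the paper takes a disc transverse to $P$, maps it to the Douady space, and selects a projective curve with the same tangent direction. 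It then applies Graber--Harris--Starr to obtain a section, whose tangent vector gives $d(f^*y_1)\neq0$ at some point of the central fibre.

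Your Step~1 is a valid alternative to the paper's meromorphic map $X\dashrightarrow\mathbb P_Q(V)$: you show $L|_F$ is trivial and take $M^\circ=f_*L$. You do need to justify that $M^\circ$ extends meromorphically across $Q\setminus Q^\circ$, for instance by Cramer's rule as in Proposition~\ref{lem:meromorphic_factorisation} together with coherence of $f_*$. Your Step~3 is the paper's route once $E^+$ is known to be $f$-degenerate.
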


Let us explain why this theorem gives an affirmative answer to
Question~\ref{question 1.1}. Let \(q :  X\dashrightarrow Q\) be a
rational quotient. Its base \(Q\) is non-uniruled, and hence \(K_Q\) is
pseudo-effective by Ou's theorem. Choose a resolution of the indeterminacies of \(q\),
$ \mu :  \widetilde X\to X$, such that
\[
    f:=q\circ\mu : \widetilde X\to Q
\]
is a fibration. Then \(\widetilde X\) is compact Kähler and the general
fibre of \(f\) is rationally connected. Moreover, \(\mu^*L\) is pseudo-effective,
and \(\varphi\) induces an injection
$ \mu^*L\to(\Omega_{\widetilde X}^1)^{\otimes m}$.
Applying the theorem above and using the invariance of
numerical dimension under proper bimeromorphic morphism, we obtain
\[
    \nu(L,X)
    =\nu(\mu^*L,\widetilde X)
    \leq\nu(K_Q,Q),
\]
which proves the inequality in Question~\ref{question 1.1}.

The guiding idea behind the proof is simple: descend
$  L\to(\Omega_X^1)^{\otimes m}$ to a pseudo-effective line bundle
$  M\to(\Omega_Q^1)^{\otimes m}$,
establish an explicit relation between \(L\) and \(f^*M\), and then apply
\cite[Theorem~1.4]{CP26} to \(M\) on \(Q\) to derive the desired
inequality.

Carrying out this strategy requires two main ingredients. The first controls the effect
of certain divisors on numerical dimension, while the second establishes the
required descent.

Let $f : X\to Y$ be a fibration between compact Kähler manifolds. Various notions of degeneracy for vertical divisors recur throughout relative birational geometry; see, for instance, \cite{Bir12,GL13,Na04}. The notion relevant to our setting is that of an $f$-degenerate divisor.

An effective vertical $\mathbb R$-divisor $D$ on $X$ is called
$f$-degenerate if, for every prime divisor $P\subset Y$ contained in
$f(\operatorname{Supp}D)$, there exists a prime divisor
$\Gamma\subset X$ such that
\[
f(\Gamma)=P
~~\text{and}~~
\Gamma\not\subset\operatorname{Supp}D.
\]
In other words, $D$ never contains all the divisorial components lying
over a prime divisor contained in $f(\operatorname{Supp}D)$. In
particular, $f$-degenerate divisors may be viewed as a natural
generalization of divisors whose images under $f$ have codimension at
least 2 in $Y$.

The results of Section~\ref{Sec 3.2} show that \(f\)-degenerate divisors
carry only very limited positivity. In particular, adding such a divisor
to the pullback of a pseudo-effective class on \(Y\) does not increase its numerical dimension.

\begin{thm}(=Theorem \ref{thm:f-degenerate-main})
Let \(f :  X\to Y\) be a fibration between compact Kähler manifolds,
let \(\alpha\in H^{1,1}(Y,\mathbb R)\) be a pseudo-effective class, and let
\(D\geq0\) be an \(f\)-degenerate \(\mathbb R\)-divisor. Then
\[
    \nu(f^*\alpha+\{D\},X)
    =\nu(\alpha,Y).
\]
\end{thm}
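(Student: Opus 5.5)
The inequality $\nu(f^*\alpha+\{D\},X)\ge \nu(f^*\alpha,X)=\nu(\alpha,Y)$ is the easy half. It follows from monotonicity of the numerical dimension under adding the effective class $\{D\}$, combined with the invariance $\nu(f^*\alpha,X)=\nu(\alpha,Y)$ for pullbacks under a fibration. The latter I would take from the basic properties recalled in Section~\ref{sec 212}: one computes $\nu$ via non-pluripolar products of currents with minimal singularities, and these pull back for surjective maps with connected fibres. The content of the theorem is therefore the reverse inequality $\nu(f^*\alpha+\{D\},X)\le\nu(\alpha,Y)$.

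For the upper bound I would first reduce to the case where $D$ is supported over a proper closed subset in a controlled way. Write $D=\sum_i a_i\Gamma_i$ and group the components by image. Let $D'$ be the part whose components map to subsets of codimension $\ge2$, and $D''$ the part whose components map onto prime divisors $P\subset Y$.

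For $D''$ the key point is the following. For each such $P$, write $f^*P=\sum_j b_j\Gamma_{P,j}$. By $f$-degeneracy, some $\Gamma_{P,j_0}$ with $b_{j_0}>0$ is not in $\operatorname{Supp}D$. This lets me use a Zariski-lemma type negativity: the restriction of $D''$ over $P$ is not a multiple of $f^*P$, hence is ``$f$-exceptional-like'' (degenerate in the sense of Nakayama/Lazić). The same holds after adding an arbitrary vertical divisor that does not meet the missing component. Together with $D'$, this makes $D$ an $f$-degenerate divisor in the sense of \cite{Na04}. The expected statement is then that $D$ lies in the non-nef locus of $f^*\alpha+\{D\}$, i.e. that the positive part in the divisorial Zariski decomposition
\[
f^*\alpha+\{D\}=Z+N_\sigma
\]
satisfies $N_\sigma\ge D$. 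Given this, $\langle (f^*\alpha+\{D\})^k\rangle=\langle Z^k\rangle\le\langle (f^*\alpha)^k\rangle$ in a suitable sense, since $Z\le f^*\alpha$ as classes modulo pseudo-effective ones. By monotonicity and the fact that $\nu$ only depends on the positive part, we get $\nu\le\nu(f^*\alpha)=\nu(\alpha,Y)$.

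The main obstacle is proving $N_\sigma(f^*\alpha+\{D\})\ge D$ in the Kähler, transcendental setting. I would adapt Nakayama's argument \cite[III.5]{Na04}. Take a general smooth movable curve family in a component $\Gamma$ of $D$ over $P$: curves contracted by $f$, lying in fibres over general points of $P$, or a general complete intersection curve inside $\Gamma$ over a curve in $P$. Restricting to the surface or curve pieces over a general point of $P$, Zariski's lemma shows that some component $\Gamma$ of $D$ has $D\cdot C<0$ on a covering family $C$ of $\Gamma$ with $f_*C=0$. Then $(f^*\alpha+\{D\})\cdot C=D\cdot C<0$, so $\Gamma$ is in the non-nef locus and $\{D\}-\epsilon\Gamma$ can be subtracted. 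Inducting on the number of components, or using Boucksom's characterization of $N_\sigma$ via $\nu(\cdot,\Gamma)>0$ when a class is negative on a covering curve family of $\Gamma$, iteratively removes all of $D$. The delicate points are that $\alpha$ is only pseudo-effective, which I handle by approximating with $\alpha+\epsilon\omega_Y$ and using continuity of $\nu$ from above on the upper bound side, and checking the negative-definiteness on fibres over general points of $P$ in the analytic category.

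For $D'$ the same curve argument applies more directly, since curves in fibres over $f(\Gamma)$ cover $\Gamma$ and are $f$-contracted.
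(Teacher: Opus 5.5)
Your architecture matches the paper's. You prove $N(f^*\alpha+\{D\})\ge D$ by a Zariski-lemma negativity argument, using connected fibres and the component over $P$ missing from $D$ that $f$-degeneracy supplies, and then use that $\nu$ only sees the positive part. Your last step, writing $f^*\alpha=Z+\{N-D\}$ and applying monotonicity, is a fine substitute for the paper's identity $Z(f^*\alpha+\{D\})=Z(f^*\alpha)$.

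\textbf{The gap: curves need not exist.} The negativity step is phrased entirely through covering families of $f$-contracted curves on a component $\Gamma$, or complete-intersection curves. For a fibration between compact K\"ahler manifolds, with no projectivity assumed, such families need not exist. Concretely:
\begin{itemize}
\item Let $T$ be a $2$-torus containing no curves, $X:=\mathrm{Bl}_{(0,t_0)}(\mathbb P^1\times T)$ and $f:X\to\mathbb P^1$.
\item Then $f^*(0)=\widetilde T+E$ with $E\simeq\mathbb P^2$ and $\widetilde T\simeq\mathrm{Bl}_{t_0}T$, so $D=\widetilde T$ is $f$-degenerate.
\item The only curve on $\widetilde T$ is the line $\ell=\widetilde T\cap E$, and $\widetilde T\cdot\ell=-E\cdot\ell=1>0$ since $f^*(0)\cdot\ell=0$.
\end{itemize}
So no curve detects the negativity. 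The conclusion still holds: $\{\widetilde T\}|_{\widetilde T}=-\{\ell\}$ pairs negatively with any K\"ahler form on $\widetilde T$.

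\textbf{The missing idea.} Replace curves by a transcendental test form. On a resolution $h:\widetilde\Gamma\to\Gamma$, the paper uses the closed strongly positive form $\Phi=\Omega^{r-1}\wedge g^*\eta$, where $g=f\circ h$ and $\eta$ is a bump volume form on a small open subset of $W_{\mathrm{reg}}$ (Lemma~\ref{lem:localized-test-form}); in the example this is just $\Omega|_{\widetilde T}$. This form has three properties:
\begin{itemize}
\item It kills $h^*f^*a$ for every $(1,1)$-form $a$ on $Y$.
\item It pairs non-negatively with $h^*(Z(\beta)|_\Gamma)$, which is psef by \cite[Proposition~2.4]{Bou04}, and, if $\Gamma\not\subset\operatorname{Supp}N(\beta)$, with $h^*(N(\beta)|_\Gamma)$.
\item Via the local relation $D|_{X_U}+G=\lambda f^*H$, it pairs negatively with $D$. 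Here $G\ge 0$ does not contain a maximal-ratio branch $F_i$ of $D$ but meets it along a locus dominating $W\cap U$; this is where connected fibres enter. This gives $\langle h^*\{D\},\Phi\rangle=-\langle h^*\{G\},\Phi\rangle<0$.
\end{itemize}
With this substitution your Zariski-lemma idea goes through. As written, it only covers cases where the fibre components over $W$ are covered by curves, for example $X$ projective.

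\textbf{Smaller points.}
\begin{itemize}
\item Approximating $\alpha$ by $\alpha+\epsilon\omega_Y$ with ``continuity of $\nu$ from above'' cannot work, since $\nu(\alpha+\epsilon\{\omega_Y\},Y)=\dim Y$ for every $\epsilon>0$. It is also unnecessary: $f^*\alpha$ pairs to zero with $f$-contracted curves, or with $\Phi$, whatever its positivity. Pseudo-effectivity of $\alpha$ is only needed so that $N(\beta)$ is defined.
\item The negativity argument yields only \emph{one} component of $D$ in $\operatorname{Supp}N(\beta)$, at a locus $W$ maximal among the images of components. You cannot treat $D'$ separately: if $W\subset f(\Gamma'')$ for a component $\Gamma''$ of $D''$, curves over $W$ can meet $\Gamma''$. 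Moreover, $\Gamma\subset\operatorname{Supp}N$ does not give $\operatorname{ord}_\Gamma N\ge\operatorname{ord}_\Gamma D$.
\item The paper closes this in one stroke. With $D_0:=\min(D,N)$, the divisor $D_1:=D-D_0$ is still $f$-degenerate, and $N(f^*\alpha+\{D_1\})=N-D_0$ by Lemma~\ref{lem:remove-negative-subdivisor}. That divisor contains no component of $D_1$, contradicting the one-component statement unless $D_1=0$. Your ``subtract $\epsilon\Gamma$ and iterate'' needs exactly this lemma, and subtraction of full coefficients rather than $\epsilon$.
\item The inequality $\nu(f^*\alpha,X)\le\nu(\alpha,Y)$, which your final step uses, is not a basic fact. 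Currents with minimal singularities in $f^*\alpha+\epsilon\{\omega_X\}$ do not come from $Y$. The paper proves it as Theorem~\ref{thm:fibration-invariance}, via flattening and the descent of Theorem~\ref{paun_descent}. Citing that theorem is fine, but your one-line justification is not a proof.
\end{itemize}
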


The main tool in the proof is Boucksom's divisorial Zariski decomposition. By extending \cite[Corollary~III.5.3]{Na04} to the Kähler setting, we show that adding an effective
$f$-degenerate divisor does not change the positive part, i.e., $Z(f^*\alpha+\{D\})=Z(f^*\alpha).$ 
Since removing the negative part does not affect the numerical dimension, we obtain the desired formula.

We now turn to the second ingredient, where the relevance of \(f\)-degenerate divisors to our problem becomes apparent. We further assume that the general fiber of $f: X \to Y$ is rationally connected. Rational connectedness of the general fibre imposes a strong rigidity on tensorial differentials: a pseudo-effective line contained in \((\Omega_X^1)^{\otimes m}\) must come generically from the base. The following theorem promotes this generic descent to a global relation and gives precise control of the resulting divisorial discrepancy.

\begin{thm}(=Theorem \ref{thm:divisorial_descent_cotangent_line})
Let $f: X\to Y$ be a fibration between compact
Kähler manifolds whose general fibre is rationally connected. Let
$L$ be a pseudo-effective line bundle admitting an injection
$\varphi: L\to(\Omega_X^1)^{\otimes m},~m\geq1$.

Then there exist a pseudo-effective line bundle $M$ on $Y$, embedded as a saturated subsheaf of $(\Omega_Y^1)^{\otimes m}$, and effective divisors $D,E$ on $X$, where $D$ is
$f$-degenerate, such that
\begin{equation}\label{eq 1.2}
L\otimes\mathcal O_X(E)
\simeq
f^*M\otimes\mathcal O_X(D).
\end{equation}
\end{thm}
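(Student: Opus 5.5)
The plan is to first descend $\varphi$ generically, over a big open subset of $Y$, and then extend the descended line over $Y$, accounting for the divisorial discrepancies.

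\textbf{Generic descent.} Let $Y^\circ\subset Y$ be the Zariski open set over which $f$ is smooth, and put $X^\circ=f^{-1}(Y^\circ)$. On $X^\circ$ the relative cotangent sequence
\[
0\to f^*\Omega_{Y}^1\to\Omega_X^1\to\Omega_{X/Y}^1\to0
\]
induces a filtration of $(\Omega_X^1)^{\otimes m}$ whose graded pieces are $f^*(\Omega_Y^1)^{\otimes a}\otimes(\text{tensor products of }\Omega^1_{X/Y})$ with at least one factor $\Omega^1_{X/Y}$, except for the top piece $f^*(\Omega_Y^1)^{\otimes m}$. Let $F$ be a general fibre. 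It is rationally connected, so it carries a very free rational curve $C$ along which $T_F$ is ample. Hence $H^0(F,(\Omega_F^1)^{\otimes k}\otimes A^{-1})=0$ for every $k\ge1$ and every line bundle $A$ with $A\cdot C\ge0$. Since $L$ is pseudo-effective and $C$ moves in a covering family, $L|_F\cdot C\ge0$. Therefore the composition of $L|_F$ with each graded piece involving $\Omega_{X/Y}^1$ vanishes, and over a general point $\varphi$ factors through $f^*(\Omega_Y^1)^{\otimes m}$. Moreover $L|_F$ injects into the trivial bundle $\mathcal O_F^{\oplus r}$, so $L|_F\simeq\mathcal O_F(-\text{eff})$. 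Pseudo-effectivity together with rational connectedness then forces $L|_F\simeq\mathcal O_F$, and the image is a constant line in the fibre of $(\Omega_Y^1)^{\otimes m}$ at $f(F)$. Letting $F$ vary, we get a rank-one subsheaf of $(\Omega_Y^1)^{\otimes m}$ over a dense open set. Let $M$ be its saturation in $(\Omega_Y^1)^{\otimes m}$ on all of $Y$. Since $Y$ is smooth and $M$ is reflexive of rank one, $M$ is a line bundle.

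\textbf{Global comparison.} Let $L'$ be the saturation of $\varphi(L)$ in $(\Omega_X^1)^{\otimes m}$, so that $L'=L\otimes\mathcal O_X(E_0)$ with $E_0\ge0$. Both $L'$ and the saturation of $f^*M$ in $(\Omega_X^1)^{\otimes m}$ coincide generically. The natural map $f^*M\to(\Omega_X^1)^{\otimes m}$ is injective, and its saturation is $f^*M\otimes\mathcal O_X(D_0)$ with $D_0\ge0$ supported where $df$ drops rank, hence vertical. Two saturated rank-one subsheaves that agree generically coincide, so
\[
L\otimes\mathcal O_X(E_0)\simeq f^*M\otimes\mathcal O_X(D_0).
\]
It remains to make $D_0$ $f$-degenerate. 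For each prime $P\subset Y$ over which every prime divisor of $f^{-1}(P)$ lies in $\operatorname{Supp}D_0$, write $f^*P=\sum b_i\Gamma_i$. Subtract $c\,f^*P$ from $D_0$, with $c$ the largest integer such that the difference stays effective, and replace $M$ by $M\otimes\mathcal O_Y(cP)$. Any excess remaining in $D_0$ is absorbed by moving it to $E$. Iterating over the finitely many such $P$ yields $D$ $f$-degenerate. Because $f^*P$ is integral, this only changes $M$ by a Cartier divisor on $Y$, and $E=E_0+(\text{leftover})$ stays effective.

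\textbf{Main obstacle.} The delicate point is that after this twisting, $M$ need no longer be saturated in $(\Omega_Y^1)^{\otimes m}$, nor obviously pseudo-effective. I would rather argue directly that the saturation condition on $M$ already rules out the bad $P$. Indeed, if $D_0\ge c f^*P$ along all components over $P$, then the sections of $(\Omega_X^1)^{\otimes m}$ coming from $M(P)$ lift, by a local computation in coordinates where $f$ is $(x_1,\dots)\mapsto(x_1^{b}\cdots)$ over a general point of $P$, and therefore $M(P)\subset(\Omega_Y^1)^{\otimes m}$ near general points of $P$. This contradicts saturation. The local computation has to handle components with multiplicity $b_i>1$, where $f^*dy$ vanishes to order $b_i-1$; this is exactly where at least one component must escape $D$, so one takes $D$ to be the part of $D_0$ not accounted for.

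\textbf{Pseudo-effectivity of $M$.} Choose a general multisection, or use $f_*$. Since $f^*M+D\ge L$ up to $E$, restricting to curves in $X$ that avoid $\operatorname{Supp}D$ is not enough. Instead, I would use that pushing forward currents gives $f_*(T_L+[E])\in\{M\}\cdot$(degree) plus the contribution of $f_*[D]$. The latter is zero in codimension one because $D$ is $f$-degenerate in the appropriate sense (only partially covering each $P$), combined with the divisorial Zariski decomposition of Theorem \ref{thm:f-degenerate-main}.
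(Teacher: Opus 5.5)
Your generic descent and the comparison $L\otimes\mathcal O_X(E_0)\simeq f^*M\otimes\mathcal O_X(D_0)$ with $\operatorname{Supp}D_0\subset\operatorname{Crit}(f)$ are essentially the paper's. Your constancy argument via $L|_F\simeq\mathcal O_F$ is even slightly simpler than its two-point argument, though you still owe a reason why the generic line in $(\Omega_Y^1)^{\otimes m}$ extends meromorphically across the bad locus; the paper gets this from the graph of $\rho$.

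The gap is the step that makes $D_0$ $f$-degenerate. The twisting trick is unusable. Indeed, $M(cP)$ is no longer a subsheaf of $(\Omega_Y^1)^{\otimes m}$. Moreover, moving a leftover $R\le D_0$ across the isomorphism turns $E_0$ into $E_0-R$, not $E_0+R$.

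Your fallback, that saturation of $M$ excludes bad $P$, also fails, for two reasons. First, $f$-degeneracy already breaks down once every prime divisor over $P$ merely lies in $\operatorname{Supp}D_0$, which is much weaker than $D_0\ge f^*P$. For instance, take $Y$ a curve, $m=1$, $M=K_Y$ and $f^*P=bF_0$ with $b\ge2$. Then locally $f^*dt=b\,s^{b-1}ds$, so $D_0$ contains $F_0$ with coefficient $b-1$, although $M$ is saturated. Second, even the implication ``$D_0\ge f^*P\Rightarrow M(P)\subset(\Omega_Y^1)^{\otimes m}$'' needs a point of $f^{-1}(P)$ at which $f$ is a submersion. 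If $f^*t=s^2$ and $M$ is generated near $P$ by $dt\otimes dt$, then $f^*(t^{-1}dt\otimes dt)=4\,ds\otimes ds$ is holomorphic while $t^{-1}dt\otimes dt$ is not. In both cases the obstruction is a fibre over $P$ all of whose components are multiple. Nothing in your argument for this step uses rational connectedness, and without it the step is false.

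The missing idea is Theorem~\ref{thm:multiplicity_one_component}: when the general fibre is rationally connected, every prime divisor $P\subset Y$ is dominated by a prime divisor $P_0\subset X$ with $\operatorname{ord}_{P_0}(f^*P)=1$. Then $f$ is a submersion at a general point of $P_0$, so $P_0\not\subset\operatorname{Crit}(f)\supset\operatorname{Supp}D_0$. Hence $D_0$ is automatically $f$-degenerate (Theorem~\ref{prop:pullback_saturation_is_degenerate}); no twisting is needed and $M$ stays saturated. In the projective case this comes from a Graber--Harris--Starr section over a general complete-intersection curve meeting $P$ transversally. In the Kähler case the paper replaces that curve by a projective curve in the Douady space of the fibres, with the same tangent direction as the classifying map of a transverse disc (Lemmas~\ref{lem:projective_curve_selection_prescribed_tangent} and~\ref{lem:tangent_transfer}).

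Your pseudo-effectivity step also needs repair. An $f$-degenerate $D$ can have components dominating a prime divisor of $Y$. So the push-forward of $[D]\wedge\omega^{r}$ (with $\omega$ Kähler on $X$, $r=\dim X-\dim Y$) does charge divisors, and you only get that $c_1(M)$ plus an effective class is psef. What works is Corollary~\ref{cor:alpha is psef}: the proof of $N(f^*\alpha+\{D\})\ge D$ uses only pseudo-effectivity of $f^*\alpha+\{D\}$, which then gives $f^*c_1(M)$, hence $c_1(M)$, psef. Theorem~\ref{thm:f-degenerate-main} itself assumes $\alpha$ psef, so quoting it here would be circular.
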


The construction of $M$ relies on covering families of very free
rational curves on the general fibres of $f$. Setting
$V:=(\Omega_Y^1)^{\otimes m}$, we first restrict $\varphi$ to these
curves to show that its image is generically contained in $f^*V$.
The resulting meromorphic map $\rho\colon X\dashrightarrow\mathbb P_Y(V)$
is then shown, using again that these curves cover a general fibre, to
be constant along the general fibres of $f$. It consequently factors
meromorphically through $f$ as
\[
    \rho=\bar\rho\circ f,~~
    \bar\rho\colon Y\dashrightarrow\mathbb P_Y(V),
\]
where $\bar\rho$ is a meromorphic section of $\mathbb P_Y(V) \to Y$. This allows us to construct a saturated rank-one subsheaf $M\subset V$ and then establish \eqref{eq 1.2} by comparing its
pullback with $L$.

The relation (\eqref{eq 1.2}) is precisely where the two ingredients meet. For any pseudo-effective line bundle $L \to (\Omega_X^1)^{\otimes m}$, we can find an unique $M_L \to (\Omega_Y^1)^{\otimes m}$. As in (\ref{eq 1.2}), the effectivity of \(E\), together with the
numerical invariance under adding the \(f\)-degenerate divisor \(D\), gives $\nu(L,X) \leq \nu(M_L,Y)$. Note that,  this Theorem does not assume that $K_Y$ is pseudo-effective.

\subsubsection*{Acknowledgements}
The author used the GPT-5.6 Plus version Sol model, for proof exploration and consistency checks during the development of this work. In particular, the author thanks GPT for helping to flesh out many of the ideas and technical details in Section \ref{sec 4}. AI generated suggestions were treated as provisional; the author assumes full responsibility for every argument and for the final text.

\subsubsection*{Conventions and notation}

In this paper, we use the following conventions and notation.

\begin{itemize}
    \item We use the convention
    $ dd^c=\frac{\sqrt{-1}}{\pi}\partial\bar\partial$.

    \item For a closed real $(p,q)$-form $\theta$, we denote its cohomology class by
    $ \{\theta\}\in H^{p,q}(X,\mathbb R)$.
    
    For a divisor $D$, we denote by $[D]$ the associated current of integration
    and by
    $\{D\} := c_1(D)\in H^{1,1}(X,\mathbb R)$ its cohomology class.

    \item A morphism $f :  X\to Y$ is called a \emph{fibration} if it is
    proper and surjective with connected fibres, we denote the fiber at the point $y$ by $X_y = f^{-1}(y)$. A \emph{section} of $f$ is a
    morphism $s :  Y\to X$ such that
    $  f\circ s=\operatorname{id}_Y$.

    \item For a $\mathbb R$-divisor $D$ and a prime divisor $F$, we denote by
${\rm ord}_F D$ the coefficient of $F$ in $D$. 
Inequalities between
divisors are understood coefficientwise: we write $D_1\geq D_2$ if
$
{\rm ord}_F D_1\geq {\rm ord}_F D_2$
for every prime divisor $F$. In particular, $D\geq0$ means that $D$ is effective.

    \item For a morphism $f :  X\to Y$ between complex manifolds,
we denote by $\operatorname{Crit}(f)$ its critical locus, so that $\operatorname{rank}(df_x)<\dim Y$  for  $x\in\operatorname{Crit}(f)$.
We define the \emph{submersion locus} of $f$ by $X^\circ :=f^{-1}\bigl(Y\backslash f(\operatorname{Crit}(f))\bigr)$.

    \item We use \emph{usc} as an abbreviation for \emph{upper semicontinuous}.

    \item We use the convention that $\mathbb P(E)$ parametrizes lines in $E$, rather than in its dual.

    \item We say that a property holds at a general point if it holds outside a proper analytic subset.
\end{itemize}

\section{Preliminaries}

Unless otherwise specified, in this section we generally assume that $X$ is a compact Kähler manifold of complex dimension $n$.

\subsection{Pluripotential Theory}

We first recall some basic notions from pluripotential theory. Let $M$ be a complex manifold of dimension $n$. A function $u:M\to [-\infty,+\infty)$
is called plurisubharmonic (psh) if it is usc and its restriction to every complex line in a local holomorphic coordinate chart is subharmonic or identically equal to $-\infty$. A function $u$ is called quasi-plurisubharmonic (qpsh) if locally it can be written as the sum of a psh function and a smooth function.

A $(1,1)$-current $T$ on $M$ is called positive if
$\langle T,\Phi\rangle\geq 0$ for every smooth compactly supported positive $(n-1,n-1)$-form $\Phi$. It is called a closed positive $(1,1)$-current if, in addition, $dT=0$.

Let $\theta$ be a smooth closed real $(1,1)$-form on $X$. By the $dd^c$-Lemma for currents, every closed positive $(1,1)$-current $T \in \{\theta\}$ can be written as
$T=\theta+dd^c u$
for some quasi-plurisubharmonic function $u$, unique up to an additive constant. We denote by
$$
\operatorname{PSH}(X,\theta) = \left\{ u ~\text{is qpsh function on}~X:
\theta+dd^c u\geq 0
\right\}.
$$
Thus the closed positive $(1,1)$-currents in the cohomology class $\{\theta\}$ are precisely the currents of the form
$\theta+dd^c u$, with $u\in\operatorname{PSH}(X,\theta)$.

We will frequently compare the singularities of currents. Given two qpsh functions $u$ and $v$, we say that $u$ is less singular than $v$, and write
$u\succeq v$, if $v\leq u+O(1)$ on $X$. We say that $u$ and $v$ have the same singularity type, and write
$u\simeq v$, if $u\succeq v$ and $v\succeq u$. Similarly, for closed positive $(1,1)$-currents
$T_i=\theta_i+dd^c\varphi_i$, we write $T_1 \succeq T_2$ (resp. $T_1 \simeq T_2$) if $\varphi_1 \succeq \varphi_2$ (resp. $\varphi_1 \simeq \varphi_2$).

\subsubsection{Positivity and Intersection Products of Transcendental Classes}


A class $\alpha=\{\theta\}\in H^{1,1}(X,\mathbb R)$ is called
\emph{pseudoeffective} (psef for short) if it contains a closed positive $(1,1)$-current, or
equivalently if $\operatorname{PSH}(X,\theta)\neq\varnothing$. It is called \emph{big} if it
contains a Kähler current, i.e., $\alpha$ is big if there exist
$\varphi\in\operatorname{PSH}(X,\theta)$ and $\varepsilon>0$ such that
\[
    \theta+dd^c\varphi\geq\varepsilon\omega.
\]

Suppose now that $\alpha=\{\theta\}$ is psef. The envelope
\[
    V_\theta
    :=\left(\sup\bigl\{u\in\operatorname{PSH}(X,\theta):u\leq 0\text{ on }X\bigr\}\right)^*
\]
has minimal singularities in $\operatorname{PSH}(X,\theta)$, where ${}^*$ denotes the upper
semicontinuous regularization. 
More generally, $u\in\operatorname{PSH}(X,\theta)$ is said to have
minimal singularities if $u\succeq v$ for every
$v\in\operatorname{PSH}(X,\theta)$.

Let $1\leq p\leq n$. For $j=1,\ldots,p$, let $\theta_j$ be a smooth closed real
$(1,1)$-form whose class $\alpha_j:=\{\theta_j\}$ is big, and let
\[
    \varphi_j\in\operatorname{PSH}(X,\theta_j),~~
    T_j:=\theta_j+dd^c\varphi_j.
\]
Set $V_j:=V_{\theta_j}$ and, for $\ell\in\mathbb N$, define the canonical
truncations and the associated plurifine open sets by
$$ 
\varphi_j^{(\ell)}:=\max\{\varphi_j,V_j-\ell\}, 
~~~ U_\ell:=\bigcap_{j=1}^p\{\varphi_j>V_j-\ell\}.
$$
Each $\varphi_j^{(\ell)}$ has minimal singularities. The corresponding
Bedford--Taylor product is defined on the common ample locus
$\bigcap_j\operatorname{Amp}(\alpha_j)$ and extended trivially across its
complement. Here $\operatorname{Amp}(\alpha_j)$ is a Zariski open set constructed by Boucksom \cite{Bou04} such that $V_j \in L^\infty_{loc}({\rm Amp}(\alpha_j))$. The currents
\[
    R_\ell
    :=\mathbf 1_{U_\ell}
      \bigwedge_{j=1}^p
      \left(\theta_j+dd^c\varphi_j^{(\ell)}\right)
\]
form an increasing sequence and converge weakly to a closed positive
$(p,p)$-current. Its limit is called the \emph{non-pluripolar product} of
$T_1,\ldots,T_p$ and is denoted by
\[
    \left\langle T_1\wedge\cdots\wedge T_p\right\rangle
    :=\lim_{\ell\to\infty}R_\ell.
\]
This construction is independent of all the auxiliary choices and the resulting
current does not charge pluripolar sets; see \cite[Section~2]{DDL25}.

We shall repeatedly use the monotonicity of non-pluripolar masses.
\begin{thm}\label{DDL25 thm 33}(\cite[Theorem~3.3]{DDL25})
Let $\theta_1,\ldots,\theta_n$ be smooth closed real
$(1,1)$-forms whose cohomology classes are big, and let
$u_j,v_j\in\operatorname{PSH}(X,\theta_j)$. Set $T_j = \theta_j + dd^c u_j, S_j = \theta_j + dd^c v_j$, if $ T_j \succeq S_j$, i.e., 
$v_j\leq u_j+O(1)$ for every $j$, then
$$
    \int_X\left\langle
        T_1\wedge\cdots\wedge T_n
    \right\rangle
    \geq
    \int_X\left\langle
        S_1\wedge\cdots\wedge  S_n
    \right\rangle.
$$
\end{thm}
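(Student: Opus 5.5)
The plan is to reduce monotonicity to two facts. The first is that currents with \emph{the same} singularity type have the same non-pluripolar mass. The second is plurifine locality of the non-pluripolar product. Adding constants to $v_j$ changes neither $S_j$ nor the product, so we may assume $v_j\leq u_j$ on $X$ for every $j$.

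\textbf{Step 1 (interpolating potentials).} For $k\in\mathbb N$ set
\[
    w_{j,k}:=\max\{u_j-k,\,v_j\}\in\operatorname{PSH}(X,\theta_j),\qquad W_{j,k}:=\theta_j+dd^c w_{j,k}.
\]
Since $u_j-k\leq w_{j,k}\leq u_j$, we have $w_{j,k}\simeq u_j$. Granting the equal-mass statement of Step 2, this gives
\[
\int_X\langle W_{1,k}\wedge\cdots\wedge W_{n,k}\rangle=\int_X\langle T_1\wedge\cdots\wedge T_n\rangle.
\]
On the plurifine open set $O_k:=\bigcap_j\{v_j>u_j-k\}$ we have $w_{j,k}=v_j$ for all $j$. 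Plurifine locality of the non-pluripolar product (which follows from its construction through the canonical truncations and the corresponding Bedford--Taylor locality on $\bigcap_j\operatorname{Amp}(\alpha_j)$) then gives
\[
\mathbf 1_{O_k}\langle W_{1,k}\wedge\cdots\wedge W_{n,k}\rangle=\mathbf 1_{O_k}\langle S_1\wedge\cdots\wedge S_n\rangle.
\]
The sets $O_k$ increase, and their union contains $X\setminus\bigcup_j\{v_j=-\infty\}$, whose complement is pluripolar. Since $\langle S_1\wedge\cdots\wedge S_n\rangle$ puts no mass on pluripolar sets, monotone convergence yields
\[
\int_X\langle T_1\wedge\cdots\wedge T_n\rangle\geq\int_{O_k}\langle S_1\wedge\cdots\wedge S_n\rangle\ \longrightarrow\ \int_X\langle S_1\wedge\cdots\wedge S_n\rangle .
\]

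\textbf{Step 2 (same singularity type $\Rightarrow$ same mass).} This is the step I expect to be the main obstacle. Let $|u_j-u_j'|\leq C$ for all $j$, and put $T_j'=\theta_j+dd^c u_j'$. With $V_j=V_{\theta_j}$, the truncations
\[
u_j^{(\ell)}=\max\{u_j,V_j-\ell\},\qquad u_j'^{(\ell)}=\max\{u_j',V_j-\ell\}
\]
all have minimal singularities. By the result of Boucksom and BEGZ, the total mass $\int_X\langle\bigwedge_j(\theta_j+dd^c\psi_j)\rangle$ is the same for all tuples $(\psi_j)$ with minimal singularities. Writing $U_\ell=\bigcap_j\{u_j>V_j-\ell\}$ and $U'_\ell=\bigcap_j\{u'_j>V_j-\ell\}$, plurifine locality gives:
\begin{itemize}
\item $U_\ell\subset U'_{\ell+C}$ and $U'_\ell\subset U_{\ell+C}$, because $|u_j-u_j'|\leq C$;
\item $\langle T_1\wedge\cdots\wedge T_n\rangle=\lim_\ell\mathbf 1_{U_\ell}\langle\bigwedge_j(\theta_j+dd^c u_j^{(\ell)})\rangle$, and similarly for the $u_j'$.
\end{itemize}
The difficulty is that the truncations of $u_j$ and $u_j'$ at the same level $\ell$ do not coincide on $U_\ell$. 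I would first try to compare the products on $U_\ell$ via the auxiliary potentials $\max\{u_j,u_j'-C',V_j-\ell\}$, which agree with the $u$-truncation on a slightly smaller plurifine set. If that bookkeeping does not close, the fallback is to reduce to $u_j'=u_j+\chi(u_j-u_j')$-type interpolations using the comparison principle of BEGZ. Either way, the target is to show that the masses outside $U_\ell$ and outside $U'_\ell$ have the same limit as $\ell\to\infty$, so that the two total masses coincide.

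Combining Step 2 with Step 1 gives the asserted inequality.
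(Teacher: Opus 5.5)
Your Step 1 is correct. It is the standard reduction: normalize $v_j\leq u_j$, replace $v_j$ by $\max\{u_j-k,v_j\}$, apply plurifine locality on $O_k$, and use that $\langle S_1\wedge\cdots\wedge S_n\rangle$ charges no pluripolar set. The paper itself gives no proof of this statement; it quotes \cite[Theorem~3.3]{DDL25}.

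\textbf{The gap is Step 2, and it is not a technical lemma.} Applying the theorem in both directions gives the equal-mass statement, so Step 1 only trades the theorem for an equivalent assertion. All the content therefore sits in Step 2, which you leave open. This is exactly where \cite{BEGZ10} needed small unbounded locus, which is why their result covers your minimal-singularity truncations. The general case is Witt Nystr\"om's theorem, extended to mixed products by Darvas--Di Nezza--Lu.

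Your proposed bookkeeping cannot close it.
\begin{itemize}
\item Locality and the common mass $m$ of minimal-singularity products give $\int_X\langle T_1\wedge\cdots\wedge T_n\rangle=m-\lim_\ell\int_{X\setminus U_\ell}\langle\bigwedge_j(\theta_j+dd^cu_j^{(\ell)})\rangle$, and similarly for the $u_j'$. So you would have to show that these escaping masses agree. That mass sits where some $u_j\leq V_j-\ell$, in particular on the contact sets $\{u_j=V_j-\ell\}$. These are not plurifine open, so locality says nothing there.
\item The inclusions $U_\ell\subset U'_{\ell+C}$ compare sets, not the two different measures $\langle\bigwedge_jT_j\rangle$ and $\langle\bigwedge_jT'_j\rangle$. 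Comparing those measures is the claim itself.
\item Your auxiliary potentials $\max\{u_j,u'_j-C',V_j-\ell\}$ coincide identically with $u_j^{(\ell)}$ when $C'\geq C$. For smaller $C'$, locality again identifies them with either truncation only on plurifine open sets.
\item The comparison principle of \cite{BEGZ10} is available only for full-mass (or small unbounded locus) potentials, which is precisely what fails here. So the fallback does not supply the missing input.
\end{itemize}

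\textbf{What is missing: go up one dimension.} This proves the inequality directly, so Step 2 is not needed. With $v_j\leq u_j$, let $p_1,p_2$ be the projections of $X\times\mathbb P^1$, let $\omega_{\mathrm{FS}}=dd^c\log\sqrt{1+|\tau|^2}$, and set
\[
\Phi_j(x,\tau):=\max\{v_j(x),\,u_j(x)+\log|\tau|\}-\log\sqrt{1+|\tau|^2}.
\]
This is $(p_1^*\theta_j+p_2^*\omega_{\mathrm{FS}})$-psh on $X\times\mathbb P^1$; check at $\tau=\infty$ in the coordinate $1/\tau$. Let $R$ be the non-pluripolar product of the $n$ currents $p_1^*\theta_j+p_2^*\omega_{\mathrm{FS}}+dd^c\Phi_j$.
\begin{itemize}
\item Non-pluripolar products are closed on compact K\"ahler manifolds \cite{BEGZ10}. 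Hence $(p_2)_*R$ is a closed $(0,0)$-current on $\mathbb P^1$, i.e.\ a constant $c$.
\item Over $\{|\tau|>1\}$ the $j$-th current equals $p_1^*T_j$, because $v_j\leq u_j$. So $R=p_1^*\langle T_1\wedge\cdots\wedge T_n\rangle$ there, and $c=\int_X\langle T_1\wedge\cdots\wedge T_n\rangle$.
\item On the plurifine open set $O=\bigcap_j\{v_j>u_j+\log|\tau|\}$ one has $\Phi_j=v_j-\log\sqrt{1+|\tau|^2}$. Plurifine locality then gives $\mathbf 1_OR=\mathbf 1_O\,p_1^*\langle S_1\wedge\cdots\wedge S_n\rangle$.
\item Over every $x$ with all $v_j(x)>-\infty$, the set $O$ contains $\{x\}\times\{|\tau|<\delta(x)\}$ for some $\delta(x)>0$. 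The excluded set is pluripolar, hence not charged.
\end{itemize}
Pair $(p_2)_*R$ with bumps $\eta_\varepsilon\geq0$ of mass one concentrating at $\tau=0$, and use dominated convergence. This gives $c\geq\int_X\langle S_1\wedge\cdots\wedge S_n\rangle$, which is the theorem.
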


The non-pluripolar product is symmetric and multilinear on the cone of closed
positive $(1,1)$-currents \cite[Proposition~1.4]{BEGZ10}. In particular, if
$T,S,T_2,\ldots,T_p$ are closed positive $(1,1)$-currents whose cohomology classes are big and $a,b\geq0$, then
\[
\begin{split}
    \left\langle (aT+bS)\wedge T_2\wedge\cdots\wedge T_p\right\rangle={}
    a\left\langle T\wedge T_2\wedge\cdots\wedge T_p\right\rangle
    +b\left\langle S\wedge T_2\wedge\cdots\wedge T_p\right\rangle.
\end{split}
\]


Let $\alpha$ be big and let $0\le S\in\alpha$ be smooth outside a proper analytic subset
$A\subset X$. Then its non-pluripolar product can be viewed as the classical wedge product of smooth forms on $X\backslash A$, followed by trivial
extension across $A$, that is  $\langle S^k\rangle =\mathbf 1_{X\backslash A}S^k$.

Let $\alpha_1,\ldots,\alpha_p$ be big $(1,1)$-classes. For each $j$, choose a
closed positive current $T_{j,\min}\in\alpha_j$ with minimal singularities. The
class
\[
    \left\langle\alpha_1\cdots\alpha_p\right\rangle
    :=
    \left\{
        \left\langle
            T_{1,\min}\wedge\cdots\wedge T_{p,\min}
        \right\rangle
    \right\}
    \in H^{p,p}(X,\mathbb R)
\]
is independent of the chosen currents with minimal singularities. It is called
the \emph{positive intersection product} of $\alpha_1,\ldots,\alpha_p$; see
\cite[Definition~1.17]{BEGZ10}. This agrees with the movable intersection
product introduced in \cite[Section~3]{BDPP13}. If $\alpha_1,\ldots,\alpha_p$ are merely psef, fix an arbitrary
Kähler class $\beta$. Following the big-class approximation of
\cite{BDPP13}, one defines
\[
    \left\langle\alpha_1\cdots\alpha_p\right\rangle
    :=\lim_{\varepsilon\downarrow0}
      \left\langle
        (\alpha_1+\varepsilon\beta)\cdots
        (\alpha_p+\varepsilon\beta)
      \right\rangle.
\]
The limit exists in $H^{p,p}(X,\mathbb R)$ and is independent of the Kähler
class $\beta$. In particular, for a psef class
$\alpha$, we have $ \left\langle\alpha^p\right\rangle
    =\lim_{\varepsilon}
      \left\langle(\alpha+\varepsilon\beta)^p
      \right\rangle$.

\subsubsection{Definition of the Numerical Dimension of Transcendental Classes}\label{sec 212}

Let $\alpha\in H^{1,1}(X,\mathbb R)$ be a psef class. Using the positive
intersection products introduced above, we define the \emph{numerical
dimension} of $\alpha$ by
\[
    \nu_{\mathrm{np}}(\alpha,X)
    :=\max\bigl\{
        k\in\{0,\ldots,n\}:\langle\alpha^k\rangle\neq0
      \bigr\}.
\]
Here, $\langle\alpha^0\rangle:=1$. Since $\langle\alpha^k\rangle$ is a closed
positive $(k,k)$-current, its cohomology class is nonzero if it has
positive mass. Therefore,
\[
    \nu_{\mathrm{np}}(\alpha,X)
    =\max\left\{
        k\in\{0,\ldots,n\}:
        \int_X\langle\alpha^k\rangle\wedge\omega^{n-k}>0
      \right\}.
\]
If $\alpha$ is not psef, we set $\nu_{\mathrm{np}}(\alpha,X):=-\infty$. Note that $\alpha$ is big if and only if $\nu_{\rm np}(\alpha,X) = n$, see \cite{Bou02}.

For each $\varepsilon>0$, we define $\alpha[-\varepsilon\omega]
    :=\bigl\{T\in\alpha:T\geq-\varepsilon\omega\bigr\}$,
and denote by $\alpha[-\varepsilon\omega]_{\mathrm{an}}$ the subset consisting
of currents with analytic singularities. Recall that a current
$T=\theta+dd^c\varphi$ has \emph{analytic singularities} if, locally,
\[
    \varphi
    =c\log\!\left(\sum_{\ell=1}^N|f_\ell|^2\right)+v,
    ~~ c>0,
\]
where the $f_\ell$ are holomorphic functions and $v$ is smooth. In particular,
$T$ is smooth on $X\backslash \operatorname{Sing}(T)$, so $T$ is smooth outside a proper analytic subset.

We recall the following Demailly's regularization Theorem.

\begin{thm}\label{thm:DP04-regularization}(\cite[Theorem~3.2]{DP04})
Let $T=\theta+dd^c\varphi$ be a closed real $(1,1)$-current satisfying
$T\geq\gamma$, where $\theta$ is smooth and $\gamma$ is a continuous real
$(1,1)$-form. Then there exist quasi-psh functions $\varphi_j$ with analytic
singularities and numbers $\delta_j\downarrow0$ such that
\[
    T_j:=\theta+dd^c\varphi_j\geq\gamma-\delta_j\omega,
    ~~
    \varphi_j\searrow\varphi.
\]
Moreover, $T_j$ is smooth on $X\backslash  Z_j$, where $(Z_j)_j$ is an
increasing sequence of analytic subsets of $X$. In particular, if
$T\in\alpha[-\varepsilon\omega]$, then
$T_j\in\alpha[-(\varepsilon+\delta_j)\omega]_{\rm an}$.
\end{thm}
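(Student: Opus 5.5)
We follow Demailly's Bergman kernel approximation. The argument has four steps: local approximation, gluing with control of the negative part, monotonicity, and the analytic structure of the singular loci.

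\emph{Local approximation.} Cover $X$ by finitely many coordinate balls $B_i$, and on each choose a smooth potential $\psi_i$ with $dd^c\psi_i=\theta$. Write $\phi_i:=\psi_i+\varphi$, so $dd^c\phi_i=T\geq\gamma$. Since $\gamma$ is only continuous, add a small quadratic correction on a slightly shrunken ball. Let $\gamma_i$ be a constant-coefficient form with $|\gamma-\gamma_i|\leq\eta(r)\omega$ on $B_i$, where $\eta$ is the modulus of continuity of $\gamma$ and $r$ is the radius. Then $\phi_i-q_i$ is psh for a quadratic $q_i$ with $dd^c q_i=\gamma_i-\eta(r)\omega$.

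For $m\geq1$, let $\mathcal H_{i,m}$ be the Hilbert space of holomorphic $f$ on $B_i$ with $\int_{B_i}|f|^2e^{-2m(\phi_i-q_i)}<\infty$, with orthonormal basis $(\sigma_{i,m,\ell})_\ell$. Set
\[
\phi_{i,m}:=q_i+\tfrac1{2m}\log\sum_\ell|\sigma_{i,m,\ell}|^2 .
\]
Two estimates control $\phi_{i,m}$:
\begin{itemize}
\item the mean value inequality gives $\phi_{i,m}(z)\leq\sup_{|\zeta-z|<r}\phi_i(\zeta)+\tfrac1m\log\tfrac{C}{r^n}+O(r)$;
\item the Ohsawa--Takegoshi extension theorem gives $\phi_{i,m}\geq\phi_i-\tfrac Cm$.
\end{itemize}
Together they yield $\phi_{i,m}\to\phi_i$ pointwise. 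Moreover, $\phi_{i,m}$ has analytic singularities, and by strong Noetherianity its polar locus is the zero set of the multiplier ideal $\mathcal I(m\varphi)$. By construction $dd^c\phi_{i,m}\geq\gamma-2\eta(r)\omega$ on the shrunken ball.

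\emph{Gluing.} Define $\varphi_m:=\widetilde{\max}_i\bigl(\phi_{i,m}-\psi_i+\chi_i\bigr)$, using a regularized maximum. The bump terms $\chi_i$ are very negative near $\partial B_i$ and small in the interior, so that on overlaps the maximum is never attained near a boundary. The two-sided estimates above show that the discrepancies $\phi_{i,m}-\psi_i-(\phi_{j,m}-\psi_j)$ are $O(1/m)+O(r)$, which allows $\chi_i$ of size $O(r^2)$ with controlled Hessian. The regularized max of functions with analytic singularities keeps analytic singularities, since near each point only functions with comparable singularities, namely the same ideals $\mathcal I(m\varphi)$, compete. We obtain $\theta+dd^c\varphi_m\geq\gamma-\delta\omega$ with $\delta\to0$ as $r\to0$ and $m\to\infty$; here we choose $r=r_m\to0$ slowly.

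\emph{Monotonicity and the loci $Z_j$.} Restrict to $m=2^j$. The subadditivity $\mathcal I((m_1+m_2)\varphi)\subset\mathcal I(m_1\varphi)\mathcal I(m_2\varphi)$, or directly the Ohsawa--Takegoshi argument on the diagonal, gives $\phi_{i,2m}\leq\phi_{i,m}+\tfrac Cm$. Adding constants $\sum_{k\geq j}C2^{-k}$, which does not affect curvature, therefore yields a decreasing sequence $\varphi_j\searrow\varphi$. The singular set $Z_j$ is the zero locus of $\mathcal I(2^j\varphi)$, and these ideals decrease in $j$, so the $Z_j$ increase. Outside $Z_j$ the sum $\sum|\sigma_\ell|^2$ is a nonvanishing, locally finitely generated expression, so $T_j$ is smooth there. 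The final claim is immediate: if $T\geq-\varepsilon\omega$, apply the theorem with $\gamma=-\varepsilon\omega$.

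The main obstacle is the gluing step. One must simultaneously keep the loss of positivity $\delta_j\to0$ uniform in the continuous lower bound $\gamma$, retain analytic singularities, and preserve monotonicity after the regularized maximum. I would handle it by fixing the cover and the quadratic corrections first, then choosing $r_m$ as a function of $\eta$ and $m$ so that both the $O(1/m)$ and $O(\eta(r))$ errors tend to zero.
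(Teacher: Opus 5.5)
The paper does not prove this statement; it quotes \cite[Theorem~3.2]{DP04}, whose proof is Demailly's Bergman-kernel regularization, so your overall route is the standard one. As written, however, the gluing step has a genuine gap. You assert that the two-sided estimates make the discrepancies $(\phi_{i,m}-\psi_i)-(\phi_{k,m}-\psi_k)$ of size $O(1/m)+O(r)$. Combining the mean-value bound for $\phi_{i,m}$ with the Ohsawa--Takegoshi bound for $\phi_{k,m}$ only gives
\[
    (\phi_{i,m}-\psi_i)(z)-(\phi_{k,m}-\psi_k)(z)
    \leq\sup_{|\zeta-z|<r}\varphi(\zeta)-\varphi(z)+\frac1m\log\frac{C}{r^n}+O(r).
\]
Here $\sup_{|\zeta-z|<r}\varphi-\varphi(z)$ is unbounded near the poles of $\varphi$; take $\varphi=\log|z_1|$ at points with $|z_1|\ll r$. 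Also, $\frac1m\log\frac{C}{r^n}$ is not $O(1/m)$ once $r\to0$.

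Even an $O(r)$ bound would not suffice. If $dd^c\chi_i\geq-\delta\omega$ on a ball of radius $r$, then $\chi_i+C\delta|z|^2$ is psh. By the maximum principle, $\chi_i$ cannot be lower near the boundary than in the interior by more than $O(\delta r^2)$. A regularized maximum with admissible bumps therefore needs discrepancies $o(\delta r^2)$ on the transition regions, uniformly, including at the most singular points of $\varphi$.

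That requires a localization estimate for weighted Bergman kernels, which your outline lacks. On an overlap the two weights differ by a pluriharmonic function $\operatorname{Re}H$ up to an error $O(\eta(r)r^2)$. Multiplication by $e^{mH}$ then identifies the weighted $L^2$ spaces up to a factor $e^{O(m\eta(r)r^2)}$. H\"ormander's $L^2$ estimate with a cut-off, plus the extra weight $2n\log|z-x|$ to keep the value at $x$, compares the Bergman kernel of a ball with that of a smaller ball $B(x,\rho)$ up to a factor depending on $\rho$ but not on $m$. Together these give discrepancies $O(\eta(r)r^2)+O(C(\rho)/m)$. Bumps of oscillation about $\delta r^2$ absorb these once $\eta(r)\ll\delta$ and then $m\gg C(\rho)/(\delta r^2)$.

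The monotonicity step has a related gap, and your closing plan is internally inconsistent: you fix the cover and the $q_i$ first and then let $r=r_m\to0$. The inequality $\phi_{i,2m}\leq\phi_{i,m}+C/m$ compares Bergman kernels of one weight on one ball. It passes through $\widetilde{\max}$ only if the cover, the $q_i$ and the bumps $\chi_i$ are the same at levels $m$ and $2m$. With a fixed cover, the positivity loss your argument controls stays of order $\eta(r)$ and does not tend to $0$. If instead the cover shrinks with $m$, the ball $B_i$ at level $2m$ is not a ball of the level-$m$ cover, and subadditivity gives nothing.

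To get both $\delta_j\to0$ and $\varphi_j\searrow\varphi$, you need a cross-scale comparison $\varphi_{j+1}\leq\varphi_j+e_j$ between approximants built on different covers, with $\sum_je_j<\infty$. This can be obtained from subadditivity together with the localization estimate above, by taking:
\begin{itemize}
\item $m_{j+1}$ a multiple of $m_j$;
\item $r_j$ decreasing fast enough that the bump and weight errors are summable;
\item $m_j$ large relative to $r_j$.
\end{itemize}
Only then does adding the constants $\sum_{k\geq j}e_k$ produce a decreasing sequence.

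The remaining ingredients of your outline are fine: the local two-sided estimates, the description of the polar set via $\mathcal I(2^j\varphi)$, smoothness off $Z_j$, and the consequence for $\alpha[-\varepsilon\omega]$.
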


  By this Theorem, for $\alpha \in H^{1,1}(X,\mathbb R)$ psef, the class
$\alpha[-\varepsilon\omega]_{\mathrm{an}}$ is nonempty for every
$\varepsilon>0$. So we can now give an equivalent definition of the numerical dimension:
\[
    \nu_{\mathrm{an}}(\alpha,X)
    :=
    \max\left\{
        k\in\{0,\ldots,n\}:
        \limsup_{\varepsilon\downarrow0}\,
        \sup_{T\in\alpha[-\varepsilon\omega]_{\mathrm{an}}}
        \int_{X\backslash \operatorname{Sing}(T)}
            (T+\varepsilon\omega)^k\wedge\omega^{n-k}>0
      \right\}.
\]

Let us explain the equivalence of these two definitions. Write
$\alpha=\{\theta\}$ and set
\[
    \theta_\varepsilon:=\theta+\varepsilon\omega,
    ~~
    T_\varepsilon
    :=\theta_\varepsilon+dd^cV_{\theta_\varepsilon}.
\]
Thus $T_\varepsilon$ is a positive current with minimal singularities in the
big class $\alpha+\varepsilon\{\omega\}$. By the definition of the positive
product $\langle\alpha^k\rangle$, we have 
$$
\lim_{\varepsilon\downarrow0}
        \int_X\langle T_\varepsilon^k\rangle
        \wedge\omega^{n-k}>0
$$
for all $k \leq \nu_{\rm np}(\alpha,X)$. Fix $\varepsilon>0$ and let
$S'_\varepsilon\in\alpha[-\varepsilon\omega]_{\mathrm{an}}$. Set
$ S_\varepsilon
    :=S'_\varepsilon+\varepsilon\omega
    =\theta_\varepsilon+dd^c\varphi_\varepsilon\geq0$. 
Since $V_{\theta_\varepsilon}$ has minimal singularities in $\{\theta_\varepsilon\}$, we have $ \varphi_\varepsilon
   \preceq V_{\theta_\varepsilon}$.
The monotonicity of total non-pluripolar masses given by
Theorem~\ref{DDL25 thm 33} therefore yields
\[
\begin{split}
    \int_X\langle T_\varepsilon^k\rangle\wedge\omega^{n-k}
    \geq
    \int_X\langle S_\varepsilon^k\rangle\wedge\omega^{n-k} =
    \int_{X\backslash \operatorname{Sing}(S_\varepsilon)}
        S_\varepsilon^k\wedge\omega^{n-k}.
\end{split}
\]
Taking the supremum over
$S'_\varepsilon\in\alpha[-\varepsilon\omega]_{\mathrm{an}}$ and then the
limit superior as $\varepsilon\downarrow0$, we obtain
\[
    \nu_{\mathrm{np}}(\alpha,X)
    \geq\nu_{\mathrm{an}}(\alpha,X).
\]

For the converse inequality, apply
Theorem~\ref{thm:DP04-regularization} to $T_\varepsilon$. We may choose an
index $j_\varepsilon$ such that $T_{\varepsilon,j_\varepsilon}
    :=\theta_\varepsilon+dd^c\psi_\varepsilon
    \geq-\varepsilon\omega$,
where $\psi_\varepsilon$ has analytic singularities and
$\psi_\varepsilon\geq V_{\theta_\varepsilon}$. Set
\[
    G'_\varepsilon
    :=T_{\varepsilon,j_\varepsilon}-\varepsilon\omega
    \in\alpha[-2\varepsilon\omega]_{\mathrm{an}},~~
    G_\varepsilon
    :=T_{\varepsilon,j_\varepsilon}+\varepsilon\omega
    =G'_\varepsilon+2\varepsilon\omega\geq0.
\]
Since $\psi_\varepsilon \geq V_{\theta_\varepsilon}$, Theorem~\ref{DDL25 thm 33} gives
\[
\begin{split}
    \int_{X\backslash \operatorname{Sing}(G_\varepsilon)}
        G_\varepsilon^k\wedge\omega^{n-k}
    &=
    \int_X\langle G_\varepsilon^k\rangle\wedge\omega^{n-k} \\
    &\geq
    \int_X
        \left\langle(T_\varepsilon+\varepsilon\omega)^k\right\rangle
        \wedge\omega^{n-k} \geq
    \int_X\langle T_\varepsilon^k\rangle\wedge\omega^{n-k}.
\end{split}
\]
The last inequality follows from the expansion of the non-pluripolar product
after adding the smooth Kähler form $\varepsilon\omega$. Since
$G'_\varepsilon$ is admissible in the definition of
$\nu_{\mathrm{an}}(\alpha,X)$ at the parameter $2\varepsilon$, it follows that
\[
    \nu_{\mathrm{an}}(\alpha,X)
    \geq\nu_{\mathrm{np}}(\alpha,X).
\]
Consequently $    \nu_{\mathrm{an}}(\alpha,X)    =\nu_{\mathrm{np}}(\alpha,X)$.
We denote their common value by $\nu(\alpha,X)$.

\begin{rem}
When $X$ is projective and $\alpha=c_1(L)$, it follows from
\cite[Theorem~1.1]{Leh13} that $\nu(\alpha,X)$ agrees with the numerical
dimension $\nu(L,X)$ defined in algebraic geometry.
\end{rem}

Now let us give two fundamental properties of numerical dimension.
\begin{prop}\label{prop:basic-properties-numerical-dimension}
Let
$\alpha,\beta\in H^{1,1}(X,\mathbb R)$ be psef classes. Then we have:
\begin{enumerate}
    \item $ \nu(c\alpha,X)=\nu(\alpha,X)$, $\forall c >0$;

    \item $   \nu(\alpha,X)\leq\nu(\beta+\alpha,X)$.
\end{enumerate}
\end{prop}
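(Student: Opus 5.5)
Both properties will be proved from the definition $\nu=\nu_{\mathrm{np}}$ through positive intersection products. We reduce to big classes by adding $\varepsilon\{\omega\}$, then use the homogeneity and monotonicity of non-pluripolar masses (Theorem~\ref{DDL25 thm 33}). Fix a Kähler form $\omega$ and write $\alpha=\{\theta\}$, $\beta=\{\eta\}$.

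For (1), take $c>0$. The class $c\alpha+\varepsilon\{\omega\}=c(\alpha+(\varepsilon/c)\{\omega\})$ is big. If $T$ is a current with minimal singularities in $\alpha+(\varepsilon/c)\{\omega\}$, then $cT$ has minimal singularities in $c\alpha+\varepsilon\{\omega\}$, since $u\mapsto cu$ is a bijection $\operatorname{PSH}(X,\theta+(\varepsilon/c)\omega)\to \operatorname{PSH}(X,c\theta+\varepsilon\omega)$ preserving the order $\succeq$. By multilinearity of the non-pluripolar product, $\langle (cT)^k\rangle=c^k\langle T^k\rangle$. Letting $\varepsilon\to0$ and using the independence of the Kähler class in the limit defining $\langle\alpha^k\rangle$, we get $\langle (c\alpha)^k\rangle=c^k\langle\alpha^k\rangle$. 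Since $c^k>0$, the same indices $k$ give nonzero classes, so $\nu(c\alpha,X)=\nu(\alpha,X)$.

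For (2), it suffices to show $\int_X\langle(\alpha+\beta)^k\rangle\wedge\omega^{n-k}\geq\int_X\langle\alpha^k\rangle\wedge\omega^{n-k}$ for every $k$. Fix $\varepsilon>0$ and choose minimal-singularity currents $T_\varepsilon=\theta+\varepsilon\omega+dd^cu\in\alpha+\varepsilon\{\omega\}$ and $S_\varepsilon=\eta+\varepsilon\omega+dd^cv\in\beta+\varepsilon\{\omega\}$. Then $T_\varepsilon+S_\varepsilon$ lies in the big class $\alpha+\beta+2\varepsilon\{\omega\}$, and its potential $u+v$ is dominated in singularity by the minimal one $V$ of that class. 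Theorem~\ref{DDL25 thm 33}, applied with $n-k$ copies of the smooth form $\omega$ (whose class is Kähler, hence big), gives
\[
\int_X\langle(\alpha+\beta+2\varepsilon\{\omega\})^k\rangle\wedge\omega^{n-k}\;\geq\;\int_X\langle(T_\varepsilon+S_\varepsilon)^k\rangle\wedge\omega^{n-k}.
\]
Expand the right-hand side by multilinearity into a sum of terms $\binom{k}{j}\int_X\langle T_\varepsilon^{j}\wedge S_\varepsilon^{k-j}\rangle\wedge\omega^{n-k}$. Every term is nonnegative, because it is the mass of a positive current. Hence the right-hand side is at least $\int_X\langle T_\varepsilon^k\rangle\wedge\omega^{n-k}$.

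Letting $\varepsilon\to0$ yields the desired mass inequality. The left-hand side converges to $\int_X\langle(\alpha+\beta)^k\rangle\wedge\omega^{n-k}$ by definition, using $2\varepsilon$ in place of $\varepsilon$ and independence of the choice of Kähler class. The lower bound converges to $\int_X\langle\alpha^k\rangle\wedge\omega^{n-k}$. Therefore $\langle\alpha^k\rangle\neq0$ implies $\langle(\alpha+\beta)^k\rangle\neq0$, which proves (2).

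The only delicate point is the bookkeeping around wedging with $\omega^{n-k}$. The wedge of a non-pluripolar product with a smooth form coincides with the non-pluripolar product that includes $\omega$ as extra factors, so Theorem~\ref{DDL25 thm 33} applies in the stated form with $\theta_j=\omega$ for the remaining indices. Passing to the limit is legitimate because masses pair continuously against the fixed smooth form $\omega^{n-k}$ in cohomology.
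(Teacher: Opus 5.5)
Your proof is correct. For (1) you do what the paper does, namely use the homogeneity of the non-pluripolar product after rescaling $\varepsilon$; the appeal to independence of the Kähler class is not even needed there. For (2) the paper gives no argument of its own and simply cites \cite[Lemma~6.10]{CP26}. You instead prove it directly: you compare $T_\varepsilon+S_\varepsilon$ with a minimal-singularity current in $\alpha+\beta+2\varepsilon\{\omega\}$ via Theorem~\ref{DDL25 thm 33}, expand by multilinearity, and discard the nonnegative mixed terms $\langle T_\varepsilon^{j}\wedge S_\varepsilon^{k-j}\rangle$.

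Your route is self-contained and uses only tools already recorded in the preliminaries. It is the same mechanism the paper itself uses when proving $\nu_{\mathrm{np}}=\nu_{\mathrm{an}}$. It also works directly with the definition $\nu_{\mathrm{np}}$, so no matching of conventions with the cited source is required. Moreover, it yields the quantitative inequality $\int_X\langle(\alpha+\beta)^k\rangle\wedge\omega^{n-k}\geq\int_X\langle\alpha^k\rangle\wedge\omega^{n-k}$ for every $k$, which is stronger than the comparison of numerical dimensions. The citation only buys brevity.

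An even shorter self-contained alternative would be to invoke the monotonicity of the movable intersection product \cite[Theorem~3.5]{BDPP13}. The paper already uses this in the proof of Theorem~\ref{thm:invariance-zariski-decomposition}, and it gives $\langle\alpha^k\rangle\leq\langle(\alpha+\beta)^k\rangle$ directly.
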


\begin{proof}
By the homogeneity of the non-pluripolar product, we can directly obtain the first assertion. The second assertion follows from
\cite[Lemma~6.10]{CP26}.
\end{proof}

\subsubsection{Pullbacks of Currents}

Let $g :  X^n\to Y^m$ be a fibration between compact Kähler manifolds, and set $r:=n-m$.
Let $\Gamma_g\subset X\times Y$ be the graph of $g$, let $\pi_X$ and
$\pi_Y$ be the two projections, and set
\[
    \rho:=\pi_Y|_{\Gamma_g} : \Gamma_g\to Y.
\]

Let $R$ be a closed positive $(q,q)$-current on $Y$, and let $U\subset Y$
be the flat locus of $g$, i.e., $g|_{g^{-1}(U)} : g^{-1}(U) \to U$ is flat. Choose a sufficiently fine open covering
$(U_i)_i$ of $U$. After shrinking each $U_i$ if necessary, choose smooth
closed positive forms $R_{i,j}$ on $U_i$ converging weakly to $R|_{U_i}$,
and set
\[
    Q_i(R)
    :=
    \lim_{j\to\infty}
    \pi_Y^*R_{i,j}\wedge[\Gamma_i],
    ~~
    \Gamma_i:=\Gamma_g\cap(X\times U_i).
\]
By the local construction of Dinh--Sibony, these limits exist, are
independent of the regularizations, and $Q_i(R)$, $Q_j(R)$ agree on overlaps $\Gamma_i \cap \Gamma_j$; see
\cite[Section~3]{DS07}. They therefore glue to a closed positive current
$Q_U(R)$ on $\Gamma_U:=\rho^{-1}(U)$, and one sets
\[
    g_U^*(R|_U):=(\pi_X)_*Q_U(R).
\]

Set
\[
    C:=
    \left\{
        z\in\Gamma_g:
        \dim_z\rho^{-1}\bigl(\rho(z)\bigr)>r
    \right\},
    ~~
    \Gamma_g^\circ:=\Gamma_g\backslash  C.
\]
The map $\rho^\circ:=\rho|_{\Gamma_g^\circ}$ has locally only empty or pure $r$-dimensional fibres. Hence the same
construction defines a closed positive current $(\rho^\circ)^*R$ on
$\Gamma_g^\circ$. By \cite[Proposition~5.1]{DS07}, this current has locally
finite mass near $C$, and its trivial extension to $\Gamma_g$ is closed.
The \emph{Dinh--Sibony strict transform} of $R$ is defined by
\[
    g^\star R
    :=
    (\pi_X)_*\widetilde{(\rho^\circ)^*R}.
\]
Its restriction over $U$ coincides with $g_U^*(R|_U)$.

We shall use the following consequences of
\cite[Proposition~5.1]{DS07}.

\begin{prop}\label{prop:DS-pullback}
Let $R$ be a closed positive $(q,q)$-current on $Y$, where
$0\leq q\leq m$. Then $g^\star R$ is a closed positive $(q,q)$-current
on $X$, and
\[
    \int_X g^\star R\wedge\Omega^{n-q}
    \leq
    C_q\int_Y R\wedge\omega^{m-q},
\]
where $\Omega$ and $\omega$ are fixed Kähler forms on $X$ and $Y$,
respectively. If $R$ charges no proper analytic subset of $Y$, then
$g^\star R$ charges no proper analytic subset of $X$.
\end{prop}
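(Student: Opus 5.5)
This proposition is a consequence of \cite[Proposition~5.1]{DS07}, so the plan is to read off the three assertions from the structure of the construction rather than to reprove the Dinh--Sibony theory.

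\emph{Closedness and positivity.} Over the locus $\Gamma_g^\circ$ the map $\rho^\circ$ has fibres that are either empty or of pure dimension $r$. On each sufficiently small chart, $(\rho^\circ)^*R$ is defined as the weak limit of the currents $\pi_Y^*R_{j}\wedge[\Gamma_g^\circ]$, where the $R_j$ are smooth closed positive forms. Each such product is closed and positive, and the limit exists and does not depend on the regularization by \cite{DS07}. Hence $(\rho^\circ)^*R$ is a closed positive $(q,q)$-current on $\Gamma_g^\circ$.

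By \cite[Proposition~5.1]{DS07} this current has locally finite mass near the analytic set $C$, and its trivial extension is closed. Positivity of the extension is automatic. Since $\pi_X|_{\Gamma_g}$ is a biholomorphism onto $X$ (in particular proper), the push-forward $g^\star R$ is a closed positive $(q,q)$-current on $X$.

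\emph{Mass estimate.} I would first treat smooth $R$. Then $g^\star R$ agrees with $g^*R$ on the open set where $g$ is flat, and it is dominated by the trivially extended current. So
\[
\int_X g^\star R\wedge\Omega^{n-q}\le \int_{X} g^*R\wedge\Omega^{n-q},
\]
and the right-hand side depends only on the class $\{R\}$. The map sending a cohomology class $\{R\}$ to the pairing $\{g^*R\}\cdot\{\Omega\}^{n-q}$ is linear and nonnegative on positive classes. Comparing it with the Kähler pairing on $Y$ (the positive $(q,q)$-cohomology cone is controlled by the mass $\int_Y R\wedge\omega^{m-q}$) gives a constant $C_q$.

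For general $R$, the mass bound is exactly the quantitative content of \cite[Proposition~5.1]{DS07}. It follows from the local estimates there combined with compactness of $\Gamma_g$ and a finite covering. The step needing care is uniformity of the constant near $C$; I would take it directly from \cite{DS07}.

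\emph{Non-charging of analytic sets.} Let $A\subsetneq X$ be analytic and set $A'=\pi_X^{-1}(A)\cap\Gamma_g$. The part of $A'$ lying over $C$ is not charged, because the extension is trivial. Off $C$, write $A'$ as a union of pieces: those with $\rho(A')$ a proper analytic subset $B\subset Y$, and pieces with $\rho|_{A'}$ generically of fibre dimension less than $r$.

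The first kind lie in $\rho^{-1}(B)$. Locally $\mathbf 1_{\rho^{-1}(B)}(\rho^\circ)^*R=(\rho^\circ)^*(\mathbf 1_B R)=0$. This localization of the pull-back to preimages of Borel (or analytic) sets holds by the local slicing construction.

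For the second kind, the pull-back is computed fibrewise as $\int_Y(\cdot)\,[\text{fibre}]$-type integration. A subset meeting each fibre in dimension less than $r$ therefore has zero $r$-dimensional fibre measure, so it is not charged either.

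The main obstacle is making this localization rigorous in the Dinh--Sibony framework. I would prove it via the approximation $R_j\to R$ together with semicontinuity of mass on closed sets, using that $\mathbf 1_B R=0$.
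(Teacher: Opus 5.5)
\paragraph{Overall route.} Your route is the paper's route. The paper gives no argument for this proposition beyond quoting \cite[Proposition~5.1]{DS07}, together with the no-mass property of the equidimensional pull-back from \cite[Theorem~1.1]{DS07}, which it invokes in the proof of Lemma~\ref{prop:fixed-current-pullback}. Your closedness and positivity paragraph is fine, and so is the cohomological bound for smooth $R$. For general $R$ you defer the uniform constant to \cite{DS07}, exactly as the paper does. The smooth case does not bootstrap to general $R$ by global approximation, since closed positive currents are in general not limits of smooth closed positive forms, so it is really the Dinh--Sibony estimate doing the work.

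\paragraph{The gap: the second kind in the non-charging argument.} This step fails as written, for two reasons.
\begin{itemize}
\item A component $A_k$ with $g(A_k)=Y$ has fibre dimension $<r$ only \emph{generically}. It can contain whole $r$-dimensional fibre components over a proper analytic subset. For example, over a chart $\mathbb C^2$ of $Y$ with $g$ the projection $\mathbb C^2\times\mathbb P^1\to\mathbb C^2$, the surface $\{y_1t_0=y_2t_1\}$ dominates $\mathbb C^2$ with finite generic fibres but contains the entire fibre over $0$. So ``meeting each fibre in dimension less than $r$'' is not what you have.
\item A fibrewise-integration description of $(\rho^\circ)^*R$ is not available at critical points of $g$ outside $C$. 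What is true is this: over the submersion locus, $g$ is locally a projection $Y'\times\mathbb C^r\to Y'$, and the trace measure of the pull-back is comparable to $\sigma_R\otimes\lambda_{\mathbb C^r}$, where $\sigma_R$ is the trace measure of $R$.
\end{itemize}

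\paragraph{Repair.} Absorb both problems into your first kind.
\begin{itemize}
\item Put $A^{\mathrm{big}}:=\{x\in A:\dim_x(A\cap X_{g(x)})\geq r\}$, which is analytic by Cartan--Remmert semicontinuity. Every fibre of $g|_{A^{\mathrm{big}}}$ has dimension $\geq r$, so $\dim g(A^{\mathrm{big}})\leq\dim A-r<m$.
\item Set $B:=\Delta_g\cup g(A^{\mathrm{big}})$, a proper analytic subset of $Y$.
\item $C$ is uncharged by construction. Moreover $g^{-1}(B)\setminus C$ is uncharged by the no-mass property, because $\mathbf 1_BR=0$.
\item On $X\setminus g^{-1}(B)$, $g$ is a submersion, and every local slice of $A$ has dimension $<r$, hence is Lebesgue-null. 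Fubini then gives zero mass.
\end{itemize}
For the no-mass property itself, simply cite \cite[Theorem~1.1]{DS07}. Your proposed approximation argument would in any case need the uniform local mass bound for pull-backs that underlies that theorem.
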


We now provide some descriptions of this pullback.

\begin{lem}\label{prop:fixed-current-pullback}
Let $S$ be a closed positive $(1,1)$-current on $Y$ whose cohomology class is big and which is smooth outside a proper analytic set $A\subset Y$. Then,
for every $0\leq q\leq m$, $\left\langle(g^*S)^q\right\rangle
    =
    g^\star\left\langle S^q\right\rangle$. In particular,   $\left\langle(g^*S)^q\right\rangle=0$, for $m<q\leq n$.
\end{lem}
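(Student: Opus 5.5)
The plan is to compare both sides on a Zariski open set where everything is smooth, and then use that neither side charges proper analytic subsets.

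First, the pullback current. Write $A':=g^{-1}(A)$, a proper analytic subset of $X$ (since $g$ is surjective). On $X\backslash A'$ the current $g^*S$ is the smooth form $g^*(S|_{Y\backslash A})$, which is semipositive. The class $g^*\{S\}$ is in general only psef, not big. So I first interpret $\langle(g^*S)^q\rangle$ as in \cite{BEGZ10} for currents with small unbounded locus: it is the trivial extension of $\mathbf 1_{X\backslash A'}(g^*S)^q$. By the locality of the non-pluripolar product in the plurifine topology, this agrees with the truncation definition. The same reasoning gives $\langle S^q\rangle=\mathbf 1_{Y\backslash A}S^q$, as recalled in the paper.

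Next, the identity on a smooth locus. Let $U$ be the flat locus of $g$ and $Y^\circ:=Y\backslash f(\operatorname{Crit}(g))$ the regular value set, so that $g$ is a smooth submersion over $Y^\circ\backslash A$. There, the Dinh--Sibony construction reduces to the usual pullback. Indeed, the local regularizations $R_{i,j}$ of the smooth form $S^q$ may be taken equal to $S^q$ itself, and $\pi_Y^*S^q\wedge[\Gamma_i]$ pushes forward to $g^*(S^q)=(g^*S)^q$. Hence $g^\star\langle S^q\rangle=(g^*S)^q=\langle (g^*S)^q\rangle$ on $g^{-1}(Y^\circ\backslash A)$, whose complement $Z:=g^{-1}(A\cup g(\operatorname{Crit}(g)))$ is a proper analytic subset of $X$.

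Then I conclude by the vanishing of analytic masses. The current $\langle (g^*S)^q\rangle$ charges no pluripolar set, so in particular it puts no mass on $Z$. Similarly, $\langle S^q\rangle$ charges no proper analytic subset of $Y$, so by Proposition~\ref{prop:DS-pullback} the current $g^\star\langle S^q\rangle$ charges no proper analytic subset of $X$, and in particular not $Z$. Two positive currents that agree off $Z$ and both give zero mass to $Z$ are equal, so $\langle(g^*S)^q\rangle=g^\star\langle S^q\rangle$.

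For $q>m$, on $X\backslash Z$ we have $(g^*S)^q=g^*(S^q)=0$, because $S^q=0$ as a form of degree $2q>2m$ on $Y$. The same mass argument then gives $\langle (g^*S)^q\rangle=0$.

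The main obstacle is the first step: defining and identifying $\langle (g^*S)^q\rangle$ when $g^*\{S\}$ is not big, since the paper's definition via truncations assumes big classes. The approach I would try first is to use the \cite{BEGZ10} definition for arbitrary closed positive currents, via $\mathbf 1_{\{\varphi>-\ell\}}(\theta+dd^c\max(\varphi,-\ell))^q$, together with plurifine locality. Alternatively, one can add $\varepsilon\Omega$ and use the multilinearity of the product. A secondary point requiring care is checking that the Dinh--Sibony local pullback of a smooth form coincides with the ordinary pullback over submersion points.
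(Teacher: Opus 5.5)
Your proposal is correct and follows essentially the same route as the paper. Both arguments identify $g^\star\langle S^q\rangle$ and $\langle (g^*S)^q\rangle$ with $(g^*S)^q$ over $Y\setminus(A\cup\Delta_g)$, using locality of the Dinh--Sibony construction, and then conclude because neither current charges the proper analytic complement; you invoke Proposition~\ref{prop:DS-pullback} for the second current where the paper cites the no-mass property \cite[Theorem~1.1]{DS07} together with trivial extension, and you also observe, which the paper passes over silently, that $g^*\{S\}$ is not big, so $\langle (g^*S)^q\rangle$ must be understood in the sense of \cite{BEGZ10} for arbitrary closed positive currents.
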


\begin{proof}
Let $\Delta_g\subset Y$ be the set of critical values of $g$, and set
\[
    Y^\circ:=Y\backslash (A\cup\Delta_g),
    ~~
    X^\circ:=g^{-1}(Y^\circ).
\]
Write
\[
    g^\circ:=g|_{X^\circ} :  X^\circ\to Y^\circ.
\]
On $Y^\circ$, the current $S$ is smooth and $g^\circ$ is a submersion.
The locality of the Dinh--Sibony construction therefore gives
\[
    \left.
    g^\star\left\langle S^q\right\rangle
    \right|_{X^\circ}
    =
    (g^\circ)^*\bigl((S|_{Y^\circ})^q\bigr)
    =
    \bigl((g^\circ)^*(S|_{Y^\circ})\bigr)^q.
\]
This is also the restriction of
$\left\langle(g^*S)^q\right\rangle$ to $X^\circ$ because $g^*S$ is smooth on $X^\circ$.

It remains to compare the extensions across $X\backslash  X^\circ$. The
current $\langle S^q\rangle$ charges neither $A$ nor
$\Delta_g$. By the no-mass property of the pure-fibre pullback
\cite[Theorem~1.1]{DS07}, its pullback on $\Gamma_g^\circ$ charges no mass
over $A\cup\Delta_g$. Since $g^\star$ is obtained by trivial extension
across the non-pure locus, it follows that
$ g^\star\left\langle S^q\right\rangle$
charges no mass on $X\backslash  X^\circ$. The same holds for
$\left\langle(g^*S)^q\right\rangle$. Hence both currents are
the trivial extension of the same current on $X^\circ$, proving the first
identity.

\end{proof}

\begin{lem}\label{lem:fibre-integration}
Let $R$ be a closed positive $(q,q)$-current on $Y$ which charges no
proper analytic subset. Then
\[
    \int_X
        g^\star R\wedge g^*\omega^{m-q}\wedge\Omega^r
    =
    c_g\int_Y R\wedge\omega^{m-q},
\]
where $c_g:=\int_{X_y}\Omega^r>0$ for every regular fibre $X_y$ of $g$.
\end{lem}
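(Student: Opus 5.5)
The identity is a fibrewise Fubini computation on the submersion locus, and the global statement follows because neither side charges the complement.

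Set $\Delta_g\subset Y$ for the set of critical values of $g$. This is a proper analytic subset by Sard's theorem and Remmert's proper mapping theorem. Put $Y^\circ:=Y\backslash\Delta_g$ and $X^\circ:=g^{-1}(Y^\circ)$.

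\textbf{Step 1: reduce to the submersion locus.} Since $R$ charges no proper analytic subset, $\int_Y R\wedge\omega^{m-q}=\int_{Y^\circ}R\wedge\omega^{m-q}$. By Proposition \ref{prop:DS-pullback}, $g^\star R$ charges no proper analytic subset of $X$, in particular not $X\backslash X^\circ$. The form $g^*\omega^{m-q}\wedge\Omega^r$ is smooth, so the left-hand integral may also be computed over $X^\circ$.

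\textbf{Step 2: identify $g^\star R$ over $Y^\circ$.} On $Y^\circ$ the map $g$ is a proper submersion, hence flat. By the locality of the Dinh--Sibony construction (as in the proof of Lemma \ref{prop:fixed-current-pullback}), $g^\star R|_{X^\circ}$ is the usual pullback: locally it is the weak limit of $g^*R_j$ for smooth closed positive forms $R_j\to R$.

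\textbf{Step 3: compute locally with a partition of unity.} Choose a locally finite covering $(U_i)$ of $Y^\circ$ by small balls over which $g$ is smoothly trivial, and a subordinate partition of unity $(\chi_i)$ on $Y^\circ$. For each $i$, apply the projection formula for a proper submersion:
\[
\int_{X} g^*(\chi_i R_j)\wedge g^*\omega^{m-q}\wedge\Omega^r=\int_{U_i}\chi_i R_j\wedge\omega^{m-q}\wedge g_*(\Omega^r).
\]
Here $g_*(\Omega^r)$ is the fibre integral of $\Omega^r$. It is a degree-zero function, namely $y\mapsto\int_{X_y}\Omega^r$. This function is locally constant, because $\Omega$ is closed and the fibres $X_y$, $y\in Y^\circ$, are homologous. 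Since $Y^\circ$ is connected (it is the complement of an analytic set in a connected manifold), the function equals the constant $c_g>0$.

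Now pass to the limit $j\to\infty$. This is justified because $g^*(\chi_i R_j)\to \chi_i\circ g\cdot g^\star R$ weakly on $g^{-1}(U_i)$, and the test form has compact support there because $g$ is proper. Summing over $i$ gives
\[
\int_{X^\circ}g^\star R\wedge g^*\omega^{m-q}\wedge\Omega^r=c_g\int_{Y^\circ}R\wedge\omega^{m-q}.
\]
Combined with Step 1, this yields the lemma.

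\textbf{Main obstacle.} The delicate point is the summation over the non-compact cover of $Y^\circ$. Summing the local identities is harmless because both integrands are positive measures, so monotone convergence applies. The real issue is Step 2: one must make sure that the Dinh--Sibony strict transform restricted to $X^\circ$ really is the naive smooth-approximation pullback. I would take this directly from the construction recalled above, since on $Y^\circ$ the map $g$ is flat, $C\cap\rho^{-1}(Y^\circ)=\varnothing$, and $g^\star R|_{X^\circ}=g_U^*(R|_U)$ restricted to $X^\circ$.
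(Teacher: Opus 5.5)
Your proposal is correct and follows the paper's proof. Both apply the projection formula over the regular-value locus, using that the fibre integral $g_*(\Omega^r)$ is the constant $c_g$, and then extend the identity across the critical values because neither $R$ nor $g^\star R$ charges proper analytic subsets. Your partition-of-unity and smooth-approximation limit only spell out details the paper leaves implicit.
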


\begin{proof}
The closed positive $(0,0)$-current $g_*(\Omega^r)$ is constant, and $g_*(\Omega^r)  =  \int_{X_y}\Omega^r   = c_g $ on every regular fibre. The projection formula gives the identity over
the regular-value locus of $g$. Since neither $R$ nor $g^\star R$ charges
the analytic subsets, the identity holds globally.
\end{proof}

\subsubsection{Boucksom's Divisorial Zariski Decomposition}

Let $T$ be a closed real $(1,1)$-current on $X$ satisfying
   $ T\geq\gamma$
for some closed smooth real $(1,1)$-form $\gamma$. Locally, one can write 
$ T=dd^c\varphi$, 
where $\varphi + h$ is plurisubharmonic, $h$ is the local potential of $-\gamma$. The \emph{Lelong number} of
$T$ at a point $x\in X$ is defined by $\nu(T,x):=\nu(\varphi,x)$,
where
\[
    \nu(\varphi,x)
    :=\liminf_{z\to x}\frac{\varphi(z)}{\log|z-x|}
    =\sup\left\{
        c\geq0:\varphi(z)\leq c\log|z-x|+O(1)
      \right\}.
\]
This definition is independent of the choice of local potential and local
coordinates.

If $Y\subset X$ is an irreducible analytic subset, the \emph{generic Lelong
number} of $T$ along $Y$ is defined by
\[
    \nu(T,Y):=\inf_{x\in Y}\nu(T,x).
\]
By Siu's Theorem \cite{Siu74}, this number is equal to $\nu(T,x)$ for a general point
$x\in Y$. In particular, if $ E:=\sum_i a_iE_i$ 
is an effective $\mathbb R$-divisor and
$[E]=\sum_i a_i[E_i]$ is its current of integration, then, for every prime
divisor $D\subset X$,
\[
    \nu([E],D)=\operatorname{ord}_D(E).
\]

By Siu's decomposition \cite{Siu74}, every closed positive $(1,1)$-current $T$
admits a unique decomposition
\[
    T=R+\sum_D\nu(T,D)[D],
\]
where the sum extends over all prime divisors $D\subset X$ and converges
weakly, while $R$ is a closed positive $(1,1)$-current satisfying $\nu(R,D)=0$ for every prime divisor $D\subset X$. More generally, the same decomposition
holds when $T$ is a closed real $(1,1)$-current satisfying $T\geq\gamma$ for
some smooth real $(1,1)$-form $\gamma$; in this case, the residual current
satisfies $R\geq\gamma$. See \cite[Section~2.2.1]{Bou04}.

A psef class $\alpha$ is called \emph{modified nef} if, for every
$\varepsilon>0$, there exists a current
$T_\varepsilon\in\alpha[-\varepsilon\omega]$ such that
$\nu(T_\varepsilon,D)=0$ 
for every prime divisor $D\subset X$; see
\cite[Definition~2.2]{Bou04}.

Now let $\alpha\in H^{1,1}(X,\mathbb R)$ be a psef class. For each
$\varepsilon>0$, let $T_{\min,\varepsilon}$ be a current with minimal
singularities in $\alpha[-\varepsilon\omega]$. For every prime divisor
$D\subset X$, Boucksom defines the \emph{minimal multiplicity} of $\alpha$
along $D$ by
\[
    \nu(\alpha,D)
    :=\sup_{\varepsilon>0}\nu(T_{\min,\varepsilon},D)
    =\lim_{\varepsilon\downarrow0}
       \nu(T_{\min,\varepsilon},D).
\]
From the definitions of minimal multiplicity and Lelong number, for any positive closed current $T \in \alpha$ and prime divisor $D \subset X$, we have $T \preceq T_{{\rm min},\varepsilon}$, $\forall \varepsilon >0$, hence
\begin{equation}\label{lelong number ineq}
\nu(T,D) \geq \nu(\alpha,D),~~\forall~ {\rm prime~ divisor}~ D \subset X.
\end{equation}

The \emph{negative part} of $\alpha$ is defined by
$ N(\alpha):=\sum_D\nu(\alpha,D)D$.
Only finitely many coefficients in this sum are nonzero, and hence
$N(\alpha)$ is an effective $\mathbb R$-divisor; see
\cite[Theorem~3.12]{Bou04}. The \emph{Zariski projection}, or positive part,
of $\alpha$ is defined by
$ Z(\alpha):=\alpha-\{N(\alpha)\}$.

The decomposition
\[
    \alpha=Z(\alpha)+\{N(\alpha)\}
\]
is called the \emph{divisorial Zariski decomposition} of $\alpha$. By inequality (\ref{lelong number ineq}) and Siu's decomposition \cite{Siu74}, for $0 \leq T \in \alpha$, we have 
$0 \leq T - [N(\alpha)] \in Z(\alpha)$, hence $Z(\alpha)$ is psef. Moreover, the class
$Z(\alpha)$ is modified nef; see
\cite[Proposition~3.8]{Bou04}.

\begin{rem}
For a pseudo-effective $\mathbb R$-divisor $D$ on a projective manifold,
Nakayama defined the negative part $N_{\sigma}(D)$ in
\cite[Chapter~III, \S1]{Na04}. By
\cite[Theorem~5.4 and Appendix~A.1]{Bou04}, it is coincide with the definition of the negative part given by Boucksom, i.e., $N(\{D\})=N_{\sigma}(D)$.
\end{rem}

\subsubsection{Pluripotential Theory on Singular Analytic Spaces}
Throughout this subsection, $Z$ denotes a reduced irreducible complex analytic space. We recall the notions of smooth forms and plurisubharmonic functions on $Z$ that will be used below; see, for example, \cite{FN80,Dem85}.

A function $f:Z\to\mathbb R$ is called smooth if every point of $Z$ admits a neighbourhood $U$, a local embedding
\[
    \iota:U\hookrightarrow\Omega\subset\mathbb C^N,
\]
and a function $\widetilde f\in\mathcal C^\infty(\Omega)$ such that $f|_U=\widetilde f\circ\iota$.
We denote the resulting sheaf of smooth functions on $Z$ by $\mathcal C_Z^\infty$. Let $\mathcal{PH}_Z$ be the subsheaf of real-valued pluriharmonic functions, namely, smooth functions which are locally the real parts of holomorphic functions.

A smooth closed real $(1,1)$-form $\theta$ on $Z$ is, by definition, a global section of $  \mathcal C_Z^\infty/\mathcal{PH}_Z$.
More explicitly, $\theta$ is represented by an open covering $(U_i)_i$ of $Z$ and smooth functions
\[
    \rho_i\in\mathcal C_Z^\infty(U_i)
\]
such that
\[
    \rho_i-\rho_j\in\mathcal{PH}_Z(U_i\cap U_j).
\]
We then write
\[
    \theta|_{U_i}=dd^c\rho_i
\]
and call $\rho_i$ a local potential of $\theta$.

A smooth closed real $(1,1)$-form $\theta$ is called a Kähler form if every point of $Z$ admits a local embedding as above for which a local potential of $\theta$ is the restriction of a smooth strictly plurisubharmonic function on the ambient Euclidean space. If $p:Y\to Z$ is a holomorphic map, the pull-back $p^*\theta$ is the smooth closed real $(1,1)$-form represented on $p^{-1}(U_i)$ by the local potentials
\[
    \rho_i\circ p.
\]

We next recall the notion of plurisubharmonicity on a complex analytic space. Let $W$ be a reduced pure dimensional complex analytic space. An usc function  $u:W\to [-\infty,\infty)$
is called plurisubharmonic if $u\not\equiv-\infty$ and every point of $W$ admits a neighbourhood $U$, a local embedding $\iota:U\hookrightarrow\Omega\subset\mathbb C^N$,
and a function $\widetilde u\in\operatorname{PSH}(\Omega)$ such that
\[
    u|_U=\widetilde u\circ\iota.
\]
We denote the space of such functions by $\operatorname{PSH}(W)$.

The following Theorem of Fornæss and Narasimhan gives an intrinsic characterization of this definition.

\begin{thm}\label{FN80 thm 531}
    An usc function $u:Z\to [-\infty,\infty)$
    is plurisubharmonic if and only if, for every holomorphic map
    $f:\Delta\to Z$
    from the unit disc, the function $f^*u$ is subharmonic or identically equal to $-\infty$.
\end{thm}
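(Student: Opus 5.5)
The implication ``plurisubharmonic $\Rightarrow$ subharmonic along discs'' is formal, and I would dispose of it first. If $u|_U=\widetilde u\circ\iota$ with $\widetilde u\in\operatorname{PSH}(\Omega)$, and $f:\Delta\to Z$ is holomorphic, then locally on $\Delta$ we have $f^*u=\widetilde u\circ(\iota\circ f)$. Here $\iota\circ f$ is a holomorphic map into $\Omega\subset\mathbb C^N$, and a psh function composed with a holomorphic map is subharmonic or $\equiv-\infty$. Subharmonicity is a local property, so this gives the claim on all of $\Delta$.

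For the converse I would argue locally. Fix $p\in Z$, embed a neighbourhood $U\hookrightarrow\Omega\subset\mathbb C^N$, and assume that $u$ is usc and subharmonic along all discs. The task is to produce $\widetilde u\in\operatorname{PSH}(\Omega')$ on a smaller neighbourhood with $\widetilde u|_{Z}=u$. The plan has four steps.
\begin{enumerate}
\item On the regular locus $Z_{\rm reg}$, take discs inside complex lines of local charts. The classical characterization then shows that $u$ is psh on $Z_{\rm reg}$, and $u$ is locally bounded above on $Z$ because it is usc.
\item Reduce to continuous $u$. I would construct continuous functions $u_j\searrow u$ on a neighbourhood of $p$ in $Z$, each again subharmonic on every holomorphic disc. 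For this I would use a Poletsky-type disc envelope. Take continuous $h_j\searrow u$ (possible since $u$ is usc), and set
\[
u_j(z):=\inf\Bigl\{\int_0^{2\pi}h_j\bigl(f(e^{it})\bigr)\tfrac{dt}{2\pi}:\ f:\overline\Delta\to Z \text{ holomorphic},\ f(0)=z\Bigr\}.
\]
The sub-mean-value property along discs gives $u\le u_j\le h_j$, and the envelope is again subharmonic along discs. The needed continuity and stability statements are proved in the smooth setting; on a singular $Z$ I would obtain them by working through a resolution of singularities, or the normalization, together with the disc-lifting property.
\item Extend continuous functions. Choose a Stein neighbourhood $\Omega'$ of $Z\cap\overline B$, available by Siu's Stein neighbourhood theorem for Stein subvarieties. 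A continuous function on $Z\cap\Omega'$ that is subharmonic along discs extends to a continuous psh function on a possibly smaller neighbourhood. I would prove this with a Perron envelope
\[
\widetilde u_j:=\sup\{v\in\operatorname{PSH}(\Omega'):\ v|_Z\le u_j\}^*,
\]
using $\log$ of sums of squares of generators of the ideal of $Z$, and the fact that $\Omega'$ is Stein, to build psh barriers of the form $u_j\circ\pi+C\log(\cdots)$ near $Z$. These barriers force $\widetilde u_j|_Z=u_j$. The disc condition is exactly what excludes the ``$\le$'' being strict on $Z$: the envelope restricted to $Z$ is itself subharmonic along discs and dominated by $u_j$, and a maximality argument then gives equality.
\item Pass to the limit. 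The extensions can be arranged to be decreasing, for example by replacing $\widetilde u_j$ with $\max$-regularized minima normalized on $Z$, or by applying the envelope to the decreasing data directly. The limit $\widetilde u=\lim\widetilde u_j$ is then psh or $\equiv-\infty$ on $\Omega'$, restricts to $u$ on $Z$, and is $\not\equiv-\infty$ since $u\not\equiv-\infty$.
\end{enumerate}

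I expect the main obstacle to be Step 3 together with its interaction with Step 2. The difficulty is to show that the ambient envelope restricts back to exactly $u_j$ at singular points of $Z$. There the only available information is subharmonicity along analytic discs through the singular point, and discs do not lift through a singularity as freely as they do in the smooth case. My first attempt would be to pass to a resolution $\pi:\widehat Z\to Z$. On $\widehat Z$, the function $u\circ\pi$ is psh in the usual sense, by the classical smooth statement applied on the manifold $\widehat Z$. I would then use the fact that $u$ is constant on connected fibres of $\pi$ in the limit sense forced by the disc condition, and combine this with the Stein neighbourhood barriers to control the envelope on $Z_{\rm sing}$. If that fails, I would fall back on the original Fornæss--Narasimhan induction on $\dim Z$. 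That argument uses a generic linear projection $Z\to\mathbb C^{\dim Z}$ that is finite near $p$, and symmetrizes $u$ over the sheets to produce a psh function downstairs, which is then pulled back and corrected by the barrier $C\log\sum|g_\ell|^2$.
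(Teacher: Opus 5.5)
Your forward implication and Step 1 are fine. The proposal has a genuine gap, however, at exactly the point where the theorem has content: constructing an ambient psh extension near points of $Z_{\mathrm{sing}}$. The paper gives no argument here, only the reference \cite[Section~5.3]{FN80}, and the substance of that reference is precisely this extension.

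\textbf{Step 3 does not supply the extension.}
\begin{itemize}
\item As written, your envelope is identically $+\infty$. Take any holomorphic $g\not\equiv0$ on $\Omega'$ vanishing on $Z$. Then $c+\log|g|$ is psh, equals $-\infty\le u_j$ on $Z$, and is arbitrarily large on the dense set $\{g\neq0\}$.
\item Suppose you repair this with an ambient cap $v\le H$, where $H$ is a continuous extension of $u_j$. You then get $\widetilde u_j|_Z\le u_j$. But the reverse inequality asks, for each $z_0\in Z$ and $\varepsilon>0$, for an ambient psh $v\le H$ with $v(z_0)>u_j(z_0)-\varepsilon$. That is the local extension statement you are trying to prove, so the argument is circular.
\item Your barriers $u_j\circ\pi+C\log(\cdots)$ are psh only when $\pi$ is a holomorphic retraction onto $Z$. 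Such retractions exist near $Z_{\mathrm{reg}}$ but not near a singular point, and for a merely continuous retraction $u_j\circ\pi$ is not psh.
\item There is no ``maximality argument''. Both $\widetilde u_j|_Z$ and $u_j$ are subharmonic along discs, with $\widetilde u_j|_Z\le u_j$, and nothing forces equality.
\item Your fallback has the same defect. Symmetrizing over the sheets of a finite projection gives psh functions downstairs, but their pullbacks do not restrict to $u$ on $Z$; the maximum over sheets, for instance, is only a majorant. Adding a term $C\log\sum_\ell|g_\ell|^2$ cannot correct values on $Z$, since it is $-\infty$ there.
\end{itemize}
What is missing is a genuinely holomorphic source of ambient psh \emph{minorants} of $u$ that touch $u$ at a prescribed singular point. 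One such source is $\frac1k\log|\widetilde h_k|$, where $\widetilde h_k$ are ambient extensions (via Cartan's Theorem B with estimates) of holomorphic functions built on $Z$ or on a Hartogs-type domain over $Z$. Producing these needs Stein-space input of the kind developed in \cite{FN80}.

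\textbf{Step 2 is also unsound on singular $Z$.} Work near a point where $Z$ is not locally irreducible, say the origin of $\{zw=0\}$ or the node of an irreducible nodal curve. Every holomorphic disc lies in a single branch, so the disc envelope at the crossing point is the minimum of the two branchwise envelopes. Take $u\equiv0$, with $h_j\equiv1/j$ on $\{w=0\}$ and $h_j(0,w)=\frac1j\max(0,1-j|w|)$ on $\{z=0\}$. Then $u_j=1/j$ on $\{w=0\}\setminus\{0\}$ but $u_j(0)=0$. So $u_j$ is not even upper semicontinuous, let alone subharmonic along discs.

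Passing to the normalization or a resolution does not repair this:
\begin{itemize}
\item Envelopes computed upstairs need not be constant on fibres (in the example they take the values $1/j$ and $0$ over the node), so they do not descend to $Z$.
\item Discs inside $Z_{\mathrm{sing}}$ need not lift. The singular axis of the Whitney umbrella $\{x^2=y^2z\}$ is covered two-to-one, branched at the origin, by its resolution $(s,t)\mapsto(st,s,t^2)$. Hence the disc $\tau\mapsto(0,0,\tau)$ has no lift.
\item Plurisubharmonicity of $u\circ\pi$ upstairs does not by itself give ambient extendability of $u$ on $Z$. That is again the missing step.
\end{itemize}
The continuity of $u_j$ is also only asserted, not proved.
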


\begin{proof}
    See \cite[Section 5.3]{FN80}.
\end{proof}

In particular, if $p:Y\to Z$ is holomorphic and $u\in\operatorname{PSH}(Z)$, then
$p^*u$ is plurisubharmonic on $Y$, unless it is identically equal to $-\infty$.

Now let $\theta$ be a smooth closed real $(1,1)$-form on $Z$, represented by local potentials $(U_i,\rho_i)$. We define
\[
    \operatorname{PSH}(Z,\theta)
    :=
    \left\{
        u:Z\to[-\infty,\infty):
        \begin{array}{l}
        u\not\equiv-\infty,\\
        u+\rho_i\in\operatorname{PSH}(U_i)
        \text{ for every }i
        \end{array}
    \right\}.
\]
Since two local potentials of $\theta$ differ by a pluriharmonic function, this definition is independent of the chosen local potentials. For
$u\in\operatorname{PSH}(Z,\theta)$, we use the notation
\[
    \theta+dd^cu\geq0.
\]

Finally, let $p:Y\to Z$ be a surjective morphism and let
$u\in\operatorname{PSH}(Z,\theta)$. Since
\[
    (u+\rho_i)\circ p
    =
    u\circ p+\rho_i\circ p \not\equiv -\infty,~~\forall i,
\]
 Theorem \ref{FN80 thm 531} shows that $p^*u\in\operatorname{PSH}(Y,p^*\theta)$.

\subsection{Algebraic Geometry}

\subsubsection{Saturated Subsheaves of Vector Bundles}

Let $X$ be a complex manifold, $E$ be a holomorphic vector bundle on $X$, and let
$\mathcal F\subset E$ be a coherent subsheaf. Denote by
$\operatorname{Tor}(E/\mathcal F)$ the torsion subsheaf of $E/\mathcal F$.
The \emph{saturation} of $\mathcal F$ in $E$ is
\[
    \mathcal F^{\mathrm{sat}}
    :=
    \ker\bigl(
        E\to
      (E/ \mathcal F)\big/ \operatorname{Tor}(E/\mathcal F)
    \bigr).
\]
Thus $\mathcal F$ is saturated if and only if $E/\mathcal F$ is
torsion-free. If $\mathcal F$ is saturated of rank one, then
$\mathcal F$ is reflexive, see \cite[Proposition 5.5.22]{Kob87}, hence it is locally free because $\mathcal{F}$ is rank one and $X$ smooth.

\begin{lem}\label{lem:sat_eff}
    Let $L$ be a line bundle on $X$, let $E$ be a holomorphic vector
    bundle, and let
    $\varphi :  L\to E$ be a nonzero morphism. Set $\mathcal L:=\varphi(L)\subset E$ and
    denote by $\mathcal L^{\mathrm{sat}}$ its saturation in $E$. Then $\mathcal L^{\mathrm{sat}}-L$ is effective.
\end{lem}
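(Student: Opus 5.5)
The plan is to show that the inclusion $\mathcal L\subset\mathcal L^{\mathrm{sat}}$ of rank-one subsheaves of $E$ is cut out by a section of $\mathcal L^{\mathrm{sat}}\otimes L^{-1}$, whose divisor of zeros will be the desired effective divisor.

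First, since $L$ is a line bundle and $\varphi\neq0$, the map $\varphi$ is injective on $X$ (assumed connected). Indeed, $\ker\varphi$ is a subsheaf of the torsion-free rank-one sheaf $L$. It cannot have rank one, otherwise $\varphi$ would vanish generically and hence identically. So $\ker\varphi$ has rank zero, and since $L$ is torsion-free it is zero. Thus $\varphi:L\xrightarrow{\sim}\mathcal L$.

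Next, $\mathcal L^{\mathrm{sat}}$ has rank one, because $E/\mathcal L$ and $E/\mathcal L^{\mathrm{sat}}$ differ only by the torsion sheaf $\operatorname{Tor}(E/\mathcal L)$. It is saturated, since $E/\mathcal L^{\mathrm{sat}}=(E/\mathcal L)/\operatorname{Tor}$ is torsion-free. By the remark preceding the lemma it is therefore a line bundle, which I denote $L'$. We then have an inclusion of line bundles $L\cong\mathcal L\hookrightarrow L'$.

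Such an inclusion is given by a nonzero holomorphic section $s\in H^0(X,L'\otimes L^{-1})$. Let $D:=\operatorname{div}(s)\ge0$. Then $L'\cong L\otimes\mathcal O_X(D)$, which says precisely that $\mathcal L^{\mathrm{sat}}-L$ is (linearly equivalent to) the effective divisor $D$. Moreover, $\operatorname{Supp}D$ is exactly the locus where $\mathcal L\neq\mathcal L^{\mathrm{sat}}$, namely the divisorial part of the support of $\operatorname{Tor}(E/\mathcal L)$.

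No step is a genuine obstacle. The only points needing care are:
\begin{itemize}
\item injectivity of $\varphi$, which uses connectedness of $X$;
\item local freeness of the saturation, which is supplied by the reflexivity remark.
\end{itemize}
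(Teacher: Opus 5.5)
Your proposal is correct and follows essentially the same argument as the paper: injectivity of $\varphi$ gives $L\simeq\mathcal L$, the inclusion $\mathcal L\subset\mathcal L^{\mathrm{sat}}=\ker(E\to(E/\mathcal L)/\operatorname{Tor})$ gives a nonzero map of $L$ into the line bundle $\mathcal L^{\mathrm{sat}}$, and this map is a nonzero section of $\mathcal L^{\mathrm{sat}}\otimes L^{-1}$ with effective zero divisor. Your version adds more detail on injectivity, on the rank of the saturation and on its local freeness, points the paper treats briefly.
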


\begin{proof}
    Since $L$ is a line bundle and $\varphi$ is nonzero, $\varphi$ is
    injective, so $\mathcal L\simeq L$. Moreover,
    $\mathcal L^{\mathrm{sat}}$ is a saturated rank-one subsheaf of the
    vector bundle $E$. 

   Let
\[
    q :  E\to E/\mathcal L~~\text{and}~~
    \pi :  E/\mathcal L\to
    (E/\mathcal L)/\operatorname{Tor}(E/\mathcal L)
\]
be the natural quotient morphisms. By definition, $\mathcal L^{\mathrm{sat}}=\ker(\pi\circ q)$.
Since $\operatorname{Im}(\varphi)=\mathcal L$, we have
$q\circ\varphi=0$, and hence $(\pi\circ q)\circ\varphi=0$. Therefore,
$\varphi$ factors uniquely through $\ker(\pi\circ q)$, giving a
nonzero morphism $L\to\mathcal L^{\mathrm{sat}}$. This morphism defines a non-zero holomorphic section of $\mathcal{L}^{\mathrm{sat}} - L$.
\end{proof}

We use the convention that $\mathbb P_X(E)$ parametrizes
one-dimensional subspaces of the fibres of $E$.

\begin{lem}\label{lem:meromorphic_section_saturated_line}
    Let $E$ be a holomorphic vector bundle on $X$, and let
    $\sigma :  X\dashrightarrow \mathbb P_X(E)$ be a meromorphic section. Then $\sigma$ determines a unique saturated
    coherent rank-one subsheaf $\mathcal L_\sigma\subset E$. 
    
    More precisely, for a general point $x\in X$, the fibre of
    $\mathcal L_\sigma$ at $x$ is precisely the line in $E_x$
    represented by $\sigma(x) \in \mathbb P(E_x)$.
\end{lem}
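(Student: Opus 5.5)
The plan is to construct $\mathcal L_\sigma$ directly from the tautological line subbundle on $\mathbb P_X(E)$, pulled back along the graph of $\sigma$, and then take the saturation.

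\textbf{Step 1: the line subbundle away from the indeterminacy locus.} Let $I\subset X$ be the indeterminacy locus of $\sigma$. Since $X$ is smooth, hence normal, $I$ is an analytic subset of codimension at least $2$. On $U:=X\setminus I$, $\sigma$ is a holomorphic section of $p\colon\mathbb P_X(E)\to X$. Let $\mathcal O(-1)\subset p^*E$ be the tautological line subbundle. Then $\sigma^*\mathcal O(-1)\subset \sigma^*p^*E=E|_U$ is a line subbundle of $E|_U$, and its fibre at $x$ is exactly the line $\sigma(x)\subset E_x$.

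\textbf{Step 2: extension across $I$.} Set
\[
\mathcal L_\sigma:=j_*\bigl(\sigma^*\mathcal O(-1)\bigr)\cap E,
\]
where $j\colon U\hookrightarrow X$ is the inclusion and the intersection is taken inside $j_*(E|_U)=E$. Here $j_*(E|_U)=E$ by Hartogs, since $E$ is locally free and $\operatorname{codim}I\ge2$. Concretely, $\mathcal L_\sigma$ is the sheaf of sections $s$ of $E$ with $s|_U\in\sigma^*\mathcal O(-1)$.

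I expect coherence to be the main obstacle. I would prove it without appealing to general extension theorems, as follows.
\begin{itemize}
\item Locally, $\sigma^*\mathcal O(-1)$ is the kernel on $U$ of the bundle map $E|_U\to (E/\sigma^*\mathcal O(-1))$.
\item Alternatively, take a resolution $\mu\colon\widetilde X\to X$ of the graph of $\sigma$. On $\widetilde X$ the pulled-back section $\widetilde\sigma$ is holomorphic, so it defines a line subbundle $\widetilde{\mathcal L}\subset\mu^*E$ and a quotient $\mu^*E\to \mu^*E/\widetilde{\mathcal L}$.
\item Then $\mathcal L_\sigma=\ker\bigl(E\to\mu_*(\mu^*E/\widetilde{\mathcal L})\bigr)$, using $\mu_*\mu^*E=E$. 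This kernel is coherent by Grauert's direct image theorem.
\end{itemize}

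\textbf{Step 3: saturation and rank.} The quotient $E/\mathcal L_\sigma$ injects into $\mu_*(\mu^*E/\widetilde{\mathcal L})$, which is torsion-free because $\mu^*E/\widetilde{\mathcal L}$ is locally free. Hence $\mathcal L_\sigma$ is saturated. It has rank one, with generic fibre $\sigma(x)$ by Step 1.

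\textbf{Step 4: uniqueness.} Suppose $\mathcal L'\subset E$ is another saturated rank-one subsheaf with the same generic fibres. Then $\mathcal L'$ and $\mathcal L_\sigma$ coincide on a dense Zariski open set $W$. A section of $\mathcal L'$ maps to zero in $E/\mathcal L_\sigma$ over $W$, so its image in $E/\mathcal L_\sigma$ is torsion. Since $E/\mathcal L_\sigma$ is torsion-free, this image vanishes, giving $\mathcal L'\subset\mathcal L_\sigma$. The reverse inclusion follows by symmetry, so $\mathcal L'=\mathcal L_\sigma$.
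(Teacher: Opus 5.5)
Your proof is correct and follows essentially the same route as the paper: you resolve the graph of $\sigma$, pull back the tautological line subbundle to $\widetilde{\mathcal L}\subset\mu^*E$, use $\mu_*\mu^*E\simeq E$ and Grauert's direct image theorem for coherence, and get uniqueness from the torsion-freeness of $E/\mathcal L_\sigma$. The only difference is minor. By left exactness of $\mu_*$, your kernel $\ker\bigl(E\to\mu_*(\mu^*E/\widetilde{\mathcal L})\bigr)$ is exactly the paper's $\mu_*\widetilde{\mathcal L}$, so your Step~3 shows that this sheaf is already saturated and the paper's final saturation step is unnecessary. The identification of this kernel with your Hartogs-type definition, which uses $\widetilde\sigma=\sigma\circ\mu$ over $U$ and the local freeness of $\mu^*E/\widetilde{\mathcal L}$, is left implicit in your write-up but is routine.
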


\begin{proof}
    Let $\Gamma\subset\mathbb P_X(E)$ be the reduced closure of
    $\sigma(U)$, where $U\subset X$ be the maximal Zariski open subset on which $\sigma$ is holomorphic, and let
    $\nu : \widetilde X\to\Gamma$ be a resolution. Denote by
    \[
        \mu : \widetilde X\to X,
        ~~
        \widetilde\sigma : \widetilde X\to\mathbb P_X(E)
    \]
    the induced morphisms. Then
    $\pi\circ\widetilde\sigma=\mu$, where
    $\pi : \mathbb P_X(E)\to X$ is the natural projection. Pulling
    back the tautological inclusion gives
    \[
        \widetilde{\mathcal L}
        :=
        \widetilde\sigma^*
        \mathcal O_{\mathbb P_X(E)}(-1)
        \hookrightarrow\mu^*E.
    \]
    Applying $\mu_*$ and using the projection formula and
    $\mu_*\mathcal O_{\widetilde X}=\mathcal O_X$, we obtain
    \[
        \mu_*\widetilde{\mathcal L}
        \hookrightarrow
        \mu_*\mu^*E
        \simeq E.
    \]
    The sheaf $\mu_*\widetilde{\mathcal L}$ is coherent by the proper
    direct image Theorem and has rank one, since over $U$ it agrees with
    $\sigma^*\mathcal O_{\mathbb P_X(E)}(-1)$. We define
    \[
        \mathcal L_\sigma
        :=
        \bigl(\mu_*\widetilde{\mathcal L}\bigr)^{\mathrm{sat}}
        \subset E.
    \]
    This is a saturated coherent rank-one subsheaf with the required
    description over $U$.

    Finally, such a saturated extension is unique. Indeed, if
    $\mathcal L_1,\mathcal L_2\subset E$ are saturated and agree over
    $U$, then
    $(\mathcal L_1+\mathcal L_2)/\mathcal L_1$ is a torsion subsheaf of
    the torsion-free sheaf $E/\mathcal L_1$, hence vanishes. Thus
    $\mathcal L_2\subset\mathcal L_1$, and symmetry gives
    $\mathcal L_1=\mathcal L_2$.
\end{proof}



\subsubsection{Rationality of Compact Kähler Manifolds}\label{sec 2.2.2}

Let $V$ be a compact irreducible complex analytic space. A \emph{rational curve} on $V$
is the image of a nonconstant holomorphic map $\mathbb P^1\to V$. We say
that $V$ is \emph{uniruled} if a rational curve passes through a general
point of $V$. Equivalently, there exist an irreducible compact complex
space $T$ and a dominant meromorphic map
\[
    \Phi :  \mathbb P^1\times T\dashrightarrow V
\]
whose restriction to $\mathbb P^1\times\{t\}$ is nonconstant for general
$t\in T$. The space $V$ is \emph{rationally connected}, abbreviated
RC, if two general points of $V$ can be joined by a rational curve. It is
\emph{rationally chain connected} if two general points can be joined by
a connected chain of rational curves. These two notions coincide for
smooth projective varieties over $\mathbb C$; see
\cite[Chapter~IV]{Kol96}.

Let $X$ be a compact complex manifold. For a nonconstant holomorphic map $f : \mathbb P^1\to X$, write
\[
    f^*T_X\simeq
    \bigoplus_{i=1}^{\dim X}\mathcal O_{\mathbb P^1}(a_i).
\]
The curve is called \emph{free} if $a_i\geq 0$ for every $i$, and
\emph{very free} if $a_i\geq 1$ for every $i$. 

The following criterion is standard in the projective case. As we have not found a suitable reference for the Kähler case, we include a short proof.

\begin{prop}\label{prop:free_curves_kahler}
    Let $X$ be a compact Kähler manifold.
    \begin{enumerate}
        \item[1).] The manifold $X$ is uniruled if and only if it
        contains a free rational curve.
        \item[2).] The manifold $X$ is rationally connected if and
        only if it contains a very free rational curve. In this case,
        $X$ is projective.
    \end{enumerate}
\end{prop}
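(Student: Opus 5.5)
The plan is to reduce to the projective statements by deformation theory of rational curves. Morphisms from $\mathbb P^1$ to a compact complex manifold form a Douady/Hom space, and its local structure is the same as in the algebraic case: the tangent space at $[f]$ is $H^0(\mathbb P^1,f^*T_X)$ and the obstructions lie in $H^1(\mathbb P^1,f^*T_X)$.

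\textbf{Part 1).} For the ``if'' direction of 1), let $f$ be free. Then $H^1(\mathbb P^1,f^*T_X)=0$, so $\operatorname{Hom}(\mathbb P^1,X)$ is smooth at $[f]$ of dimension $h^0(f^*T_X)$. The evaluation map
\[
F:\mathbb P^1\times \operatorname{Hom}\to X
\]
has differential at $(p,[f])$ given by $H^0(f^*T_X)\to T_{X,f(p)}$, $s\mapsto s(p)$, plus $df_p$. Since $f^*T_X$ is globally generated, this map is surjective. Hence $F$ is a submersion near $(p,[f])$, and its image contains an open set. Taking the irreducible component $T$ of the Douady space through $[f]$ (compact by Bishop's theorem, as $X$ is Kähler), the image of $\mathbb P^1\times T$ is an analytic set containing an open set. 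So it is all of $X$, and $X$ is uniruled.

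For the ``only if'' direction, suppose $X$ is uniruled, with a dominant family $\Phi:\mathbb P^1\times T\dashrightarrow X$. By generic smoothness (Sard applied to a resolution), at a general point $(p,t)$ with $f=\Phi(\cdot,t)$, the differential of $\Phi$ is surjective and factors through $H^0(f^*T_X)$ via evaluation at $p$. So $f^*T_X$ is generically generated by global sections at a general point of $\mathbb P^1$. A sum $\bigoplus\mathcal O(a_i)$ generated at one point by global sections has all $a_i\geq0$, so $f$ is free.

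\textbf{Part 2).} For the ``only if'' direction of 2), let $X$ be RC. I would first get projectivity. Two general points are joined by a rational curve, so $X$ is uniruled and, by part 1), covered by free curves. RC then forces $H^0(X,\Omega_X^p)=0$ for $p\geq1$: pulling a form back to a free curve through a general point, or more precisely to the family, shows it vanishes. With $h^{2,0}=0$, Kodaira's criterion gives that $X$ is projective. Then the standard result for smooth projective RC varieties over $\mathbb C$ \cite[Chapter~IV]{Kol96} provides a very free curve.

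For the ``if'' direction, let $f$ be very free. Deformations of $f$ fixing $f(0)$ have tangent space $H^0(f^*T_X(-1))$ and vanishing obstruction group $H^1(f^*T_X(-1))=0$, since all twists have degree $\geq0$. Evaluation at $\infty$ is then submersive, because $f^*T_X(-1)$ is globally generated. So curves through a fixed general point $x$ sweep out an open set, and by compactness of the Douady component and analyticity of the image they sweep out all of $X$. Moving $x$, two general points are joined by a rational curve, so $X$ is RC and projectivity follows from the argument above.

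\textbf{Main obstacle.} The delicate step is the vanishing $H^0(\Omega_X^2)=0$ in the Kähler setting. My plan: for $\omega\in H^0(\Omega^2_X)$ and a very free $f$ through a general point $x$, the image under evaluation of deformations fixing $x$ spans $T_{X,y}$ for general $y$. The form $\omega$ restricted along the family pulls back to sections of $\Omega^2$ of $\bigoplus\mathcal O(-a_i)$ with $a_i\geq1$, hence vanishes at general points; therefore $\omega=0$. The existence of a very free curve needs projectivity, so in the direction RC $\Rightarrow$ very free I must instead first show $h^{2,0}=0$ using only free curves and chains. I would try the argument via the rational quotient/ MRC fibration in the Kähler case (Campana), or argue that RC Kähler manifolds are Moishezon by Campana's results, and then Moishezon plus Kähler implies projective.
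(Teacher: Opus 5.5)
Your treatment of 1) and of the implication ``very free $\Rightarrow$ RC'' is essentially the paper's. Vanishing of $H^1(\mathbb P^1,f^*T_X)$ (respectively of its twist by $\mathcal O(-1)$) makes the evaluation map a submersion, and generic smoothness applied to a dominating family shows that a general member is free.

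The weak point is the converse in 2). Your sketch ``RC forces $H^0(X,\Omega^p_X)=0$ by pulling a form back to a free curve through a general point'' does not work. For a merely free curve, $f^*\Omega^p_X$ can have trivial summands, so a holomorphic form need not vanish on it. For example, $X=\mathbb P^1\times A$ with $A$ a complex torus is covered by free curves, yet it has nonzero holomorphic forms and is non-projective when $A$ is non-algebraic. So, as written, ``RC $\Rightarrow$ projective'' rests entirely on your fallback. That fallback is exactly what the paper does: Campana's result that a rationally connected compact Kähler manifold is projective \cite[Corollaire, p.~212]{Cam81}, followed by \cite[Chapter~IV]{Kol96} to get a very free curve.

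The circularity you perceive (``the existence of a very free curve needs projectivity'') is not real, though. Run your own Part 1 argument with two marked points:
\begin{itemize}
\item By Sard, the two-point evaluation $(s_1,s_2,t)\mapsto(\Phi(s_1,t),\Phi(s_2,t))$ of a family joining general pairs is submersive at a general point.
\item The $T_tT$-contribution factors through $H^0(f^*T_X)\to(f^*T_X)_{s_1}\oplus(f^*T_X)_{s_2}$.
\item The contribution of $T_{s_1}\mathbb P^1$ is the image under $df$ of a section of $T_{\mathbb P^1}\simeq\mathcal O(2)$ that vanishes at $s_2$, and symmetrically for $s_2$.
\end{itemize}
Hence this two-point evaluation on $H^0(f^*T_X)$ is surjective, which forces all $a_i\geq1$, i.e.\ $f$ is very free. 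Your correct observation that very free curves through general points kill all holomorphic $p$-forms then gives $h^{2,0}=0$, and Kodaira's criterion yields projectivity.

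This is a genuinely different, self-contained route to 2), modulo parametrizing rational curves by Douady spaces. It avoids Campana's Moishezon-type theorem, whereas the paper's version is shorter but rests on two external results. In the ``if'' direction, your projectivity argument (very free $\Rightarrow h^{2,0}=0\Rightarrow$ projective) is already such an alternative to the paper's appeal to \cite{Cam81}.
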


\begin{proof}
    Let $f : \mathbb P^1\to X$ be a rational curve. For distinct points $z_1,...,z_k\in\mathbb P^1$,
    where $k=1$ or $2$, the differential at $[f]$ of the $k$-point
    evaluation map on the local deformation space of $f$ is
    \[
        H^0(\mathbb P^1,f^*T_X)
        \to
        \bigoplus_{j=1}^k (f^*T_X)_{z_j}.
    \]
    Recall $f^*T_X\simeq\bigoplus_i\mathcal O_{\mathbb P^1}(a_i)$, this map is
    surjective if and only if $a_i\geq k-1$ for every $i$. In that
    case $H^1(\mathbb P^1,f^*T_X)=0$, so the deformation space is smooth at
    $[f]$ and the evaluation map is a submersion there; see
    \cite{Hor73}.

    For $k=1$, it follows that a free curve deforms in a dominating
    family. Conversely, applying generic smoothness to a dominating
    family of rational curves shows that the one-point evaluation map
    is a submersion at a general point of the parameter space. Hence that member is free. This
    proves~1).

    For $k=2$, a very free curve has dominant two-point evaluation, so
    $X$ is rationally connected. Conversely, a rationally connected
    compact Kähler manifold is projective by
    \cite[Corollaire, p.~212]{Cam81}, and a smooth projective rationally
    connected variety contains a very free rational curve by
    \cite[Chapter~IV]{Kol96}. This proves~(ii), including the final
    assertion.
\end{proof}

\begin{rem}
    By the proof of Proposition~\ref{prop:free_curves_kahler}, we may in fact choose the rational curves covering a uniruled (resp. rationally connected) Kähler manifold so that their general members are free (resp. very free).
\end{rem}

We next record two results concerning families with rationally connected
general fibre. The first allows us to use projective techniques for
Kähler fibrations.

\begin{prop}\label{prop:rc_fibration_projective}
(\cite[Theorem 4.1 and Corollary 4.2]{CH24})
    Let $f :  X\to Y$ be a fibration between compact Kähler
    manifolds. If a general fibre of $f$ is rationally connected, then
    $ R^if_*\mathcal O_X=0 ~~,i>0$,
    and $f$ is a projective morphism.
\end{prop}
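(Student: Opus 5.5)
The proposition is quoted from \cite{CH24}, but here is how I would prove it. There are two parts: the vanishing of the higher direct images, then local projectivity of $f$. Write $r=\dim X-\dim Y$ and let $F$ be a general fibre.

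\textbf{Vanishing on the general fibre.} By Proposition~\ref{prop:free_curves_kahler}, $F$ is projective and contains a covering family of very free rational curves $g:\mathbb P^1\to F$. Any holomorphic $i$-form on $F$ with $i>0$ pulls back to a section of $\bigwedge^i g^*\Omega_F^1$. This bundle is a sum of line bundles of negative degree, so the section is zero. Since the curves cover $F$, we get $H^0(F,\Omega_F^i)=0$. Hodge symmetry then gives $H^i(F,\mathcal O_F)=0$ for $i>0$. By Serre duality, $H^j(F,K_F)=0$ for $j<r$.

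\textbf{From the fibre to the higher direct images.} I would pass from $\mathcal O_X$ to the relative canonical sheaf and use relative duality.
\begin{enumerate}
\item By Takegoshi's Kähler version of Kollár's theorems, each $R^jf_*\omega_X$ is torsion-free.
\item Also by Takegoshi, $Rf_*\omega_X$ splits in the derived category as $\bigoplus_j R^jf_*\omega_X[-j]$.
\item By the previous step, $R^jf_*\omega_X$ vanishes generically for $j<r$, so torsion-freeness forces it to vanish. Hence $Rf_*\omega_{X/Y}\simeq R^rf_*\omega_{X/Y}[-r]$.
\item The sheaf $R^rf_*\omega_{X/Y}$ is torsion-free of rank one, because $H^r(F,K_F)\simeq H^0(F,\mathcal O_F)^*=\mathbb C$.
\item Grothendieck--Verdier duality for the proper map $f$ gives $Rf_*\mathcal O_X\simeq R\mathcal{H}om\bigl(Rf_*\omega_{X/Y}[r],\mathcal O_Y\bigr)$.
\item The right-hand side has $\mathcal Ext^i$ of a rank-one torsion-free sheaf in degree $i$. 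The degree-zero term is $f_*\mathcal O_X=\mathcal O_Y$, so the sheaf is reflexive of rank one, hence invertible. Its higher $\mathcal Ext$ sheaves therefore vanish, giving $R^if_*\mathcal O_X=0$ for $i>0$.
\end{enumerate}

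\textbf{Projectivity.} It suffices to work over small Stein opens $U\subset Y$.
\begin{enumerate}
\item Since $U$ is Stein and the higher direct images vanish, Leray gives $H^2(f^{-1}(U),\mathcal O)=0$.
\item The exponential sequence then shows that $c_1:\operatorname{Pic}(f^{-1}(U))\to H^2(f^{-1}(U),\mathbb Z)$ is surjective.
\item Because the image of $H^2(f^{-1}(U),\mathbb Z)$ is a lattice in $H^2(f^{-1}(U),\mathbb R)$, rational classes are dense. I would approximate the class of a Kähler form $\omega$ by a rational class, possibly after shrinking $U$ to a relatively compact neighbourhood of a fibre so that $H^2$ is finite-dimensional, for example by retracting onto that fibre.
\item A multiple of this rational class is $c_1(\mathcal L)$ for a line bundle $\mathcal L$. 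Its restriction to every fibre over a slightly smaller $U'$ is close to $\{\omega\}$ and therefore lies in the open Kähler cone of the fibre.
\item By Kodaira's embedding theorem, applied uniformly in families, $\mathcal L$ is $f$-ample over $U'$, so $f$ is projective.
\end{enumerate}

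\textbf{Expected obstacles.} The analytic inputs I rely on are Takegoshi's torsion-freeness and splitting in the Kähler setting. The main difficulty I expect is the approximation step: controlling the topology of $f^{-1}(U)$ so that approximating the Kähler class by an integral class preserves positivity on every fibre, including the singular ones.
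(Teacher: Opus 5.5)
The paper gives no proof of this statement; it imports it from \cite[Theorem~4.1 and Corollary~4.2]{CH24}. Your outline therefore has to stand on its own, and as written it has two genuine gaps.

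\textbf{Vanishing.} The following steps are fine:
the fibrewise vanishing, Takegoshi's torsion-freeness, and the duality $Rf_*\mathcal O_X\simeq R\mathcal{H}om(\mathcal G,\mathcal O_Y)$ with $\mathcal G:=R^rf_*\omega_{X/Y}$. Once $R^jf_*\omega_X=0$ for $j\neq r$, you also do not need the decomposition theorem.

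The last step fails. The degree-zero identification only says $\mathcal G^{*}=\mathcal{H}om(\mathcal G,\mathcal O_Y)\simeq f_*\mathcal O_X=\mathcal O_Y$. Hence $\mathcal G^{**}\simeq\mathcal O_Y$, i.e.\ $\mathcal G\simeq\mathcal I_Z$ for some analytic $Z$ of codimension $\geq 2$. This says nothing about reflexivity of $\mathcal G$ itself. If $Z\neq\varnothing$ has codimension $c$, then
$R^{c-1}f_*\mathcal O_X\simeq\mathcal{E}xt^{c-1}(\mathcal I_Z,\mathcal O_Y)\simeq\mathcal{E}xt^{c}(\mathcal O_Z,\mathcal O_Y)\neq0$.
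So ruling out $Z\neq\varnothing$ is equivalent to the vanishing you are trying to prove.

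What is missing is $R^rf_*\omega_X\simeq\omega_Y$, i.e.\ surjectivity of the trace $R^rf_*\omega_X\to\omega_Y$. Injectivity is automatic: the trace is an isomorphism over the smooth locus of $f$ because fibres are connected, and $R^rf_*\omega_X$ is torsion-free. Surjectivity is exactly where smoothness of $Y$ must enter, and your argument never uses it. For a resolution of a non-rational normal surface singularity, the analogous degree-zero statement $\mathcal{H}om(f_*\omega_X,\omega_Y)\simeq\mathcal O_Y$ still holds, while $R^1f_*\mathcal O_X\neq0$. In the projective case this isomorphism is classical (Kollár). It can be obtained by cutting $X$ down with hyperplane sections to a generically finite $g:X'\to Y$, where $\operatorname{tr}_g(g^*\eta)=\deg(g)\,\eta$ for top forms $\eta$ on the manifold $Y$ splits the trace. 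In the Kähler setting you need a substitute for that hyperplane-section step.

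\textbf{Projectivity.} Controlling positivity through classes in the ``Kähler cones'' of possibly singular, non-reduced fibres does not work as stated; it should be done on the total space.
\begin{enumerate}
\item Since $H^1(f^{-1}(U),\mathcal O)=H^2(f^{-1}(U),\mathcal O)=0$, every class in the finite-dimensional space $H^2(f^{-1}(U),\mathbb R)$ is represented by a closed real $(1,1)$-form, and every $d$-exact real $(1,1)$-form is $dd^c$-exact.
\item So $\mathcal L$ carries a metric whose curvature is $k\omega$ plus a uniformly small form on $f^{-1}(\overline{U'})$.
\item Grauert's criterion then gives ampleness on every fibre, including singular ones, and hence $f$-ampleness over $U'$.
\end{enumerate}

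More seriously, this only proves that $f$ is \emph{locally} projective. Your reduction ``it suffices to work over small Stein opens'' is false in the analytic category. A non-algebraic $2$-torus of algebraic dimension one, fibred over an elliptic curve, is locally projective but has no $f$-ample line bundle. The statement, and its use in Section~\ref{sec 2.2.2} (projectivity of the family over a compact rational curve), requires a global $f$-ample line bundle. You need a globalisation step:
\begin{enumerate}
\item Approximate the global section of $R^2f_*\mathbb R$ defined by $\{\omega\}$ by a rational one. This is possible since $H^0(Y,R^2f_*\mathbb Q)$ is dense in $H^0(Y,R^2f_*\mathbb R)$.
\item View a multiple of it as a section $\lambda$ of $R^1f_*\mathcal O_X^*\simeq R^2f_*\mathbb Z$; this isomorphism uses $R^1f_*\mathcal O_X=R^2f_*\mathcal O_X=0$. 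By the local argument and compactness of $Y$, $\lambda$ is relatively ample.
\item Show that the Leray obstruction in $H^2(Y,\mathcal O_Y^*)$ to $\lambda$ coming from a line bundle on $X$ is torsion. For instance, the local direct images of a power of $\lambda$ glue to a twisted sheaf of positive rank, whose determinant trivialises a power of the obstruction class.
\end{enumerate}
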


The second is the section Theorem of Graber--Harris--Starr, which will be
our main global input over a curve.

\begin{thm}(\cite[Theorem~1.1]{GHS03})\label{thm:GHS}
    Let $C$ be a smooth projective curve and let
    $ \pi : \mathcal X\to C$
    be a surjective morphism from a smooth projective variety. If a
    general fibre of $\pi$ is rationally connected, then $\pi$ admits
    a section.
\end{thm}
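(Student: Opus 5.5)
The statement is the Graber--Harris--Starr theorem, which we quote from \cite{GHS03}. The following is a sketch of how one would prove it directly, following the deformation-theoretic strategy of \cite{GHS03}.

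\emph{Preliminary reductions.} We may assume the general fibre $F$ is smooth and rationally connected. After base change to a smooth curve and resolution, we may also assume $\mathcal X$ is smooth and $\pi$ has reduced general fibres. Since $\mathcal X$ is projective, a general complete-intersection curve $C'\subset\mathcal X$ gives a multisection: $\pi|_{C'}\colon C'\to C$ is finite of some degree $d$. The goal is to lower $d$ to $1$ by deformation inside the space of stable maps $\overline M_g(\mathcal X)$.

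\emph{Making the multisection flexible.} Let $C'\to C$ have branch points in general position. Pick many general points of $C'$ lying in smooth rationally connected fibres. At each such point attach very free rational curves in the fibre; these exist by Proposition~\ref{prop:free_curves_kahler}. This produces a comb. A standard comb-smoothing argument then shows that a suitable subcomb smooths to a curve $D\subset\mathcal X$, still a degree-$d$ multisection, whose normal bundle $N_D$ is sufficiently positive. Concretely, $H^1(D,N_D(-p_1-\cdots-p_k))=0$ for $k$ general points.

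Consequences of this positivity:
\begin{itemize}
\item The deformations of $D$ are unobstructed.
\item The composite map $\operatorname{Def}(D)\to\operatorname{Def}(D\to C)$, to the Hurwitz-type space of degree-$d$ covers of $C$, is dominant. That is, we can impose any prescribed deformation of the branch divisor.
\end{itemize}

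\emph{Degenerating the cover.} In the Hurwitz space (its Harris--Mumford admissible-cover compactification), move the branch points of $D\to C$ so that the cover degenerates. The target becomes $C$ with rational tails, and the source acquires a component mapping isomorphically onto $C$. For example, collide branch points pairwise along a path so that the monodromy becomes trivial on one sheet. Dominance lifts this degeneration to a family of curves in $\mathcal X$, after possibly replacing the one-parameter family by a finite base change. Properness of the relative Kontsevich space $\overline M_g(\mathcal X/C)$ gives a limit stable map. The limit contains a component mapping with degree one onto $C$, so it contains a section; the remaining components are vertical rational tails.

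\emph{Main obstacle.} The hardest step is guaranteeing that the lifted limit does not leave the locus where the previous step applies. One needs the family of covers to be chosen so that the branch points never cross singular fibres of $\pi$, and so that the unobstructedness persists along a general path in the Hurwitz space.

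I would argue by contradiction, as GHS do. Take the irreducible component of $\overline M_g(\mathcal X)$ containing $[D]$ and map it to the Hurwitz space. Its image is open dense and, by properness, closed in the relevant boundary stratum. Hence every admissible degeneration of the cover is realised, in particular the one containing a degree-one sheet. Verifying the openness claim requires the vanishing $H^1(D,N_D(-\text{branch}))=0$ at boundary points, and I expect this bookkeeping to be the delicate technical core.
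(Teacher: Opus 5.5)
The paper gives no argument for this statement; it imports it from \cite[Theorem~1.1]{GHS03}, exactly as your first sentence does. As a citation, then, your proposal agrees with the paper. Your supplementary sketch, however, would not stand on its own, and its gap lies in the degeneration step rather than where you place it.

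Suppose you collide the branch points of $D\to C$ in pairs with trivial local monodromy. The part of the limit lying over the main component $C$ is then an étale cover of $C$ of degree $d$. When $g(C)\geq1$, nothing in your construction forces this cover to have a degree-one component, because colliding branch points does not kill the monodromy along loops of $C$. For instance, the limit could be a connected étale double cover of an elliptic curve.

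Even for $C\cong\mathbb P^1$ you need a real input to know that such a pairing is reachable from your cover. That input is Hurwitz's braid normal form, which moves the transpositions $\tau_1,\dots,\tau_b$ into a sequence with $\tau_1=\tau_2$, $\tau_3=\tau_4,\dots$. Only then does the simple connectivity of $\mathbb P^1$ split the étale limit into $d$ sections.

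The standard remedy is to reduce to $C\cong\mathbb P^1$ first. Take a finite map $g\colon C\to\mathbb P^1$ and a dense open $U\subset\mathbb P^1$ over which $g$ is étale and the fibres of $\pi$ are smooth and rationally connected. The Weil restriction of $\pi^{-1}(g^{-1}(U))\to g^{-1}(U)$ to $U$ has fibres $\prod_{c\in g^{-1}(t)}\mathcal X_c$, which are rationally connected. After compactifying and resolving, a section over $\mathbb P^1$ gives a section of $\pi$ over $g^{-1}(U)$, which extends to $C$ by properness. For the same reason, the base change of $C$ in your preliminary reductions is not allowed: a section obtained after a nontrivial finite cover of $C$ does not descend.

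Conversely, what you call the main obstacle is not one. Suppose the vanishing of $H^1$ of the twisted normal bundle holds at the single point $[D]$. Then the irreducible component $M$ of the proper space of stable maps containing $[D]$ dominates the relevant irreducible component $\overline H$ of the admissible-cover compactification. By properness, the image of the closure of the graph of $M\dashrightarrow\overline H$ is all of $\overline H$. The desired boundary cover is therefore realised by some limit stable map. No vanishing at boundary points is needed, nor any control of the path relative to the singular fibres of $\pi$. The genuinely delicate inputs are the comb-smoothing lemma producing $D$ and the monodromy and topology argument above.
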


We recall the rational quotient of a compact Kähler manifold. On the set of very general
points of $X$, consider the equivalence relation generated by connected
chains of rational curves. Campana's quotient Theorem gives an almost
holomorphic dominant meromorphic map with connected fibres
\[
    r_X :  X\dashrightarrow R(X)
\]
such that, for every very general point $x\in X$, the fibre of $r_X$
through $x$ is precisely its equivalence class
\cite[Theorem~2.3 and Remark~2.8]{Cam92}; see also
\cite[Theorem~3.22]{Cam04}. Here \emph{almost holomorphic} means that
there exists a Zariski open set $R^\circ\subset R(X)$ such that $r_X$
is holomorphic and proper over $R^\circ$.

By its defining property, a general fibre of $r_X$ is rationally chain
connected. It is also a smooth compact Kähler manifold, and hence is
projective by \cite[Corollaire, p.~212]{Cam81}. Since rational
connectedness and rational chain connectedness coincide for smooth
projective varieties, the general fibre of $r_X$ is rationally
connected.

The base $R(X)$ belongs to Fujiki's class $\mathcal C$ and is therefore
bimeromorphic to a compact Kähler manifold; see
\cite[Section~IV.3]{Var89}. We may thus choose a proper bimeromorphic morphism
\[
    \nu :  Y\to R(X)
\]
from a smooth compact Kähler manifold $Y$. So we may replace $R(X)$
by $Y$ and, without changing notation, assume from now on that $R(X)$
is a smooth compact Kähler manifold. Resolving the indeterminacy of
$r_X$ gives a holomorphic model
\[
    f : \widehat X\to R(X),
\]
where $\widehat X$ is a smooth compact Kähler manifold and $f$ has the
same general fibres as $r_X$. By
Proposition~\ref{prop:rc_fibration_projective}, the morphism $f$ is
projective.

We claim that $R(X)$ is not uniruled. Otherwise, let
$C\subset R(X)$ be a general rational curve and let
$\widetilde C\simeq\mathbb P^1$ be its normalization. Let $Z$ be the
irreducible component of
$\widehat X\times_{R(X)}\widetilde C$ which dominates $\widetilde C$, and take a projective resolution
$\mathcal X_C\to Z$ which is an isomorphism over a general fibre. Since
$f$ is projective, $\mathcal X_C$ is a smooth projective variety, and
the induced morphism
\[
    \pi_C : \mathcal X_C\to\widetilde C
\]
has rationally connected general fibre. As
$\widetilde C\simeq\mathbb P^1$ is rationally connected,
\cite[Corollary~1.3]{GHS03} implies that $\mathcal X_C$ is rationally
connected.

Choose distinct general points $y_1,y_2\in C$ and general points
$x_i\in r_X^{-1}(y_i) \subset X$. The preceding conclusion shows that $x_1$ and
$x_2$ are rationally chain connected in $X$. They therefore belong to
the same equivalence class, whereas $r_X(x_1)=y_1\neq y_2=r_X(x_2)$.
This contradicts the defining property that a general fibre of $r_X$ is
an entire equivalence class. Hence $R(X)$ is not uniruled.

The map $r_X$ is called the \emph{rational quotient}, or the
\emph{maximal rationally connected fibration} (MRC fibration).

The two extreme cases are worth making explicit:

   1). $X$  is not uniruled
     if and only if
    $\operatorname{id}_X :  X\to X$
     is a rational quotient;

   2). $X$  is rationally connected
   if and only if
    $ X\to \{\mathrm{pt}\}$
 is a rational quotient.

\section{Some Properties of Numerical Dimensions}

\subsection{Invariance of Numerical Dimensions}
\subsubsection{Invariance under Divisorial Zariski Decomposition}

\begin{thm}\label{thm:invariance-zariski-decomposition}
Let $X$ be a compact Kähler manifold of dimension $n$, and let
$\alpha = \{\theta\}\in H^{1,1}(X,\mathbb R)$ be a psef class with divisorial Zariski
decomposition
\[
    \alpha=Z(\alpha)+\{N(\alpha)\}.
\]
Then, for every $0\leq k\leq n$,
$  \left\langle\alpha^k\right\rangle
    =
    \left\langle Z(\alpha)^k\right\rangle$. In particular,
$ \nu(\alpha,X)=\nu\bigl(Z(\alpha),X\bigr)$ 
and $\int_X \langle\alpha^n\rangle
    =
    \int_X\langle Z(\alpha)^n\rangle$.
\end{thm}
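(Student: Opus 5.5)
We compare the two positive products through their big approximations, showing that the currents with minimal singularities in $\alpha+\varepsilon\beta$ and $Z(\alpha)+\varepsilon\beta$ differ by a divisor whose coefficients vanish as $\varepsilon\downarrow0$. Fix a Kähler class $\beta=\{\omega\}$ and write $N:=N(\alpha)=\sum_i a_iD_i$, a finite sum by \cite[Theorem~3.12]{Bou04}. By definition,
\[
\langle\alpha^k\rangle=\lim_{\varepsilon\downarrow0}\langle(\alpha+\varepsilon\beta)^k\rangle ,
\qquad
\langle Z(\alpha)^k\rangle=\lim_{\varepsilon\downarrow0}\langle(Z(\alpha)+\varepsilon\beta)^k\rangle .
\]

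First, for each $\varepsilon>0$, let $T_\varepsilon$ be a current with minimal singularities in the big class $\alpha+\varepsilon\beta$. Set $N_\varepsilon:=\sum_i\nu(T_\varepsilon,D_i)D_i$. By \eqref{lelong number ineq} applied in the class $\alpha+\varepsilon\beta$ and Siu's decomposition, $T_\varepsilon-[N_\varepsilon]$ is a closed positive current. Its class is
\[
Z(\alpha)+\varepsilon\beta+\{N-N_\varepsilon\}.
\]
Moreover, $T_\varepsilon\geq-\varepsilon\omega$ viewed in $\alpha$, so $T_\varepsilon-\varepsilon\omega\in\alpha[-\varepsilon\omega]$. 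Comparing with the minimal-singularity current in $\alpha[-\varepsilon\omega]$ gives $\nu(T_\varepsilon,D_i)\leq\nu(T_{\min,\varepsilon},D_i)\leq a_i$. Conversely, $\nu(T_\varepsilon,D_i)\geq\nu(\alpha+\varepsilon\beta,D_i)$, and the latter tends to $a_i$ as $\varepsilon\to0$. The key structural fact is that the non-pluripolar product does not charge the divisor $[N_\varepsilon]$. Using multilinearity on the Zariski open locus where currents are compared, one gets
\[
\langle T_\varepsilon^k\rangle=\langle (T_\varepsilon-[N_\varepsilon])^k\rangle .
\]
The cleanest route is to note that $T_\varepsilon$ and $(T_\varepsilon-[N_\varepsilon])+[N_\varepsilon]$ coincide, and that the non-pluripolar product depends only on local potentials off pluripolar sets. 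Off $\operatorname{Supp}N$, the potentials differ by a pluriharmonic function, so the products agree on $X\setminus\operatorname{Supp}N$. Neither product charges $\operatorname{Supp}N$.

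Second, we compare classes. On one side,
\[
\langle(\alpha+\varepsilon\beta)^k\rangle=\{\langle (T_\varepsilon-[N_\varepsilon])^k\rangle\}\leq\langle(Z(\alpha)+\varepsilon\beta+\{N-N_\varepsilon\})^k\rangle ,
\]
since the positive product is computed by minimal singularities and dominates any current in the class, by Theorem~\ref{DDL25 thm 33} in its class-level form. On the other side, adding $[N]$ to a minimal-singularity current of $Z(\alpha)+\varepsilon\beta$ and repeating the argument gives
\[
\langle(Z(\alpha)+\varepsilon\beta)^k\rangle\leq\langle(\alpha+\varepsilon\beta)^k\rangle .
\]
Letting $\varepsilon\to0$, we have $\{N-N_\varepsilon\}\to0$, and $N-N_\varepsilon\geq0$ is bounded by a small multiple of $\beta$ up to a fixed class. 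Continuity of $\langle\cdot\rangle$ on the big cone and its monotone limit along $\varepsilon\beta$ then yields both inequalities in the limit, giving equality in $H^{k,k}$ in the sense of positivity of the difference in both directions.

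The main obstacle is the continuity step. One must absorb the perturbation $\{N-N_\varepsilon\}$, an effective divisor class that need not be dominated by a multiple of $\beta$. I would handle it by choosing $C$ with $\{D_i\}\leq C\beta$ in the sense that $C\beta-\{D_i\}$ is Kähler, which is possible since $\beta$ is Kähler. Then
\[
Z(\alpha)+\varepsilon\beta+\{N-N_\varepsilon\}\leq Z(\alpha)+\bigl(\varepsilon+C\textstyle\sum_i(a_i-\nu(T_\varepsilon,D_i))\bigr)\beta ,
\]
where $\leq$ means the difference is psef. Monotonicity of positive products under adding psef classes then bounds $\langle(\alpha+\varepsilon\beta)^k\rangle$ by $\langle(Z(\alpha)+\varepsilon'\beta)^k\rangle$ with $\varepsilon'\to0$. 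Finally, the statements about $\nu$ and the volume follow immediately from equality of the classes.
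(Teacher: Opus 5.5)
Your proof is correct and follows essentially the same route as the paper: you Siu-decompose the minimal-singularity current $T_\varepsilon$ in $\alpha+\varepsilon\{\omega\}$, use that non-pluripolar products ignore the divisorial part, and sandwich $\langle(\alpha+\varepsilon\{\omega\})^k\rangle$ between $\langle(Z(\alpha)+\varepsilon\{\omega\})^k\rangle$ and $\langle(Z(\alpha)+\varepsilon'\{\omega\})^k\rangle$ by monotonicity, with your explicit $\varepsilon'=\varepsilon+C\sum_i(a_i-\nu(T_\varepsilon,D_i))$ playing the role of the paper's $\delta_t$. One small fix: the bound $\nu(T_\varepsilon,D_i)\leq a_i$ does not follow from ``comparing with'' $T_{\min,\varepsilon}$, since that comparison only gives $\nu(T_{\min,\varepsilon},D_i)\leq\nu(T_\varepsilon,D_i)$; instead, $T_\varepsilon-\varepsilon\omega$ itself has minimal singularities in $\alpha[-\varepsilon\omega]$, so $\nu(T_\varepsilon,D_i)=\nu(T_{\min,\varepsilon},D_i)$, and this quantity increases to $a_i$ as $\varepsilon\downarrow0$.
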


\begin{proof}
Fix a Kähler form $\omega$ on $X$. For $t>0$, set
\[
    \alpha_t:=\alpha+t\{\omega\},~~\theta_t := \theta + t\omega \in \alpha_t,~~
    N_t:=N(\alpha_t),~~p_t:=Z(\alpha_t),
\]
and write $ N:=N(\alpha)$, $p:=Z(\alpha)$. The class $\alpha_t$ is big. Let $T_t = \theta_t + dd^c V_{\theta_t}$ be a positive current with minimal
singularities in $\alpha_t$. Then by Siu's decomposition, we have $ R_t :=T_t - [N_t] \geq 0$.
Moreover, $ R_t $ has minimal singularities in $p_t$; indeed, for any $0\leq  S_t \in p_t$, we have 
$$
0\leq S_t' := S_t + [N_t] = \theta_t + dd^c \varphi_t \in \alpha_t,
$$
hence $\varphi_t \preceq V_{\theta_t} $ because $V_{\theta_t}$ is of minimal singularity, so we obtain $S_t \preceq R_t$. Since non-pluripolar products do not charge divisors, it follows that
\[
    \left\langle\alpha_t^k\right\rangle
    = \{\langle T_t^k \rangle \}
    = \{ \langle R_t^k \rangle  \}
    = \left\langle p_t^k\right\rangle,~~ 0\leq k\leq n.
\]

Adding a Kähler class can only decrease $\nu(\cdot,D)$ along a
prime divisor $D \subset X$. Hence $N_t\leq N$ coefficientwise. Moreover, the lower
semicontinuity of minimal multiplicities gives
$\nu(\alpha_t,D)\to\nu(\alpha,D)$ for every prime divisor $D$; see
\cite[Proposition~3.5]{Bou04}. Since $N$ has finite support by
\cite[Theorem~3.12]{Bou04}, we obtain
\[
    N_t\to N,
    ~~
    p_t\to p.
\]
Furthermore, $p_t
    =p+t\{\omega\}+\{N-N_t\}$ yields $ p_t - (p + t\{\omega\})$ is psef.

 Since $p_t - (p + t\{\omega\})\to0$, we can find sequence $\{\delta_t\}_t$ such that $\delta_t\downarrow0$ and
\[
    \delta_t\{\omega\}-( p_t - p)~\text{is Kähler}.
\]
 Consequently, by the monotonicity of the movable intersection product (which agrees with the positive intersection product) \cite[Theorem 3.5]{BDPP13}, we have
\begin{equation}\label{p_t convergence ineq}
      \left\langle
            (p+t\{\omega\})^k
        \right\rangle \leq
    \left\langle p_t^k\right\rangle
    \leq
        \left\langle
            (p+\delta_t\{\omega\})^k
        \right\rangle.
\end{equation}
Here, the inequalities between $(k,k)$-classes are understood in the sense of \cite[Theorem 3.5]{BDPP13}: namely, $\gamma_1 \leq \gamma_2$ means that 
$$
\int_X\gamma_1 \cdot [u] \leq  \int_X \gamma_2 \cdot [u]
$$
for every smooth closed semipositive $(n-k,n-k)$-form $u$. Both outer terms in (\ref{p_t convergence ineq}) converge to
$\langle p^k\rangle$ as $t\downarrow0$. Therefore,
$\lim_{t\downarrow0}
    \left\langle p_t^k\right\rangle
    =
    \left\langle p^k\right\rangle$. 
    Finally, by the definition of the positive product of a psef class,
\[
\begin{split}
    \left\langle\alpha^k\right\rangle:=
    \lim_{t\downarrow0}
    \left\langle\alpha_t^k\right\rangle 
    =
    \lim_{t\downarrow0}
    \left\langle p_t^k\right\rangle=
    \left\langle p^k\right\rangle.
\end{split}
\]
This proves the asserted equality. 
The invariance of the numerical dimension and the equality of volumes follow immediately.
\end{proof}

\begin{cor}\label{cor:numerical-dimension-zero}
For every psef class $\alpha$,
$\nu(\alpha,X)=0$ if and only if
    $Z(\alpha)=0$.
In this case, $\alpha=\{N(\alpha)\}$ is the class of an exceptional effective
$\mathbb R$-divisor.
\end{cor}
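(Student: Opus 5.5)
The corollary follows from Theorem~\ref{thm:invariance-zariski-decomposition} together with the fact that $Z(\alpha)$ is modified nef, so I would prove the two directions separately.

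\emph{The implication $Z(\alpha)=0\Rightarrow\nu(\alpha,X)=0$.} If $Z(\alpha)=0$, then $\langle Z(\alpha)^k\rangle=0$ for all $k\geq1$. One way to see this: $\langle (t\{\omega\})^k\rangle=t^k\{\omega^k\}\to0$ as $t\downarrow0$. By Theorem~\ref{thm:invariance-zariski-decomposition}, $\langle\alpha^k\rangle=0$ for every $k\geq1$. Hence $\nu(\alpha,X)=0$, and $\alpha=\{N(\alpha)\}$ holds directly from the definition of the decomposition.

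\emph{The implication $\nu(\alpha,X)=0\Rightarrow Z(\alpha)=0$.} By Theorem~\ref{thm:invariance-zariski-decomposition}, $p:=Z(\alpha)$ is a psef, modified nef class with $\nu(p,X)=0$, so $\langle p\rangle=0$. I would use the characterization of the first positive product of a modified nef class: for such a class, $\langle p\rangle=p$. Concretely, $\langle p\rangle=\lim_t\{\langle T_t\rangle\}$, where $T_t$ has minimal singularities in $p+t\{\omega\}$. The non-pluripolar product $\langle T_t\rangle$ is obtained from $T_t$ by removing its divisorial part $[N(p+t\{\omega\})]$ together with any mass on pluripolar sets of higher codimension, and the latter carry no cohomology in bidegree $(1,1)$. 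Thus $\{\langle T_t\rangle\}=Z(p+t\{\omega\})$. Since $p$ is modified nef, $N(p)=0$, and by the convergence $N_t\to N$ established in the proof of Theorem~\ref{thm:invariance-zariski-decomposition}, $Z(p+t\{\omega\})\to p$. Therefore $\langle p\rangle=p$, and $\nu(p,X)=0$ forces $p=0$. (This is \cite[Proposition~3.8 and Theorem~3.12]{Bou04}, and in the big case BEGZ's identity $\langle\alpha\rangle=Z(\alpha)$.)

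The main obstacle is the identity $\{\langle T\rangle\}=\{T\}-\sum_D\nu(T,D)\{D\}$ for a current $T$ with minimal singularities. If I cannot justify it cleanly, I would instead argue by pairing against $\omega^{n-1}$. Write $T_t=R_t+[N_t]$ by Siu's decomposition. Then $\int\langle T_t\rangle\wedge\omega^{n-1}=\int\mathbf 1_{X\setminus P}R_t\wedge\omega^{n-1}$, where $P$ is the pluripolar set, and the mass of $R_t$ on $P$ vanishes because $R_t$ has zero Lelong numbers along every divisor. Here I would invoke Boucksom's result that the $(1,1)$ part of the non-pluripolar product equals $Z$.

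\emph{Exceptionality of $N(\alpha)$.} Finally, "exceptional" follows from \cite[Theorem~3.12]{Bou04}: the negative part $N(\alpha)$ of any psef class is an exceptional divisor. In particular, when $\alpha=\{N(\alpha)\}$, the components of $N(\alpha)$ have linearly independent classes, and $N(\alpha)$ is the unique positive current in its class.
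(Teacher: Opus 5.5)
Your proposal is correct and follows the paper's route: Theorem~\ref{thm:invariance-zariski-decomposition} reduces everything to $Z(\alpha)$, the identity $\langle Z(\alpha)\rangle=Z(\alpha)$ for the modified nef class $Z(\alpha)$ handles the nontrivial direction, and \cite[Theorem~3.12]{Bou04} gives exceptionality; your direct check that $\langle\alpha^k\rangle=\langle 0^k\rangle=0$ for every $k\geq1$ is, if anything, slightly more complete than the paper's one-line ``hence''. One caution on your justification of $\{\langle T_t\rangle\}=Z(p+t\{\omega\})$ and on your fallback: vanishing Lelong numbers along divisors do not by themselves stop a closed positive $(1,1)$-current from charging a pluripolar set (locally on $\mathbb C^2$, take $T=dd^c\bigl(u(z_1)\bigr)$ with $u$ the logarithmic potential of an atomless probability measure on a compact polar set $K\subset\mathbb C$: all Lelong numbers of $T$ vanish, yet $T$ is carried by the pluripolar set $K\times\mathbb C$, so $\langle T\rangle=0\neq T$), so the right reason is different. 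A current with minimal singularities in the big class $p+t\{\omega\}$ has locally bounded potential on $\operatorname{Amp}(p+t\{\omega\})$, so its polar set lies in the proper analytic set $X\setminus\operatorname{Amp}(p+t\{\omega\})$, on which a closed positive $(1,1)$-current carries only its divisorial part.
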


\begin{proof}
Since $Z(\alpha)$ is modified nef, its degree-one positive product is
$Z(\alpha)$. Therefore,
Theorem~\ref{thm:invariance-zariski-decomposition} gives
\[
    \left\{\left\langle\alpha\right\rangle\right\}
    =Z(\alpha).
\]
Hence $\nu(\alpha,X)=0$ if and only if $Z(\alpha)=0$. The final assertion
follows from the divisorial Zariski decomposition and
\cite[Theorem~3.12]{Bou04}.
\end{proof}

\subsubsection{Invariance under Proper Bimeromorphic Morphisms}

In this section, we prove that the analytic numerical dimension is invariant under pullback by a proper bimeromorphic morphism.

\begin{thm}\label{nd_bi_invari}
  Let $\pi: \widetilde{X} \to X$ be a proper bimeromorphic morphism between compact Kähler manifolds, and let $\alpha \in H^{1,1}(X,\mathbb R)$ be psef. Then
  $ \nu(\alpha,X) = \nu(\pi^*\alpha,\widetilde{X})$.
\end{thm}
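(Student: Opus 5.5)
I will prove the two inequalities separately, working with the non-pluripolar description $\nu=\nu_{\rm np}$ and the big-class approximation. Fix Kähler forms $\omega$ on $X$ and $\widetilde\omega$ on $\widetilde X$. Let $E\subset\widetilde X$ be the exceptional locus and $A=\pi(E)\subset X$, so that $\pi:\widetilde X\backslash E\to X\backslash A$ is a biholomorphism and $\operatorname{codim}A\geq 2$.

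\emph{Step 1: positive products of big classes commute with pullback.} Let $\beta$ be a big class on $X$. If $T_{\min}\in\beta$ has minimal singularities, then $\pi^*T_{\min}$ has minimal singularities in $\pi^*\beta$. Indeed, any positive current $S\in\pi^*\beta$ pushes forward to a positive current $\pi_*S\in\beta$ with $\pi^*\pi_*S=S$; this uses that $\pi$ has connected fibres, so potentials in $\operatorname{PSH}(\widetilde X,\pi^*\theta)$ are constant on fibres and descend. Hence $S\preceq\pi^*T_{\min}$. Non-pluripolar products put no mass on analytic sets and are local on $\widetilde X\backslash E\simeq X\backslash A$, so
\[
\pi_*\langle(\pi^*T_{\min})^k\rangle=\langle T_{\min}^k\rangle .
\]
Consequently $\int_{\widetilde X}\langle(\pi^*\beta)^k\rangle\wedge\pi^*\omega^{n-k}=\int_X\langle\beta^k\rangle\wedge\omega^{n-k}$ for all $k$.

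\emph{Step 2: $\nu(\pi^*\alpha)\geq\nu(\alpha)$.} Set $\beta=\alpha+\varepsilon\{\omega\}$. Since $\pi^*\omega\leq C\widetilde\omega$, the class $C\varepsilon\{\widetilde\omega\}-\varepsilon\pi^*\{\omega\}$ is Kähler. By the monotonicity of positive products \cite[Theorem~3.5]{BDPP13},
\[
\langle(\pi^*\alpha+C\varepsilon\widetilde\omega)^k\rangle\geq\langle(\pi^*\beta)^k\rangle .
\]
Pair both sides with the semipositive form $\pi^*\omega^{n-k}$, apply Step 1, and let $\varepsilon\to0$. This gives
\[
\int_{\widetilde X}\langle(\pi^*\alpha)^k\rangle\wedge\pi^*\omega^{n-k}\geq\int_X\langle\alpha^k\rangle\wedge\omega^{n-k}.
\]
Since $\pi^*\omega\leq C\widetilde\omega$, the left side is bounded by a multiple of $\int\langle(\pi^*\alpha)^k\rangle\wedge\widetilde\omega^{n-k}$. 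Therefore $\langle\alpha^k\rangle\neq0$ implies $\langle(\pi^*\alpha)^k\rangle\neq0$.

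\emph{Step 3: $\nu(\pi^*\alpha)\leq\nu(\alpha)$.} This is the main obstacle. Pairing with $\widetilde\omega^{n-k}$ rather than $\pi^*\omega^{n-k}$ could a priori detect mass of $\langle(\pi^*\alpha)^k\rangle$ concentrated in directions along $E$, which $\pi^*\omega$ cannot see. I would argue as follows.

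Let $k=\nu(\pi^*\alpha)$ and suppose $\langle\alpha^k\rangle=0$. By Step 1 applied with $\varepsilon$,
\[
\pi_*\langle(\pi^*(\alpha+\varepsilon\omega))^k\rangle=\langle(\alpha+\varepsilon\omega)^k\rangle .
\]
The positive product on $\widetilde X$ is computed from $\pi^*\alpha+\varepsilon\widetilde\omega$, so I compare it with $\pi^*(\alpha+C\varepsilon\omega)$. Write $\widetilde\omega\leq C\pi^*\omega+\{F\}$-type corrections using an effective $\pi$-exceptional divisor $F$ with $-F$ $\pi$-ample. Then $\pi^*\{\omega\}-\delta\{F\}$ is Kähler for small $\delta$, so
\[
\{\widetilde\omega\}\leq C'(\pi^*\{\omega\}-\delta\{F\}).
\]
Hence $\pi^*\alpha+\varepsilon\widetilde\omega\leq\pi^*(\alpha+C'\varepsilon\omega)-C'\varepsilon\delta F$. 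Since removing an effective divisor does not increase positive products (Theorem~\ref{thm:invariance-zariski-decomposition} and monotonicity), we get
\[
\langle(\pi^*\alpha+\varepsilon\widetilde\omega)^k\rangle\leq\langle(\pi^*(\alpha+C'\varepsilon\omega))^k\rangle .
\]
By Step 1 the right side is $\{\pi^*\langle(\alpha+C'\varepsilon\omega)^k\rangle\}$ understood via strict transforms; its masses against $\widetilde\omega^{n-k}$ are controlled by those of $\langle(\alpha+C'\varepsilon\omega)^k\rangle$ against $\omega^{n-k}$, using a Dinh--Sibony mass bound as in Proposition~\ref{prop:DS-pullback}. These tend to $0$, a contradiction.

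The delicate point I expect to have to verify is this last mass bound for the strict transform under a bimeromorphic $\pi$. Equivalently, I would show directly that a positive $(k,k)$-current $R$ on $\widetilde X$ charging no analytic set satisfies $\int R\wedge\widetilde\omega^{n-k}\leq C\int\pi_*R\wedge\omega^{n-k}$. This would follow from the fact that $\{\pi_*\pi^*\widetilde\omega\}$-type comparisons hold at the level of cohomology, since $\{R\}$ is determined by $\pi_*R$ up to classes supported on $E$, which do not occur because $R$ puts no mass on $E$.
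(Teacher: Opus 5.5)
Your argument is correct in outline and rests on the same two inputs as the paper. The easy direction $\nu(\pi^*\alpha,\widetilde X)\geq\nu(\alpha,X)$ uses the projection formula $\pi_*\bigl(R\wedge\pi^*\omega^{n-k}\bigr)=\pi_*R\wedge\omega^{n-k}$. The reverse inequality uses the Dinh--Sibony mass bound for strict transforms (Proposition~\ref{prop:DS-pullback}).

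The execution differs. The paper works with currents with analytic singularities realising $\nu_{\rm an}$. In the hard direction it pushes $B_\varepsilon=R_\varepsilon+\varepsilon\widetilde\omega$ down to $X$. There $\varepsilon\pi_*\{\widetilde\omega\}$ is absorbed into $\varepsilon C\{\omega\}$ simply by choosing a smooth $b\in\pi_*\{\widetilde\omega\}$ with $C\omega-b>0$, so no positivity property of $\pi$ is needed. You instead work with minimal-singularity currents and the BDPP monotonicity of positive products. Your Step~1, i.e. $\operatorname{PSH}(\widetilde X,\pi^*\theta)=\pi^*\operatorname{PSH}(X,\theta)$, is correct and makes Step~2 clean. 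However, you compare $\widetilde\omega$ with $\pi^*\omega$ upstairs, and this is where two points need repair.

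First, an effective exceptional $F$ with $-F$ $\pi$-ample presupposes that $\pi$ is projective, which needs an argument for an arbitrary proper bimeromorphic morphism. You only need that $C\pi^*\{\omega\}-\{\widetilde\omega\}$ is psef, and this is immediate:
\begin{itemize}
\item The pullback via local potentials of the positive current $\pi_*\widetilde\omega$ agrees with $\widetilde\omega$ off $E$.
\item By the support theorem and positivity, it therefore equals $\widetilde\omega+\sum_i\nu_i[E_i]$ with $\nu_i\geq0$.
\item Hence $C\pi^*\{\omega\}-\{\widetilde\omega\}=\pi^*\bigl(C\{\omega\}-\pi_*\{\widetilde\omega\}\bigr)+\sum_i\nu_i\{E_i\}$, which is nef plus effective for $C\gg1$.
\end{itemize}

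Second, discard the cohomological justification you sketch for the mass bound. A current charging no analytic set can still have $\{R\}\neq\pi^*\{\pi_*R\}$. For example, if $\pi$ blows up a point of a surface and $R$ is a smooth semipositive form in $\pi^*H-\{E\}$, then $\{R\}=\pi^*\{\pi_*R\}-\{E\}$.

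The inequality $\int R\wedge\widetilde\omega^{n-k}\leq C\int\pi_*R\wedge\omega^{n-k}$ is nevertheless true. For bimeromorphic $\pi$, the strict transform $\pi^\star S$ is the trivial extension of $(\pi|_{\widetilde X\backslash E})^*S$. So $R=\pi^\star(\pi_*R)$, and Proposition~\ref{prop:DS-pullback} applies with a constant independent of $\varepsilon$. The sign control on the exceptional corrections is precisely the content of \cite[Proposition~5.1]{DS07}; it is not a formal consequence of $R$ having no mass on $E$.

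With these two repairs your proof is complete.
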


\begin{proof}
    Fix Kähler forms $\omega$ on $X$ and $\widetilde \omega$ on $\widetilde X$. Choose $T_\varepsilon \in \alpha[-\varepsilon\omega]_{\rm an}$ such that, for all $q \leq \nu(\alpha,X)$,
    $$
    A_\varepsilon := T_\varepsilon + \varepsilon\omega,
    ~~
    \int_X \langle A_\varepsilon^q \rangle \wedge \omega^{n-q} \geq c >0.
    $$
    Choose $C>0$ such that $\pi^*\omega < C\widetilde \omega$. Then
    $$
    \pi^* T_\varepsilon + \varepsilon C\widetilde \omega
    =
    \pi^*A_\varepsilon
    +
    \varepsilon(C\widetilde \omega - \pi^*\omega)
    >
    \pi^* A_\varepsilon.
    $$
    For each $\varepsilon>0$, let
    $
    0\leq T_{{\rm min},\varepsilon}
    \in
    \pi^*\alpha + \varepsilon\{C\widetilde \omega\}
    $ 
    be a closed positive current with minimal singularities; in particular,
    $
    T_{{\rm min},\varepsilon}
    \succeq
    \pi^* T_\varepsilon + \varepsilon C\widetilde \omega.
    $ 
    By Theorem \ref{DDL25 thm 33} and the multinomial expansion of the non-pluripolar product
    $$
    \left\langle
    \bigl(\pi^* T_\varepsilon+\varepsilon C\widetilde \omega\bigr)^q
    \right\rangle
    =
    \left\langle
    \bigl(\pi^* A_\varepsilon
    +\varepsilon(C\widetilde \omega-\pi^*\omega)\bigr)^q
    \right\rangle,
    $$
    we obtain
    $$
    \int_{\widetilde X}
    \langle T_{{\rm min},\varepsilon}^q\rangle
    \wedge\widetilde \omega^{n-q}
    \geq
    \int_{\widetilde X}
    \left\langle
    \bigl(\pi^* T_\varepsilon+\varepsilon C\widetilde \omega\bigr)^q
    \right\rangle
    \wedge\widetilde \omega^{n-q}
    \geq
    \int_{\widetilde X}
    \langle(\pi^* A_\varepsilon)^q\rangle
    \wedge\widetilde \omega^{n-q}.
    $$
    It follows from Lemma \ref{prop:fixed-current-pullback} that
    $
    \langle(\pi^* A_\varepsilon)^q\rangle
    =
    \pi^\star\langle A_\varepsilon^q\rangle.
    $ 
    By Lemma \ref{lem:fibre-integration}, with $c_\pi=1$, we thus obtain
    $$
    \begin{aligned}
    \int_{\widetilde X}
    \langle(\pi^* A_\varepsilon)^q\rangle
    \wedge\widetilde \omega^{n-q}
    &\geq
    C^{q-n}
    \int_{\widetilde X}
    \pi^\star\langle A_\varepsilon^q\rangle
    \wedge(\pi^*\omega)^{n-q} \\
    &=
    C^{q-n}
    \int_X
    \langle A_\varepsilon^q\rangle
    \wedge\omega^{n-q}
    \geq c'>0.
    \end{aligned}
    $$
    Combining the two inequalities above yields
    $
    \nu(\pi^*\alpha,\widetilde X)
    \geq
    \nu(\alpha,X).
    $ 

    For the reverse inequality, choose
    $ 
    R_\varepsilon
    \in
    \pi^*\alpha[-\varepsilon\widetilde \omega]_{\rm an}
    $ 
    such that, for all
    $q\leq\nu(\pi^*\alpha,\widetilde X)$,
    $$
    B_\varepsilon
    :=
    R_\varepsilon+\varepsilon\widetilde \omega,
    ~~
    \int_{\widetilde X}
    \langle B_\varepsilon^q\rangle
    \wedge\widetilde \omega^{n-q}
    \geq c>0.
    $$
    Set
    $
    G_\varepsilon'
    :=
    \pi_*B_\varepsilon\geq0$, $
    \beta:=\pi_*\{\widetilde \omega\}$.
    Choose a smooth representative $b\in\beta$, and choose $C>0$ such that
    $
    \eta:=C\omega-b>0.
    $ 
    Then
    \begin{equation}\label{G_e-G_e'}
        0\leq
        G_\varepsilon
        :=
        G_\varepsilon'+\varepsilon\eta
        \in
        \alpha+\varepsilon\{C\omega\}.
    \end{equation}
    Let $E_\pi$ denote the exceptional locus of $\pi$. Note that $G_\varepsilon'$ is smooth on
    $
    X\backslash
    \pi\bigl(E_\pi\cup{\rm Sing}(B_\varepsilon)\bigr).
    $ 
    It follows from Lemma \ref{prop:fixed-current-pullback} that
    $
    \pi^\star\langle(G_\varepsilon')^q\rangle
    =
    \langle(\pi^*G_\varepsilon')^q\rangle$. 
    On $\widetilde X\backslash E_\pi$, it is easy to see that
    $B_\varepsilon=\pi^*G_\varepsilon'$.
    Hence, by the support Theorem for currents,
    $$
    B_\varepsilon
    =
    \pi^*G_\varepsilon'
    +
    \sum_i a_{i,\varepsilon}[E_i],
    $$
    where the $E_i\subset E_\pi$ are prime exceptional divisors and
    $a_{i,\varepsilon}\in\mathbb R$. Since non-pluripolar products do not charge the relevant analytic sets, we obtain
    \begin{equation}\label{pi-G_e and B_e}
        \pi^\star\langle(G_\varepsilon')^q\rangle
        =
        \langle(\pi^*G_\varepsilon')^q\rangle
        =
        \langle B_\varepsilon^q\rangle.
    \end{equation}
    Applying Lemma \ref{prop:DS-pullback} together with
    \eqref{pi-G_e and B_e}, we have
    $$
    \int_{\widetilde X}
    \langle B_\varepsilon^q\rangle
    \wedge\widetilde \omega^{n-q}
    \leq
    C_q
    \int_X
    \langle(G_\varepsilon')^q\rangle
    \wedge\omega^{n-q}.
    $$
    By \eqref{G_e-G_e'}, we obtain
    $$
    \int_X
    \langle(G_\varepsilon')^q\rangle
    \wedge\omega^{n-q}
    \leq
    \int_X
    \langle G_\varepsilon^q\rangle
    \wedge\omega^{n-q}.
    $$
    For each $\varepsilon>0$, let
    $
    0\leq
    G_{{\rm min},\varepsilon}
    \in
    \alpha+\varepsilon\{C\omega\}
    $
    be a closed positive current with minimal singularities; in particular, $G_{{\rm min},\varepsilon}
    \succeq
    G_\varepsilon$.
    By Theorem \ref{DDL25 thm 33}, we get
    $$
    \int_X
    \langle G_\varepsilon^q\rangle
    \wedge\omega^{n-q}
    \leq
    \int_X
    \langle G_{{\rm min},\varepsilon}^q\rangle
    \wedge\omega^{n-q}.
    $$
    Combining the three inequalities above, we finally obtain
    $$
    0 < c'
    \leq
    C'
    \int_{\widetilde X}
    \langle B_\varepsilon^q\rangle
    \wedge\widetilde \omega^{n-q}
    \leq
    \int_X
    \langle G_{{\rm min},\varepsilon}^q\rangle
    \wedge\omega^{n-q}.
    $$
    This yields
    $
    \nu(\alpha,X)
    \geq
    \nu(\pi^*\alpha,\widetilde X),
    $
    and completes the proof.
\end{proof}

As a direct application of Theorem \ref{nd_bi_invari}, we can define the analytic numerical dimension for  singular spaces. We first recall the definitions of Kähler spaces and resolutions of singularities. Let $Z$ be a reduced irreducible complex space.

\begin{defi}
    We say that $Z$ is a Kähler space if it admits a Kähler form.

    Assume that $Z$ is compact. A resolution of singularities of $Z$ is a proper bimeromorphic morphism $\pi:X\to Z$,
    obtained as a finite composition of blow-ups along smooth centers, where $X$ is a compact complex manifold.
\end{defi}

By \cite{BM97}, every reduced irreducible compact complex space $Z$ admits a resolution of singularities $\pi:X\to Z$ of the above form. Furthermore, if $Z$ is a Kähler space, then \cite[Lemma 2.2]{CMM17} shows that $X$ is a compact Kähler manifold.

We can now define the analytic numerical dimension on a compact Kähler space.

\begin{defi}\label{nd_on_singular}
    Let $Z$ be a compact Kähler space, and let $\theta$ be a closed smooth $(1,1)$-form on $Z$ such that
    $$
    {\rm PSH}(Z,\theta)\neq\varnothing.
    $$
    Let $\pi:X\to Z$ be a resolution of singularities. We define the numerical dimension of $\{\theta\}$ on $Z$ by
    $$
    \nu(\{\theta\},Z)
    :=
    \nu(\pi^*\{\theta\},X).
    $$
\end{defi}

The independence of the choice of resolution follows from Theorem~\ref{nd_bi_invari}. Let us provide a brief proof.

Let
$
\pi_i:X_i\to Z$, $i=1,2$,
be two resolutions of singularities. They induce a bimeromorphic map
$$
f:X_1\dashrightarrow X_2
$$
such that $f$ is given by $\pi_1: X_1 \to Z$ and $\pi_2^{-1}: Z \dashrightarrow X_2$. By \cite[Theorem 2.1.24]{MM07}, we can find a compact Kähler manifold $X_3$ with proper bimeromorphic morphisms $\pi_i' :X_3 \to X_i$ such that $\pi_1 \circ \pi_1' = \pi_2 \circ \pi_2'$.

It follows from Theorem \ref{nd_bi_invari} that we have
$$
\nu(\pi_1^*\{\theta\},X_1) = \nu\left((\pi_1')^*\pi_1^*\{\theta\},X_3\right) = 
\nu\left((\pi_2')^*\pi_2^*\{\theta\},X_3\right) = 
\nu(\pi_2^*\{\theta\},X_2).
$$

\subsubsection{Păun's Equidimensional Descent Trick}

In this section, we use Păun’s technique for equidimensional morphisms to construct potential functions on the base manifold with sufficiently mild singularities. The descent construction below is a singular-potential version of his argument in \cite{Paun98}.

Let $Z$ be a reduced irreducible compact Kähler space, let $Y$ be a compact Kähler manifold, and let $p :  Z\to Y $ be a proper surjective equidimensional morphism of relative dimension $r$. Fix Kähler forms $\Omega$ on $Z$ and $\omega$ on $Y$. In this section, our main Theorem is
\begin{thm}\label{paun_descent}
    Let $\theta$ be a smooth closed real $(1,1)$-form on $Y$. Assume that, for every $t \in (0,t_0)$, there exists $u_t \in {\rm PSH}(Z,p^*\theta+t\Omega)$.
Then, for all sufficiently small $t$, there exists a qpsh function $\varphi_t$ on $Y$ with analytic singularities such that
\[
\varphi_t \in {\rm PSH}(Y,\theta + \delta_t\omega),~~
\delta_t \searrow 0~~~{\rm and}~~~
u_t \leq p^*\varphi_t + O(1).
\]
\end{thm}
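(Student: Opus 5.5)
The plan is to push the positivity of $u_t$ down to $Y$ by fibre integration against $\Omega^r$, as in Păun's argument. The resulting potential is then regularized with Theorem~\ref{thm:DP04-regularization}. Fix $t\in(0,t_0)$ and set
\[
    T_t:=p^*\theta+t\Omega+dd^cu_t\geq0
\]
on $Z$. Since $p$ is proper and equidimensional of relative dimension $r$, the push-forward $p_*(T_t\wedge\Omega^r)$ is a closed positive $(1,1)$-current on $Y$. Let $c:=\int_{Z_y}\Omega^r>0$, which is independent of $y$ because $p_*\Omega^r$ is a closed $(0,0)$-current. Then
\[
    \tfrac1c\,p_*(T_t\wedge\Omega^r)
    =\theta+t\gamma+dd^c\psi_t,
    ~~
    \gamma:=\tfrac1c\,p_*\Omega^{r+1},
    ~~
    \psi_t(y):=\tfrac1c\int_{Z_y}u_t\,\Omega^r .
\]
Here I use that $p_*$ commutes with $dd^c$ and that $p_*(p^*\theta\wedge\Omega^r)=c\,\theta$.

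The first key step is regularity of $\gamma$. I will show that $\gamma$ is a closed positive $(1,1)$-current with continuous local potentials, so that $\gamma=\gamma_0+dd^cg$ with $\gamma_0$ smooth, $\gamma_0\leq C\omega$, and $g$ continuous. This should follow because fibre integrals of smooth forms along a proper equidimensional (hence open) map depend continuously on $y$, by continuity of the fibre cycles $y\mapsto[Z_y]$ in the Barlet space. Granting this, the current
\[
    S_t:=\theta+dd^c(\psi_t+tg)\geq-t\gamma_0\geq-tC\omega .
\]
Theorem~\ref{thm:DP04-regularization} then gives $\varphi_{t,j}$ with analytic singularities, $\varphi_{t,j}\geq\psi_t+tg$, and $\theta+dd^c\varphi_{t,j}\geq-(tC+\delta_j)\omega$. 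Choosing $j=j(t)$ with $\delta_{j(t)}\leq t$, I set $\varphi_t:=\varphi_{t,j(t)}$ and $\delta_t:=(C+1)t\searrow0$. Since $g$ is bounded, we get $p^*\varphi_t\geq p^*\psi_t-O(1)$.

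It remains to compare $u_t$ with $p^*\psi_t$, i.e.\ to prove the uniform fibrewise estimate
\[
    \sup_{Z_y}u_t\leq\tfrac1c\int_{Z_y}u_t\,\Omega^r+A_t
    \quad\text{for all }y\in Y .
\]
This gives $u_t\leq p^*\psi_t+A_t\leq p^*\varphi_t+O(1)$, which is the required inequality. On each fibre, $u_t|_{Z_y}$ lies in $\operatorname{PSH}(Z_y,(p^*\theta+t\Omega)|_{Z_y})$, via Theorem~\ref{FN80 thm 531} applied to the inclusion of the fibre (away from components where $u_t\equiv-\infty$, which are handled by the convention $\psi_t=-\infty$ there). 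For a single compact irreducible space, the sup of a normalized quasi-psh function is controlled by its mean through Hartogs-type compactness.

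The main obstacle I expect is making the constant $A_t$ uniform in $y$ as the fibres degenerate, possibly becoming reducible or non-reduced. I would first try an argument by contradiction. Take $y_k\to y$ and $v_k=u_t|_{Z_{y_k}}-\sup_{Z_{y_k}}u_t$ with means tending to $-\infty$. Then extract a limit using that the cycles $[Z_{y_k}]\wedge\Omega^r$ converge (equidimensionality again) together with local $L^1$-compactness of psh functions in ambient local embeddings with sup normalized to $0$. In the limit, the means stay bounded below on each component of the limit cycle, which gives the contradiction. If this fails, the fallback is to work on a resolution of $Z$ and use the $L^1$-bound for $\theta$-psh functions along a flat family of cycles. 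A secondary technical point to check is that $\psi_t$ is genuinely quasi-psh, and not only defined almost everywhere; this follows since it is a potential of the positive current $S_t$ and is usc by the same cycle-continuity argument.
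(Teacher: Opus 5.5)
There is a genuine gap in your last step. The uniform estimate $\sup_{Z_y}u_t\leq\frac1c\int_{Z_y}u_t\,\Omega^r+A_t$ is false in general. Fibres of an equidimensional map can be reducible, and a function in $\operatorname{PSH}(Z,p^*\theta+t\Omega)$ may be $\equiv-\infty$ on one component of a fibre while staying bounded on another.

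Here is a concrete counterexample. Let $Z=\operatorname{Bl}_x(\mathbb P^1\times\mathbb P^1)$, with $x$ in the fibre over $0$ of the first projection, and let $p\colon Z\to Y=\mathbb P^1$ be the induced map. Then $p^{-1}(0)=C+E$ and $p$ is equidimensional. Take $\theta=0$ and $u_t:=\epsilon_t\log|s_E|_h^2$ with $\epsilon_t>0$ small, so that $u_t\in\operatorname{PSH}(Z,t\Omega)$ by Poincaré--Lelong.
\begin{itemize}
\item Near a general point of $E$, the pull-back of a coordinate $y$ at $0\in\mathbb P^1$ is a local equation of $E$, so $u_t=\epsilon_t\log|y|^2+O(1)$ there.
\item The part of $Z_y$ near such points has $\Omega$-area bounded below as $y\to0$. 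Hence $\psi_t(y)\leq a_t\log|y|^2+O(1)$ with $a_t>0$, i.e.\ your fibre mean has a positive Lelong number at $0$.
\item But $u_t$ is bounded near a general point of $C$, where $p^*\psi_t\to-\infty$. So $u_t\leq p^*\psi_t+A_t$ fails on an open set for every $A_t$.
\end{itemize}
The regularization cannot repair this: the approximants of Theorem~\ref{thm:DP04-regularization} with small loss have Lelong numbers close to those of $\psi_t$. The theorem itself holds in this example, trivially, with $\varphi_t$ constant; it is the method that fails. Your compactness sketch breaks exactly at the claim that the means stay bounded below on each component of the limit cycle. The convention $\psi_t=-\infty$ on fibres containing a $-\infty$ component is precisely what destroys domination on the remaining components. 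Passing to a resolution of $Z$ does not change this.

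The missing idea is Păun's: dominate $u_t$ by its fibrewise \emph{maximum}, not its mean. Then $u_t\leq p^*\Phi_t$ is automatic, and the work moves to showing $\theta+dd^c\Phi_t\geq-C\sqrt t\,\omega$. The paper does this locally.
\begin{itemize}
\item Lemma~\ref{lem:relative-finite-coordinates} gives finite maps $(p,w_j)$, and the slices $\{w_j=\mathrm{const}\}$ are finite over $Y$.
\item Lemma~\ref{lem_max_psh_irr} shows that the maximum of a psh function over the fibres of a finite map is psh.
\item The cut-offs $\lambda_{j,t}=tq_j+\sqrt t\,(b_j-R_j)$ glue the local maxima. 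They are $\geq0$ on $V_j$ and $<0$ on $\partial D_j$, so maximizers lie in the interior.
\item The error term $-\sqrt t\,dd^c|w_j|^2$ introduced by the cut-offs disappears on the slices, where $w_j$ is constant.
\end{itemize}
This route also bypasses your Step~1, the continuity of potentials for $p_*\Omega^{r+1}$, which you only sketched; it is essentially Varouchas' theorem on proper open maps.
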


Before proving this Theorem, we first need two Lemmas. In the first Lemma, we use the condition of equidimensional morphism to construct an auxiliary finite mapping locally.

\begin{lem}\label{lem:relative-finite-coordinates}
For every $z_0\in Z$, there exist an open neighbourhood $V\subset Z$ of
$z_0$, contained in a local embedding chart, a holomorphic map
$w=(w_1,\ldots,w_r) :  V\to\mathbb C^r$ with $w(z_0)=0$, and an open
neighbourhood $U\subset Y\times\mathbb C^r$ of $(p(z_0),0)$ such that
\[
    f=(p,w) :  V\to U
\]
is finite and surjective.
\end{lem}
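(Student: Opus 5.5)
The plan is to reduce to the standard local structure of a holomorphic map with finite fibres, via the local parametrization theorem. Set $y_0:=p(z_0)$. Since $p$ is equidimensional of relative dimension $r$, the fibre $F:=p^{-1}(y_0)$ is a pure $r$-dimensional analytic subset of $Z$ near $z_0$. Work in a local embedding chart $\iota : V_0\hookrightarrow\Omega\subset\mathbb C^N$ with $\iota(z_0)=0$. By the local parametrization theorem (Noether normalization for germs), applied to the germ $(F,z_0)\subset(\mathbb C^N,0)$ of pure dimension $r$, there is a generic linear projection $\ell=(\ell_1,\dots,\ell_r) : \mathbb C^N\to\mathbb C^r$ such that $z_0$ is an isolated point of $F\cap\ell^{-1}(0)$. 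Define $w_i:=\ell_i\circ\iota$ on $V_0$. Then $w(z_0)=0$.

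With $f=(p,w) : V_0\to Y\times\mathbb C^r$, the key observation is
\[
    f^{-1}(y_0,0)=F\cap\{w=0\},
\]
which has $z_0$ as an isolated point. By the standard criterion for maps of complex spaces (a holomorphic map is finite near a point isolated in its fibre), we may choose a small neighbourhood $V$ of $z_0$ and a neighbourhood $U$ of $(y_0,0)$ such that $f|_V : V\to U$ is proper with finite fibres, hence finite. Concretely, pick a small closed ball $B$ around $z_0$ in the chart with $\partial B\cap f^{-1}(y_0,0)=\varnothing$. By compactness there is a neighbourhood $U$ of $(y_0,0)$ with $f(\partial B\cap Z)\cap U=\varnothing$. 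Then put $V:=f^{-1}(U)\cap \mathrm{int}(B)$. Properness follows, and the fibres are compact analytic subsets of a Stein chart, hence finite.

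For surjectivity, I would use a dimension count. $Z$ is irreducible; $p$ is surjective and equidimensional, so $\dim Z=\dim Y+r=\dim(Y\times\mathbb C^r)$. Since $f|_V$ is finite, its image $f(V)$ is an analytic subset of $U$ by Remmert's proper mapping theorem. Every irreducible component of $V$ has dimension $\dim Z$, because $Z$ is pure dimensional, being irreducible. A finite map preserves dimension, so $f(V)$ is an analytic subset of $U$ of dimension $\dim U$. Since $U$ is a connected manifold (shrink it to a polydisc), this forces $f(V)=U$.

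The main obstacle is the finiteness step. We must make sure the choice of $w$ forces $z_0$ to be isolated in $f^{-1}f(z_0)$, and that shrinking to $V,U$ keeps $f$ proper. This rests on the pure $r$-dimensionality of the fibre near $z_0$, which is exactly where equidimensionality is used.
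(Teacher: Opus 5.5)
Your proposal is correct and follows essentially the same route as the paper: choose a generic linear projection so that $z_0$ is isolated in $p^{-1}(p(z_0))\cap\{w=0\}$, take a finite representative of $(p,w)$ near $z_0$, and get surjectivity from the dimension count $\dim Z=\dim Y+r=\dim U$. The only differences are minor: you prove the local finite-mapping step by hand (the ball-and-compactness argument) instead of citing it, and you embed into $\mathbb C^N$ rather than into $Y_0\times\mathbb C^N$ with $p$ as the first projection.
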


\begin{proof}
Embed a neighbourhood of $z_0$ into $Y_0\times\mathbb C^N$, where
$Y_0\subset Y$ is a coordinate neighbourhood of $p(z_0)$, so that $p$ is
the first projection. Since the fibre germ $(p^{-1}(p(z_0)),z_0)$ is pure
$r$-dimensional, a generic linear projection
$\ell : \mathbb C^N\to\mathbb C^r$ has zero-dimensional fibre on this
germ at $z_0$. Thus, after replacing $\ell$ by
$\ell-\ell(z_0)$, the map $(p,\ell)$ is quasi-finite at $z_0$.
The local finite-mapping Theorem gives a finite representative
\[
    (p,\ell) :  V\to U.
\]
Its image is analytic and has dimension
$\dim Z=\dim Y+r=\dim U$; hence, after shrinking $U$ around
$(p(z_0),0)$, the map is surjective. Taking $w=\ell|_V$ proves the Lemma.
\end{proof}

Then, for finite mappings, we can construct local PSH functions in the base space by the following Lemma.

\begin{lem}\label{lem_max_psh_irr}
    Let $f: V \to U$ be finite and surjective, where $V$ is a reduced irreducible complex analytic space and $U$ is a complex manifold. If $h \in {\rm PSH}(V)$ is not identically $-\infty$, then
    $$
    H(y):= \max_{x \in f^{-1}(y)} h(x)
    $$
    is psh on $U$.
\end{lem}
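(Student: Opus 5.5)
We have to show that $H$ is upper semicontinuous, that it is not identically $-\infty$, and that it satisfies the sub-mean value property on complex discs, or equivalently that it is plurisubharmonic. We work locally on $U$, since plurisubharmonicity is a local property.

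\emph{Upper semicontinuity.} Because $f$ is finite, it is proper with finite fibres. Fix $y_0\in U$ and a constant $c>H(y_0)$. Since $h$ is usc, the set $W=\{h<c\}$ is an open neighbourhood of the finite set $f^{-1}(y_0)$. By properness, $f(V\setminus W)$ is closed and does not contain $y_0$. Hence $f^{-1}(y)\subset W$ for all $y$ near $y_0$, which gives $H(y)<c$ there. Since $f$ is surjective, every fibre is nonempty, so $H$ takes values in $[-\infty,\infty)$.

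\emph{Not identically $-\infty$.} Because $h\not\equiv-\infty$ and $V$ is irreducible, the polar set $\{h=-\infty\}$ is pluripolar and has empty interior. Its image under the finite map $f$ therefore cannot cover $U$. Concretely, $f(\{h=-\infty\})$ is a countable union of sets of measure zero, since finite maps send pluripolar sets to pluripolar sets by the trace argument below. So $H>-\infty$ somewhere.

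\emph{Plurisubharmonicity.} We plan to use the classical structure of finite branched coverings. Since $V$ is irreducible and $f$ is finite and surjective onto a manifold of the same dimension, there is a proper analytic subset $A\subset U$ outside which $f$ is an unbranched covering of some degree $d$. Over a small ball $B\subset U\setminus A$, the preimage $f^{-1}(B)$ is a disjoint union of $d$ sheets $s_i(B)$, where the $s_i$ are holomorphic sections. Each composition $h\circ s_i$ is psh, because $s_i$ is a holomorphic map into $V$ and we can apply Theorem~\ref{FN80 thm 531}. Hence $H=\max_i h\circ s_i$ is psh on $U\setminus A$.

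It remains to extend across $A$. We first show that $H$ is locally bounded above near $A$; this follows from upper semicontinuity on the compact closure of the fibres together with properness. A psh function on $U\setminus A$ that is locally bounded above extends uniquely to a psh function on $U$, namely
\[
H^\sharp(y)=\limsup_{U\setminus A\ni y'\to y}H(y'),
\]
by the Grauert--Remmert extension theorem. We must then show that $H=H^\sharp$ on $A$. The inequality $H^\sharp\le H$ follows from the upper semicontinuity of $H$.

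The \textbf{main obstacle} is the reverse inequality $H(y_0)\le H^\sharp(y_0)$ for $y_0\in A$. That is, for $x_0\in f^{-1}(y_0)$ we must show that $h(x_0)\le\limsup h(x')$ as $x'\to x_0$ with $x'\notin f^{-1}(A)$. Here the psh property of $h$ on $V$ is essential. The set $f^{-1}(A)$ is a proper analytic subset of the irreducible space $V$, hence nowhere dense and pluripolar. For a psh function on a reduced space, the value at a point equals the $\limsup$ taken off any nowhere dense analytic subset. To prove this, lift to the normalization, or use Theorem~\ref{FN80 thm 531} with discs through $x_0$ not contained in $f^{-1}(A)$, combined with the fact that subharmonic functions on a disc satisfy $u(0)=\limsup_{z\to0,\,z\ne0}u(z)$. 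This gives the desired equality, and hence $H=H^\sharp$ is psh on $U$.
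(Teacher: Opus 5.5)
Your argument is correct and follows the paper's proof essentially step for step. Both proofs get upper semicontinuity from properness and write $H=\max_i h\circ s_i$ over the unbranched locus $U\setminus A$. Both extend by removable singularities with $H^\sharp\le H$, and both prove the reverse inequality at $y_0\in A$ via a holomorphic disc $\gamma$ through a maximizer $x_0$ with $\gamma(\Delta^*)\subset V\setminus f^{-1}(A)$, using $u(0)=\limsup_{z\to 0,\,z\neq 0}u(z)$ for subharmonic $u$.

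The one weak spot is your ``not identically $-\infty$'' paragraph, which relies on a trace argument you never supply. You do not need it: any $x$ with $h(x)>-\infty$ gives $H(f(x))\geq h(x)>-\infty$.
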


\begin{proof}
Since $f$ is finite, it is proper and its fibres are finite; hence the above
maximum is well defined. We first observe that $H$ is usc.
Indeed, let $y_\nu\to y_0$ and choose $x_\nu\in f^{-1}(y_\nu)$ such that
$H(y_\nu)=h(x_\nu)$. We first pass to a subsequence along which
$H(y_\nu)$ tends to its original upper limit. After restricting to a
relatively compact neighbourhood of $y_0$, properness of $f$ allows us to
pass to a further subsequence such that
$x_\nu\to x_0\in f^{-1}(y_0)$. Therefore
\[
    \limsup_{\nu\to\infty}H(y_\nu)
    =\limsup_{\nu\to\infty}h(x_\nu)
    \leq h(x_0)
    \leq H(y_0).
\]
The same argument also shows that $H$ is locally bounded above: if
$W\Subset U$, then $f^{-1}(\overline{W}) \subset V$ compact, hence
\[
    \sup_W H\leq \sup_{f^{-1}(\overline W)}h<+\infty.
\]

Let $R\subset V$ be the analytic subset consisting of
$\operatorname{Sing}V$ together with the critical locus of
$f|_{V_{\mathrm{reg}}}$. By generic smoothness, $R$ is a proper analytic
subset of $V$. Since $f$ is finite, the proper mapping Theorem and the
dimension Theorem show that $A:=f(R)$ is a proper analytic subset of $U$. Moreover,
\[
    f :  f^{-1}(U\setminus A)\to U\setminus A
\]
is a finite unramified covering. Thus, locally on $U\setminus A$, the map
$f$ admits finitely many holomorphic inverse branches
$s_1,\ldots,s_d$, and hence
\[
    H=\max_{1\leq j\leq d}(h\circ s_j).
\]
It follows that $H$ is plurisubharmonic on $U\setminus A$. Since it is
locally bounded above near $A$, the removable-singularities Theorem for
plurisubharmonic functions gives a plurisubharmonic extension
\[
    \widetilde H(y):=
    \begin{cases}
        H(y),&y\in U\setminus A,\\[2mm]
        \displaystyle\limsup_{\substack{z\to y\\ z\in U\setminus A}}H(z),
        &y\in A.
    \end{cases}
\]
Since $H$ is usc on all of $U$, we have $\widetilde H\leq H$.

It remains to prove the reverse inequality on $A$. Fix $y_0\in A$. If
$H(y_0)=-\infty$, then the preceding inequality already gives
$\widetilde H(y_0)=H(y_0)$. Suppose therefore that $H(y_0)>-\infty$, and
choose $x_0\in f^{-1}(y_0)$ such that
\[
    h(x_0)=H(y_0).
\]
Set $B:=f^{-1}(A)$. Since $A\subsetneq U$ and $f$ is surjective, $B$ is a
proper analytic subset of $V$. The irreducibility of $V$ implies that
$V\setminus B$ is dense at $x_0$. There is a nonconstant morphism
\[
    \gamma : (\Delta,0)\to(V,x_0)
\]
such that $\gamma(\Delta^*)\subset V\backslash B$.
The function $h\circ\gamma$ is subharmonic on $\Delta$ by Theorem \ref{FN80 thm 531}, and is finite at
$0$. Then we can find a sequence 
$\tau_\nu\to0$, with $\tau_\nu\neq0$, such that
\[
    h\bigl(\gamma(\tau_\nu)\bigr)\to h(x_0).
\]
Indeed, we set $S_\nu = \{z \in \Delta:|z| = (\nu+1)^{-1}\}$ and $\tau_\nu$ satisfying   $\gamma^*h(\tau_\nu) = \sup_{ S_\nu} \gamma^*h$,
then using the submean inequality
on circles and the upper semi-continuity of $h$, we can see that $\tau_\nu$ is the desired sequence.

Now $f(\gamma(\tau_\nu))\in U\setminus A$ and
$f(\gamma(\tau_\nu))\to y_0$. Therefore
\[
\begin{aligned}
    \widetilde H(y_0)
    &\geq \limsup_{\nu\to\infty}
        H\bigl(f(\gamma(\tau_\nu))\bigr) \\
    &\geq \lim_{\nu\to\infty}h\bigl(\gamma(\tau_\nu)\bigr)
     =h(x_0)=H(y_0).
\end{aligned}
\]
Thus $\widetilde H=H$ on $U$. Since $\widetilde H$ is
plurisubharmonic, so is $H$.
\end{proof}

\begin{proof}[Proof of Theorem \ref{paun_descent}]
    Fix $t\in(0,t_0)$, to be sufficiently small, and set $s=\sqrt t$. The
assumption on $u_t$ is equivalent to
$  p^*\theta+dd^cu_t\geq-t\Omega$.                     
By Lemma~\ref{lem:relative-finite-coordinates}, every
point $z_j\in Z$ has a neighbourhood $U_j \subset Z$ and a coordinate neighbourhood $Y_j \subset Y$ of $p(z_j)$ on which
\[
    f_j=(p,w_j) :  U_j\to W_j\subset
    Y_j\times\mathbb C^r
\]
is finite and surjective; we choose coordinates such that $f_j(z_j)=0$.
Write
$$
    B_{j,a}:=\{(y_j,\xi):|y_j|^2+|\xi|^2<a\} \subset Y_j \times \mathbb C^r,
    $$
   and
   $$
    R_j : = |y_j \circ p|^2+|w_j|^2 ~~{\rm on}~U_j.
$$
Choose numbers $0<b_j<a_j$ with $\overline{B}_{j,a_j}\Subset W_j$, and let
    $D_j$ 
be the connected component of $f_j^{-1}(B_{j,a_j})$ containing $z_j$. Finiteness gives
$K_j:=\overline{D_j}\Subset U_j$ and $R_j=a_j$ on $\partial D_j$. Set
\[
    V_j:=\{z\in D_j:R_j(z)<b_j\}.
\]
We can assume that $\bigcup_jV_j = Z$. Choose smooth functions
$q_j\geq0$ on $U_j$ such that $  dd^c q_j\geq\Omega$ near $K_j$. Here $q_j$ can be obtained from the restriction of smooth strictly psh function in the local embedding. We define
\[
    \lambda_{j,t}:=tq_j+s(b_j-R_j)
\]
on $K_j$. Then $\lambda_{j,t}\geq0$ on $V_j$. On the other hand, if
$M_j:=\max_{K_j}q_j$ and $\delta_j:=a_j-b_j$, then
\[
    \lambda_{j,t}\leq tM_j-s\delta_j
    =s(sM_j-\delta_j)<0
    ~~\text{on }\partial D_j
\]
for all sufficiently small $t$, uniformly in $j$. Moreover, after
enlarging $C$ so that $dd^c|y_j| \leq C\omega$ on $V_j$ for all $j$, we get

\begin{equation}\label{long_ineq_singular}
\begin{aligned}
    p^*\theta+dd^c(u_t+\lambda_{j,t}) &= p^*\theta + dd^c u_t + tdd^cq_j - s(p^* dd^c|y_j| + dd^c|w_j|)
    \\ &\geq p^*\theta + dd^c u_t  + t\Omega - Csp^*\omega - s \sum_{\ell=1}^rdd^c|w_{j,\ell}|^2
    \\ &\geq-Cs\,p^*\omega
       -s\sum_{\ell=1}^r dd^c|w_{j,\ell}|^2~~~{\rm on}~~D_j.
\end{aligned}
\end{equation}

 Set
\[
   \Phi_{j,t}(y):=\max_{p^{-1}(y)\cap K_j}(u_t+\lambda_{j,t})~~{\rm and}~~ \Phi_t(y):=\max_j\Phi_{j,t}(y),
\]
where the maximum over the empty set is $-\infty$. 
Since the $V_j\subset K_j$ cover $Z$, the function $\Phi_t$ is defined on all of $Y$.

We claim that each $\Phi_{j,t}$ is usc. Indeed, if $y_\nu\to y$ and $z_\nu$ are maximizers, compactness of $K_j$ gives, after extraction, $z_\nu\to z\in p^{-1}(y)\cap K_j$.
Hence $\limsup_\nu\Phi_{j,t}(y_\nu)\leq(u_t+\lambda_{j,t})(z)\leq\Phi_{j,t}(y)$.
Since only finitely many $j$ occur, $\Phi_t$ is usc. Since the $V_j$ cover $Z$ and
$\lambda_{j,t}\geq0$ on $V_j$, we have $ u_t\leq p^*\Phi_t$.                               

Now we prove that $\theta+dd^c\Phi_t+Cs\omega\geq0$. Fix
$y_0\in Y$ with $\Phi_t(y_0)>-\infty$, and choose $j$ and
$z_0\in p^{-1}(y_0)\cap K_j$ realizing the defining maximum. The boundary inequality
above and the covering by the $V_j$ imply that $z_0\in D_j$. Set 
$w_0=w_j(z_0)$ and choose an irreducible component $\Sigma$ of
\[
    \{z\in D_j:w_j(z)=w_0\}
\]
through $z_0$.
After shrinking a neighbourhood $B$ of $y_0$, the map
\[
    p|_\Sigma : \Sigma\to B
\]
is finite and surjective. Indeed, it is finite because $f_j$ is finite;
the slice is defined by $r$ equations, so $\dim\Sigma\geq\dim Y$, while
finiteness gives the reverse inequality. Its image is therefore a
full-dimensional analytic subset of $Y$ and contains $B$ after shrinking.

Let $\chi$ be a local potential of $\theta+Cs\omega$ on $B$. By (\ref{long_ineq_singular}),
$u_t+\lambda_{j,t}+\chi\circ p
       +s\sum_{\ell=1}^r|w_{j,\ell}|^2$ is psh. Since $w_j$ is constant on $\Sigma$, the restriction
of $u_t+\lambda_{j,t}+\chi\circ p$ to $\Sigma$ is plurisubharmonic and is
not identically $-\infty$. Lemma~\ref{lem_max_psh_irr} therefore
shows that
\[
    H(y):=\max_{z\in p^{-1}(y)\cap\Sigma}
           \bigl(u_t(z)+\lambda_{j,t}(z)+\chi(y)\bigr)
\]
is plurisubharmonic on $B$. By the choice of $z_0$,
\[
    H\leq\Phi_t+\chi,
    ~~
    H(y_0)=(\Phi_t+\chi)(y_0).
\]
The submean inequality for $H$ therefore gives the submean inequality for
$\Phi_t+\chi$ at $y_0$ via the first inequality. At points where $\Phi_t=-\infty$ it is
automatic. Hence $\Phi_t+\chi$ is psh, proving
$ \theta+dd^c\Phi_t+Cs\omega\geq0$.

Finally, apply Theorem \ref{thm:DP04-regularization} to this current, with loss at most
$s\omega$, and choose the regularized potential $\varphi_t$ less singular
than $\Phi_t$. Then $\varphi_t$ has analytic singularities and
\[
    \theta+dd^c\varphi_t+(C+1)s\omega\geq0,~~
    \Phi_t\preceq\varphi_t.
\]
Thus the Theorem follows from $u_t \leq p^*\Phi_t$ with
 $\delta_t:=(C+1)\sqrt t\searrow0$.
\end{proof}

\subsubsection{Invariance under Fibrations}

Let $f: X \to Y$ be a holomorphic fibration between compact Kähler manifolds, and let $\alpha \in H^{1,1}(Y,\mathbb R)$ be a psef class. Fix two Kähler forms $\omega_X$ on $X$ and $\omega_Y$ on $Y$. Assume that $n = \dim X$, $m = \dim Y$, and $r = n-m$. In this section, we will show that the numerical dimension is invariant under fibrations, i.e. $\nu(f^*\alpha,X)=\nu(\alpha,Y)$.

We prove this by establishing the two inequalities separately. We begin with the easier direction.

\begin{lem}\label{easy_direction_fib_nd}
    In this setting, we have $\nu(f^*\alpha,X)\geq\nu(\alpha,Y)$.
\end{lem}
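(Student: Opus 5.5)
We mimic the first half of the proof of Theorem~\ref{nd_bi_invari}, with $f$ in place of $\pi$ and Lemma~\ref{lem:fibre-integration} absorbing the fibre directions. Set $k:=\nu(\alpha,Y)$. By the definition of $\nu_{\mathrm{an}}$, for a sequence $\varepsilon\downarrow0$ we can choose $T_\varepsilon\in\alpha[-\varepsilon\omega_Y]_{\rm an}$ with
\[
A_\varepsilon:=T_\varepsilon+\varepsilon\omega_Y\geq0,\qquad \int_Y\langle A_\varepsilon^k\rangle\wedge\omega_Y^{m-k}\geq c>0 .
\]
Then $A_\varepsilon$ lies in the big class $\alpha+\varepsilon\{\omega_Y\}$ and is smooth outside a proper analytic subset. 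We must show $\int_X\langle (f^*\alpha+\varepsilon'\{\omega_X\})^k\rangle\wedge\omega_X^{n-k}$ stays bounded below as $\varepsilon'\downarrow0$.

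\textbf{Step 1 (comparison on $X$).} Choose $C>0$ with $f^*\omega_Y\leq C\omega_X$. Then
\[
f^*T_\varepsilon+\varepsilon C\omega_X=f^*A_\varepsilon+\varepsilon(C\omega_X-f^*\omega_Y)\geq f^*A_\varepsilon .
\]
Let $T_{\min,\varepsilon}$ be a current with minimal singularities in the big class $f^*\alpha+\varepsilon C\{\omega_X\}$. Then $T_{\min,\varepsilon}\succeq f^*T_\varepsilon+\varepsilon C\omega_X$. By Theorem~\ref{DDL25 thm 33} applied with mixed entries (or via the multilinear expansion, discarding the nonnegative terms that contain the semipositive form $C\omega_X-f^*\omega_Y$), we get
\[
\int_X\langle T_{\min,\varepsilon}^k\rangle\wedge\omega_X^{n-k}\geq\int_X\langle (f^*A_\varepsilon)^k\rangle\wedge\omega_X^{n-k}.
\]

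\textbf{Step 2 (descend to $Y$).} A subtlety is that $f^*A_\varepsilon$ lies in $f^*(\alpha+\varepsilon\omega_Y)$, which is not big on $X$ when $r>0$. To keep the non-pluripolar products honest, work with $f^*A_\varepsilon+\delta\omega_X$ and let $\delta\to0$, or interpret the product as the Bedford--Taylor product off the analytic singular set. Lemma~\ref{prop:fixed-current-pullback} identifies $\langle (f^*A_\varepsilon)^k\rangle=f^\star\langle A_\varepsilon^k\rangle$, since $k\leq m$. Next bound $\omega_X^{n-k}\geq C^{-(m-k)}(f^*\omega_Y)^{m-k}\wedge\omega_X^{r}$. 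This is valid as a pointwise inequality of positive forms paired against the positive current $f^\star\langle A_\varepsilon^k\rangle$: expand $\omega_X=C^{-1}f^*\omega_Y+(\omega_X-C^{-1}f^*\omega_Y)$ with both summands semipositive, and keep one term. Lemma~\ref{lem:fibre-integration} applies because $\langle A_\varepsilon^k\rangle$ charges no analytic subsets, giving
\[
\int_X f^\star\langle A_\varepsilon^k\rangle\wedge f^*\omega_Y^{m-k}\wedge\omega_X^r=c_f\int_Y\langle A_\varepsilon^k\rangle\wedge\omega_Y^{m-k}\geq c_f c .
\]

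\textbf{Step 3 (conclude).} Combining the two steps yields a bound $\int_X\langle T_{\min,\varepsilon}^k\rangle\wedge\omega_X^{n-k}\geq c'>0$ that is uniform in $\varepsilon$. Letting $\varepsilon\to0$ and using the definition of $\langle (f^*\alpha)^k\rangle$ (independent of the Kähler class $C\omega_X$), we get $\langle (f^*\alpha)^k\rangle\neq0$, i.e.\ $\nu(f^*\alpha,X)\geq k$.

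The main obstacle is Step~2: justifying Lemma~\ref{prop:fixed-current-pullback} when $f^*A_\varepsilon$ is not in a big class, and controlling the mixed terms. If the identity with $\delta\omega_X$ added is troublesome, the fallback is to compute directly on the submersion locus $X^\circ$, where everything is smooth. There the integral over $X^\circ$ of $(f^*A_\varepsilon)^k\wedge f^*\omega_Y^{m-k}\wedge\omega_X^r$ equals $c_f\int_{Y^\circ}A_\varepsilon^k\wedge\omega_Y^{m-k}$ by fibre integration. This gives a lower bound for $\int_X\langle (f^*A_\varepsilon+\delta\omega_X)^k\rangle\wedge\omega_X^{n-k}$ that is uniform in $\delta$, since non-pluripolar products dominate their restriction to $X^\circ$.
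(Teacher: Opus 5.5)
Your argument is correct and is essentially the paper's proof. You compare the minimal-singularity current $T_{\min,\varepsilon}\in f^*\alpha+\varepsilon C\{\omega_X\}$ with $f^*T_\varepsilon+\varepsilon C\omega_X\geq f^*A_\varepsilon$ via Theorem~\ref{DDL25 thm 33} and the multilinear expansion, identify $\langle (f^*A_\varepsilon)^k\rangle=f^\star\langle A_\varepsilon^k\rangle$ by Lemma~\ref{prop:fixed-current-pullback}, and conclude with $\omega_X^{n-k}\geq C^{k-m}(f^*\omega_Y)^{m-k}\wedge\omega_X^r$ and Lemma~\ref{lem:fibre-integration}. The bigness issue you flag in Step~2 is not a real obstacle, since the paper uses the lemma exactly as you do and it only requires $\alpha+\varepsilon\{\omega_Y\}$ to be big on $Y$ and $A_\varepsilon$ smooth off an analytic set. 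Your fallback is therefore unnecessary, though harmless.
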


\begin{proof}
    Fix $q\leq\nu(\alpha,Y)$. Choose
    $T_\varepsilon\in\alpha[-\varepsilon\omega_Y]_{\rm an}$ on $Y$ such that
    \[
        A_\varepsilon:=T_\varepsilon+\varepsilon\omega_Y\geq0,
        ~~
        \int_Y\langle A_\varepsilon^q\rangle
        \wedge\omega_Y^{m-q}\geq c>0.
    \]
    Choose $C>0$ such that $f^*\omega_Y\leq C\omega_X$. Then
    \[
        f^*T_\varepsilon+\varepsilon C\omega_X
        =
        f^*A_\varepsilon
        +
        \varepsilon(C\omega_X-f^*\omega_Y)
        \geq f^*A_\varepsilon.
    \]
    For each $\varepsilon>0$, let
    $0\leq T_{{\rm min},\varepsilon}
        \in
        f^*\alpha+\varepsilon\{C\omega_X\}$ 
    be a closed positive current with minimal singularities. In particular,
    $ T_{{\rm min},\varepsilon}
        \succeq
        f^*T_\varepsilon+\varepsilon C\omega_X$.
    By Theorem \ref{DDL25 thm 33} and the multinomial expansion of the non-pluripolar product, we obtain
    \[
    \begin{aligned}
        \int_X
        \langle T_{{\rm min},\varepsilon}^q\rangle
        \wedge\omega_X^{n-q}
        &\geq
        \int_X
        \left\langle
        (f^*T_\varepsilon+\varepsilon C\omega_X)^q
        \right\rangle
        \wedge\omega_X^{n-q} \\
        &\geq
        \int_X
        \langle(f^*A_\varepsilon)^q\rangle
        \wedge\omega_X^{n-q}.
    \end{aligned}
    \]
    It follows from Lemma \ref{prop:fixed-current-pullback} that $  \langle(f^*A_\varepsilon)^q\rangle
        =
        f^\star\langle A_\varepsilon^q\rangle$. By Lemma \ref{lem:fibre-integration}, we thus obtain
    \[
    \begin{aligned}
        \int_X
        \langle(f^*A_\varepsilon)^q\rangle
        \wedge\omega_X^{n-q}
        &\geq
        C^{q-m}
        \int_X
        f^\star\langle A_\varepsilon^q\rangle
        \wedge(f^*\omega_Y)^{m-q}\wedge\omega_X^r \\
        &\geq
        C^{q-m}c_f
        \int_Y
        \langle A_\varepsilon^q\rangle
        \wedge\omega_Y^{m-q}
        \geq c'>0.
    \end{aligned}
    \]
    Combining the two inequalities above yields $ \nu(f^*\alpha,X)\geq q$.
    Since this holds for every $q\leq\nu(\alpha,Y)$, we conclude that $ \nu(f^*\alpha,X)\geq\nu(\alpha,Y)$.
\end{proof}

Before proceeding to prove the inequality in the other direction, we first explain the geometric construction that will be used.

Hironaka flattening \cite{Hi75} gives a proper bimeromorphic morphism
$\pi:\widetilde Y\to Y$ and the reduced main component $Z$ of
$X\times_Y\widetilde Y$ fitting into the commutative diagram
\[
\begin{tikzcd}[column sep=large, row sep=large]
\widetilde X
    \arrow[r, "\mu"]
    \arrow[rr, bend left=29, "g"]
    \arrow[dr, "\widetilde p"']
&
Z
    \arrow[r, "q"]
    \arrow[d, "p"]
&
X
    \arrow[d, "f"]
\\
&
\widetilde Y
    \arrow[r, "\pi"']
&
Y.
\end{tikzcd}
\]
Here $p$ is an equidimensional proper surjective morphism, while $\pi$ and $q$ are proper bimeromorphic morphisms. Moreover, we may assume that $\pi$ is a finite composition of blow-ups along smooth centers; see, for example, \cite[Theorem 2.1.22]{MM07}. Hence $\widetilde Y$ is also a compact Kähler manifold. Since $Z$ is a closed analytic subspace of the compact Kähler manifold $X\times\widetilde Y$, it is a compact Kähler space. We choose $\mu:\widetilde X\to Z$ to be a resolution of singularities; in particular, $\widetilde X$ is a compact Kähler manifold. Moreover,
\[
    \widetilde p:=p\circ\mu
\]
is a fibration, due to the connectivity of its general fibers. Setting $g:=q\circ\mu$, we have
\[
    f\circ g=\pi\circ\widetilde p.
\]

\begin{lem}
    For each $\varepsilon>0$, choose any
    $  R_\varepsilon
        \in
        f^*\alpha[-\varepsilon\omega_X]_{\rm an}$
    and set $B_\varepsilon:=R_\varepsilon+\varepsilon\omega_X\geq0$.
    Let $k>\nu(\alpha,Y)$. Then
    \[
        I_\varepsilon
        :=
        \int_X
        \langle B_\varepsilon^k\rangle
        \wedge\omega_X^{n-k}
        \to0.
    \]
\end{lem}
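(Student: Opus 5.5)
The strategy is to push $B_\varepsilon$ through the flattening diagram and use Păun's descent, Theorem~\ref{paun_descent}, to produce currents on $\widetilde Y$ with controlled negativity whose pullbacks are less singular than $g^*B_\varepsilon$. The masses $I_\varepsilon$ are then dominated by positive products on $\widetilde Y$ of a class converging to $\pi^*\alpha$. Throughout, fix $k>\nu(\alpha,Y)=\nu(\pi^*\alpha,\widetilde Y)$, where the equality is Theorem~\ref{nd_bi_invari}.

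\textbf{Step 1: transfer to $\widetilde X$.} Write $R_\varepsilon=\theta_X+dd^c v_\varepsilon$, with $\theta_X=f^*\theta$ for a smooth representative $\theta$ of $\alpha$. Choose $C$ with $g^*\omega_X\le C\widetilde\omega$. By the argument in the second half of the proof of Theorem~\ref{nd_bi_invari}, applied to $g=q\circ\mu$, which is bimeromorphic, we get
\[
I_\varepsilon\le C'\int_{\widetilde X}\langle (g^*B_\varepsilon)^k\rangle\wedge\widetilde\omega^{n-k}.
\]
This uses Lemma~\ref{prop:fixed-current-pullback}, Lemma~\ref{lem:fibre-integration} with $c_g=1$, and $g^*\omega_X\le C\widetilde\omega$.

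\textbf{Step 2: descent to $\widetilde Y$.} On $Z$, the function $u:=v_\varepsilon\circ q$ satisfies
\[
p^*\pi^*\theta+dd^c u\ge -\varepsilon q^*\omega_X\ge -C\varepsilon\Omega .
\]
We apply Theorem~\ref{paun_descent} to the equidimensional map $p$ with $t=C\varepsilon$. This gives $\varphi_\varepsilon$ on $\widetilde Y$ with analytic singularities, $\varphi_\varepsilon\in\operatorname{PSH}(\widetilde Y,\pi^*\theta+\delta_\varepsilon\omega_{\widetilde Y})$, $\delta_\varepsilon\to0$, and $u\le p^*\varphi_\varepsilon+O(1)$. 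Pulling back by $\mu$ gives
\[
v_\varepsilon\circ g\le \widetilde p^*\varphi_\varepsilon+O(1).
\]
Set $S_\varepsilon:=\pi^*\theta+\delta_\varepsilon\omega_{\widetilde Y}+dd^c\varphi_\varepsilon\ge0$. Fix the Kähler form $\widetilde\omega$ large enough that $\widetilde p^*\omega_{\widetilde Y}\le\widetilde\omega$ and $g^*\omega_X\le C\widetilde\omega$. Then $g^*B_\varepsilon\le$ in singularity type, and in class is dominated by, $\widetilde p^*S_\varepsilon+C'\eta_\varepsilon\widetilde\omega$ with $\eta_\varepsilon:=\varepsilon+\delta_\varepsilon$. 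More precisely, $\widetilde p^*S_\varepsilon+C''\eta_\varepsilon\widetilde\omega$ and $g^*B_\varepsilon+(\text{smooth positive})$ lie in the same class, and the former has the less singular potential. Theorem~\ref{DDL25 thm 33}, applied after adding $\eta_\varepsilon\widetilde\omega$ to make both classes big, then gives
\[
\int\langle (g^*B_\varepsilon)^k\rangle\wedge\widetilde\omega^{n-k}\le \int\big\langle(\widetilde p^*S_\varepsilon+C''\eta_\varepsilon\widetilde\omega)^k\big\rangle\wedge\widetilde\omega^{n-k}.
\]
The main bookkeeping obstacle is here: I must make sure that adding the smooth semipositive difference to $g^*B_\varepsilon$ only increases the mass, which follows from multilinearity.

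\textbf{Step 3: expand and conclude.} By multilinearity, the right-hand side expands into terms
\[
\eta_\varepsilon^{k-j}\int\langle(\widetilde p^*S_\varepsilon)^j\rangle\wedge\widetilde\omega^{n-j}.
\]
Since $S_\varepsilon$ has analytic singularities, Lemma~\ref{prop:fixed-current-pullback} identifies $\langle(\widetilde p^*S_\varepsilon)^j\rangle$ with $\widetilde p^\star\langle S_\varepsilon^j\rangle$, and Proposition~\ref{prop:DS-pullback} bounds its mass by $C_j\int_{\widetilde Y}\langle S_\varepsilon^j\rangle\wedge\omega_{\widetilde Y}^{m-j}$. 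By monotonicity, this is at most the mass of $\langle(\pi^*\alpha+\delta_\varepsilon\{\omega_{\widetilde Y}\})^j\rangle$, which is uniformly bounded.

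It remains to handle $j=k$. If $k>m$, the term vanishes by Lemma~\ref{prop:fixed-current-pullback}. Otherwise the bound is
\[
\int\langle(\pi^*\alpha+\delta_\varepsilon\omega_{\widetilde Y})^k\rangle\wedge\omega_{\widetilde Y}^{m-k}\to\int\langle(\pi^*\alpha)^k\rangle\wedge\omega_{\widetilde Y}^{m-k}=0,
\]
since $k>\nu(\pi^*\alpha,\widetilde Y)$. The terms with $j<k$ carry a factor $\eta_\varepsilon^{k-j}\to0$ against bounded masses. Hence $I_\varepsilon\to0$.
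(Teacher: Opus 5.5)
Your argument is correct and is essentially the paper's proof: Păun's descent (Theorem~\ref{paun_descent}) on the flattened family, then comparison of singularity types on $\widetilde X$ via monotonicity of non-pluripolar masses, then multilinear expansion and the Dinh--Sibony mass bound, with the top term killed by $k>\nu(\pi^*\alpha,\widetilde Y)$. The differences are cosmetic: you test against a Kähler form $\widetilde\omega$ on $\widetilde X$ and absorb $\varepsilon g^*\omega_X$ into $C''\eta_\varepsilon\widetilde\omega$, whereas the paper tests against $g^*\omega_X$ and keeps $\varepsilon g^*\omega_X$ in $C_\varepsilon$; also, the inequality in your Step~1 comes from the fibre-integration argument in the first half of the proof of Theorem~\ref{nd_bi_invari}, not the second half.
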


\begin{proof}
    Write $R_\varepsilon
        =
        f^*\theta+dd^c\psi_\varepsilon
        $ on $X$.
    Fix a Kähler form $\Omega$ on $Z$ such that
    $\Omega-q^*\omega_X$ is Kähler. We have
    \[
    \begin{aligned}
        p^*(\pi^*\theta)
        +
        dd^c(\psi_\varepsilon\circ q)
        &=
        q^*f^*\theta
        +
        dd^c(\psi_\varepsilon\circ q) \\
        &=
        q^*R_\varepsilon
        \geq
        -\varepsilon q^*\omega_X
        >
        -\varepsilon\Omega
        ~~\text{on }Z.
    \end{aligned}
    \]
    By Theorem \ref{paun_descent}, there exist a Kähler form
    $\widetilde{\omega_Y}$ on $\widetilde Y$, numbers
    $\delta_\varepsilon\searrow0$, and functions
    $\varphi_\varepsilon
        \in
        {\rm PSH}
        (
            \widetilde Y,
            \pi^*\theta+\delta_\varepsilon\widetilde{\omega_Y})$
    with analytic singularities such that
    $\varphi_\varepsilon\circ p+O(1)
        \geq
        \psi_\varepsilon\circ q$ \text{on } $Z$.
    Hence $\varphi_\varepsilon\circ\widetilde p
        \succeq
        \psi_\varepsilon\circ g
        ~~\text{on }\widetilde X$.

    Define the closed positive currents
    \[
    \begin{aligned}
        G_\varepsilon
        &:=
        g^*B_\varepsilon, ~~
        G_\varepsilon' :=
        G_\varepsilon
        +
        \delta_\varepsilon
        \widetilde p^*\widetilde{\omega_Y}, \\
        C_\varepsilon
        &:=
        \widetilde p^*\pi^*\theta
        +
        dd^c(\varphi_\varepsilon\circ\widetilde p)
        +
        \delta_\varepsilon
        \widetilde p^*\widetilde{\omega_Y}
        +
        \varepsilon g^*\omega_X
    \end{aligned}
    \]
    on $\widetilde X$. Here
    \[
        G_\varepsilon',C_\varepsilon
        \in
        \widetilde p^*\pi^*\alpha
        +
        \varepsilon g^*\{\omega_X\}
        +
        \delta_\varepsilon
        \widetilde p^*\{\widetilde{\omega_Y}\}.
    \]
    Since
    $ C_\varepsilon\succeq G_\varepsilon'$, 
    it follows from Theorem \ref{DDL25 thm 33} that
    \[
        \int_{\widetilde X}
        \langle G_\varepsilon'^k\rangle
        \wedge g^*\omega_X^{n-k}
        \leq
        \int_{\widetilde X}
        \langle C_\varepsilon^k\rangle
        \wedge g^*\omega_X^{n-k}.
    \]
    We also have $ \langle G_\varepsilon^k\rangle
        \leq
        \langle G_\varepsilon'^k\rangle$
    by the multilinearity of the non-pluripolar product. Furthermore, Lemma
    \ref{prop:fixed-current-pullback} gives $g^\star\langle B_\varepsilon^k\rangle
        =
        \langle G_\varepsilon^k\rangle$.
    Since $g$ is bimeromorphic morphism, Lemma \ref{lem:fibre-integration}, with
    $c_g=1$, yields
    \[
        I_\varepsilon
        =
        \int_{\widetilde X}
        \langle G_\varepsilon^k\rangle
        \wedge g^*\omega_X^{n-k}.
    \]
    Combining the inequalities above with the expression for
    $I_\varepsilon$, we obtain
    \begin{equation}\label{G_e_G_e'_C_e}
    \begin{aligned}
        I_\varepsilon
        &=
        \int_{\widetilde X}
        \langle G_\varepsilon^k\rangle
        \wedge g^*\omega_X^{n-k} \\
        &\leq
        \int_{\widetilde X}
        \langle G_\varepsilon'^k\rangle
        \wedge g^*\omega_X^{n-k} \leq
        \int_{\widetilde X}
        \langle C_\varepsilon^k\rangle
        \wedge g^*\omega_X^{n-k}.
    \end{aligned}
    \end{equation}

    Set $S_\varepsilon
        :=
        \pi^*\theta
        +
        \delta_\varepsilon\widetilde{\omega_Y}
        +
        dd^c\varphi_\varepsilon$ on $\widetilde Y$.
    Then
    $C_\varepsilon
        =
        \widetilde p^*S_\varepsilon
        +
        \varepsilon g^*\omega_X$.
        By the multilinearity of the non-pluripolar product, we obtain
    \begin{equation}\label{C_e_S_e_pull_back}
    \begin{aligned}
        \langle C_\varepsilon^k\rangle
        &=
        \sum_{j=0}^k
        \varepsilon^{k-j}
        \binom{k}{j}
        \langle(\widetilde p^*S_\varepsilon)^j\rangle
        \wedge g^*\omega_X^{k-j} \\
        &=
        \sum_{j=0}^k
        \varepsilon^{k-j}
        \binom{k}{j}
        \widetilde p^\star
        \langle S_\varepsilon^j\rangle
        \wedge g^*\omega_X^{k-j}.
    \end{aligned}
    \end{equation}

    Fix a Kähler form $\widetilde{\omega_X}$ on $\widetilde X$ and choose
    $C>0$ such that $C\widetilde{\omega_X}\geq g^*\omega_X$.
    Applying Lemma \ref{prop:DS-pullback} to $\widetilde p$, we obtain, for
    $0\leq j\leq\min\{k,m\}$,
    \[
    \begin{aligned}
        \int_{\widetilde X}
        \widetilde p^\star\langle S_\varepsilon^j\rangle
        \wedge g^*\omega_X^{n-j}
        &\leq
        C'
        \int_{\widetilde X}
        \widetilde p^\star\langle S_\varepsilon^j\rangle
        \wedge\widetilde{\omega_X}^{n-j} \\
        &\leq
        C''
        \int_{\widetilde Y}
        \langle S_\varepsilon^j\rangle
        \wedge\widetilde{\omega_Y}^{m-j}.
    \end{aligned}
    \]
    Terms with $j>m$ vanish for dimensional reasons. Combining
    (\ref{G_e_G_e'_C_e}) and (\ref{C_e_S_e_pull_back}), we obtain
    \[
    \begin{aligned}
        I_\varepsilon
        \leq
        C''
        \sum_{j=0}^{\min\{k,m\}}
        \varepsilon^{k-j}
        \binom{k}{j}
        \int_{\widetilde Y}
        \langle S_\varepsilon^j\rangle
        \wedge\widetilde{\omega_Y}^{m-j}.
    \end{aligned}
    \]

    For every $j<k$, the quantities
    $\int_{\widetilde Y}
        \langle S_\varepsilon^j\rangle
        \wedge\widetilde{\omega_Y}^{m-j}$ 
    are uniformly bounded by a cohomological constant. Since these terms are multiplied by
    $\varepsilon^{k-j}$, their contributions tend to zero.

    If $k\leq m$, then the only remaining term is the one with $j=k$. By
    Theorem \ref{nd_bi_invari}, $\nu(\pi^*\alpha,\widetilde Y)
        =
        \nu(\alpha,Y)
        <
        k$.
    Since $\delta_\varepsilon\to0$, the definition of the numerical dimension gives
    \[
        \int_{\widetilde Y}
        \langle S_\varepsilon^k\rangle
        \wedge\widetilde{\omega_Y}^{m-k}
        \to0.
    \]
    If $k>m$, every nonzero term satisfies $j\leq m<k$ and is therefore multiplied by a positive power of $\varepsilon$. Consequently, $  I_\varepsilon\to0$.
\end{proof}

\begin{thm}\label{thm:fibration-invariance}
Let $f :  X\to Y$ be a fibration between compact Kähler
manifolds, and let $\alpha\in H^{1,1}(Y,\mathbb R)$ be psef. Then $\nu(f^*\alpha,X)=\nu(\alpha,Y)$.
\end{thm}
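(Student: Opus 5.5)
The theorem follows by combining the two inequalities already prepared in this subsection. The inequality $\nu(f^*\alpha,X)\geq\nu(\alpha,Y)$ is Lemma~\ref{easy_direction_fib_nd}. For the reverse inequality $\nu(f^*\alpha,X)\leq\nu(\alpha,Y)$, I will use the analytic characterization $\nu=\nu_{\mathrm{an}}$ from Section~\ref{sec 212} together with the preceding lemma on the masses $I_\varepsilon$.

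Fix an integer $k$ with $\nu(\alpha,Y)<k\leq n$. I claim that
\[
\limsup_{\varepsilon\downarrow0}\sup_{R\in f^*\alpha[-\varepsilon\omega_X]_{\rm an}}\int_{X\backslash\operatorname{Sing}(R)}(R+\varepsilon\omega_X)^k\wedge\omega_X^{n-k}=0 .
\]
Once this holds for every such $k$, the definition of $\nu_{\mathrm{an}}$ gives $\nu(f^*\alpha,X)\leq\nu(\alpha,Y)$, and combining with Lemma~\ref{easy_direction_fib_nd} yields equality.

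To prove the claim, argue by contradiction. If it fails, there exist $c>0$, a sequence $\varepsilon_j\downarrow0$, and currents $R_{\varepsilon_j}\in f^*\alpha[-\varepsilon_j\omega_X]_{\rm an}$ with
\[
\int_{X\backslash\operatorname{Sing}(R_{\varepsilon_j})}(R_{\varepsilon_j}+\varepsilon_j\omega_X)^k\wedge\omega_X^{n-k}\geq c .
\]
Complete this sequence to a family $(R_\varepsilon)_{\varepsilon>0}$ by choosing arbitrary elements of $f^*\alpha[-\varepsilon\omega_X]_{\rm an}$ for the other values of $\varepsilon$. These sets are nonempty by Theorem~\ref{thm:DP04-regularization}, because $f^*\alpha$ is psef.

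Set $B_\varepsilon:=R_\varepsilon+\varepsilon\omega_X$. Since $B_\varepsilon$ has analytic singularities, it is smooth outside $\operatorname{Sing}(R_\varepsilon)$. Hence its non-pluripolar product is the trivial extension of the smooth wedge product, that is, $\langle B_\varepsilon^k\rangle=\mathbf 1_{X\backslash\operatorname{Sing}(R_\varepsilon)}B_\varepsilon^k$. So the integrand above is exactly the quantity $I_\varepsilon$ of the preceding lemma. That lemma applies to an arbitrary choice of $R_\varepsilon$ for each $\varepsilon$, and it gives $I_\varepsilon\to0$. This contradicts $I_{\varepsilon_j}\geq c$ along the sequence.

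The only real subtlety is this uniformity: the supremum over all admissible $R$ must also tend to zero. It is resolved by the subsequence argument above, since the lemma places no restriction on which $R_\varepsilon$ is selected. The heavy work (flattening, Păun's descent via Theorem~\ref{paun_descent}, and the estimates on the Dinh--Sibony pullback) is already contained in that lemma, so no new obstacle is expected.
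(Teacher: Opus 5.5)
Your proposal is correct and is essentially the paper's proof: the easy direction comes from Lemma~\ref{easy_direction_fib_nd}, and the reverse inequality comes from the preceding lemma on $I_\varepsilon$. Your subsequence argument, which completes an arbitrary bad sequence to a family and applies the lemma to it, simply spells out the uniformity over all admissible currents that the paper asserts in a single phrase.
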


\begin{proof}
Lemma~\ref{easy_direction_fib_nd} gives the inequality ``$\geq$''.
If $k>\nu(\alpha,Y)$, the preceding Lemma shows, uniformly over all
admissible currents in the definition of $\nu(f^*\alpha,X)$, that the
corresponding $k$-th masses tend to zero. Hence
$\nu(f^*\alpha,X)\leq\nu(\alpha,Y)$.
\end{proof}

\subsection{\texorpdfstring{$f$}{f}-degenerate divisors}\label{Sec 3.2}

Let $f: X \to Y$ be a fibration between compact Kähler manifolds.
We first recall the notion of degenerations of the divisor under fibration.

\begin{defi}\label{def:f-degenerate-divisor}
    Let $D\geq 0$ be a vertical
    $\mathbb R$-divisor on $X$. We say that $D$ is

    \noindent 1). \emph{$f$-exceptional} if $f({\rm Supp}~D) \subset Y$ has codimension at least 2.

    \noindent 2). \emph{$f$-degenerate} if
    for every prime divisor $Q\subset Y$ such that
    $Q\subset f(\operatorname{Supp}D)$, there exists a prime divisor
    $P\subset X$ satisfying 
    $$
    f(P)=Q,~~
    P\not\subset\operatorname{Supp}D.
    $$

    \noindent 3). \emph{weakly $f$-degenerate} if either $D$ is
$f$-exceptional, or there exist prime divisors $Q\subset Y$ and
$P\subset X$ such that
\[
Q\subset f(\operatorname{Supp}D),~~
f(P)=Q,~~
P\not\subset\operatorname{Supp}D.
\]
\end{defi}

Clearly, for a vertical divisor $D \geq 0$, if $D$ is $f$-exceptional, then $D$ is $f$-degenerate; if $D$ is $f$-degenerate, then $D$ is weakly $f$-degenerate.

The following proposition relates $f$-degenerate divisors to the divisorial Zariski decomposition.

\begin{prop}\label{prop:f-degenerate-divisor}
    Let $f :  X\to Y$ be a fibration between compact Kähler manifolds,
    let $\alpha\in H^{1,1}(Y,\mathbb R)$ be a psef class, and let
    $0\neq D\geq 0$ be a vertical $\mathbb R$-divisor.
    If $D$ is weakly $f$-degenerate, then at least one prime component $\Gamma$ of $D$ such that
    \[
       \Gamma \subset {\rm Supp} ~N(f^*\alpha+\{D\}) .
    \]
\end{prop}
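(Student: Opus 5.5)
The plan is to argue by contradiction via a Zariski-type negativity lemma on the vertical divisors, in the spirit of \cite[Lemma~III.5.1]{Na04}. Set $\beta:=f^*\alpha+\{D\}$ and suppose that no prime component of $D$ lies in $\operatorname{Supp}N(\beta)$. Write $\beta=Z(\beta)+\{N(\beta)\}$, and split $N(\beta)=N_v+N_h$, where $N_v$ collects the components lying over $f(\operatorname{Supp}D)$. Then
\[
Z(\beta)=f^*\alpha+\{E\}-\{N_h\},\qquad E:=D-N_v .
\]
By hypothesis $E$ has strictly positive coefficient along every component of $D$, and coefficient $\le 0$ along every other prime divisor lying over $f(\operatorname{Supp}D)$.

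\textbf{Positivity input.} I would use that $Z(\beta)$ is modified nef (\cite[Proposition~3.8]{Bou04}) in the following form.
\begin{itemize}
\item For every prime divisor $G\subset X$ and every $\varepsilon>0$, there is a current in $Z(\beta)[-\varepsilon\omega]$ with zero generic Lelong number along $G$.
\item Its restriction to $G$ is therefore well defined. Letting $\varepsilon\downarrow0$, this gives
\[
Z(\beta)\cdot\{G\}\cdot\gamma\ge 0
\]
for every smooth closed semipositive $(n-2,n-2)$-form $\gamma$ built from Kähler forms and pullbacks of Kähler forms from $Y$.
\end{itemize}

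\textbf{Case 2), $D$ not $f$-exceptional.} Pick a prime $Q\subset Y$ with $Q\subset f(\operatorname{Supp}D)$ and a prime $P$ over $Q$ with $P\not\subset\operatorname{Supp}D$. Let $G_1,\dots,G_s$ be the prime divisors of $X$ with $f(G_i)=Q$, and take the test form
\[
\gamma:=f^*\omega_Y^{m-1}\wedge\omega_X^{r-1}.
\]
Geometrically, cutting by $f^*\omega_Y^{m-1}$ restricts to $f^{-1}(C)$ for a general curve $C\subset Y$ meeting $Q$ transversally at general points. The pairing $(A,B)\mapsto A\cdot B\cdot\gamma$ on $\bigoplus\mathbb R G_i$ is then the usual fibre intersection form on a fibred "surface". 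Zariski's lemma says it is negative semidefinite with kernel $\mathbb R\,f^*Q$; in the Kähler setting this follows from the Hodge index theorem applied fibre by fibre.

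The argument then runs as follows.
\begin{itemize}
\item $f^*\alpha\cdot G_i\cdot\gamma=0$, since $f^*\alpha\wedge f^*\omega_Y^{m-1}$ restricted to a divisor mapping to $Q$ vanishes for dimension reasons on $Y$.
\item $N_h\cdot G_i\cdot\gamma\ge0$, because $N_h$ shares no component with $G_i$.
\item Let $E_Q$ be the part of $E$ supported on the $G_i$; the other vertical parts do not meet $f^{-1}(C)$ in codimension one. Combining the two points above with the positivity input gives $E_Q\cdot G_i\cdot\gamma\ge0$ for all $i$.
\item Decompose $E_Q=E^+-E^-$. Pairing with $E^+$ (supported on components of $D$, hence not in $N_h$) yields $E_Q\cdot E^+\cdot\gamma\ge0$, while pairing with $E^-$ and applying the standard Zariski-lemma argument yields $E_Q\cdot E_Q\cdot\gamma\ge 0$.
\item Negative semidefiniteness then forces $E_Q\in\mathbb R\,f^*Q$.
\end{itemize}
But $E_Q$ is positive on the components of $D$ over $Q$ and $\le0$ on $P$, while $f^*Q$ has strictly positive coefficients on all $G_i$. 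This is a contradiction.

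\textbf{Case 1), $D$ $f$-exceptional, and the main obstacle.} Here the analogue is the negativity lemma: an $f$-exceptional divisor $E\ne0$ whose negative part is $\le0$ cannot satisfy $E\cdot G\cdot\gamma\ge0$ for all its components $G$. I would prove it by choosing a component $W$ of $f(\operatorname{Supp}D)$ with $\operatorname{codim}W=c\ge2$. Cutting $Y$ by $\dim W$ general hyperplane-type forms and $c-2$ further general Kähler forms through a point of $W$ reduces to the case of a point blown up, i.e.\ a bimeromorphic-type situation over a surface germ. There the Hodge index theorem for the relevant exceptional configuration gives negative definiteness, and the same positive-versus-negative argument yields a contradiction.

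I expect the main obstacle to be making these intersection-theoretic steps rigorous in the Kähler setting. Specifically, the two points to establish are the vanishing and nonnegativity statements for $Z(\beta)|_G$ tested against degenerate forms like $f^*\omega_Y^{m-1}\wedge\omega_X^{r-1}$, and the Zariski/negativity lemma itself without projectivity. I would first try to obtain them by Hodge–Riemann bilinear relations on a resolution of a general fibre $G\cap f^{-1}(y)$, using that $f$ is smooth over the general point of $Q$ away from codimension two.
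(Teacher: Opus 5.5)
Your Case 2 is essentially sound. It is a global Kähler version of Nakayama's Zariski-lemma argument, and the pairings you use are legitimate: with $\gamma=f^*\omega_Y^{m-1}\wedge\omega_X^{r-1}$, every vertical divisor not mapping onto $Q$ pairs to zero with $G_i\cdot\gamma$ for dimension reasons, and horizontal ones contribute with the right sign.

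Two corrections are needed there.
\begin{enumerate}
\item Negative semidefiniteness with kernel $\mathbb R\sum_i\mu_iG_i$ is not a Hodge-index statement. It is the elementary Zariski argument: $M_{ij}:=G_i\cdot G_j\cdot\gamma\geq0$ for $i\neq j$, and $\sum_j\mu_jM_{ij}=0$, because $f^*\{Q\}\cdot G_i\cdot\gamma=0$ and the components of $f^*Q$ over codimension $\geq2$ pair to zero. The kernel is one-dimensional precisely because the fibres of $f$ over general points of $Q$ are connected, which makes the graph with edges $M_{ij}>0$ connected. This is where the fibration hypothesis enters (compare the paper's example with disconnected fibres), and you should say so.
\item Your $E^\pm$ step does not yield $E_Q\cdot E_Q\cdot\gamma\geq0$ as written. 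Instead, $E_Q\cdot G_i\cdot\gamma\geq0$ for all $i$, together with $\sum_i\mu_i\,E_Q\cdot G_i\cdot\gamma=0$, forces all these numbers to vanish, so $E_Q$ lies in the kernel.
\end{enumerate}

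The genuine gap is Case 1 ($D$ $f$-exceptional). There no negativity statement is actually proved, and the proposed reduction does not work in this setting.

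\emph{Forms.} Kähler forms on $Y$ cannot be taken ``through a point of $W$''. If you literally test against $f^*\omega_Y^{m-2}\wedge\omega_X^{r}$, then for $\operatorname{codim}W\geq3$ every pairing $G_i\cdot G_j\cdot\gamma$ between divisors over $W$ vanishes identically. Indeed, $f^*\omega_Y^{m-2}$ restricts to zero on anything mapping to a set of dimension $<m-2$, so there is no definite form to exploit.

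\emph{Hypersurfaces.} If instead you cut $Y$ by local hypersurfaces down to a surface germ $S$, then $f^{-1}(S)\to S$ still has relative dimension $r$, so this is not a bimeromorphic situation. Reaching a surface configuration would require cutting $X$ by hypersurfaces near a compact fibre, i.e.\ local projectivity of $f$. This proposition is stated for arbitrary Kähler fibrations, without the rational connectedness that gives projectivity.

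The missing ingredient is a \emph{local principal divisor} replacing the fibre class. The paper uses it uniformly in both cases.
\begin{enumerate}
\item Choose $W$ maximal among the $f(D_\nu)$ and a small neighbourhood $U$ of a general point of $W$. Take a local hypersurface $H=\{z_1=0\}\supset W\cap U$.
\item Since $f^*H=\operatorname{Div}(z_1\circ f)$, one can write $D|_{X_U}+G=\lambda f^*H$ with $G\geq0$. Some branch $F_i$ of $D$ has $\operatorname{ord}_{F_i}G=0$. Some $F_j$ has $\operatorname{ord}_{F_j}G>0$: it maps onto $H$ when $\operatorname{codim}W\geq2$, or is a branch of the divisor over $Q$ not in $\operatorname{Supp}D$ when $\operatorname{codim}W=1$.
\item Connectedness of the fibres over $W$ produces a component $C\subset F_i\cap F_j$ dominating $W\cap U$.
\item Take the compactly supported closed form $\Phi=\Omega^{r-1}\wedge g^*\eta$ on a resolution of the global component $\Gamma\supset F_i$. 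Stokes gives $\langle h^*\{D\},\Phi\rangle=-\langle h^*\{G\},\Phi\rangle<0$.
\item This contradicts $\langle h^*\{D\},\Phi\rangle\geq0$, which follows from psefness of $Z(\beta)|_\Gamma$ and the fact that $\Gamma\not\subset\operatorname{Supp}N(\beta)$.
\end{enumerate}
With this local principal divisor you could alternatively run your matrix argument fibrewise over general points of $W$. Without it, Case 1 remains unproved.
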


\begin{rem}\label{rem 314}
In fact, the notion of weak $f$-degeneracy is equivalent to {\it insufficient fiber type} as defined in \cite[Chapter III.5]{Na04}.
Proposition \ref{prop:f-degenerate-divisor} extends the consequence of \cite[Corollary III.5.3]{Na04} from psef divisors on projective varieties to psef classes on compact Kähler manifolds.    

However, we must clarify: in \cite{Na04}, the author did not require the fibers of the mapping to be connected. This is incorrect for \cite[Corollary III.5.3]{Na04}, even replacing weakly $f$-degenerate with $f$-degenerate; see Example \ref{ex:disconnected-fibres-f-degenerate} below.
\end{rem}

We isolate the analytic tool used in the proof.

\begin{lem}\label{lem:localized-test-form}
    Let $M$ be a compact Kähler manifold of dimension $d$, let
    $g :  M\to Y$ be holomorphic, and suppose that
    $W:=g(M)$ is irreducible of dimension $m$. Let
    $V\subset W_{\mathrm{reg}}$ be open and let
    $C\subset g^{-1}(V)$ be an irreducible hypersurface. If, at some smooth
    point $x_0\in C$,
    \[
        \operatorname{rank}d(g|_C)_{x_0}=m,
    \]
    then there exists a smooth strongly positive $d$-closed form
     $ \Phi\in\mathcal A^{d-1,d-1}_c\bigl(g^{-1}(V)\bigr)$
    such that
    \[
        \int_C\Phi>0,
        ~~
        g^*a\wedge\Phi=0
    \]
    for every smooth $(1,1)$-form $a$ on $Y$. Its support may be confined to
    any prescribed connected component of $g^{-1}(V)$ containing $x_0$.
\end{lem}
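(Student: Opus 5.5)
The plan is a local construction near $x_0$: build a bump form in adapted coordinates so that wedging with any pulled-back $(1,1)$-form kills it. The form will be a product of a fibre-direction positive form and a base-direction positive form supported near the point.

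\textbf{Step 1 (coordinates at $x_0$).} Since $\operatorname{rank} d(g|_C)_{x_0}=m=\dim W$ and $g(x_0)\in V\subset W_{\mathrm{reg}}$, the map $g|_C$ is a submersion near $x_0$ onto the $m$-dimensional manifold $W_{\mathrm{reg}}$. Because $C$ is a smooth hypersurface of $M$ near $x_0$, the map $g$ itself also has rank $m$ at $x_0$ as a map to $W_{\mathrm{reg}}$. By the constant rank theorem, there are holomorphic coordinates $(y_1,\dots,y_m,z_1,\dots,z_{d-m})$ on a small polydisc $U\ni x_0$, contained in $g^{-1}(V)$ and in the prescribed connected component, with the following properties:
\begin{itemize}
\item $g$ is given by $(y,z)\mapsto y$ in local coordinates on $W_{\mathrm{reg}}$;
\item $C\cap U=\{z_{d-m}=0\}$.
\end{itemize}
The second point is possible because $C$ is transverse to the fibres of $g$: we straighten $C$ as a graph over the $y$-directions and the first $d-m-1$ fibre coordinates. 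Note that $d-m\geq1$ is forced, since $C$ is a hypersurface surjecting locally onto an $m$-dimensional space.

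\textbf{Step 2 (the form).} Choose cutoffs $\chi(y)\geq0$ and $\eta(z)\geq0$ with small compact support. Set
\[
\Phi:=\chi(y)\,\Bigl(\textstyle\bigwedge_{i=1}^m \tfrac{\sqrt{-1}}{2} dy_i\wedge d\bar y_i\Bigr)\wedge \eta(z)\,\Bigl(\textstyle\bigwedge_{j=1}^{d-m-1}\tfrac{\sqrt{-1}}{2}dz_j\wedge d\bar z_j\Bigr).
\]
This has bidegree $(d-1,d-1)$, is compactly supported in $U$, and is strongly positive as a product of positive $(1,1)$-forms with nonnegative coefficients.

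\textbf{Step 3 (vanishing and positivity).} Any $g^*a$ with $a$ a smooth $(1,1)$-form on $Y$ restricts on $U$ to a combination of $dy_i\wedge d\bar y_k$. Each such term wedges to zero against the full $dy\wedge d\bar y$ block, so $g^*a\wedge\Phi=0$. On $C\cap U$, parametrized by $(y,z_1,\dots,z_{d-m-1})$, we get
\[
\int_C\Phi=\int\chi\,\eta\,dV>0,
\]
provided $\eta$ is positive near $z=0$ in the first $d-m-1$ variables. (If needed, take $\eta(z)=\eta_1(z_1,\dots,z_{d-m-1})\eta_2(z_{d-m})$ with $\eta_2(0)>0$.)

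\textbf{Step 4 (closedness — the main obstacle).} As written, $d\Phi$ contains the terms $d\chi$ and $d\eta$. The $d\chi$ terms are $y$-differentials, which wedge to zero with the full $y$-block. The $d\eta$ terms are handled by making $\eta$ depend only on the missing variable, $\eta=\eta_2(z_{d-m})$: then $d\eta$ involves only $dz_{d-m}$ and $d\bar z_{d-m}$. These do not vanish automatically, so I would instead let $\Phi$ carry $\chi$ in the $y$-variables and a function of $z_{d-m}$ only, and choose the fibre block differently. Replace the $z$-block by $\eta_1(z')\bigwedge_j$ and absorb compact support in $z'$ into $d$-closed bump forms of top degree in $z'$. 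A top-degree form in $(y,z')$ times a function is closed in those variables; only $z_{d-m}$-derivatives remain.

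To handle $z_{d-m}$, I would use the trick of making the coefficient depend on $(y,z')$ only. Compact support in the $z_{d-m}$-direction then fails, so we cut off in $z_{d-m}$ and accept a non-closed error. Hence the cleanest route I'd try first is to relax: take $\Phi$ closed by choosing its coefficient independent of $z_{d-m}$ on a tube, and obtain compact support through the geometry instead. If this fails, I expect the lemma's use downstream only needs $\int T\wedge\Phi$ for closed currents $T$, where $d$-closedness of $\Phi$ is precisely what permits cohomological computation. This is the delicate point, and I expect it to require a careful choice such as
\[
\Phi=dd^c\!\bigl(\text{compactly supported potential}\bigr)\wedge(\text{top } y\text{-block}),
\]
which is automatically closed, with positivity checked on $C$.
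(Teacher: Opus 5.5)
Your Steps 1--3 are correct, but Step 4 is a genuine gap, and it cannot be repaired inside your polydisc $U$. Since $U$ is diffeomorphic to $\mathbb R^{2d}$ and $H^{2d-2}_c(\mathbb R^{2d})=0$, every $d$-closed $(d-1,d-1)$-form $\Phi$ with compact support in $U$ can be written as $\Phi=d\psi$ with $\psi$ compactly supported in $U$. The integration current $[C]$ is closed on $g^{-1}(V)$, so $\int_C\Phi=\langle [C],d\psi\rangle=0$. Thus $d$-closedness, compact support in $U$ and $\int_C\Phi>0$ are mutually incompatible. The surviving $dz_{d-m}$, $d\bar z_{d-m}$ terms you found are a symptom of this obstruction, not a technical nuisance. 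Your fallback $dd^c(\text{compactly supported potential})\wedge(\ldots)$ fails for the same reason, since it is $d$ of a compactly supported form. You also cannot drop closedness. In the proof of Proposition~\ref{prop:f-degenerate-divisor}, two steps require $d\Phi=0$: pairing the classes $h^*\{D|_\Gamma\}$ and $h^*(Z(\beta)|_\Gamma)$ with $\Phi$, and the Stokes argument for $D|_{X_U}+G=\lambda\operatorname{Div}(z_1\circ f)$.

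The missing idea is that compact support must come from the compactness of $M$, hence of the fibres of $g$, not from a cutoff in the fibre directions. Only the base directions should be cut off. Put $r=d-m$ and fix a Kähler form $\Omega$ on $M$ and a neighbourhood $V_0\Subset V$ of $g(x_0)$. Take a smooth $(m,m)$-form $\eta\geq0$ on the $m$-dimensional manifold $W_{\mathrm{reg}}$, compactly supported in $V_0$ and strictly positive at $g(x_0)$. Set $\Phi:=\Omega^{r-1}\wedge g^*\eta$, extended by zero. Then:
\begin{itemize}
\item $\Phi$ is closed and strongly positive, being a product of such forms.
\item $\Phi$ is supported in the compact set $g^{-1}(\overline{V_0})\subset g^{-1}(V)$.
\item $g^*a\wedge\Phi=\Omega^{r-1}\wedge g^*(a|_{W_{\mathrm{reg}}}\wedge\eta)=0$, because $a|_{W_{\mathrm{reg}}}\wedge\eta$ has bidegree $(m+1,m+1)$ on an $m$-dimensional manifold.
\item $\Phi|_C\geq0$ everywhere and $\Phi|_C>0$ near $x_0$, since $g|_C$ is a submersion there (your Step 1) and $\Omega^{r-1}$ is positive on its $(r-1)$-dimensional fibres.
\end{itemize}
To confine the support to a prescribed connected component of $g^{-1}(V)$, multiply by the indicator function of that component; it is smooth because components are open and closed. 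This is the paper's construction. In your coordinates it amounts to keeping $\chi(y)\bigwedge_i\frac{\sqrt{-1}}{2}dy_i\wedge d\bar y_i$, which is locally $g^*\eta$, and replacing the local block $\eta(z)\bigwedge_{j<d-m}\frac{\sqrt{-1}}{2}dz_j\wedge d\bar z_j$ by the global closed form $\Omega^{d-m-1}$, so no cutoff in $z$ is needed.
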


\begin{proof}
    Let $r=d-m$. Since $C$ has dimension $d-1$ and $g|_C$ has rank $m$ at
    $x_0$, we have $r\geq 1$. Fix a Kähler form $\Omega$ on $M$, choose a
    relatively compact neighbourhood $V_0\Subset V$ of $g(x_0)$, and choose
    a volume form $\eta$ compactly supported in $V_0$.  Set
    \[
        \Phi:=\Omega^{r-1}\wedge g^*\eta,
    \]
    on $g^{-1}(V_0)$ and extend it by zero. It is not difficult to verify that $\Phi$ satisfies all the requirements.
\end{proof}

\begin{proof}[Proof of Proposition~\ref{prop:f-degenerate-divisor}]
      Write $D=\sum_{\nu=1}^s a_\nu D_\nu,~a_\nu>0$,
    and set $\beta:=f^*\alpha+\{D\}$. Suppose that no prime component of
    $D$ is the component of $N(\beta)$.

    \smallskip
    \noindent\emph{Step 1.} Set $Y_\nu:=f(D_\nu)$. These are proper irreducible analytic subsets of
    $Y$. We will choose a suitable $W := Y_\nu$. If $D$ is $f$-exceptional, we choose $W$ maximal among the $Y_\nu$. Otherwise, choose prime divisors
    $Q\subset Y$ and $R\subset X$ so that
    $$
    f(R)=Q,~~R\not\subset\operatorname{Supp}D.
    $$
    Set $W:=Q$, we also have $Q\subset Y_\nu$ for some $\nu$. Thus $W$ is again a
    maximal member of the family $\{Y_\nu\}$.

    After relabelling, write
    \[
        Y_1=\cdots=Y_{s_0}=W,
        ~~
        Y_\nu\neq W \quad (\nu>s_0).
    \]
    Maximality ensures $W\not\subset Y_\nu$ for $\nu>s_0$. Choose a point
    \[
        q\in W_{\mathrm{reg}}
        \setminus\bigcup_{\nu>s_0}(W\cap Y_\nu)
    \]
    and also in the common flat locus of the maps $D_\nu\to W$, $\nu\leq s_0$.
    After shrinking a relatively compact coordinate neighbourhood
    $U\ni q$, every local prime component of $D|_{f^{-1}(U)}$ maps onto
    $W\cap U$, and no component with image different from $W$ meets
    $f^{-1}(U)$.

    Set $c:=\operatorname{codim}_YW$ and choose coordinates
    $z$ on $U$ such that
    \[
        W\cap U=\{z_1=\cdots=z_c=0\},
        ~~
        H:=\{z_1=0\}\subset U.
    \]
    Thus $H=W\cap U$ when $c=1$, while $W\cap U\subsetneq H$ when
    $c\geq2$.

    \smallskip
    \noindent\emph{Step 2.} Set $X_U:=f^{-1}(U)$ and let $F_1,\ldots,F_N$ be the local prime
    components of
    $  f^*H=\operatorname{Div}(z_1\circ f)$
    on $X_U$. Since every component of $D|_{X_U}$ maps into $W\cap U$, we
    may write
    \[
        f^*H=\sum_{i \in I}m_iF_i,~~
        D|_{X_U}=\sum_{i\in I}a_iF_i,~~ m_i>0,\quad a_i\geq0.
    \]
    Define
    \[
        \lambda:=\max_{i\in I}\frac{a_i}{m_i}>0,
        ~~
        b_i:=\lambda m_i-a_i\geq0,
        ~~
        G:=\sum_{i \in I}b_iF_i.
    \]
    Then
    \begin{equation}\label{eq:weak-local-complementary-divisor}
        D|_{X_U}+G
        =\lambda f^*H
        =\lambda\operatorname{Div}(z_1\circ f).
    \end{equation}
    Set $I_0:=\{i \in I:b_i=0\}$, $I_+:=\{j \in I:b_j>0\}$.
    The set $I_0$ is nonempty, and every $F_i$ with $i\in I_0$ is a branch
    of $D$ mapping onto $W\cap U$.

    We will find a component $F_j,~j\in I_+$ whose image contains $W\cap U$. If
    $c=1$, the divisor $R$ fixed in Step~1 has a local branch
    $F_j\subset R\cap X_U$ mapping onto $W\cap U$. Since
    $R\not\subset\operatorname{Supp}D$, we have $a_j=0$ and hence
    $b_j=\lambda m_j>0$. If $c\geq2$, then
    \[
        H=\bigcup_{i=1}^N f(F_i).
    \]
    Indeed, every fibre over $H$ is contained in
    $\operatorname{Supp}(f^*H)=\bigcup_iF_i$. Since the sets on the right
    are closed analytic and $H$ is irreducible,
    some $F_j$ maps onto $H$. It cannot be a branch of $D$, because all
    such branches map onto $W\cap U\subsetneq H$. Thus again
    $a_j=0<b_j$.

    For $y\in W\cap U$, set
    \[
        A_y:=f^{-1}(y)\cap\bigcup_{i\in I_0}F_i,
        ~~
        B_y:=f^{-1}(y)\cap\bigcup_{j\in I_+}F_j.
    \]
    These are nonempty closed subsets of the connected fibre $f^{-1}(y)$,
    and $f^{-1}(y)=A_y\cup B_y$. Hence $A_y\cap B_y\neq\varnothing$, so
    \[
        W\cap U
        =\bigcup_{(i,j)\in I_0\times I_+}f(F_i\cap F_j).
    \]
    By irreducibility, for some $(i,j)\in I_0\times I_+$ there is an
    irreducible component
    \[
        C\subset F_i\cap F_j,
        ~~
        f(C)=W\cap U.
    \]
    Fix such a pair $(i,j)$.
    Let $\Gamma$ be the global prime component of $D$ containing $F_i$.
    Since $b_i=0$ and $b_j>0$,
    \[
        F_i\not\subset\operatorname{Supp}G,
        ~~
        C\subset F_i\cap\operatorname{Supp}G.
    \]
    The divisors $F_i$ and $F_j$ are distinct, so $C$ is a divisor on the
    local branch $F_i$. Consequently, the local defining sections of $G$
    restrict nontrivially to $F_i$ and vanish along $C$ with positive
    multiplicity.

    \smallskip
    \noindent\emph{Step 3.} Choose a resolution of singularities
    \[
        h : \widetilde\Gamma\to\Gamma\hookrightarrow X.
    \]
    Then $\widetilde\Gamma$ is a compact Kähler manifold. The local branch $F_i$
    determines a connected component $M_i\subset h^{-1}(X_U)$ with dense
    image in $F_i$. On $M_i$, the effective local pullback of $G$ contains
    a prime divisor $\widetilde C$ mapping onto $C$. Applying
    Lemma~\ref{lem:localized-test-form} to
    $g:=f\circ h$ and $\widetilde C$, we obtain a form $\Phi$ compactly
    supported in $M_i$ such that
    \begin{equation}\label{eq:weak-localized-pairing-positive}
        \langle h^*\{G\},\Phi\rangle>0,
        ~~
        h^*f^*a\wedge\Phi=0
    \end{equation}
    for every smooth $(1,1)$-form $a$ on $Y$. Since $\Phi$ is compactly supported on $M_i$, the first pairing is well-defined.

    Write $\beta=Z(\beta)+\{N(\beta)\}$, the class $Z(\beta)$ is modified nef. By \cite[Proposition 2.4]{Bou04}, its restriction $\{Z(\beta)|_\Gamma\}$ is psef; so $h^*\{Z(\beta)|_\Gamma\}$ is also psef. Since
    $\Gamma\not\subset\operatorname{Supp}N(\beta)$, the class $h^*\{N(\beta)|_\Gamma\}$ is
    represented on $\widetilde\Gamma$ by an effective $\mathbb R$-divisor
    $N_{\widetilde\Gamma}$.

    Recall that $f^*\alpha+\{D\}=Z(\beta)+\{N(\beta)\}$, we have
    $$
    h^* \{f^*(\alpha|_\Gamma)\} + h^*\{D|_{\Gamma}\}=h^*(Z(\beta)|_\Gamma+\{N(\beta)\}|_{\Gamma}).
    $$
    The base term $h^* \{f^*(\alpha|_\Gamma)\}$ pairing with $\Phi$ vanishes by
    \eqref{eq:weak-localized-pairing-positive}. Hence
    \begin{equation}\label{eq:weak-D-pairing-nonnegative}
        \langle h^*\{D|_{\Gamma}\},\Phi\rangle
        =\langle h^*(Z(\beta)|_\Gamma),\Phi\rangle
         +\int_{N_{\widetilde\Gamma}}\Phi
        \geq0.
    \end{equation}

    On the other hand, \eqref{eq:weak-local-complementary-divisor} is a
    multiple of a divisor whose Bott--Chern class is trivial. Since $\Phi$ is compactly supported in
    $h^{-1}(X_U)$, Stokes' Theorem gives
    \[
        \langle h^*\{D\},\Phi\rangle
        +\langle h^*\{G\},\Phi\rangle_{\mathrm{loc}}=0.
    \]
    By \eqref{eq:weak-localized-pairing-positive}, the first term is
    negative, contradicting \eqref{eq:weak-D-pairing-nonnegative}.
\end{proof}

Before proceeding further, we first present a key Lemma regarding the Zariski divisorial decomposition.

\begin{lem}\label{lem:remove-negative-subdivisor}
    Let $\alpha \in H^{1,1}(X,\mathbb R)$ be a psef class, and let $E \subset X$ be an effective $\mathbb R$-divisor. If $0 \leq E \leq N(\alpha)$ coefficientwise, then $\alpha - \{ E \}$ is psef and
    $$
    N(\alpha - \{E\}) = N(\alpha) - E,~~Z(\alpha - \{E\}) = Z(\alpha).
    $$
\end{lem}
\begin{proof}
    The first assertion follows from $\alpha - \{E\} = Z(\alpha) +\{N(\alpha) - E\}$ for which $N(\alpha) - E$ is effective. 

    For the second one, we only need to show that
    $$
    \nu(\alpha -\{   E\},F) = {\rm ord}_F (N(\alpha)) - {\rm ord}_F(E)
    $$
    for any prime divisor $F \subset X $. Once this equality is proved, the formula for $N$ follows from its definition; we then obtain 
    $$
    {\rm ord}_F (N(\alpha - \{E\})) = {\rm ord}_F (N(\alpha)-E),
    $$
    which yields $N(\alpha - \{E\}) = N(\alpha) - E$.
    By the definition of minimal multiplicities, it is easy to see $\nu(\{G\},F) \leq \nu([G],F) = {\rm ord}_F (G)$. By \cite[Proposition 3.7]{Bou04}, the subadditivity of minimal multiplicities, we have
    $$
    \nu(\alpha - \{E\},F) \leq \nu(Z(\alpha),F) + \nu(\{N(\alpha) - E\},F) \leq {\rm ord}_F(N(\alpha) - E).
    $$
    Conversely, by definition of $N(\alpha)$, we have
    $$
    {\rm ord}_FN(\alpha) = \nu(\alpha,F) \leq \nu(\alpha - \{E\},F) + \nu(\{E\},F) \leq \nu(\alpha - \{E\},F) + {\rm ord}_F(E).
    $$
    Thus $\nu(\alpha - \{E\},F) \geq {\rm ord}_F(N(\alpha) - E)$, proving equality for every $F$. Subtracting the two negative parts gives the last assertion of positive parts.
\end{proof}

We now state the main result of this section. It shows that the numerical dimension is invariant not only under fibrations but also under the addition of an $f$-degenerate divisor.

\begin{thm}\label{thm:f-degenerate-main}
    Let $f: X \to Y$ be a fibration between compact Kähler manifolds.
    Let $D \geq 0$ be a $f$-degenerate $\mathbb R$-divisor on $X$ and let $\alpha \in H^{1,1}(Y,\mathbb R)$ be a psef class.  Then
    \begin{align}
        N(f^*\alpha+\{D\})&\geq D~~{\rm coefficientwise},
        \label{eq:thm1-negative-part}\\
        Z(f^*\alpha+\{D\})
          &=Z(f^*\alpha),
        \label{eq:thm1-positive-part}\\
        \nu(f^*\alpha+\{D\},X)
          &=\nu(\alpha,Y).
        \label{eq:thm1-numdim}
    \end{align}
\end{thm}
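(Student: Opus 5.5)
The plan is to prove \eqref{eq:thm1-negative-part} by an iterative peeling argument based on Proposition~\ref{prop:f-degenerate-divisor} and Lemma~\ref{lem:remove-negative-subdivisor}. The other two assertions then follow from Theorem~\ref{thm:invariance-zariski-decomposition} and Theorem~\ref{thm:fibration-invariance}.

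\emph{Step 1: proof of \eqref{eq:thm1-negative-part}.} Set $\beta:=f^*\alpha+\{D\}$, which is psef because $f^*\alpha$ is psef and $D\geq0$. Put
\[
E:=\sum_F\min\{\operatorname{ord}_F D,\ \operatorname{ord}_F N(\beta)\}\,F ,
\]
so that $0\leq E\leq D$ and $E\leq N(\beta)$. By Lemma~\ref{lem:remove-negative-subdivisor}, the class $\beta':=\beta-\{E\}=f^*\alpha+\{D-E\}$ is psef and satisfies $N(\beta')=N(\beta)-E$.

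Suppose $D':=D-E\neq0$. Then $D'$ is a vertical effective divisor with $\operatorname{Supp}D'\subset\operatorname{Supp}D$.

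We next check that $D'$ is still $f$-degenerate. Let $Q\subset f(\operatorname{Supp}D')$ be a prime divisor. Then also $Q\subset f(\operatorname{Supp}D)$, so there is a prime divisor $P$ with $f(P)=Q$ and $P\not\subset\operatorname{Supp}D$. A fortiori $P\not\subset\operatorname{Supp}D'$. If $f(\operatorname{Supp}D')$ contains no prime divisor, then $D'$ is $f$-exceptional. In either case $D'$ is weakly $f$-degenerate.

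Proposition~\ref{prop:f-degenerate-divisor} now gives a prime component $\Gamma$ of $D'$ with $\Gamma\subset\operatorname{Supp}N(\beta')$. Since $\operatorname{ord}_\Gamma D'>0$, the definition of $E$ forces $\operatorname{ord}_\Gamma E=\operatorname{ord}_\Gamma N(\beta)<\operatorname{ord}_\Gamma D$. But then $\operatorname{ord}_\Gamma N(\beta')=\operatorname{ord}_\Gamma N(\beta)-\operatorname{ord}_\Gamma E=0$, which is a contradiction. Hence $D'=0$, that is, $E=D$, and so $D\leq N(\beta)$.

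The minimum construction is what makes this a one-shot argument. Peeling off one component at a time would also work, but only after checking that the process terminates, which holds because $D$ has finitely many components.

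\emph{Step 2: the remaining assertions.} Apply Lemma~\ref{lem:remove-negative-subdivisor} with $E=D$. This gives $Z(\beta)=Z(\beta-\{D\})=Z(f^*\alpha)$, which is \eqref{eq:thm1-positive-part}. Then Theorem~\ref{thm:invariance-zariski-decomposition} applied twice, followed by Theorem~\ref{thm:fibration-invariance}, yields
\[
\nu(\beta,X)=\nu(Z(\beta),X)=\nu(Z(f^*\alpha),X)=\nu(f^*\alpha,X)=\nu(\alpha,Y),
\]
which is \eqref{eq:thm1-numdim}.

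\emph{Main obstacle.} The only delicate point is confirming that the hypotheses of Proposition~\ref{prop:f-degenerate-divisor} hold for $D'$. This requires two checks. First, $D'$ is vertical, which is clear since $\operatorname{Supp}D'\subset\operatorname{Supp}D$. Second, $D'$ is weakly $f$-degenerate, and the subtle case here is when $f(\operatorname{Supp}D')$ contains no divisor, which is exactly where the $f$-exceptional alternative is needed. Both are handled above, and the remainder of the argument is formal.
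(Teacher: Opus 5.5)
Your proposal is correct and follows the paper's proof step for step. You use the same minimum divisor $E$ (the paper's $D_0$), the same application of Lemma~\ref{lem:remove-negative-subdivisor} to get $N(\beta-\{E\})=N(\beta)-E$, and the same contradiction with Proposition~\ref{prop:f-degenerate-divisor} applied to the remainder, which is still $f$-degenerate. You then finish, as the paper does, with Lemma~\ref{lem:remove-negative-subdivisor} for $E=D$ together with Theorems~\ref{thm:invariance-zariski-decomposition} and~\ref{thm:fibration-invariance}. Your explicit check that $f$-degenerate implies weakly $f$-degenerate, using the $f$-exceptional alternative when $f(\operatorname{Supp}D')$ contains no prime divisor, simply spells out what the paper calls clear.
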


\begin{proof}
    Set $\beta:=f^*\alpha+\{D\},~~
        N:=N(\beta).$ 
    Define the effective $\mathbb R$-divisor $D_0$ by
    \[
        \operatorname{mult}_F D_0
        :=\min\bigl\{\operatorname{mult}_F D,
                       \operatorname{mult}_F N\bigr\}
    \]
    for every prime component $F$ of $D$.  Thus $0\leq D_0\leq D$ and
    $D_0\leq N$.

    Suppose that $D_0\neq D$ and set the effective divisor $D_1:=D-D_0\ne0$.  Crucially,
    $D_1$ is still $f$-degenerate: if a prime divisor
    $Q\subset f(\operatorname{Supp}D_1)$ is given, then
    $Q\subset f(\operatorname{Supp}D)$, and a prime divisor missing from
    $\operatorname{Supp}D$ also misses $\operatorname{Supp}D_1$.
    Lemma~\ref{lem:remove-negative-subdivisor} gives
    \begin{equation}\label{eq:negative-part-after-removal}
        N(f^*\alpha+\{D_1\}) = N(\beta - \{D_0\})=N-D_0.
    \end{equation}
    If $F$ is a component of $D_1$, then
    $\operatorname{ord}_F (D)>\operatorname{ord}_F (N)$.  Hence
    $\operatorname{ord}_F (D_0)=\operatorname{ord}_F (N)$, and therefore
    $F \not\subset{\rm Supp}(N-D_0)$.  Thus no component of $D_1$ occurs in
    the negative part in \eqref{eq:negative-part-after-removal}, contrary
    to Proposition~\ref{prop:f-degenerate-divisor}.  Consequently
    $D_0=D$, which proves
    \[
        D\leq N(f^*\alpha+\{D\}).
    \]

    Applying Lemma~\ref{lem:remove-negative-subdivisor} once more, we obtain
    \[
        Z(f^*\alpha+\{D\})=Z(f^*\alpha).
    \]

    Finally, it follows from Theorem \ref{thm:invariance-zariski-decomposition} and Theorem~\ref{thm:fibration-invariance} that the numerical dimension depends only on the positive part of
    the Zariski divisorial decomposition, and it is invariant under
    pullback by a fibration.
    Using the equality of positive parts just proved, we obtain
    \[
        \nu(f^*\alpha+\{D\},X)
        =\nu(f^*\alpha,X)
        =\nu(\alpha,Y).
    \]
    This proves \eqref{eq:thm1-numdim} and completes the proof.
\end{proof}

The Theorem has the following two immediate applications. The first describes
the divisorial Zariski decomposition and numerical dimension of an
$f$-degenerate divisor.

\begin{cor}
    Let $f :  X\to Y$ be a fibration between compact Kähler manifolds.
    Assume that $D\geq0$ is an $f$-degenerate $\mathbb R$-divisor. Then
    $N(\{D\})=D$ and $\nu(\{D\},X)=0$.
\end{cor}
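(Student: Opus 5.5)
The plan is to apply Theorem~\ref{thm:f-degenerate-main} with the trivial class $\alpha=0\in H^{1,1}(Y,\mathbb R)$, which is obviously psef. For this choice we have $f^*\alpha+\{D\}=\{D\}$, so the theorem gives three facts at once. First, the negative part satisfies $N(\{D\})\geq D$ coefficientwise. Second, the positive part is $Z(\{D\})=Z(f^*0)=Z(0)$. Third, the numerical dimension is $\nu(\{D\},X)=\nu(0,Y)$.

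Next, we identify the divisorial Zariski decomposition of the zero class. The zero current is a closed positive current in the class $0$ and has zero Lelong numbers. By inequality (\ref{lelong number ineq}), or directly from the definition of minimal multiplicities, this gives $\nu(0,F)=0$ for every prime divisor $F$. Hence $N(0)=0$ and $Z(0)=0$. The positive intersection products $\langle 0^k\rangle$ vanish for $k\geq1$, as one sees by taking the limit of $\langle(\varepsilon\beta)^k\rangle=\varepsilon^k\beta^k$ for a Kähler class $\beta$. Therefore $\nu(0,Y)=0$, since $\langle 0^0\rangle=1$ by convention.

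Combining these, we obtain $Z(\{D\})=0$, and so $\{D\}=\{N(\{D\})\}$. Together with $N(\{D\})\geq D$, this shows that $E:=N(\{D\})-D$ is an effective $\mathbb R$-divisor whose cohomology class is zero. To conclude $E=0$, pair $E$ with $\omega^{n-1}$ for a Kähler form $\omega$: the integral $\int_X [E]\wedge\omega^{n-1}$ depends only on the class $\{E\}=0$, so it vanishes. An effective divisor with zero mass against $\omega^{n-1}$ must be zero. Hence $N(\{D\})=D$. The statement $\nu(\{D\},X)=\nu(0,Y)=0$ then follows from (\ref{eq:thm1-numdim}), or alternatively from Corollary~\ref{cor:numerical-dimension-zero} because $Z(\{D\})=0$.

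There is essentially no obstacle here. The only point to check is that the hypotheses of Theorem~\ref{thm:f-degenerate-main} allow $\alpha=0$, which they do since only pseudo-effectivity is assumed. The one substantive step is the computation $N(0)=0$, $Z(0)=0$, $\nu(0,Y)=0$, and this is immediate from the definitions.
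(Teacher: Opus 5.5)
Your proof is correct and follows the paper's route: apply Theorem~\ref{thm:f-degenerate-main} with $\alpha=0$ to get $N(\{D\})\geq D$ and $\nu(\{D\},X)=\nu(0,Y)=0$. The only difference is how you obtain the reverse inequality $N(\{D\})\leq D$: the paper notes that $[D]$ is a positive current in $\{D\}$, so (\ref{lelong number ineq}) gives $\nu(\{D\},F)\leq\operatorname{ord}_F D$. You instead use $Z(\{D\})=Z(0)=0$ together with the fact that an effective divisor with zero class vanishes, which is equally valid but slightly longer.
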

\begin{proof}
    Applying Theorem \ref{thm:f-degenerate-main} with
    $\alpha=0\in H^{1,1}(Y,\mathbb R)$ gives $N(\{D\})\geq D$. On the
    other hand, $N(\{D\})\leq D$ holds coefficientwise for every effective
    divisor $D$, proving the first equality. The second then follows from
    Corollary \ref{cor:numerical-dimension-zero}.
\end{proof}

The second application shows that an $f$-degenerate divisor cannot create
pseudo-effectivity on total space.

\begin{cor}\label{cor:alpha is psef}
    Let $f :  X\to Y$ be a fibration between compact Kähler manifolds.
    Assume that $D\geq0$ is an $f$-degenerate $\mathbb R$-divisor and that
    $\alpha\in H^{1,1}(Y,\mathbb R)$. Then $f^*\alpha+\{D\}$ is psef if
    and only if $\alpha$ is psef.
\end{cor}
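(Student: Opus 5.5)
The ``if'' direction is immediate. If $\alpha$ is psef, pick a closed positive current $T\in\alpha$. Then $f^*T+[D]$ is a closed positive current in $f^*\alpha+\{D\}$, since $D\geq0$ and pullbacks of closed positive $(1,1)$-currents by holomorphic maps remain positive (via local potentials). So the content is the converse.

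For the ``only if'' direction, assume $\beta:=f^*\alpha+\{D\}$ is psef. I want to remove $D$ using the negative part. The proof of Proposition~\ref{prop:f-degenerate-divisor} and Theorem~\ref{thm:f-degenerate-main} is stated for $\alpha$ psef, so first I check where that hypothesis enters. In the proof of Theorem~\ref{thm:f-degenerate-main}, the quantity $N(\beta)$ and Lemma~\ref{lem:remove-negative-subdivisor} only require $\beta$ to be psef. Proposition~\ref{prop:f-degenerate-divisor} likewise only uses that $\beta=f^*\alpha+\{D\}$ is psef, through $Z(\beta)$ being modified nef and $N(\beta)$. The pairing argument with the test form $\Phi$ of Lemma~\ref{lem:localized-test-form} kills $f^*a$ for any smooth representative $a$ of $\alpha$, whether or not $\alpha$ is psef. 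So the same argument, with the hypothesis weakened to ``$f^*\alpha+\{D_1\}$ psef'', shows that some component of $D_1$ lies in $\operatorname{Supp}N(f^*\alpha+\{D_1\})$.

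Running the descent of Theorem~\ref{thm:f-degenerate-main} verbatim then gives $D\leq N(\beta)$. Here one uses that $f^*\alpha+\{D_1\}=\beta-\{D_0\}$ is psef by Lemma~\ref{lem:remove-negative-subdivisor}, which is applied to the psef class $\beta$. Applying Lemma~\ref{lem:remove-negative-subdivisor} once more, $f^*\alpha=\beta-\{D\}$ is psef.

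It remains to show that $f^*\alpha$ psef implies $\alpha$ psef, and this is the main obstacle. I would use Păun's descent, Theorem~\ref{paun_descent}. Take the flattening diagram $\widetilde Y\xrightarrow{\pi}Y$ with $p:Z\to\widetilde Y$ equidimensional and $q:Z\to X$. Write $f^*\alpha\ni T=f^*\theta+dd^c\psi\geq0$. Then $q^*T=p^*\pi^*\theta+dd^c(\psi\circ q)\geq0>-t\Omega$ for all $t$. Theorem~\ref{paun_descent} yields $\varphi_t\in\operatorname{PSH}(\widetilde Y,\pi^*\theta+\delta_t\widetilde{\omega_Y})$ with $\delta_t\to0$. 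Hence $\pi^*\alpha+\delta_t\{\widetilde{\omega_Y}\}$ is psef for every $t$, and by closedness of the psef cone $\pi^*\alpha$ is psef.

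Finally, push forward. Let $S\geq0$ lie in $\pi^*\alpha$. Then $\pi_*S$ is a closed positive current on $Y$, and $\pi_*\pi^*\alpha=\alpha$ since $\pi$ is bimeromorphic (the projection formula, $\pi_*1=1$). Therefore $\alpha$ is psef.
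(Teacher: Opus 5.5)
Your proof is correct. The first half matches the paper's argument. There you rerun the proof of \eqref{eq:thm1-negative-part} assuming only that $\beta=f^*\alpha+\{D\}$ is psef. You also note that Lemma~\ref{lem:remove-negative-subdivisor} makes $f^*\alpha+\{D_1\}=\beta-\{D_0\}$ psef, which is what allows Proposition~\ref{prop:f-degenerate-divisor} to be applied to $D_1$. You then conclude that $f^*\alpha=Z(\beta)+\{N(\beta)-D\}$ is psef. On that point you are more explicit than the paper's ``the same argument therefore gives''.

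You diverge from the paper only in the last step, the descent from ``$f^*\alpha$ psef'' to ``$\alpha$ psef''.

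\textbf{The paper's route.} It uses fibre integration. For $0\le T\in f^*\alpha$ and $r=\dim X-\dim Y$, the current $f_*(T\wedge\omega^r)$ is a closed positive $(1,1)$-current on $Y$. It lies in the class $c\alpha$ with $c=\int_{X_y}\omega^r>0$, because $f_*(\omega^r)$ is a closed $(0,0)$-current and hence a constant. This needs only that $f$ is proper and surjective.

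\textbf{Your route.} You pass through Hironaka flattening and Păun's equidimensional descent (Theorem~\ref{paun_descent}) to show that $\pi^*\alpha+\delta_t\{\widetilde{\omega_Y}\}$ is psef with $\delta_t\to0$. You then use closedness of the psef cone and push forward by the bimeromorphic map $\pi$, using $\pi_*\pi^*\alpha=\alpha$. Each step is valid, including $\psi\circ q\in\operatorname{PSH}(Z,p^*\pi^*\theta)$, which holds because $q$ is bimeromorphic.

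\textbf{Comparison.} Your route calls on the heaviest machinery of Section~3 for what is a one-line pushforward. What it buys is finer information: descended potentials with analytic singularities that dominate $\psi\circ q$. That control of singularities is irrelevant for mere pseudo-effectivity. Your route also yields psef classes only up to $\delta_t$, so it needs the closedness step, whereas fibre integration produces a positive current in $c\alpha$ directly.
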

\begin{proof}
    If $\alpha$ is psef, then $f^*\alpha$ is psef, and hence so is
    $f^*\alpha+\{D\}$.

    Conversely, assume that $f^*\alpha+\{D\}$ is psef. In the proof of
    inequality~\eqref{eq:thm1-negative-part} in Theorem
    \ref{thm:f-degenerate-main}, the only property used is precisely this
    pseudo-effectivity. The same argument therefore gives
    $N(f^*\alpha+\{D\})\geq D$. Consequently,
    $$
        f^*\alpha
        =Z(f^*\alpha+\{D\})+ \left(\{N(f^*\alpha+\{D\})\}-\{D\} \right),
    $$
    so $f^*\alpha$ is psef. Since pseudo-effectivity descends under
    surjective pullback by 
    $$
    f_* \left(f^*\alpha \wedge \{\omega^r\} \right)= c\alpha,~c >0,
    $$
    here $r:= \dim X - \dim Y$ and $\omega$ is a Kähler form on $X$, we obtain $\alpha$ is psef.
\end{proof}

Finally, we provide two examples. The first one is to support the argument in Remark \ref{rem 314}. The second one is to illustrate that the condition on $D$ in Theorem \ref{thm:f-degenerate-main} cannot be relaxed to weakly $f$-degenerate.

\begin{ex}\label{ex:disconnected-fibres-f-degenerate}
    Let $g :  C'\to C$ be a connected étale double cover of smooth projective curves, and set
    \[
        X:=C'\times\mathbb P^1,
        ~~
        f:=g\circ\operatorname{pr}_1 :  X\to C.
    \]
    The fibres of $f$ are disconnected. Fix $y\in C$ and write  $g^{-1}(y)=\{x_1,x_2\}$.
    Consider the prime divisor
    $   D:=\{x_1\}\times\mathbb P^1\subset X$.
     Then $D$ is a nonzero effective vertical divisor and $f(\operatorname{Supp}D)=\{y\}$.
    Moreover, the prime divisor $ P:=\{x_2\}\times\mathbb P^1$
    satisfies
    \[
        f(P)=\{y\},
        ~~
        P\not\subset\operatorname{Supp}D.
    \]
    Hence $D$ is $f$-degenerate.

    On the other hand, $D$ is nef, and therefore
    $N(\{D\})=0$.
    Taking $\alpha=0$, no prime component of $D$ is contained in
    $\operatorname{Supp}N(\{D\})$. 
    Thus Proposition~\ref{prop:f-degenerate-divisor} fails without the
    connectedness of the fibres, even when $D$ is assumed to be
    $f$-degenerate rather than merely weakly $f$-degenerate.
\end{ex}

\begin{ex}
    Let $g :  S:=\mathbb P^1\times\mathbb P^1\to\mathbb P^1 $
be the first projection. Fix a point \(p\in g^{-1}(0)\), let
\[
    \mu :  X:=\operatorname{Bl}_pS\to S,
    ~~
    f:=g\circ\mu :  X\to\mathbb P^1,
\]
and denote by \(E\) the exceptional curve and by \(C\) the strict transform
of \(g^{-1}(0)\). Thus
\[
    f^{-1}(0)=C+E.
\]
Let \(F_\infty:=f^{-1}(\infty)\) and set
\[
    D:=E+F_\infty.
\]
Then \(D\) is weakly \(f\)-degenerate: indeed,
\[
    \{0\}\subset f(\operatorname{Supp}D),
    ~~
    f(C)=\{0\},
    ~~
    C\not\subset\operatorname{Supp}D.
\]
It is not \(f\)-degenerate, since \(F_\infty\) is the unique prime divisor
mapping onto \(\{\infty\}\) and \(F_\infty\subset\operatorname{Supp}D\).

Take \(\alpha=0\), and denote by \(F\) the numerical class of a fibre of
\(f\). Since
\[
    D=F+E,~~
    F^2=F\cdot E=0,~~ E^2=-1,
\]
and \(F\) is nef. Thanks to \cite[Theorem 4.5 and 4.8]{Bou04}, the Zariski divisorial decomposition of \(\{D\}\) on surface $S$ is given by
\[
    Z(\{D\})=\{F\},
    ~~
    N(\{D\})=E.
\]
In particular, $D\not\leq N_\sigma(\{D\})$ because  \({\rm ord}_{F_\infty}N(\{D\}) = 0\). Moreover,
\[
    Z(f^*\alpha+\{D\})
    =\{F\}\neq0=Z(f^*\alpha),
\]
and
\[
    \nu(f^*\alpha+\{D\},X)
    =\nu(\{F\},X)=1
    \neq0=\nu(\alpha,\mathbb P^1).
\]
Thus none of the three conclusions of Theorem~\ref{thm:f-degenerate-main} remains true if
\(f\)-degeneracy is replaced by weak \(f\)-degeneracy.
\end{ex}

\section{Fibrations with Rationally Connected Fibers}\label{sec 4}

\subsection{Descent of Subsheaves of Cotangent Bundles}

Let $f: X\to Y$ be a fibration between compact Kähler manifolds. We assume that a general fibre of
\(f\) is rationally connected. In this subsection we work in the
nontrivial relative case
\[
    0<d:=\dim Y<\dim X,
    ~~
    r:=\dim X-\dim Y.
\]

Let us first state a Proposition showing that, a very free curve in a general fibre is free in $X$.

\begin{prop}
\label{prop:vertical_very_free_splitting}
Let $y\in Y$ be general, so that $F:=X_y$ is smooth and rationally connected. For a very free curve $h : \mathbb P^1\to F$, let $g : \mathbb P^1\to X$ denote the composite.
There are suitable integers \(a_i\geq 1\) such that
\[
    g^*T_X
    \simeq
    \mathcal O_{\mathbb P^1}^{\oplus d}
    \oplus
    \bigoplus_{i=1}^r\mathcal O_{\mathbb P^1}(a_i)
\]
and hence $g^*\Omega_X^1
    \simeq
    \mathcal O_{\mathbb P^1}^{\oplus d}
    \oplus
    \bigoplus_{i=1}^r\mathcal O_{\mathbb P^1}(-a_i)$.
Under the decomposition, the horizontal subbundle
\(g^*f^*\Omega_Y^1\) is identified with
\(\mathcal O_{\mathbb P^1}^{\oplus d}\). In particular, $g(\mathbb P^1)$ is a free rational curve in $X$.
\end{prop}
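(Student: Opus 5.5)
The plan is to pull back the relative tangent sequence along $g$ and then show it splits. Since $y$ is general, $y$ lies outside the critical values of $f$, so $f$ is a submersion along $F$. On a neighbourhood of $F$ we then have the exact sequence of vector bundles
\[
0\to T_{X/Y}\to T_X\to f^*T_Y\to 0,
\qquad T_{X/Y}|_F\simeq T_F.
\]
Because $f\circ g$ is constant with value $y$, pulling back along $g$ gives
\[
0\to h^*T_F\to g^*T_X\to \mathcal O_{\mathbb P^1}\otimes T_{Y,y}\simeq\mathcal O_{\mathbb P^1}^{\oplus d}\to 0 .
\]
Since $h$ is very free, $h^*T_F\simeq\bigoplus_{i=1}^r\mathcal O_{\mathbb P^1}(a_i)$ with all $a_i\geq1$.

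Next I will show the sequence splits. The extension class lies in
\[
\operatorname{Ext}^1\bigl(\mathcal O^{\oplus d},h^*T_F\bigr)=H^1\bigl(\mathbb P^1,h^*T_F\bigr)^{\oplus d}=\bigoplus H^1\bigl(\mathcal O(a_i)\bigr)^{\oplus d}.
\]
This group vanishes because $a_i\geq 1>-2$. Therefore
\[
g^*T_X\simeq\mathcal O^{\oplus d}\oplus\bigoplus_i\mathcal O(a_i).
\]
In this splitting, $\bigoplus_i\mathcal O(a_i)$ is canonically the subbundle $h^*T_F$. All splitting degrees are nonnegative, so $g$ is free in $X$.

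Dualizing gives the exact sequence
\[
0\to g^*f^*\Omega_Y^1\to g^*\Omega_X^1\to h^*\Omega_F^1\to 0,
\]
with $g^*f^*\Omega^1_Y=\mathcal O\otimes T^*_{Y,y}\simeq\mathcal O^{\oplus d}$. This sequence also splits, and it yields the stated decomposition of $g^*\Omega_X^1$.

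The identification of the horizontal subbundle needs a little care, since the splitting is not canonical. The subbundle $g^*f^*\Omega^1_Y$ is the trivial piece $\mathcal O^{\oplus d}$ sitting inside $g^*\Omega^1_X$. It is in fact the unique subbundle of the form $\mathcal O^{\oplus d}$ that surjects onto the degree-$0$ part. To see this, note that any map $\mathcal O\to\bigoplus_i\mathcal O(-a_i)$ is zero because every $a_i>0$. Hence the degree-$0$ summand of $g^*\Omega^1_X$, namely the maximal trivial subbundle, is canonically determined, and it must equal $g^*f^*\Omega^1_Y$.

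I expect no serious obstacle. The only point to check is that generality of $y$ guarantees $f$ is smooth along $F$, so that the relative tangent sequence consists of vector bundles there.
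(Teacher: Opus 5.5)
Your proposal is correct and follows essentially the same route as the paper: pull back the relative tangent sequence along $g$ over the submersion locus, and split it using $\operatorname{Ext}^1(\mathcal O_{\mathbb P^1}^{\oplus d},h^*T_F)=H^1(\mathbb P^1,h^*T_F)^{\oplus d}=0$. Your extra observation fills in something the paper leaves implicit. Because $\operatorname{Hom}(\mathcal O_{\mathbb P^1},\mathcal O_{\mathbb P^1}(-a_i))=0$, the trivial summand of $g^*\Omega_X^1$ is canonical, and so it must coincide with $g^*f^*\Omega_Y^1$.
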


\begin{proof}
Since \(F\) is rationally connected, it indeed admits a very free rational
curve. Thus we can choose \(h: \mathbb P^1 \to F\) such that
\begin{equation}\label{split T_F}
    h^*T_F\simeq\bigoplus_{i=1}^r\mathcal O_{\mathbb P^1}(a_i),~~ a_i\geq 1.
\end{equation}

Over the submersion locus of $f$, the exact sequence of relative tangent bundle is
$$
0\to T_{X/Y}
      \to T_X
      \to f^*T_Y
      \to 0.
$$
Since $f$ is a proper submersion near \(F\) and $f\circ g:\mathbb P^1 \to Y$ is constant, pulling back the above 
sequence gives
\begin{equation}\label{T_F T_X T_Y exact sequence}
    0\to h^*T_F
      \to g^*T_X
      \to T_{Y,y}\otimes\mathcal O_{\mathbb P^1} = \mathcal O_{\mathbb P^1}^{\oplus d}
      \to 0
\end{equation}
on $\mathbb P^1$. Its extension class belongs to
\[
    \operatorname{Ext}^1
    \bigl(\mathcal O_{\mathbb P^1}^{\oplus d},h^*T_F\bigr)
    =
    H^1(\mathbb P^1,h^*T_F)^{\oplus d} = \left(\bigoplus_{i=1}^r H^1(\mathbb P^1,\mathcal{O}_{\mathbb P^1}(a_i)) \right)^{\oplus d}
    =0.
\]
The sequence (\ref{T_F T_X T_Y exact sequence}) therefore splits, combining with (\ref{split T_F}) giving the asserted decompositions of
\(g^*T_X\) and \(g^*\Omega_X^1\).
\end{proof}

Let \(m\geq 1\) and \(L\) be a psef line bundle on \(X\), i.e. $c_1(L) \in H^{1,1}(X,\mathbb R)$ is psef, and suppose that
there is an injective morphism
$\varphi :  L\to (\Omega_X^1)^{\otimes m}$. 
We set
\[
    V:=(\Omega_Y^1)^{\otimes m}.
\]

For the remainder of this subsection, we will show that $L$ descends to a rank-one subsheaf of $V$.
The argument consists of two distinct steps. We first show that the image
of \(\varphi\) is generically contained in \(f^*V\). This only requires a
covering family of vertical very free curves. We then prove that the
resulting line in \(V_y\) is constant along a general fibre \(X_y\). For
this second step, the property of rationally
connected manifolds is crucial.

\begin{lem}\label{lem:generic_horizontalisation}
The image of \(\varphi\) is generically horizontal;  more precisely,
\[
    \operatorname{Im}(\varphi)
    \subset
    f^*V
    \subset
    (\Omega_X^1)^{\otimes m}
\]
on the submersion locus  \(X^\circ \subset X\) of $f$.
\end{lem}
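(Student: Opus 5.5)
The plan is to test the morphism $\varphi$ against very free vertical rational curves and use the splitting of Proposition~\ref{prop:vertical_very_free_splitting}. Fix a general point $y\in Y$, so that $F:=X_y$ is smooth, rationally connected and contained in $X^\circ$. Let $h:\mathbb P^1\to F$ be very free and $g:\mathbb P^1\to X$ the composite. By Proposition~\ref{prop:vertical_very_free_splitting},
\[
    g^*\Omega_X^1\simeq\mathcal O_{\mathbb P^1}^{\oplus d}\oplus\bigoplus_{i=1}^r\mathcal O_{\mathbb P^1}(-a_i),\qquad a_i\geq1,
\]
and the trivial summand is exactly $g^*f^*\Omega_Y^1$. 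Taking the $m$-th tensor power, $g^*(\Omega_X^1)^{\otimes m}$ splits as $g^*f^*V\simeq\mathcal O_{\mathbb P^1}^{\oplus d^m}$ plus a complement $W_g$. This complement is a direct sum of tensor products in which at least one factor is some $\mathcal O(-a_i)$, so every summand of $W_g$ has degree $\leq-1$.

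Next I need $\deg g^*L\geq0$ for a general member of a covering family. Since $c_1(L)$ is psef, choose a closed positive current $T\in c_1(L)$. Its polar set is a proper pluripolar subset, and a very free curve through a general point of $F$ is not contained in it. Indeed, the very free curves in $F$ deform in a family dominating $F$, and the deformations of $g$ inside $X$ dominate $X$ because $g$ is free (Proposition~\ref{prop:vertical_very_free_splitting}). Hence for a general member, $g^*T$ is a well-defined positive measure, and
\[
    \deg g^*L=\int_{\mathbb P^1}g^*T\geq0.
\]
If one wants to avoid pluripolar restrictions, an alternative is $\deg g^*L=c_1(L)\cdot[g_*\mathbb P^1]\geq0$, because the class of a member of a dominating family of curves is a movable curve class and pairs nonnegatively with psef classes. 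In the Kähler setting this follows by approximating $T$ with Demailly regularization as in Theorem~\ref{thm:DP04-regularization}, with error $\delta_j\int g^*\omega\to0$.

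Now compose $g^*\varphi$ with the projection onto $W_g$. This gives a map $g^*L\to W_g$. Since $\deg g^*L\geq0$ and every summand of $W_g$ has negative degree, $\operatorname{Hom}(\mathcal O(e),\mathcal O(k))=0$ for $k<e$, so the map vanishes. Therefore $g^*\varphi$ lands in $g^*f^*V$ along the whole curve. Because such curves pass through a general point of $F$, and $y$ is general, the composite
\[
    L\to(\Omega_X^1)^{\otimes m}\big|_{X^\circ}\to\bigl((\Omega_X^1)^{\otimes m}/f^*V\bigr)\big|_{X^\circ}
\]
vanishes on a dense subset of $X^\circ$. On $X^\circ$, $f^*\Omega_Y^1$ is a subbundle of $\Omega_X^1$, so $f^*V$ is a subbundle of $(\Omega_X^1)^{\otimes m}$ and the quotient is locally free. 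The composite is then a holomorphic section of a vector bundle vanishing on a dense set, hence identically zero on $X^\circ$. This gives $\operatorname{Im}(\varphi)\subset f^*V$ there.

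The main obstacle I expect is the nonnegativity of $\deg g^*L$ for a general very free vertical curve in the Kähler (non-projective) setting, which comes down to handling the restriction of a psef class to curves. Everything else is a degree count on $\mathbb P^1$.
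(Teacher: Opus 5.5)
Your proposal is correct and follows the paper's proof: you split $g^*(\Omega_X^1)^{\otimes m}$ along vertical very free curves, get $\deg g^*L\geq0$ from the movable class of a free curve (the paper cites \cite[Proposition~1.4]{BDPP13}), use Hom-vanishing into the negative summands, and extend from a dense set; the paper phrases this last step as a contradiction on the open set where $\overline{\varphi}\neq0$. Your alternative via restricting a positive current $T$ needs one adjustment, because a polar set can contain the fibres over a Zariski-dense countable set of points of $Y$: instead, use that $\deg g^*L$ is deformation invariant, so it suffices that one deformation of $g$ inside $X$ avoids the polar set, which holds for almost every deformation since $g$ is free.
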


\begin{proof}
Let \(X^\circ\subset X\) be the submersion locus of \(f\). On \(X^\circ\)
we have the exact cotangent sequence
\[
    0\to
    H:=f^*\Omega_Y^1
    \to
    E:=\Omega_X^1
    \to
    \Omega_{X/Y}^1
    \to 0.
\]
Choose a general curve \(g: \mathbb P^1\to X\) as in
Proposition~\ref{prop:vertical_very_free_splitting}. Taking the $m$-th tensor power of the cotangent splitting gives
\[
    g^*E^{\otimes m}
    \simeq
    \mathcal O_{\mathbb P^1}^{\oplus d^m}
    \oplus
    \bigoplus_{\alpha}\mathcal O_{\mathbb P^1}(-b_\alpha),~~b_\alpha>0,
\]
where $\mathcal O_{\mathbb P^1}^{\oplus d^m}
    =
    g^*H^{\otimes m}
    =
    g^*f^*V$.

Write \(g^*L\simeq\mathcal O_{\mathbb P^1}(\lambda)\). We first claim that
\[
    \lambda=\deg g^*L = \int_{\mathbb P^1} g^* c_1(L) \geq 0.
\]
Indeed, by the proof of Proposition~\ref{prop:free_curves_kahler} for the first assertion, the class of a free rational curve
 $\{[C]\}= \{[g(\mathbb P^1)]\}$
 is movable in the sense of
\cite[Definition~1.3]{BDPP13}. Hence, since \(c_1(L)\) is
psef, we have 
$
\langle \mathbb P^1,g^*c_1(L) \rangle =\langle [C],c_1(L)\rangle
$ 
is non-negative by \cite[Proposition~1.4]{BDPP13}.

On the other hand, the restriction of \(\varphi\) is a nonzero
morphism
\begin{equation}\label{g*L and g*Em morphism}
     \mathcal O_{\mathbb P^1}(\lambda) \simeq 
     g^*L 
    \to g^*E^{\otimes m} \simeq
    \mathcal O_{\mathbb P^1}^{\oplus d^m}
    \oplus
    \bigoplus_{\alpha}\mathcal O_{\mathbb P^1}(-b_\alpha).
\end{equation}
Since every summand on the right has degree at most zero, this forces
\(\lambda\leq 0\). Hence $\lambda=0$.
Moreover, $\operatorname{Hom}
    \bigl(\mathcal O_{\mathbb P^1},\mathcal O_{\mathbb P^1}(-b_\alpha)\bigr)= 0$ yields the image
    $ {\rm Im}(g^*\varphi) \subset g^*f^*V$.

Now consider the quotient morphism
\[
    \overline{\varphi} : 
    L|_{X^\circ}
    \to
    E^{\otimes m}/H^{\otimes m}.
\]
Suppose that \(\overline{\varphi}\not\equiv0\). Since both the source
and the target are locally free on \(X^\circ\), the set
\[
    U:=
    \left\{
        x\in X^\circ
        \;\middle|\;
        \overline{\varphi}_x : 
        L_x\to
        \bigl(E^{\otimes m}/H^{\otimes m}\bigr)_x
        \text{ is nonzero}
    \right\}
\]
is a nonempty open subset of \(X^\circ\).

Since \(f|_{X^\circ}\) is a submersion, it is an open map. We may
therefore choose a general point \(y\in f(U)\). Then
\(U\cap X_y\) is a nonempty open subset of \(X_y\). We may further choose a member of very free rational curve $g : \mathbb P^1\to X_y$
 such that \(g(p)\in U\). Under the natural identification of fibres, we have
$ (g^*\overline{\varphi})_p
    =
    \overline{\varphi}_{g(p)}
    \neq0$.
In particular, \(g^*\varphi\) is nonzero. The preceding degree argument
therefore applies to this curve and gives
$ \operatorname{Im}(g^*\varphi)
    \subset
    g^*H^{\otimes m}$.
Equivalently, \(g^*\overline{\varphi}=0\), contradicting
\((g^*\overline{\varphi})_p\neq0\).

Therefore \(\overline{\varphi}=0\) on \(X^\circ\), and hence $\operatorname{Im}(\varphi)
    \subset
    H^{\otimes m}
    =
    f^*V$
on \(X^\circ\).
\end{proof}

Before turning to the main argument, we record the following technical Proposition concerning morphisms of vector bundles.

\begin{prop}\label{lem:meromorphic_factorisation}
Let \(M\) be a complex manifold, and let $ \alpha:E\to F$, $\beta:G\to F$ 
be morphisms of vector bundles on \(M\). Assume that
\(\alpha\) is generically injective.

Let \( V\subsetneq M\) be a proper analytic subset. Suppose that over
\(M\backslash V\), there exists
\[
    \gamma^\circ:G|_{M\backslash V}
    \to E|_{M\backslash V}
\]
such that  $\alpha\circ\gamma^\circ
    =
    \beta|_{M\backslash V}$. Then \(\gamma^\circ\) extends uniquely to a meromorphic morphism
\[
    \gamma:G\dashrightarrow E~~{\rm on}~~M.
\]

\end{prop}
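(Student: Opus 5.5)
The plan is to reduce to a linear-algebra statement about generically injective maps and then clear denominators along the degeneracy divisor of \(\alpha\). Since \(\alpha\) is generically injective, \(\operatorname{rank}E=:e\leq\operatorname{rank}F\), and the locus \(S\) where \(\alpha_x\) fails to be injective is a proper analytic subset of \(M\). Uniqueness is immediate. Any two meromorphic extensions agree on the dense open set \(M\setminus V\), and a meromorphic morphism is determined by its restriction to a dense open set; indeed, \(\alpha\circ\gamma\) is forced to equal \(\beta\), and \(\alpha\) is injective on \(M\setminus S\). So the real content is existence: \(\gamma^\circ\) should have at worst poles along an analytic set, rather than essential singularities along \(V\).

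For existence, I would work locally on a small polydisc \(W\subset M\) on which \(E,F,G\) are trivial. There \(\alpha\) is an \(f\times e\) matrix of holomorphic functions of generic rank \(e\). Choose an \(e\times e\) minor \(\delta=\det(\alpha_I)\), with \(I\) a set of \(e\) rows, that is not identically zero on \(W\) (if \(W\) is connected; otherwise work componentwise). Let \(\alpha_I^{\mathrm{adj}}\) be the adjugate matrix, so that \(\alpha_I^{\mathrm{adj}}\alpha_I=\delta\cdot\mathrm{Id}_E\). Define
\[
\gamma_W:=\delta^{-1}\,\alpha_I^{\mathrm{adj}}\circ\beta_I ,
\]
where \(\beta_I\) is the \(I\)-rows of \(\beta\). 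This is a meromorphic morphism \(G|_W\dashrightarrow E|_W\). On \(W\setminus(V\cup\{\delta=0\})\), apply \(\alpha_I^{\mathrm{adj}}\) to the identity \(\alpha_I\gamma^\circ=\beta_I\) to get \(\delta\,\gamma^\circ=\alpha_I^{\mathrm{adj}}\beta_I\). Hence \(\gamma_W=\gamma^\circ\) there, and this set is dense in \(W\setminus V\).

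The local pieces then glue. Two such local definitions \(\gamma_W,\gamma_{W'}\) (possibly using different minors) both coincide with \(\gamma^\circ\) on a dense open subset of \(W\cap W'\). By the identity theorem for meromorphic functions, they coincide as meromorphic morphisms on \(W\cap W'\). So they glue to a global meromorphic \(\gamma\), which extends \(\gamma^\circ\) on the complement of the pole set. On \(M\setminus V\), \(\gamma^\circ\) is holomorphic and agrees with \(\gamma\) generically, so \(\gamma\) has no poles there. The identity \(\alpha\circ\gamma=\beta\) then holds as meromorphic morphisms, again by density.

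The only delicate point is the choice of a nonvanishing minor. This requires \(M\) (or each connected component of \(W\)) to be connected, so that generic injectivity gives a minor not identically zero locally. I would state that \(M\) is taken connected, or argue componentwise. I expect this step to be the main (mild) obstacle: making precise that "generically injective" gives a locally non-identically-zero minor on every connected open set, via the identity theorem applied to the analytic set \(\{\text{all } e\times e\text{ minors}=0\}\). No results from the earlier parts of the paper are needed.
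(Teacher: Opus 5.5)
Your proposal is correct and follows essentially the same route as the paper: local frames, an $e\times e$ minor $A_I$ with $\det A_I\not\equiv 0$, the adjugate formula $\gamma=\operatorname{adj}(A_I)B_I/\det A_I$, and gluing because the local extensions agree with $\gamma^\circ$ on a dense open set. Your connectedness remark (needed so that $M\setminus V$ is dense and a non-vanishing minor exists on each chart) and your care in restricting the identity $A_I\gamma^\circ=B_I$ to the complement of $V\cup\{\det A_I=0\}$ make explicit two points the paper leaves implicit.
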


Here the meromorphic morphism is defined by a meromorphic section of ${\rm Hom}(G,E)$.

\begin{proof}
The assertion is local on \(M\). Choose local frames and write
\(\alpha,\beta,\gamma^\circ\) as matrices $A,~B,~C^\circ$,
so that  $AC^\circ=B$
on \(M\backslash V\).

Set \(r:=\operatorname{rk}E\). Since \(\alpha\) is generically injective,
there is a set \(I\) of \(r\) row indices such that the corresponding
\(r\times r\) submatrix \(A_I\) satisfies
$\det A_I\not\equiv0$.
Let \(B_I\) be the submatrix of \(B\) obtained by taking the same rows. 
Taking the rows indexed by $I$ on both sides of $AC^\circ
=B$, we obtain $ A_I C^\circ=B_I$. Hence on \(\{\det A_I \ne 0\}\) we have
\[
    C^\circ
    =
    A_I^{-1}B_I
    =
    \frac{\operatorname{adj}(A_I)B_I}{\det A_I}.
\]
Thus the entries of \(C^\circ\) extend meromorphically. These local
extensions agree on overlaps, since they coincide with \(\gamma^\circ\)
on a dense open subset, and therefore glue to a unique meromorphic
morphism \(\gamma:G\dashrightarrow E\).
\end{proof}

We now return to the main argument and seek to descend the line bundle $L$ from $X$ to $Y$.

\begin{lem}\label{lem:constancy_general_fibre}
The horizontal factorisation of \(\varphi\) induces a meromorphic map
 $\rho :  X\dashrightarrow\mathbb P_Y(V)$.
 Moreover, for a general point \(y\in Y\), the
restriction
$\rho_y:=\rho|_{X_y} :  X_y\dashrightarrow\mathbb P(V_y)$
is constant.
\end{lem}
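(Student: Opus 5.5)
First I construct $\rho$. By Lemma~\ref{lem:generic_horizontalisation}, on the submersion locus $X^\circ$ the morphism $\varphi$ factors through $f^*V\hookrightarrow(\Omega_X^1)^{\otimes m}$. On $X^\circ$ the inclusion $f^*V\to(\Omega_X^1)^{\otimes m}$ is a subbundle, so the factorisation $\psi^\circ:L|_{X^\circ}\to f^*V|_{X^\circ}$ is holomorphic there. Proposition~\ref{lem:meromorphic_factorisation} then extends $\psi^\circ$ to a meromorphic morphism $\psi:L\dashrightarrow f^*V$ on $X$. Here we take $\alpha$ to be the generically injective map $f^*V\to(\Omega_X^1)^{\otimes m}$ and $\beta=\varphi$. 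Since $\varphi$ is injective, $\psi$ is nonzero at a general point. Outside the zeros and poles of $\psi$, sending $x\mapsto[\psi(L_x)]\in\mathbb P(V_{f(x)})$ defines a holomorphic map into $\mathbb P_Y(V)$ lying over $f$. In local frames this is the projectivisation of a meromorphic vector, so it extends meromorphically to $\rho:X\dashrightarrow\mathbb P_Y(V)$.

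For constancy, fix a general $y$. Then $X_y$ is smooth and rationally connected, it is not contained in the indeterminacy locus of $\rho$, and $\psi$ is holomorphic and nonvanishing at a general point of $X_y$. The restriction $\rho_y:X_y\dashrightarrow\mathbb P(V_y)$ maps to a fixed projective space, since $V_y$ is a fixed vector space. Take a very free curve $g:\mathbb P^1\to X_y$ through a general point, chosen (by generality in the covering family) to avoid the codimension~$\geq2$ indeterminacy locus of $\rho_y$ and to meet the locus where $\psi$ is nonzero. The proof of Lemma~\ref{lem:generic_horizontalisation} gives $g^*L\simeq\mathcal O_{\mathbb P^1}$, and shows that $g^*\varphi$ is a nonzero map $\mathcal O_{\mathbb P^1}\to g^*f^*V=V_y\otimes\mathcal O_{\mathbb P^1}$. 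A morphism $\mathcal O_{\mathbb P^1}\to\mathcal O_{\mathbb P^1}^{\oplus d^m}$ is a constant vector $v\in V_y$. It is nonzero, hence nowhere vanishing, so the line it spans is constant. Therefore $\rho_y\circ g$ is constant, equal to $[v]$.

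Very free curves through a general point of $X_y$ cover a dense open set, and any two general points of $X_y$ can be joined by a single very free curve, because $X_y$ is rationally connected and the two-point evaluation map is dominant (Proposition~\ref{prop:free_curves_kahler}). So $\rho_y$ takes the same value at two general points. Hence $\rho_y$ is constant on a dense open subset, and therefore constant as a meromorphic map.

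\textbf{Main obstacle.} The main difficulty is making sure that, for a general curve, $g^*\varphi$ really factors through $g^*f^*V$ as a holomorphic, nowhere-vanishing map. This means ensuring the curve stays inside $X^\circ$ and avoids the bad loci of $\psi$. Because $y$ is general, the whole fibre $X_y$ lies in $X^\circ$, so on it the factorisation through the subbundle $f^*V$ is holomorphic. The degree computation then forces the map to be constant and nowhere zero, which settles this point.
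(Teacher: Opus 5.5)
Your proposal is correct and follows the paper's proof essentially step by step. You extend the horizontal factorisation via Proposition~\ref{lem:meromorphic_factorisation}, with $\alpha\colon f^*V\to(\Omega_X^1)^{\otimes m}$ and $\beta=\varphi$, and obtain $\rho$ by projectivising a meromorphic vector. You then apply the degree argument to very free curves in a general fibre $X_y\subset X^\circ$, showing that $g^*\psi$ is a nonzero constant vector in $V_y$, and conclude by dominance of the two-point evaluation map. Your observation that this constant vector is nowhere vanishing, so such curves automatically avoid the zeros of $\psi$, is a slightly more careful version of the paper's remark that the factorisation is holomorphic along the fibre.
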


\begin{proof}
Consider the natural morphism
\[
    \alpha : 
    f^*V = f^*(\Omega_Y^1)^{\otimes m}
    \to
    (\Omega_X^1)^{\otimes m}.
\]
Since \(f\) is a proper submersion over \(X^\circ\), the morphism \(\alpha\)
is injective there, and hence generically injective. By
Lemma~\ref{lem:generic_horizontalisation}, the morphism \(\varphi\)
factors holomorphically through \(\alpha\) over \(X^\circ\). Applying
Proposition~\ref{lem:meromorphic_factorisation}, this factorisation
extends uniquely to a meromorphic morphism
\begin{equation}\label{L_fV_mer}
    L\dashrightarrow f^*V.
\end{equation}
It is generically nonzero because its composition with \(\alpha\) is
\(\varphi\). Therefore, at a general point \(x\in X\), its image is a
line in $(f^*V)_x=V_{f(x)}$, and hence defines
\[
    \rho(x):=
    \bigl[\operatorname{Im}(\varphi_x)\bigr]
    \in\mathbb P(V_{f(x)}).
\]
Locally, after choosing holomorphic frames of $L$ and $f^*V$, the meromorphic morphism in \eqref{L_fV_mer} is represented by a generically nonzero tuple of meromorphic functions 
$$
x \mapsto (u_1(x),...,u_N(x)).
$$
Hence its image line ${\rm Im}(\varphi_x)$ is represented by the homogeneous coordinates
$$
x \to [u_1(x),...,u_N(x)]
$$
 which define a local meromorphic map to $\mathbb P_Y(V)$. These local maps are compatible with changes of frames and therefore glue to the meromorphic map $\rho$.

We now prove its fibrewise constancy. Fix a general point \(y\in Y\)
and set $  F:=X_y$. Then \(F\subset X^\circ\) is rationally connected. By
Proposition~\ref{prop:free_curves_kahler} and its proof, \(F\) carries
a family of very free rational curves.
Let $ g:\mathbb P^1\to F$
be a general member of this family. By
Proposition~\ref{prop:vertical_very_free_splitting}, the curve \(g(\mathbb P^1)\)
is free in \(X\). The degree argument in the proof of
Lemma~\ref{lem:generic_horizontalisation} then gives
\[
    g^*L\simeq\mathcal O_{\mathbb P^1}.
\]
The horizontal factorisation is holomorphic along \(F\), and its
pullback by a general \(g\) is a nonzero morphism
\[
    \mathcal O_{\mathbb P^1}
    \to
    g^*f^*V
    =
    V_y\otimes\mathcal O_{\mathbb P^1} \simeq \mathcal{O}_{\mathbb P^1}^{\oplus d^m}.
\]
This morphism is just given by
\[
    1\longmapsto(c_1,...,c_r),
    ~~ c_i\in\mathbb C,
\]
and is therefore constant. Thus $\rho_y\circ g :  C\to\mathbb P(V_y)$ is constant.

Finally, let \(x_1,x_2\in F\) be two general points at which
\(\rho_y\) is defined. The dominance of the two-point evaluation map
allows us to choose a curve \(g(\mathbb P^1)\) as above passing through both
points. Consequently,
\[
    \rho_y(x_1)=\rho_y(x_2).
\]
Hence \(\rho_y\) is constant on a dense open subset of \(F\), and
therefore is constant as a meromorphic map.
\end{proof}

The distinction between the two Lemmas is important: a covering family
is enough to force generic horizontalisation, whereas constancy along a
fibre requires rational curves connecting two general points.

We can now prove the descent theorem.

\begin{thm}\label{thm:descent_cotangent_line}
Let $f :  X\to Y$ be a fibration between connected compact Kähler manifolds whose
general fibre is rationally connected, and assume that
\(\dim Y>0\). Let \(L\) be a pseudo-effective line bundle admitting an
injective morphism $ \varphi :  L\to(\Omega_X^1)^{\otimes m}$,
and set $V:=(\Omega_Y^1)^{\otimes m}$. 

Then there exists a unique saturated coherent rank-one subsheaf  $M\subset V$ such that, at a general point of \(X\),
\[
    \operatorname{Im}(\varphi)
    \subset
    f^*M
    \subset
    (\Omega_X^1)^{\otimes m},
\]
where the second inclusion is induced by the natural cotangent
morphism.
\end{thm}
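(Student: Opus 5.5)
The plan is to descend the fibrewise-constant map $\rho$ of Lemma~\ref{lem:constancy_general_fibre} to a meromorphic section of $\mathbb P_Y(V)\to Y$. Then Lemma~\ref{lem:meromorphic_section_saturated_line} turns that section into the saturated subsheaf $M$.

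\emph{Step 1: factorisation of $\rho$.} Let $\Gamma_\rho\subset X\times_Y\mathbb P_Y(V)$ be the closure of the graph of $\rho$ over its domain of definition. It is an irreducible compact analytic subspace, and its projection to $X$ is bimeromorphic. By Lemma~\ref{lem:constancy_general_fibre}, for general $y\in Y$ the fibre of the composite $\Gamma_\rho\to\mathbb P_Y(V)$ over $y$ is a single point $\rho_y(X_y)$. Let $\Sigma$ be the image of $\Gamma_\rho$ in $\mathbb P_Y(V)$. It is an irreducible analytic subset by Remmert's proper mapping theorem, and the projection $\Sigma\to Y$ is proper and surjective with generic fibre a single reduced point. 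Hence $\Sigma\to Y$ is bimeromorphic, and its inverse is a meromorphic map $\bar\rho\colon Y\dashrightarrow\mathbb P_Y(V)$ with $\pi\circ\bar\rho=\mathrm{id}_Y$, i.e.\ a meromorphic section. By construction $\rho=\bar\rho\circ f$ on a dense Zariski open subset of $X$.

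\emph{Step 2: construction of $M$.} Apply Lemma~\ref{lem:meromorphic_section_saturated_line} to $\bar\rho$. This gives a saturated coherent rank-one subsheaf $M:=\mathcal L_{\bar\rho}\subset V$ whose fibre at a general $y$ is the line $\bar\rho(y)\subset V_y$. Consider a general point $x$ in the intersection of the submersion locus $X^\circ$, the set where $\rho$ is defined, and the preimage of the set where $M$ is a subbundle equal to the line of $\bar\rho$. At such $x$, Lemma~\ref{lem:generic_horizontalisation} identifies $\operatorname{Im}(\varphi_x)$ with a line in $(f^*V)_x=V_{f(x)}$. That line is $\rho(x)=\bar\rho(f(x))=M_{f(x)}=(f^*M)_x$. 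Since $\alpha\colon f^*V\to(\Omega_X^1)^{\otimes m}$ is injective on $X^\circ$, this yields the chain of inclusions $\operatorname{Im}(\varphi)\subset f^*M\subset(\Omega_X^1)^{\otimes m}$ at general points.

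\emph{Step 3: uniqueness.} Suppose $M'\subset V$ is another saturated rank-one subsheaf with the same property. For general $x$, both $f^*M'$ and $f^*M$ contain the nonzero line $\operatorname{Im}(\varphi_x)$, and $\alpha$ is injective at $x$. Hence $M'_{f(x)}=M_{f(x)}$ for $y=f(x)$ ranging over a dense open subset of $Y$; this uses that $f$ is surjective and that the relevant bad loci in $X$ are proper analytic subsets, so their complements dominate $Y$. The uniqueness part of Lemma~\ref{lem:meromorphic_section_saturated_line} (two saturated subsheaves agreeing on a dense open set coincide) then gives $M'=M$.

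The main obstacle I expect is Step 1: making the descent of $\rho$ rigorous in the analytic category. The constancy is known only for general $y$, and $\rho$ has indeterminacy. I would handle this through the graph-closure argument above, using the connectedness of the fibres of $f$ and the fact that $\Sigma\to Y$ is generically one-to-one. A proper bimeromorphic map between irreducible spaces over a normal base has a meromorphic inverse, and $Y$ is smooth, so this applies. The remaining steps are bookkeeping of general points.
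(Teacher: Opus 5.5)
Your proposal is correct and follows essentially the same route as the paper: take the closure of the graph of $\rho$ in $X\times_Y\mathbb P_Y(V)$, note that its image in $\mathbb P_Y(V)$ maps bimeromorphically onto $Y$ because $\rho$ is constant on general fibres, and invert this to get a meromorphic section. You then apply Lemma~\ref{lem:meromorphic_section_saturated_line} to construct $M$, and obtain uniqueness because two saturated subsheaves that agree generically must coincide; your bookkeeping of general points in Steps 2 and 3 is slightly more explicit than the paper's.
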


\begin{proof}
By Lemma~\ref{lem:constancy_general_fibre}, the horizontal
factorisation of \(\varphi\) defines a meromorphic map
\[
    \rho :  X\dashrightarrow\mathbb P_Y(V)
\]
whose restriction to a general fibre of \(f\) is constant. We first
show that \(\rho\) descends to a meromorphic section of
\(\mathbb P_Y(V)\).

Let
\[
    \Gamma_\rho
    \subset
    X\times_Y\mathbb P_Y(V)
\]
be the closure of the graph of \(\rho\), and denote by
$ q : \Gamma_\rho\to\mathbb P_Y(V)$ the second projection. We know that
\[
    \Gamma_\sigma:=q(\Gamma_\rho)
    \subset\mathbb P_Y(V)
\]
is an irreducible analytic subset. Let $\pi : \mathbb P_Y(V)\to Y$
be the natural projection. Since \(\rho\) is constant along a general
fibre of \(f\), the general fibre of
$\pi|_{\Gamma_\sigma} : \Gamma_\sigma\to Y
$
consists of a single point. Thus, \(\pi|_{\Gamma_\sigma}\) is bimeromorphic, and its meromorphic inverse $Y\dashrightarrow\Gamma_\sigma$,  defines a meromorphic section
\[
    \sigma :  Y\dashrightarrow\mathbb P_Y(V)
\]
such that  $\rho=\sigma\circ f $ as meromorphic maps.

Applying Lemma~\ref{lem:meromorphic_section_saturated_line} to
\(\sigma\), we obtain a unique saturated coherent rank-one subsheaf
\[
    M:=\mathcal L_\sigma\subset V.
\]
For a general point \(y\in Y\), the fibre \(M_y\subset V_y\) is the
line represented by \(\sigma(y)\). Hence, for a general point
\(x\in X\), $ (f^*M)_x=M_{f(x)}$
is the line represented by $\sigma(f(x))=\rho(x)$. 
By the construction of \(\rho\), its image under the natural morphism
\[
    (f^*V)_x\to
    (\Omega_X^1)^{\otimes m}_x
\]
is precisely \(\operatorname{Im}(\varphi_x)\). This proves the generic
inclusion  $\operatorname{Im}(\varphi)
    \subset
    f^*M
    \subset
    (\Omega_X^1)^{\otimes m}$.

Finally, if \(M'\subset V\) is another saturated rank-one subsheaf
with the stated property, then \(M\) and \(M'\) determine the same
line in \(V_y\) for a general point \(y\in Y\). They are therefore the
saturated closures in \(V\) of the same generic line, and hence
\(M=M'\).
\end{proof}

\begin{rem}\label{non-existence}
    Suppose $Y$ is a point, so that $X$ is rationally connected. Let $L$ be a psef line bundle on $X$ and suppose that there is an injective morphism $L\to (\Omega_X^1)^{\otimes m}$.

    Choose a very free curve $g: \mathbb P^1 \to X$ so that $[g(\mathbb P^1)]$ is movable class on $X$ and
    \begin{equation}\label{target is a direct sum}
    g^*(\Omega_X^1) \simeq \bigoplus_i \mathcal O_{\mathbb P^1}(-a_i),~~a_i \geq 1.
    \end{equation}
   Pseudo-effectivity gives \(\deg g^*L\geq0\). On the other hand, the corresponding
morphism in \eqref{g*L and g*Em morphism}, whose target is (\ref{target is a direct sum}), forces \(\deg g^*L<0\), a
contradiction.  Hence no such injection exists. This means we do not need to consider the case where the base is a point in our subsequent discussion.
\end{rem}

\subsection{Pullback of Subsheaves of Cotangent Bundles}

Throughout this subsection, let $ f :  X\to Y $ be a fibration between compact Kähler manifolds whose general fibre is
rationally connected.

\subsubsection{Multiplicity-One Component over a Base Divisor}

We first record the geometric input needed below. Its proof will be given in
the next subsubsection.

\begin{thm}[Multiplicity-one component]
    \label{thm:multiplicity_one_component} Let $f: X \to Y$ be a fibration between compact Kähler manifolds whose general fibre is ratonally connected. 
    Let $P\subset Y$ be a prime divisor and write
    \[
        f^*P=\sum_j \mu_jP_j+E,
    \]
    where the $P_j$ are precisely the prime divisors of $X$ which dominate
    $P$, and every irreducible component of $E$ maps into a subset of $Y$ of
    codimension at least two. Then $\min_j\mu_j=1$.
\end{thm}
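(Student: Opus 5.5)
The plan is to reduce to a family over a curve and apply the Graber--Harris--Starr section theorem (Theorem~\ref{thm:GHS}). The key observation is local: if $s$ is a section of $f$ over a small disc transversal to $P$, the multiplicities of $f^*P$ along components hit by $s$ are forced to be $1$.

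\emph{Local multiplicity computation.} Suppose $\Delta\subset Y$ is a holomorphic disc meeting $P$ transversally at a general point $y_0\in P_{\mathrm{reg}}$. Suppose also that $s:\Delta\to X$ is a holomorphic section of $f$ over $\Delta$. Let $t$ be a local equation of $P$ near $y_0$, so that $t|_\Delta$ is a coordinate on $\Delta$ vanishing simply at $y_0$. Then $s^*(f^*t)=t|_\Delta$ vanishes to order exactly one at $y_0$. We may choose $y_0$ outside the codimension $\ge 2$ set $f(\operatorname{Supp}E)\cup f(\bigcup_{j\ne k}P_j\cap P_k)\cup f(\operatorname{Sing}(\operatorname{Supp}f^*P))$ intersected with $P$. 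This is possible because these images are proper analytic subsets of $P$, or have codimension $\ge 2$ in $Y$, after discarding the non-dominating loci. Then $s(y_0)$ lies on some $P_j$ and, generically, on exactly one of them. Near $s(y_0)$, write $f^*t=u\cdot h_j^{\mu_j}$ with $u$ a unit and $h_j$ a local equation of $P_j$. Pulling back gives $1=\operatorname{ord}_{y_0}(s^*f^*t)=\mu_j\cdot\operatorname{ord}_{y_0}(s^*h_j)\ge\mu_j$. Hence $\mu_j=1$.

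There is a subtlety: $s(y_0)$ might land in a bad locus, for instance on $E$ or on the intersection of two components. To handle it, we argue with divisors of the pullback. The identity $\operatorname{ord}_{y_0}s^*(f^*P)=1$ holds regardless. Since $s^*$ of an effective divisor whose support contains $s(y_0)$ has positive order, exactly one component contains $s(y_0)$, and it appears with multiplicity one. That component cannot be in $E$ once $y_0$ is general, because $f(E)\cap P$ is a proper subset. So it is some $P_j$ with $\mu_j=1$.

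\emph{Producing the section.} It remains to find such a section. By Proposition~\ref{prop:rc_fibration_projective}, $f$ is projective. Take a general complete curve $C\subset Y$ meeting $P$ transversally at a general point $y_0$ and not contained in the bad loci. For example, take an intersection of general members of a very ample system locally over $Y$. The difficulty is that $Y$ is only Kähler, so such compact curves need not exist. I would therefore work locally: it suffices to have a section over a small disc $\Delta$.

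Take $\Delta$ a small disc through $y_0$ transversal to $P$, and let $X_\Delta:=f^{-1}(\Delta)\to\Delta$. This is a projective morphism over a disc whose general fibre is rationally connected. The needed input is the local version of GHS: a projective family over a disc, or more generally over the germ of a smooth curve, with rationally connected general fibre admits a section. This follows from Theorem~\ref{thm:GHS} by algebraization. Locally over $\Delta$, the family is defined by finitely many equations in $\mathbb P^N\times\Delta$. Approximating the coefficients by polynomials preserves the combinatorics of the central fibre to any finite order, which is what the multiplicities see, and yields an algebraic family over a smooth projective curve $C$. Theorem~\ref{thm:GHS} then gives a section over $C$, hence a formal or analytic section over the germ. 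Alternatively, one can quote directly the known fact that GHS sections exist over henselian or analytic local discs, which Graber--Harris--Starr prove via a degeneration argument.

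I expect this algebraization/local-GHS step to be the main obstacle. It must preserve the multiplicities $\mu_j$ of the components of the central fibre, for instance via Artin approximation applied to the finite-order jet of the family, with an order larger than $\max_j\mu_j$. With the section in hand, the local multiplicity computation above gives $\min_j\mu_j\le1$. The reverse inequality $\min_j\mu_j\ge1$ is trivial, so $\min_j\mu_j=1$.
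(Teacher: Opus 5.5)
Your overall strategy matches the paper's: a disc $\Delta$ transverse to $P$ at a general point, a section coming from Graber--Harris--Starr, and the count $\operatorname{ord}_{y_0}(s^*f^*t)=1$. The local computation in your ``subtlety'' paragraph is correct. Your first version is not: when there are several $P_j$, connectedness of the fibres forces some $P_j\cap P_k$ to dominate $P$, so you cannot choose $y_0$ off their images.

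The genuine gap is the step you flag as the main obstacle.

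\textbf{The section does not transfer.} Your passage from a section over $C$ to ``a formal or analytic section over the germ'' does not follow. GHS gives a section of the \emph{approximating} algebraic family, not of $f^{-1}(\Delta)\to\Delta$. If the two families agree over $\operatorname{Spec}\mathbb C[t]/(t^N)$, you only get a section modulo $t^N$. Upgrading that to an honest section needs a strong (Greenberg/Wavrik-type) approximation theorem, which you do not invoke.

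\textbf{The approximating family is not usable.} Approximating the coefficients of the defining equations by polynomials does not produce a family you can work with. Perturbing generators of an ideal that is not a complete intersection can destroy flatness and change the nearby fibres completely, so the rationally connected general fibre required by GHS is lost. The approximation has to be done on the classifying map $\varphi:\Delta\to\mathcal H$ into the Douady/Hilbert space. That requires $f^{-1}(\Delta)\to\Delta$ to be flat, which you never check. The paper gets flatness from $p\notin f(\operatorname{Supp}E)$, so that $f^{-1}(p)$ is pure $r$-dimensional, and projectivity of $f$ from Claudon--H\"oring.

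\textbf{The fallbacks do not work either.} The Graber--Harris--Starr theorem concerns algebraic curves; it does not directly give sections over an analytic disc, which is exactly what needs an argument. Preserving the $\mu_j$ in an approximating family also does not help. Your order computation uses smoothness of $X$ (factoriality of its local rings), and would have to be redone on a possibly singular total space.

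\textbf{The missing idea: no section over the disc is needed.} Your computation only uses $d(f^*t)_{s(y_0)}\bigl(ds(\partial_t)\bigr)\neq0$, which is a first-order condition. So a section modulo $t^2$ suffices, and such a first-order section depends only on the tangent vector $\xi=d\varphi_0(\partial_t)$. The paper exploits exactly this:
\begin{enumerate}
\item It replaces $\varphi$ by a smooth projective curve $g:C\to\mathcal H$ with $dg_c(\partial_s)=\xi$, whose general point lies in the smooth rationally connected locus. This uses Bertini with prescribed tangent direction on a resolution of the Zariski closure of $\varphi(\Delta)$.
\item It applies GHS to $C\times_{\mathcal H}\mathcal U\to C$.
\item It pushes the tangent vector of the resulting section through $T_{(0,z)}W=T_0\Delta\times_{T_h\mathcal H}T_z\mathcal U$ to get $v\in T_zX$ with $d(f^*y_1)_z(v)=1$. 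This is precisely your order-one condition.
\end{enumerate}
When $\xi=0$, the vector $(\partial_t,0)$ already works.

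Once you see that a section modulo $t^2$ suffices, your route could alternatively be repaired by Artin approximation of $\varphi$. You would approximate to order at least $2$, and beyond the contact order of $\varphi$ with the non-RC locus. As written, however, the proposal does not contain this idea, and the key step is unproved.
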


\begin{rem}
   When both \(X\) and \(Y\) are projective, choose a sufficiently general complete-intersection curve \(C\subset Y\) which meets \(P\) transversely and  \(C \cap f(\operatorname{Supp}E) = \varnothing\); we may also assume that \(X_C:=f^{-1}(C)\) is smooth. 
   
   The general fibre of \(f_C :  X_C\to C\) is rationally connected, so Theorem~\ref{thm:GHS} provides a section \(s :  C\to X_C\). For \(p\in C\cap P\) and \(x=s(p)\), the identity \(f_C\circ s=\operatorname{id}_C\) implies that \(d(f_C)_x\) is surjective, and hence the implicit function theorem shows that \(f_C^*p\) is reduced near \(x\). Since \(C\) meets \(P\) transversely, we have \(f_C^*p=(f^*P)|_{X_C}\) near \(x\), while the choice of \(C\) ensures that \(x\) lies on some \(P_j\). It follows that \(\mu_j=1\), and hence \(\min_j\mu_j=1\). 
   
   In the compact Kähler setting, one cannot expect the existence of such curves in $Y$. We shall follow the same strategy by finding, in the Hilbert--Douady space, a suitable analogue of the curve \(C\). 
\end{rem}

\smallskip

 However, proving this Theorem requires some preparatory work. We will first give an application of this Theorem that can be connected with the $f$-degenerate divisor we defined in Section \ref{Sec 3.2}.

\begin{thm}
    \label{prop:pullback_saturation_is_degenerate}
    Let $m\geq1$ and let $\mathcal M\subset (\Omega_Y^1)^{\otimes m}$ be a saturated coherent subsheaf of rank one. Consider the morphism
    \[
        f^*\mathcal M
        \to f^*(\Omega_Y^1)^{\otimes m}
        \xrightarrow{\,(df^*)^{\otimes m}\,}
        (\Omega_X^1)^{\otimes m},
    \]
    and denote the saturation of its image in $(\Omega_X^1)^{\otimes m}$ by $(f^*\mathcal M)^{\mathrm{sat}}$.
    There is a $f$-degenerate divisor $D \geq 0$ such that $(f^*\mathcal M)^{\mathrm{sat}}
        - f^*\mathcal M = D$.
\end{thm}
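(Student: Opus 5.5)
By Lemma~\ref{lem:sat_eff}, the morphism $f^*\mathcal M\to(\Omega_X^1)^{\otimes m}$ is nonzero: it is injective over the submersion locus $X^\circ$, where $(df^*)^{\otimes m}$ is injective. Hence $D:=(f^*\mathcal M)^{\mathrm{sat}}-f^*\mathcal M$ is an effective divisor. It remains to show that $D$ is vertical and $f$-degenerate, and the plan is to read off its coefficients locally.

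First, $D$ is vertical. Over $X^\circ$ the cotangent map $f^*\Omega_Y^1\to\Omega_X^1$ is a subbundle inclusion, so $f^*V\subset(\Omega_X^1)^{\otimes m}$ is saturated there, where $V:=(\Omega_Y^1)^{\otimes m}$. Over the locus where $\mathcal M\subset V$ is moreover a subbundle (the complement of a codimension $\geq2$ set $B\subset Y$, since $\mathcal M$ is saturated), $f^*\mathcal M$ is then saturated in $(\Omega_X^1)^{\otimes m}$. Therefore $\operatorname{Supp}D\subset f^{-1}(f(\operatorname{Crit} f)\cup B)$. Every component of $D$ mapping into $B$, or into a codimension $\geq2$ part of the discriminant, is $f$-exceptional and imposes no condition. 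So it suffices to treat a prime divisor $P\subset Y$ contained in $f(\operatorname{Supp}D)$.

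For such $P$, apply Theorem~\ref{thm:multiplicity_one_component}: some prime divisor $P_{j}$ dominating $P$ appears in $f^*P$ with multiplicity $\mu_{j}=1$. I claim $P_{j}\not\subset\operatorname{Supp}D$, which is exactly $f$-degeneracy. To check this, work near a general point $x\in P_{j}$ lying over a general point $y\in P$. Away from a codimension-two set we may assume $\mathcal M$ is a subbundle near $y$, $P=\{z_1=0\}$, and $P_{j}$ is the only component of $f^*P$ through $x$. Since $\mu_j=1$, $z_1\circ f$ vanishes to order one along $P_{j}$, so near $x$ we have local coordinates with $z_1\circ f=w_1$ and $z_k\circ f$ for $k\geq2$ unramified. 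Consequently $df$ has full rank $\dim Y$ at a general point of $P_{j}$, i.e.\ $f$ is a submersion there: indeed $f^*dz_1=dw_1$, and the $dz_k$ with $k\ge2$ pull back to independent forms because $f|_{P_j}\to P$ is generically submersive (generic smoothness on $P_j\to P$).

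It follows that $f^*\Omega_Y^1\to\Omega_X^1$ is a subbundle at a general point of $P_{j}$, hence so is its $m$-th tensor power. Therefore $f^*\mathcal M$ is saturated there, and $\operatorname{ord}_{P_{j}}D=0$.

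The main obstacle is the passage from $\mu_j=1$ to $df$ having full rank generically along $P_j$. I would handle it as follows. Choose $x$ general in $P_j$ so that $P_j$ is smooth at $x$ and $f|_{P_j}:P_j\to P$ is a submersion at $x$, which holds at general points by generic smoothness. Then $d(f|_{P_j})_x$ surjects onto $T_yP$. The reducedness of $f^*P$ along $P_j$ means $d(z_1\circ f)_x\neq0$ in the normal direction, which supplies the missing normal direction. Hence $df_x$ is onto $T_yY$.

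The remaining care is bookkeeping. Components of $D$ over $P$ might also lie over $B$; this is avoided by choosing $y\notin B$, which is possible since $B$ has codimension $\ge2$ and thus $P\not\subset B$.
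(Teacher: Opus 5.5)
Your proposal is correct and is essentially the paper's proof. Effectivity comes from Lemma~\ref{lem:sat_eff}. Theorem~\ref{thm:multiplicity_one_component} then gives, over each prime divisor $P\subset f(\operatorname{Supp}D)$, a component $P_j$ of multiplicity one. Along $P_j$, $df$ has full rank at a general point: $f|_{P_j}\to P$ supplies the tangential rank, and $d(z_1\circ f)\neq0$ supplies the normal direction. Hence $f^*\mathcal M$ is saturated near a general point of $P_j$.

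The only difference is packaging. The paper first shows $\operatorname{Supp}D\subset\operatorname{Crit}(f)$, using that $f$ is a submersion off $\operatorname{Crit}(f)$, and then shows $P_j\not\subset\operatorname{Crit}(f)$. You instead check saturation directly near a general point of $P_j$.
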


\begin{proof}
    By Lemma~\ref{lem:sat_eff}, there is an effective divisor $D$ such that
    $(f^*\mathcal M)^{\mathrm{sat}}- f^*\mathcal M = D$. Set $U := X \backslash {\rm Crit}(f)$, we have $f|_U$ is a flat morphism. Since $\mathcal M$
    is saturated, it is a line subbundle of $(\Omega_Y^1)^{\otimes m}$ away
    from an analytic subset $Z\subset Y$ of codimension at least two. On
    $U \backslash f^{-1}(Z)$, the morphism
    \[
        f^*\mathcal M\to(\Omega_X^1)^{\otimes m}
    \]
    is a subbundle inclusion, and hence it is already saturated there.
    Moreover, since $f|_{U}$ is flat,
    $f^{-1}(Z)\cap U$ has codimension at least two in $U$.
    As $D$ is a divisor, it follows that
    \[
        \operatorname{Supp}D\subset X\backslash U = {\rm Crit}(f).
    \]

    We claim that every effective $\mathbb R$-divisor supported in
    ${\rm Crit}(f)$ is $f$-degenerate. Indeed, let $E\geq0$ be such a
    divisor and let $P\subset Y$ be a prime divisor. By
    Theorem~\ref{thm:multiplicity_one_component}, there is a prime divisor
    $P_0\subset X$ dominating $P$ such that $\operatorname{ord}_{P_0}(f^*P)=1$.
    
    At a general point $x\in P_0$, the morphism $f|_{P_0}$ has rank
    $\dim Y-1$. If $t$ and $s$ are local equations of $P$ and $P_0$,
    respectively, then
    \[
        f^*t=su
    \]
    for a holomorphic unit $u$. Thus $d(f^*t)_x$ is nonzero in the
    normal direction to $P_0$, and consequently $df_x$ is non-degenerate. Hence
    $P_0\not\subset {\rm Crit}(f)$, so
    $P_0\not\subset\operatorname{Supp}E$. This proves that $E$ is
    $f$-degenerate. Applying this observation to $E=D$ completes the proof.
\end{proof}

The last paragraph of the proof yields the following corollary, which reveals a relation between the critical locus of a fibration and
$f$-degenerate divisors when the general fibre is rationally connected. This
implication does not hold for arbitrary fibrations.

\begin{cor}
    \label{prop:critical_divisors_are_f_degenerate}
    Let $f :  X\to Y$ be a fibration between compact Kähler manifolds
    whose general fibre is rationally connected. Then every effective $\mathbb R$-divisor
    $E_0$ satisfying
    \[
        \operatorname{Supp}E_0\subset\operatorname{Crit}(f)
    \]
    is $f$-degenerate.
\end{cor}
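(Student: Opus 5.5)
The statement is exactly the content of the last paragraph of the proof of Theorem~\ref{prop:pullback_saturation_is_degenerate}, so the plan is to isolate that argument and apply it to an arbitrary effective $E_0$. First note that $E_0$ is vertical: $\operatorname{Crit}(f)$ does not dominate $Y$, since by generic smoothness the critical values form a proper analytic subset, so every component of $E_0$ maps into a proper analytic subset of $Y$. We must show that for every prime divisor $P\subset Y$ with $P\subset f(\operatorname{Supp}E_0)$ there is a prime divisor $P_0\subset X$ with $f(P_0)=P$ and $P_0\not\subset\operatorname{Supp}E_0$. In fact we will prove more: this holds for every prime divisor $P\subset Y$.

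Fix a prime divisor $P\subset Y$ and apply Theorem~\ref{thm:multiplicity_one_component}. This gives a prime divisor $P_0$ dominating $P$ with $\operatorname{ord}_{P_0}(f^*P)=1$. The key step is to show that $P_0\not\subset\operatorname{Crit}(f)$. Pick a general point $x\in P_0$. We may take $x$ outside the other components of $f^*P$, at a smooth point of $P$, and at a point where $f|_{P_0}\colon P_0\to P$ is a submersion, which is possible by generic smoothness since $P_0$ dominates $P$. Choose local coordinates on $Y$ near $f(x)$ with $P=\{t=0\}$, and let $s$ be a local equation of $P_0$ at $x$. Multiplicity one gives $f^*t=s\,u$ with $u(x)\neq0$, so $d(f^*t)_x=u(x)\,ds_x\neq0$ in the normal direction to $P_0$. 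Meanwhile $df_x$ restricted to $T_xP_0$ surjects onto $T_{f(x)}P=\ker dt$. Together these show that $df_x$ is surjective, so $x\notin\operatorname{Crit}(f)$.

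Since $\operatorname{Crit}(f)$ is closed analytic and misses a general point of $P_0$, we get $P_0\not\subset\operatorname{Crit}(f)\supset\operatorname{Supp}E_0$. Hence $P_0\not\subset\operatorname{Supp}E_0$, and $E_0$ is $f$-degenerate.

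All the real difficulty is carried by Theorem~\ref{thm:multiplicity_one_component}, which we take as given. The remaining local step is routine. The one point to check is the general choice of $x$, so that $f|_{P_0}$ has maximal rank $\dim Y-1$ there. This holds because $P_0\to P$ is surjective, and generic smoothness on the regular locus of $P_0$ gives rank $\dim P$ on a dense Zariski open set.
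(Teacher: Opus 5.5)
Your proposal is correct and is essentially the paper's own argument. The corollary is read off from the last paragraph of the proof of Theorem~\ref{prop:pullback_saturation_is_degenerate}, which likewise invokes Theorem~\ref{thm:multiplicity_one_component} to obtain a multiplicity-one dominating component $P_0$, then uses $f^*t=su$ together with the rank $\dim Y-1$ of $f|_{P_0}$ to conclude $P_0\not\subset\operatorname{Crit}(f)$. Your additional remark that $E_0$ is automatically vertical, because the critical values form a proper analytic subset, fills a small point the paper leaves implicit.
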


\subsubsection{The Tangent-Transfer Argument}

In this subsection, we prove Theorem Theorem~\ref{thm:multiplicity_one_component}. Set
$d:=\dim Y,~n:=\dim X,~r:=n-d$.
Choose a sufficiently general point $p\in P$ such that $P$ is smooth at $p$,
\[
    p\notin f(\operatorname{Supp}E),
\]
and every fibre $(P_j)_p$ has dimension $r$. We claim that $f$ is flat near
$f^{-1}(p)$. Indeed, the pullback of a local equation of $P$ is a
non-zero-divisor on the smooth manifold $X$, so $f^{-1}(P)$ is an effective
Cartier divisor of pure codimension one. Since
$p\notin f(\operatorname{Supp}E)$, every irreducible component of
$f^{-1}(p)$ is contained in some $(P_j)_p$ and hence has dimension at most
$r$. 

On the other hand, the fibre is locally cut out in the smooth $n$-fold
$X$ by the pullbacks of $d$ local parameters on $Y$, so each of its
irreducible components has dimension at least $n-d=r$. Thus $f^{-1}(p)$ is
purely $r$-dimensional. So we can shrink a neighbourhood $U$ of $p$ so that
\[
    f^{-1}(U)\to U
\]
is flat. Moreover, thanks to Proposition \ref{prop:rc_fibration_projective}, this morphism is also projective.

Fix a relative projective embedding over $U$. Openness of smooth rational
connectedness in projective families shows that
\[
    S:=\{y\in U\mid X_y\text{ is not smooth and rationally connected}\}
\]
is a proper closed analytic subset; see
\cite[Theorem~IV.3.11]{Kol96}. If a general point of $P$ does not lie in
$S$, the Theorem follows immediately.

We may therefore choose $p$ so that $P\cap U$ is the only divisorial component of
$S$ through $p$. Take coordinates $(y_1,\ldots,y_d)$ on $U$, centred at
$p$, with $P=(y_1=0)$, and choose a general transverse disc
\[
    \iota : \Delta\hookrightarrow U,~~
    y_1\circ\iota=t,~~
    y_2\circ\iota=\cdots=y_d\circ\iota=0,
\]
such that $\Delta\cap S=\{0\}$. Set
\[
    \pi :  W:=X\times_Y\Delta\to\Delta.
\]
Then $\pi$ is flat and projective, and
     $W_t$  is smooth and rationally connected for $t \in\Delta^*$.

Since $\pi :  W\to\Delta$ is flat and projective, 
\cite[\S~4, no.~7, Satz~6]{Gra60} implies that
\[
    P_{W_t}(k):=\chi\bigl(W_t,\mathcal O_{W_t}(k)\bigr)
\]
is independent of $t\in\Delta$. We denote this common Hilbert polynomial
by $Q$.

In Douady's notation, $H(\mathbb P^N)$ parametrizes compact analytic
subspaces of $\mathbb P^N$. Let
\[
    \mathcal H:=H_Q(\mathbb P^N)
    \simeq \operatorname{Hilb}_Q(\mathbb P^N)^{\mathrm{an}}
\]
be the open-and-closed locus of $H(\mathbb P^N)$ parametrizing those with Hilbert polynomial
$Q$. Let
\[
    \mathcal U\subset\mathcal H\times\mathbb P^N,~~q:= {\rm pr}_\mathcal{H}|_{\mathcal U} : \mathcal U \to \mathcal H
\]
be the universal closed analytic subspace and the first projection. Thus, if
$h\in\mathcal H$ represents an analytic subspace
$V_h\subset\mathbb P^N$, then $\mathcal U_h:=q^{-1}(h)=V_h$.

By the universal property of the Douady space, the embedded flat family
$W\subset\Delta\times\mathbb P^N$ determines a unique holomorphic classifying map
\[
    \varphi : \Delta\to\mathcal H,
    ~~
    \varphi(t)=[W_t],
\]
for which the corresponding fibre product satisfies
\begin{equation}\label{eq:W_as_Hilbert_pullback}
    W\simeq\Delta\times_{\mathcal H}\mathcal U.
\end{equation}
See \cite[\S~9.1, Remark~2, and \S~9.7, Theorem~1]{Dou66}. Let
\[
\mathcal H^{\rm rc}
:=\{h\in\mathcal H:\mathcal U_h
\text{ is smooth and rationally connected}\}.
\]
By openness of smoothness and rational connectedness in projective
families \cite[Theorem 3.11]{Kol96}, $\mathcal H^{\rm rc}$ is Zariski open. Moreover,
$\varphi(\Delta^*)\subset\mathcal H^{\rm rc}$.

The following curve-selection Lemma converts this analytic disc into a
projective curve without changing its first-order direction.

\begin{lem}\label{lem:projective_curve_selection_prescribed_tangent}
    Let $H$ be a projective complex space, let $H^\circ\subset H$ be a
    Zariski-open subset, and let
    \[
        \alpha : (\Delta,0)\to(H,h)
    \]
    be a holomorphic germ such that
    $\alpha(\Delta^*)\subset H^\circ$. If
    $ \zeta:=d\alpha_0\left(\frac{\partial}{\partial t}\right)\neq0$, 
    then there are a smooth projective curve $C$, a point $c\in C$, a local
    coordinate $s$ at $c$, and a holomorphic map $g :  C\to H$ such that
    \[
        g(c)=h,~~
        dg_c\left(\frac{\partial}{\partial s}\right)=\zeta,
        ~~ g(C)\cap H^\circ\neq\varnothing.
    \]
    In particular, a general point of $C$ is mapped into $H^\circ$.
\end{lem}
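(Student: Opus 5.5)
The plan is to replace the analytic arc $\alpha$ by an \emph{algebraic} arc that agrees with it to high order at $0$. I will use Artin's approximation theorem for this, and then take the smooth projective model of the function field generated by the algebraic arc.

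First, the reductions. Fix a projective embedding $H\subset\mathbb P^M$ and an affine chart $\mathbb A^M\subset\mathbb P^M$ containing $h$, with $h$ at the origin. By Chow's theorem $H$ is algebraic. Let $H_1$ be an irreducible component of $H$ containing the germ $\alpha(\Delta)$; since $\alpha(\Delta^*)\subset H^\circ$, we have $H_1\cap H^\circ\neq\varnothing$. Let $f_1,\dots,f_k\in\mathbb C[x_1,\dots,x_M]$ generate the ideal of $H_1\cap\mathbb A^M$. Then $\alpha(t)=(\alpha_1(t),\dots,\alpha_M(t))\in (t\,\mathbb C\{t\})^M$ solves $f_i(\alpha(t))=0$. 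The complement $Z:=H\setminus H^\circ$ is Zariski closed, so it is cut out near $h$ by polynomials $g_1,\dots,g_\ell$. Since $\alpha(t)\notin Z$ for $t\neq0$, some $g_{i_0}\circ\alpha\not\equiv0$; let $e$ be its vanishing order at $t=0$. Fix an integer $N>\max\{e,1\}$.

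Second, the approximation step. By Artin's approximation theorem (analytic solutions of polynomial equations over $\mathbb C\{t\}$ can be approximated by algebraic ones), there is a tuple $\bar\alpha(t)\in\mathbb C\langle t\rangle^M$ of algebraic power series with $f_i(\bar\alpha)=0$ for all $i$ and $\bar\alpha\equiv\alpha \bmod t^{N+1}$. This congruence has three consequences. Since $N\geq 1$, we get $\bar\alpha(0)=h$ and $\bar\alpha'(0)=\zeta$. The series $g_{i_0}(\bar\alpha)$ still has exact order $e<N+1$, so it is not identically zero, and therefore the arc $\bar\alpha$ is not contained in $Z$. Finally, since $\zeta\neq0$, the arc $\bar\alpha$ is nonconstant.

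Third, producing the curve. Let $K:=\mathbb C(t,\bar\alpha_1,\dots,\bar\alpha_M)$. Each $\bar\alpha_j$ is algebraic over $\mathbb C(t)$, so $K$ is a function field of transcendence degree one. Let $C$ be its smooth projective model. The embedding $K\hookrightarrow\mathbb C((t))$ defines a place of $K$, i.e.\ a point $c\in C$. Because every element of $K$ is a power series in $t$ with no negative powers needed for $t$ itself, $t$ has valuation one, so $s:=t$ is a local coordinate at $c$ and $\widehat{\mathcal O}_{C,c}\simeq\mathbb C[[t]]$. The rational functions $\bar\alpha_j\in K$ define a rational map $C\dashrightarrow H_1\subset\mathbb P^M$, since the $f_i$ vanish on them. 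This map extends to a morphism $g\colon C\to H$ because $C$ is a smooth curve and $H$ is projective. Expanding at $c$ gives $g=\bar\alpha$ in the coordinate $s$, hence $g(c)=h$ and $dg_c(\partial/\partial s)=\bar\alpha'(0)=\zeta$. Moreover $g(C)\not\subset Z$, because $g_{i_0}\circ g\neq0$ in $K$. Since $C$ is irreducible, $g^{-1}(Z)$ is then a finite set, so a general point of $C$ maps into $H^\circ$.

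I expect the main obstacle to be making the appeal to Artin approximation precise in this setting. One has to check that convergent solutions in $\mathbb C\{t\}$ (rather than merely formal ones) are approximated to any prescribed order by algebraic power series; both Artin's 1968 theorem and the nested/strong versions give this. One also has to confirm that the ideal of $H_1$ is a legitimate finite polynomial system in the chosen chart. The remaining points are bookkeeping: that order $N\geq e+1$ keeps the arc out of $Z$, and that $t$ remains a uniformizer at $c$.
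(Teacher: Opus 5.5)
Your argument is correct, and it takes a genuinely different route from the paper.

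\textbf{The paper's route.} The paper keeps the analytic arc and cuts down its Zariski closure instead. It lets $V$ be the irreducible Zariski closure of $\alpha(\Delta)$ and takes a resolution $\nu\colon\widetilde V\to V$. It lifts $\alpha$ to $\widetilde\alpha$; this is possible because $\alpha(\Delta^*)$ lies generically in $V_{\rm reg}$, and a proper modification of a smooth curve germ is trivial. From $d\nu(\widetilde\zeta)=\zeta$ it gets $\widetilde\zeta\neq0$. It then applies its Bertini-type Proposition~\ref{prop:complete_intersection_prescribed_tangent} to produce a smooth complete-intersection curve $C\subset\widetilde V$ through $\widetilde h$ with $T_{\widetilde h}C=\mathbb C\widetilde\zeta$ and $C\not\subset\nu^{-1}(V\setminus H^\circ)$.

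\textbf{Your route.} You replace the arc itself by an algebraic arc with the same $N$-jet, still not contained in $H\setminus H^\circ$. The curve is then simply the smooth model of the function field generated by that arc.

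\textbf{What each approach buys.} Your approach needs neither resolution of singularities nor a Bertini theorem with prescribed tangent. It gives agreement to arbitrary finite order rather than only first order. It also handles automatically the fact that $\zeta$ lies in the Zariski tangent space of a possibly non-reduced $H$, since you work in ambient coordinates on a reduced component $H_1$. The price is that you import Artin approximation, a much deeper input than the paper's Serre-vanishing/jet argument. You also get no extra control on $C$: it is abstract rather than an embedded complete intersection.

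\textbf{Two small points to fix.} First, the reference. Artin's 1968 theorem approximates formal solutions of analytic equations by \emph{convergent} ones. What you need is his 1969 algebraic approximation theorem, i.e.\ the approximation property of the Henselization $\mathbb C\langle t\rangle$ of $\mathbb C[t]_{(t)}$, applied to the polynomial system $f_i(Y)=0$ with the formal solution $\alpha$. Second, the step ``expanding at $c$ gives $g=\bar\alpha$'' deserves one line of justification. The given embedding $K\hookrightarrow\mathbb C((t))$ and the Laurent expansion at $c$ in the uniformizer $t$ are both continuous $\mathbb C$-algebra embeddings sending $t\mapsto t$. Hence they coincide, and the Taylor coefficients of $\bar\alpha_j$ are exactly those of $g$ in the coordinate $s=t$.
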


\noindent Here and below, for a possibly singular complex space $S$ and a point
$s\in S$, we denote by
\[
    T_sS
    :=
    \operatorname{Hom}_{\mathbb C}
    \bigl(\mathfrak m_{S,s}/\mathfrak m_{S,s}^2,\mathbb C\bigr)
\]
its Zariski tangent space; see
\cite[Chapter~2.1]{Fis76}. For a holomorphic map, one similarly defines the tangent map which satisfies the chain rule; see \cite[Chapter~2.6]{Fis76}.

\begin{proof}
    Let $V$ be the irreducible Zariski
    closure of $\alpha(\Delta)$. Then $V$ is projective and
    $V\not\subset H\setminus H^\circ$. Take a  resolution $\nu : \widetilde V\to V$.
    Note that $\Delta\not\subset V_{\rm sing}$, so after shrinking $\Delta$, the punctured disc $\Delta^*$ lies in the locus over which
    $\nu$ is an isomorphism. Its lift extends across the origin to a
    holomorphic germ
    \[
        \widetilde\alpha : (\Delta,0)
        \to(\widetilde V,\widetilde h).
    \]
    Indeed, take the closure of the graph of the lift over the punctured
    disc and normalize the component dominating $\Delta$; every proper
    modification of a smooth curve is an isomorphism. Set
    $\widetilde\zeta
        :=d\widetilde\alpha_0\left(\frac{\partial}{\partial t}\right)$,
    then $d\nu_{\widetilde h}(\widetilde\zeta)=\zeta$,
    which yields $\widetilde\zeta\neq0$.

    If $\dim\widetilde V=1$, take $C=\widetilde V$. If
    $\dim\widetilde V\geq2$, apply Proposition~\ref{prop:complete_intersection_prescribed_tangent} below to obtain
    a smooth irreducible complete-intersection curve
    $C\subset\widetilde V$ through $\widetilde h$ so that
    \[
        T_{\widetilde h}C=\mathbb C\widetilde\zeta,
    \]
    and
    $C \not\subset\nu^{-1}(V\backslash H^\circ)$. Composing
    $C\hookrightarrow\widetilde V\xrightarrow{\nu}V\hookrightarrow H$ and
    rescaling a local coordinate at $\widetilde h$ gives the required map
    $g$.
\end{proof}

Let us proceed by retaining the assumptions established at the beginning of this subsection. We next transfer the tangent direction from the parameter space $\mathcal H$ to the total space $X$.

\begin{lem}
    \label{lem:tangent_transfer}
    Let $C$ be a smooth curve, let $c\in C$, and let
    $g :  C\to\mathcal H$ satisfy
    \[
        g(c)=h,
        ~~
        dg_c\left(\frac{\partial}{\partial s}\right)
        =d\varphi_0\left(\frac{\partial}{\partial t}\right)=\xi.
    \]
    If the pullback family $\mathcal X:=C\times_{\mathcal H}\mathcal U\to C$
     admits a holomorphic section, then there are $z\in W_0$ and
    $v\in T_zX$ such that
    \begin{equation}        \label{eq:nonzero_transverse_derivative}
        d(f^*y_1)_z(v)=1.
    \end{equation}
    The same conclusion holds without introducing $C$ if $\xi=0$.
\end{lem}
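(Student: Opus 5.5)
The plan is to build a Zariski tangent vector on the fibre product $W\simeq\Delta\times_{\mathcal H}\mathcal U$ from (\ref{eq:W_as_Hilbert_pullback}), whose $\Delta$-component is $\partial/\partial t$, and then push it into $X$ through the closed embedding $W\hookrightarrow X$. The key fact I will use is that Zariski tangent spaces commute with fibre products (cf.\ \cite[Chapter~2]{Fis76}):
\[
    T_{(a,b)}(A\times_S B)=T_aA\times_{T_sS}T_bB .
\]
Here $s$ is the common image of $a$ and $b$. This follows from the universal property applied to maps from the double point $\operatorname{Spec}\mathbb C[\epsilon]/(\epsilon^2)$.

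\emph{Step 1 (the section gives a lift of $\xi$).} Let $\sigma:C\to\mathcal X=C\times_{\mathcal H}\mathcal U$ be the section, and let $\tau:\mathcal X\to\mathcal U$ be the second projection. Set $u_0:=\tau(\sigma(c))\in\mathcal U_h$. Since $h=\varphi(0)$ and $\mathcal U_{\varphi(0)}=W_0$ by (\ref{eq:W_as_Hilbert_pullback}), we may write $u_0=(h,z)$ with $z\in W_0$. Put
\[
    u:=d(\tau\circ\sigma)_c\Bigl(\frac{\partial}{\partial s}\Bigr)\in T_{u_0}\mathcal U .
\]
The chain rule, together with $q\circ\tau\circ\sigma=g$, gives $dq_{u_0}(u)=dg_c(\partial/\partial s)=\xi$.

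\emph{Step 2 (a tangent vector on $W$).} Since $dq(u)=\xi=d\varphi_0(\partial/\partial t)$, the pair $(\partial/\partial t,u)$ lies in $T_0\Delta\times_{T_h\mathcal H}T_{u_0}\mathcal U$. By the fibre-product identity, it is therefore a Zariski tangent vector $w\in T_{(0,z)}W$, and it satisfies $d\pi(w)=\partial/\partial t$. In the case $\xi=0$ we need no curve $C$: the pair $(\partial/\partial t,0)$ is already admissible, because $d\varphi_0(\partial/\partial t)=0=dq(0)$.

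\emph{Step 3 (transfer to $X$).} Recall that $W=X\times_Y\Delta$ and that $\iota$ is a closed embedding. Hence the first projection $j:W\to X$ is a closed embedding, namely the complex-space preimage $f^{-1}(\iota(\Delta))$. Moreover $\pi^*t=j^*(f^*y_1)$, because $y_1\circ\iota=t$. Set $v:=dj(w)\in T_zX$. Then
\[
    d(f^*y_1)_z(v)=d(\pi^*t)(w)=dt\bigl(d\pi(w)\bigr)=1 .
\]
This is exactly (\ref{eq:nonzero_transverse_derivative}).

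\emph{Main obstacle.} The only delicate point is bookkeeping. One must check that the isomorphism (\ref{eq:W_as_Hilbert_pullback}) is compatible with both projections, to $\Delta$ and to $\mathbb P^N$; this holds because it comes from the universal property of the Douady space applied to the embedded family $W\subset\Delta\times\mathbb P^N$. One must also check that the point $z$ obtained from the section lies in $W_0$ when viewed inside $X$. Both spaces may be singular, so everything should be phrased with Zariski tangent spaces and the chain rule of \cite[Chapter~2.6]{Fis76}, never with manifold tangent spaces.
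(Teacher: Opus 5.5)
Your proposal is correct and follows essentially the same route as the paper. You use the section to produce $u\in T_z\mathcal U$ with $dq(u)=\xi$, invoke the fibre-product description of Zariski tangent spaces to obtain $(\partial/\partial t,u)\in T_{(0,z)}W$ (or $(\partial/\partial t,0)$ when $\xi=0$), and read off $d(f^*y_1)_z(v)=1$ from the $X$-component using $y_1\circ\iota=t$. The only cosmetic difference is how you justify the tangent-space identity: you use morphisms from the double point, while the paper dualizes the right-exact sequence of cotangent spaces, and the two are equivalent.
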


\begin{proof}
    Let $\sigma :  C\to\mathcal X$ be a section, and let
    $\widehat\sigma :  C\to\mathcal U$ be the induced map. Then
    $q\circ\widehat\sigma=g$. Set $z:=\sigma(c)$, viewed as a point of the
    common central fibre $W_0$. For a fibre product of complex spaces, we have the following formula for the Zariski tangent space:
    \[
        T_{(0,z)}W
        =\left\{(a,u)\in T_0\Delta\oplus T_z\mathcal U
        \;\middle|\;d\varphi_0(a)=dq_z(u)\right\}.
    \]
Indeed, the local-ring description of the fibre product gives the right-exact sequence
\[
\frac{\mathfrak m_{\mathcal H,h}}{\mathfrak m_{\mathcal H,h}^2}
\xrightarrow{\,(\varphi^*,-q^*)\,}
\frac{\mathfrak m_{\Delta,0}}{\mathfrak m_{\Delta,0}^2}
\oplus
\frac{\mathfrak m_{\mathcal U,z}}{\mathfrak m_{\mathcal U,z}^2}
\to
\frac{\mathfrak m_{W,(0,z)}}{\mathfrak m_{W,(0,z)}^2}
\to 0.
\]
Dualizing gives $T_{(0,z)}W
=
T_0\Delta\times_{T_h\mathcal H}T_z\mathcal U$, which is precisely the asserted formula.
    
    Differentiating $q\circ\widehat\sigma=g$ shows that
    \[
        \left(
        \frac{\partial}{\partial t},
        d\widehat\sigma_c\left(\frac{\partial}{\partial s}\right)
        \right)
        \in T_{(0,z)}W.
    \]
    This vector projects to $\partial/\partial t$ under $d\pi$. Using $W=X\times_Y\Delta$, let $v\in T_zX$ be
    its $X$-component. Then $df_z(v)=d\iota_0\left(\frac{\partial}{\partial t}\right)$. Since $y_1\circ\iota=t$, this gives
    \[
        d(f^*y_1)_z(v)
        =dy_{1,p}\left(d\iota_0
          \left(\frac{\partial}{\partial t}\right)\right)=1.
    \]

    If $\xi=0$, choose any $z\in W_0$. In the tangent-space description
    above, the pair $(\partial/\partial t,0)$ belongs to $T_{(0,z)}W$.
    The same argument then gives \eqref{eq:nonzero_transverse_derivative}.
\end{proof}

\begin{proof}[Proof of Theorem~\ref{thm:multiplicity_one_component}]
    We first observe that it is enough to find a point
    $z\in W_0=f^{-1}(p)$ and a tangent vector $v\in T_zX$ such that $ d(f^*y_1)_z(v)=1$.

    Indeed, $f^*P$ is locally defined near $z$ by the holomorphic
    function $f^*y_1$. We then obtain 
    $d(f^*y_1)_z\neq0$. Hence the holomorphic implicit-function Theorem
    shows that $(f^*y_1=0)$ is smooth and reduced near $z$. Its unique
    local irreducible component through $z$ therefore occurs in $f^*P$
    with multiplicity one. Moreover,
    $p\notin f(\operatorname{Supp}E)$ implies this component is the
    germ of one of the prime divisors $P_j$ dominating $P$. Consequently,
    $\mu_j=1$.

    It remains to construct such a pair $(z,v)$. Set
    \[
        \xi:=d\varphi_0\left(\frac{\partial}{\partial t}\right)
        \in T_{\varphi(0)}\mathcal H.
    \]
    
    If $\xi=0$, the zero-tangent case of
    Lemma~\ref{lem:tangent_transfer} directly gives
    $z\in W_0$ and $v\in T_zX$ satisfying
    \eqref{eq:nonzero_transverse_derivative}. This completes the proof when $\xi=0$.

    Suppose now that $\xi\neq0$. Applying  Lemma~\ref{lem:projective_curve_selection_prescribed_tangent}, we obtain a smooth projective curve $C$, a point $c\in C$, a local
    coordinate $s$ at $c$, and a holomorphic map
    $g :  C\to\mathcal H$ such that
    \[
        g(c)=\varphi(0),~~
        dg_c\left(\frac{\partial}{\partial s}\right)
        =
        d\varphi_0\left(\frac{\partial}{\partial t}\right),
    \]
    and a general point of $C$ is mapped into
    $\mathcal H^{\mathrm{rc}}$.

    Consider the pullback of the universal family
    \[
        \mathcal X:=C\times_{\mathcal H}\mathcal U\to C.
    \]
    This is a projective family whose general fibre is smooth and
    rationally connected. Take its irreducible component dominating $C$
    and a resolution which is an isomorphism over the general
    smooth locus. The general fibre of the resolved family is still
    rationally connected. Theorem~\ref{thm:GHS} therefore provides a
    section of the resolved family over $C$, whose composition with the
    resolution is a section of $\mathcal X\to C$.

    Applying Lemma~\ref{lem:tangent_transfer} to this section and to the
    equality of tangent vectors above gives
    $z\in W_0$ and $v\in T_zX$ satisfying   $d(f^*y_1)_z(v)=1$.
\end{proof}

\subsubsection{The Bertini Theorem with Prescribed Tangent Direction}

In this separate subsection, we present a Proposition  stating that, given a point $x$ and a vector $v \in T_x V$ on a projective manifold $V$, one can find a curve passing through that point such that its tangent bundle is spanned by the $ v$ at $x$.

This statement is presumably standard; as we have not found a suitable reference, we include a proof.

\begin{prop}\label{prop:complete_intersection_prescribed_tangent}
    Let $V$ be a projective manifold of dimension $N\geq2$,
    let $x\in V$, let $0\neq v\in T_xV$, and let $A$ be an ample line bundle
    on $V$. For every sufficiently large integer $k$, there are
    hypersurfaces
    \[
        H_1,\ldots,H_{N-1}\in|A^k|
    \]
    such that $C:=H_1\cap\cdots\cap H_{N-1}$ is a smooth irreducible projective curve containing $x$ and $T_xC=\mathbb Cv$. Moreover, given a proper subvariety $S\subsetneq V$, the curve may be
    chosen so that $C\not\subset S$.
\end{prop}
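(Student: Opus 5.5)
We will prove the statement with a Bertini-type argument inside a linear subsystem of $|A^k|$ whose members are adapted to $x$ and $v$. Fix $k$ large enough that $A^k\otimes\mathfrak m_x^3$ is globally generated away from $x$ and the restriction map
\[
H^0(V,A^k)\to A^k\otimes\mathcal O_V/\mathfrak m_x^3
\]
is surjective. Both properties hold for $k\gg0$ by Serre vanishing. Choose local coordinates $(u_1,\ldots,u_N)$ at $x$ with $v=\partial/\partial u_N$.

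Consider the linear system
\[
\Lambda_i:=\{s\in H^0(V,A^k): s(x)=0,\ ds_x\in\mathbb C\, du_i\},~~1\leq i\leq N-1.
\]
For general $H_i=\operatorname{div}(s_i)$ with $s_i\in\Lambda_i$, the differentials $ds_{i,x}$ are nonzero multiples of $du_1,\ldots,du_{N-1}$. Jet surjectivity lets us prescribe exactly these linear terms. It follows that $H_1,\ldots,H_{N-1}$ are smooth at $x$ and meet transversally there. Hence $C$ is smooth at $x$ and $T_xC=\bigcap_i\ker du_i=\mathbb C v$.

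Away from $x$, each $\Lambda_i$ is base-point free on $V\setminus\{x\}$, since it contains $H^0(A^k\otimes\mathfrak m_x^2)$, which we arranged to be globally generated off $x$. The classical Bertini theorem for base-point-free systems, applied inductively on the quasi-projective variety $V\setminus\{x\}$, shows that for general choices the partial intersection $H_1\cap\cdots\cap H_j$ is smooth of pure dimension $N-j$ away from $x$. The local computation shows it is also smooth at $x$. Thus $C$ is a smooth complete intersection curve.

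Irreducibility is the step I expect to be the main obstacle. Since $C$ is a smooth complete intersection of ample divisors and $N\geq2$, it is connected: by the Lefschetz-type vanishing $H^1(V,\mathcal O(-\text{ample}))=0$ and the Koszul resolution, $h^0(\mathcal O_C)=1$. Alternatively, one can argue by induction that each $H_1\cap\cdots\cap H_j$ with $j<N-1$ is a connected smooth variety, hence irreducible, and that ampleness gives connectedness of the next hyperplane section. A connected smooth curve is irreducible, which settles this point.

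For the last assertion, take $y\in V\setminus S$ not lying on the line through $x$ in any relevant sense. Enlarge $k$ so that $A^k$ also separates the $2$-jet at $x$ from the point $y$, i.e.\ the map $H^0(A^k)\to A^k\otimes(\mathcal O/\mathfrak m_x^3\oplus \mathcal O/\mathfrak m_y)$ is surjective. Then the subsystems $\Lambda_i\cap\{s(y)=0\}$ still have the required jets at $x$ and remain base-point free off $\{x,y\}$. Running the same Bertini argument, with smoothness at $y$ handled by also prescribing transverse differentials at $y$, yields a curve $C$ through $y\notin S$, so $C\not\subset S$.

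Alternatively, and more simply, one can observe that the set of good tuples $(s_1,\ldots,s_{N-1})$ is a dense open subset of $\prod\Lambda_i$, and that the curves $C$ obtained in this way sweep out a dense subset of $V$ because $\Lambda$ generates off $x$. A general member therefore is not contained in $S$.
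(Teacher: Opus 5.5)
Your proposal is correct and follows essentially the same route as the paper. Your $\Lambda_i$ coincide with the paper's systems $W_i=\mathbb C\sigma_i+H^0(V,\mathcal I_x^2\otimes A^k)$; smoothness off $x$ comes from Bertini on $X_{r-1}\setminus\{x\}$, and smoothness and the tangent direction at $x$ come from the prescribed $1$-jets. Irreducibility follows from connectedness of ample sections (Hartshorne III.7.9), and $C\not\subset S$ from an extra base point $y\notin S$ with independent differentials there. One small slip: to prescribe those differentials at $y$ you need surjectivity onto $A^k\otimes\mathcal O_V/\mathfrak m_y^2$ rather than $A^k\otimes\mathcal O_V/\mathfrak m_y$, and Serre vanishing again provides this for $k\gg0$.
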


\begin{proof}
    Choose linearly independent covectors
    \[
        \lambda_1,\ldots,\lambda_{N-1}\in T_x^*V
        ~~\text{such that}~~
        \bigcap_{i=1}^{N-1}\ker\lambda_i=\mathbb Cv.
    \]
    Let $\mathcal I_x\subset\mathcal O_V$ be the coherent ideal sheaf of
holomorphic functions vanishing at $x$. Fix $k\gg1$ and set $L:=A^k$.
Consider the exact sequence
\[
    0\to \mathcal I_x^2\otimes L
    \to L
    \to L/\mathcal I_x^2L
    \to0.
\]
By Serre vanishing $H^1(V,\mathcal I_x^2\otimes L)=0$ for $k$ sufficiently large. Hence the restriction map
$H^0(V,L)\to
    H^0(V,L/\mathcal I_x^2L)$ 
is surjective.

Choose a local holomorphic frame $e$ of $L$ near $x$. Writing $s=f_se$
locally, we have a natural identification
\[
    H^0(V,L/\mathcal I_x^2L)
    \simeq \mathbb C\oplus T_x^*V,
    ~~
    [s]\longmapsto\bigl(f_s(x),df_s(x)\bigr).
\]
Consequently, the first-jet evaluation map
\[
    j_{x,e}^1 :  H^0(V,L)
    \to\mathbb C\oplus T_x^*V,
    ~~
    s\longmapsto\bigl(f_s(x),df_s(x)\bigr),
\]
is surjective, and $\ker j_{x,e}^1
    =
    H^0(V,\mathcal I_x^2\otimes L)$.

Increasing $k$ if necessary, the linear system
$ V_0:=H^0(V,\mathcal I_x^2\otimes L)
    \subset H^0(V,L)$
generates $L$ on $U:=V\setminus\{x\}$ and induces an immersion $U\to\mathbb P(V_0^*)$;
see \cite[\begingroup\hypersetup{hidelinks}\href{https://stacks.math.columbia.edu/tag/0FD5}{\textcolor{cyan!90!blue}{0FD5}}\endgroup]{Stacks}, note that the convention for projectivization used there differs from ours by dualization.

    Choose a local frame $e$ of $L$ near $x$ and local functions $z_i$ with
    $dz_i(x)=\lambda_i$. There is a
    section $\sigma_i\in H^0(V,L)$ whose first jet at $x$ is the class of
    $z_ie$. Set
    \[
        W_i:=\mathbb C\sigma_i+V_0\subset H^0(V,L).
    \]
    If $s=a\sigma_i+u\in W_i$ with $a\neq0$ and $u\in V_0$, then a local
    equation $f_s$ of $Z(s)$ satisfies
    $ f_s=az_i+O(|z|^2)$.
    Consequently
    \begin{equation}
        df_s(x)=a\lambda_i\neq0,~~ T_xZ(s)=\ker\lambda_i.
        \label{eq:fixed_tangent_hyperplane}
    \end{equation}
    Since $V_0\subset W_i$, the system $W_i$ generates $L$ on $U$ and its
    associated morphism $U \to \mathbb P(W_i^*)$ is an immersion there.

   We construct the hypersurfaces inductively. Set $X_0:=V$. Let
$1\leq r\leq N-1$, and suppose that sections $s_i\in W_i,~i=1,\ldots,r-1$, have been chosen. Set
\[
    H_i:=\{s_i = 0\},
    ~~X_{r-1}:=H_1\cap\cdots\cap H_{r-1},
\]
and assume inductively that $X_{r-1}$ is smooth and
\[
    T_xX_{r-1}
    =
    \bigcap_{i=1}^{r-1}\ker\lambda_i.
\]
When $r=1$, these conditions are automatically satisfied. Set $U_{r-1}:=X_{r-1}\setminus\{x\}$. On the smooth quasi-projective
variety $U_{r-1}$, the restricted linear system $W_r|_{U_{r-1}}$
generates $L|_{U_{r-1}}$ and induces an immersion. Hence Bertini's
Theorem \cite[\begingroup\hypersetup{hidelinks}\href{https://stacks.math.columbia.edu/tag/0FD6}{\textcolor{cyan!90!blue}{0FD6}}\endgroup]{Stacks} shows that a general section
$s_r\in W_r$ cuts $X_{r-1}$ smoothly away from $x$. Since
$V_0\subsetneq W_r$, we may moreover choose $s_r\notin V_0$. Write
\[
    s_r=a\sigma_r+u,
    ~~
    a\neq0,\quad u\in V_0.
\]
If $f_{s_r}$ is a local equation of $H_r$ near $x$, then
\[
    f_{s_r}\equiv az_r\pmod{\mathfrak m_x^2},
    ~~
    df_{s_r}(x)=a\lambda_r.
\]
The restriction of $\lambda_r$ to $T_xX_{r-1}$ is nonzero because
$\lambda_1,\ldots,\lambda_r$ are linearly independent. Therefore $X_r:=X_{r-1}\cap H_r$ is smooth at $x$, and hence smooth everywhere, with
\[
    T_xX_r
    =
    T_xX_{r-1}\cap\ker\lambda_r
    =
    \bigcap_{i=1}^r\ker\lambda_i.
\]
Iterating this construction for $r=1,\ldots,N-1$ gives a smooth
projective curve
\[
    C:=X_{N-1}=H_1\cap\cdots\cap H_{N-1}
\]
satisfying
\[
    T_xC
    =
    \bigcap_{i=1}^{N-1}\ker\lambda_i
    =
    \mathbb Cv.
\]
Since each $X_r$ is an effective ample divisor on $X_{r-1}$,
the connectedness Theorem for ample divisors
\cite[Corollary~III.7.9]{Har77} shows inductively that $C$ is connected;
as $C$ is smooth, it is therefore irreducible.

  For the final assertion, there is nothing to prove if $x\notin S$.
Assume that $x\in S$, and choose $q\in V\setminus S$. After increasing
$k$, impose in the above construction the additional conditions
$s_i(q)=0$. By Serre vanishing, the first-order parts of the $s_i$ at
$q$ may still be chosen arbitrarily; choose them so that
\[
    ds_1(q),\ldots,ds_{N-1}(q)
\]
are linearly independent. The corresponding linear systems contain
$H^0(V,\mathcal I_x^2\mathcal I_q\otimes L)$,
which generates $L$ and induces an immersion on
$V\setminus\{x,q\}$. Hence Bertini gives smoothness away from $x$ and
$q$, while the prescribed jets at $x$ and the independence of the
differentials at $q$ give smoothness at these two points. The resulting
curve contains $q\notin S$, and therefore $C\not\subset S$.
\end{proof}

\subsection{Proof of the main Theorem}

\begin{thm}\label{thm:divisorial_descent_cotangent_line}
    Let $f :  X\to Y$ be a fibration between connected
    compact Kähler manifolds whose general fibre is rationally connected,
    and assume that $\dim Y>0$. Let $m\geq1$, and let $L$ be a psef line
    bundle admitting an injective morphism
    $\varphi :  L\to(\Omega_X^1)^{\otimes m}$.
        
    Then there exists a unique saturated coherent rank-one subsheaf
    $M\subset(\Omega_Y^1)^{\otimes m}$ such that $M$ is a psef line bundle, and for a general point
    $x\in X$,  $\varphi(L_x)
        =
        \operatorname{Im}\left(
            (f^*M)_x
            \to
            (\Omega_X^1)_x^{\otimes m}
        \right)$. Moreover, there exist effective divisors $D,E\geq0$ on $X$, where
    $D$ is $f$-degenerate, such that
    \begin{equation}\label{E and D where D f-dege}
        L\otimes\mathcal O_X(E)
        \simeq
        f^*M\otimes\mathcal O_X(D).
    \end{equation}
\end{thm}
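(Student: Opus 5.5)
Most of the statement is already in place. Theorem~\ref{thm:descent_cotangent_line} gives the unique saturated rank-one subsheaf $M\subset V=(\Omega_Y^1)^{\otimes m}$ with $\operatorname{Im}(\varphi)\subset f^*M$ at a general point. The equality $\varphi(L_x)=\operatorname{Im}((f^*M)_x\to(\Omega_X^1)^{\otimes m}_x)$ at general $x$ holds because both sides are lines and $f^*M\to(\Omega_X^1)^{\otimes m}$ is injective on the submersion locus. Since $M$ is saturated of rank one on the manifold $Y$, it is reflexive, hence a line bundle (see the discussion before Lemma~\ref{lem:sat_eff}). What remains is the divisorial relation \eqref{E and D where D f-dege}, and then pseudo-effectivity of $M$.

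For the relation, I will compare both $L$ and $f^*M$ with a single saturated subsheaf. Let $\mathcal S:=\varphi(L)^{\mathrm{sat}}\subset(\Omega_X^1)^{\otimes m}$. Lemma~\ref{lem:sat_eff} gives an effective divisor $E$ with $\mathcal S\simeq L\otimes\mathcal O_X(E)$. The image of $f^*M$ in $(\Omega_X^1)^{\otimes m}$ is a rank-one subsheaf that agrees generically with $\varphi(L)$. By the uniqueness argument at the end of the proof of Lemma~\ref{lem:meromorphic_section_saturated_line}, two saturated rank-one subsheaves agreeing on a dense open set coincide, so $(f^*M)^{\mathrm{sat}}=\mathcal S$. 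Theorem~\ref{prop:pullback_saturation_is_degenerate} then gives an $f$-degenerate $D\geq0$ with $\mathcal S\simeq f^*M\otimes\mathcal O_X(D)$. Combining the two isomorphisms yields \eqref{E and D where D f-dege}.

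For pseudo-effectivity of $M$, I write $c_1(f^*M)+\{D\}=c_1(L)+\{E\}$. The right-hand side is psef because $L$ is psef and $E\ge0$. Corollary~\ref{cor:alpha is psef}, applied with $\alpha=c_1(M)$, then shows that $c_1(M)$ is psef.

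Uniqueness of $M$ follows from the uniqueness in Theorem~\ref{thm:descent_cotangent_line}, since the general-point condition determines the line $M_y\subset V_y$ for general $y$. The main obstacle is identifying $(f^*M)^{\mathrm{sat}}$ with $\varphi(L)^{\mathrm{sat}}$; here I must check carefully that the generic equality of images is exactly the input the uniqueness of saturations requires. I must also confirm that Theorem~\ref{prop:pullback_saturation_is_degenerate} applies to $M$ as given, which relies on Theorem~\ref{thm:multiplicity_one_component}.
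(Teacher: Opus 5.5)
Your proposal is correct and follows essentially the same route as the paper. The paper starts from $\mathcal A=(f^*M)^{\mathrm{sat}}$ and shows that $\varphi$ factors through $\mathcal A$, because the composite to the torsion-free quotient vanishes generically; this is the same fact as your identification $\varphi(L)^{\mathrm{sat}}=(f^*M)^{\mathrm{sat}}$. From there it invokes Theorem~\ref{prop:pullback_saturation_is_degenerate} and Corollary~\ref{cor:alpha is psef} exactly as you do.
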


\begin{proof}
    By Theorem~\ref{thm:descent_cotangent_line}, there exists a unique
    saturated coherent rank-one subsheaf
    $ M\subset(\Omega_Y^1)^{\otimes m}$
    such that, at a general point of $X$,
    \[
        \operatorname{Im}(\varphi)
        \subset
        f^*M
        \subset
        (\Omega_X^1)^{\otimes m}.
    \]
    Since both $\operatorname{Im}(\varphi)$ and $f^*M$ have rank one,
    the first inclusion is an equality at a general point.

    Let
    \[
        \mathcal A:=(f^*M)^{\mathrm{sat}}
        \subset(\Omega_X^1)^{\otimes m}.
    \]
    Since $\mathcal A$ is saturated, the quotient
    $(\Omega_X^1)^{\otimes m}/\mathcal A$ is torsion-free. The composite
    \[
        L\xrightarrow{\varphi}(\Omega_X^1)^{\otimes m}
        \to
        (\Omega_X^1)^{\otimes m}/\mathcal A
    \]
    vanishes at a general point because $\varphi(L)$ and $f^*M$ have
    the same generic fibre. Its image is therefore a torsion subsheaf
    of a torsion-free sheaf, so the composite vanishes identically.
    Thus $\varphi$ factors through a nonzero morphism
    \[
        L\to\mathcal A.
    \]
    As a saturated rank-one subsheaf of a vector bundle on the smooth
    manifold $X$, the sheaf $\mathcal A$ is reflexive and hence a line
    bundle. Let $E\geq0$ be the zero divisor of the above morphism. Then
    \[
        \mathcal A\simeq L\otimes\mathcal O_X(E).
    \]

    On the other hand, Proposition
    \ref{prop:pullback_saturation_is_degenerate} gives an effective
    $f$-degenerate divisor $D\geq0$ such that
    \[
        \mathcal A\simeq f^*M\otimes\mathcal O_X(D).
    \]
    Combining these two isomorphisms proves (\ref{E and D where D f-dege}). Using Corollary \ref{cor:alpha is psef} with (\ref{E and D where D f-dege}), we can conclude that $M$ is psef.
\end{proof}

We can now prove the main Theorem of this paper.

\begin{thm}\label{Main thm 1.2 in intro}
    Let $f :  X\to Y$ be a fibration between connected
    compact Kähler manifolds whose general fibre is rationally connected,
    and assume that $K_Y$ is psef. Let $m\geq1$, and let $L$ be a psef line
    bundle admitting an injective morphism
    $ \varphi :  L\to(\Omega_X^1)^{\otimes m}$.
    Then
    $$
        \nu(L,X)\leq\nu(K_Y,Y).
    $$
\end{thm}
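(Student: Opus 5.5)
The plan is to combine the descent theorem with the invariance results of Section~\ref{Sec 3.2} and the Cao--Păun theorem on $Y$. First we dispose of the degenerate case $\dim Y=0$. Then $X$ is rationally connected, and Remark~\ref{non-existence} shows that no psef $L$ admits an injection into $(\Omega_X^1)^{\otimes m}$, so there is nothing to prove. If $\dim X=\dim Y$, then $f$ is bimeromorphic, since the fibres are connected. In that case the statement follows from the inequality (\ref{ineq 1.1}) on $X$ together with Theorem~\ref{nd_bi_invari} and the fact that $K_X=f^*K_Y+(\text{exceptional effective})$. Alternatively, the general argument below also covers this case. So we may assume $\dim Y>0$.

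Apply Theorem~\ref{thm:divisorial_descent_cotangent_line} to obtain a psef line bundle $M\subset(\Omega_Y^1)^{\otimes m}$, saturated, together with effective divisors $E$ and $D$ on $X$, with $D$ $f$-degenerate, such that
\[
  c_1(L)+\{E\}=f^*c_1(M)+\{D\}.
\]
Since $E\geq0$, Proposition~\ref{prop:basic-properties-numerical-dimension}(2) gives $\nu(L,X)\leq\nu(c_1(L)+\{E\},X)$. Then Theorem~\ref{thm:f-degenerate-main}, applied with $\alpha=c_1(M)$, yields $\nu(f^*c_1(M)+\{D\},X)=\nu(M,Y)$. Hence $\nu(L,X)\leq\nu(M,Y)$.

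It remains to compare $M$ with $K_Y$ on $Y$. Because $M$ is saturated in $(\Omega_Y^1)^{\otimes m}$, the quotient $\mathcal Q:=(\Omega_Y^1)^{\otimes m}/M$ is torsion-free. Since $K_Y$ is psef, \cite[Theorem~1.4]{CP26} shows that $\det\mathcal Q$ is psef. We have $\det (\Omega_Y^1)^{\otimes m}=c_mK_Y$ with $c_m=m(\dim Y)^{m-1}$, so $c_mK_Y=M+\det\mathcal Q$. Using both parts of Proposition~\ref{prop:basic-properties-numerical-dimension}, we get $\nu(M,Y)\leq\nu(c_mK_Y,Y)=\nu(K_Y,Y)$. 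Chaining the two inequalities gives the claim.

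The substantive work all lies in the earlier results: the descent theorem, which relies on Theorem~\ref{thm:multiplicity_one_component}, and the $f$-degenerate invariance. The only points needing care here are the following.
\begin{itemize}
\item The application of \cite[Theorem~1.4]{CP26} must be to the saturated subsheaf $M$. No extra effective divisor arises on $Y$, because $M$ is already saturated.
\item The low-dimensional cases must be handled separately, since Theorem~\ref{thm:divisorial_descent_cotangent_line} assumes $\dim Y>0$.
\end{itemize}
I expect no serious obstacle beyond checking that the monotonicity in Proposition~\ref{prop:basic-properties-numerical-dimension}(2) applies to classes of effective divisors, which are psef.
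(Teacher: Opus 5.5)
Your proof is correct and follows the paper's argument. It uses the same four ingredients: the descent Theorem~\ref{thm:divisorial_descent_cotangent_line}, Theorem~\ref{thm:f-degenerate-main}, the Cao--Păun positivity of $\det\mathcal Q$, and monotonicity, with $\dim Y=0$ excluded via Remark~\ref{non-existence}. The only difference is the order of the steps. You pass through $\nu(L,X)\le\nu(M,Y)$, which needs $M$ to be psef; Theorem~\ref{thm:divisorial_descent_cotangent_line} supplies this. The paper instead first rewrites $c_mf^*c_1(K_Y)+\{D\}=c_1(L)+\{E\}+f^*c_1(\det\mathcal Q)$ and applies Theorem~\ref{thm:f-degenerate-main} with $\alpha=c_mc_1(K_Y)$, so it never uses that $M$ is psef. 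Your separate treatment of the case $\dim X=\dim Y$ is not needed.
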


\begin{proof}
    If $\dim Y = 0$, then according to Remark \ref{non-existence}, we know that such an $L$ does not exist. Thus we may
    assume that $\dim Y>0$.

    Let  $M\subset(\Omega_Y^1)^{\otimes m}
    $ be the saturated rank-one subsheaf given by Theorem
    \ref{thm:divisorial_descent_cotangent_line}. Since $M$ is saturated
    in a vector bundle on the smooth manifold $Y$, it is reflexive and
    hence a line bundle. Moreover,
    $$
        \mathcal Q:=(\Omega_Y^1)^{\otimes m}\big/M
    $$
    is torsion-free. The same Theorem gives effective divisors
    $D,E\geq0$, with $D$ being $f$-degenerate, such that
    \begin{equation}\label{L_E_fM_D}
        c_1(L)+\{E\}=f^*c_1(M)+\{D\}.
    \end{equation}

    Since $K_Y$ is psef, the positivity Theorem
    \cite[Corollary~5.2]{CP26}, applied to the torsion-free quotient
    $(\Omega_Y^1)^{\otimes m}\twoheadrightarrow\mathcal Q$,
    shows that $\det\mathcal Q$ is psef. Set $d:=\dim Y$ and
    $c_m:=md^{m-1}$. Since
    $\det((\Omega_Y^1)^{\otimes m})\simeq c_mK_Y$, the exact
    sequence
    $$
        0\to M
        \to(\Omega_Y^1)^{\otimes m}
        \to\mathcal Q
        \to0
    $$
    gives
    $c_m c_1(K_Y)
        =
        c_1(M)+c_1(\det\mathcal Q)$.
    Combining this identity with \eqref{L_E_fM_D}, we obtain
    $$
        c_m f^*c_1(K_Y)+\{D\}
        =
        c_1(L)+\{E\}+f^*c_1(\det\mathcal Q).
    $$
    Since $\{E\}+f^*c_1(\det\mathcal Q)$ is psef, the monotonicity of
    numerical dimension in Proposition
    \ref{prop:basic-properties-numerical-dimension} yields
    $$
        \nu\bigl(c_m f^*c_1(K_Y)+\{D\},X\bigr)
        \geq \nu(L,X).
    $$
    Finally, Theorem \ref{thm:f-degenerate-main} and the homogeneity of
    numerical dimension give
    $$
        \nu\bigl(c_m f^*c_1(K_Y)+\{D\},X\bigr)
        =
        \nu\bigl(c_m c_1(K_Y),Y\bigr)
        =
        \nu(K_Y,Y).
    $$
    The desired inequality follows.
\end{proof}

\bigskip

\noindent 
Department of Mathematics, South Kensington Campus, Imperial College London, London SW7 2AZ, United Kingdom.

\noindent
\textit{Email address:} \href{mailto:s.liu26@imperial.ac.uk}{s.liu26@imperial.ac.uk}


\begin{thebibliography}{99}

\bibitem[BM97]{BM97}
E. Bierstone, P. D. Milman.
{\it Canonical desingularization in characteristic zero by blowing up
the maximum strata of a local invariant}.
Invent. Math. 128 (1997), 207--302.

\bibitem[BCHM10]{BCHM10}
C. Birkar, P. Cascini, C. D. Hacon, J. McKernan.
{\it Existence of minimal models for varieties of log general type}.
J. Amer. Math. Soc. 23 (2010), 405--468.

\bibitem[Bir12]{Bir12}
C. Birkar.
{\it Existence of log canonical flips and a special LMMP}.
Publ. Math. IHÉS 115 (2012), 325--368.

\bibitem[Bou02]{Bou02}
S. Boucksom.
{\it On the volume of a line bundle}.
Int. J. Math. 13 (2002), 1043--1063.

\bibitem[Bou04]{Bou04}
S. Boucksom.
{\it Divisorial Zariski decompositions on compact complex manifolds}.
Ann. Sci. Éc. Norm. Supér. (4) 37 (2004), 45--76.
\bibitem[BDPP13]{BDPP13}
S. Boucksom, J.-P. Demailly, M. Păun, T. Peternell.
{\it The pseudo-effective cone of a compact Kähler manifold and
varieties of negative Kodaira dimension}.
J. Algebraic Geom. 22 (2013), 201--248.

\bibitem[BEGZ10]{BEGZ10}
S. Boucksom, P. Eyssidieux, V. Guedj, A. Zeriahi.
{\it Monge--Ampère equations in big cohomology classes}.
Acta Math. 205 (2010), 199--262.

\bibitem[Cam81]{Cam81}
F. Campana.
{\it Coréduction algébrique d'un espace analytique faiblement
Kählérien compact}.
Invent. Math. 63 (1981), 187--223.

\bibitem[Cam92]{Cam92}
F. Campana.
{\it Connexité rationnelle des variétés de Fano}.
Ann. Sci. Éc. Norm. Supér. (4) 25 (1992), 539--545.

\bibitem[Cam04]{Cam04}
F. Campana.
{\it Orbifolds, special varieties and classification theory: an
appendix}.
Ann. Inst. Fourier (Grenoble) 54 (2004), 631--665.

\bibitem[CP26]{CP26}
J. Cao, M. Păun.
{\it Remarks on relative canonical bundles and algebraicity criteria
for foliations in Kähler context}.
arXiv:2502.02183v2 (2025).

\bibitem[CH24]{CH24}
B. Claudon, A. Höring.
{\it Projectivity criteria for Kähler morphisms}.
arXiv:2404.13927 (2024).

\bibitem[CMM17]{CMM17}
D. Coman, X. Ma, G. Marinescu.
{\it Equidistribution for sequences of line bundles on normal Kähler
spaces}.
Geom. Topol. 21 (2017), 923--962.

\bibitem[DDL25]{DDL25}
T. Darvas, E. Di Nezza, H.-C. Lu.
{\it Relative pluripotential theory on compact Kähler manifolds}.
Pure Appl. Math. Q. 21 (2025), 1037--1118.

\bibitem[Dem85]{Dem85}
J.-P. Demailly.
{\it Mesures de Monge--Ampère et caractérisation géométrique des
variétés algébriques affines}.
Mém. Soc. Math. France 19 (1985), 1--125.

\bibitem[Dem12]{Dem12}
J.-P. Demailly.
{\it Complex Analytic and Differential Geometry}.
Online book, 2012, available on the author's website.

\bibitem[DP04]{DP04}
J.-P. Demailly, M. Păun.
{\it Numerical characterization of the Kähler cone of a compact
Kähler manifold}.
Ann. of Math. (2) 159 (2004), 1247--1274.

\bibitem[DS07]{DS07}
T.-C. Dinh, N. Sibony.
{\it Pull-back of currents by holomorphic maps}.
Manuscripta Math. 123 (2007), 357--371.

\bibitem[Dou66]{Dou66}
A. Douady.
{\it Le problème des modules pour les sous-espaces analytiques
compacts d'un espace analytique donné}.
Ann. Inst. Fourier (Grenoble) 16 (1966), 1--95.

\bibitem[Fis76]{Fis76}
G. Fischer.
{\it Complex Analytic Geometry}.
Lecture Notes in Mathematics 538, Springer-Verlag,
Berlin--New York, 1976.

\bibitem[FN80]{FN80}
J.-E. Fornæss, R. Narasimhan.
{\it The Levi problem on complex spaces with singularities}.
Math. Ann. 248 (1980), 47--72.

\bibitem[GL13]{GL13}
Y. Gongyo, B. Lehmann.
{\it Reduction maps and minimal model theory}.
Compos. Math. 149 (2013), 295--308.

\bibitem[GHS03]{GHS03}
T. Graber, J. Harris, J. Starr.
{\it Families of rationally connected varieties}.
J. Amer. Math. Soc. 16 (2003), 57--67.

\bibitem[Gra60]{Gra60}
H. Grauert.
{\it Ein Theorem der analytischen Garbentheorie und die Modulräume
komplexer Strukturen}.
Publ. Math. IHÉS 5 (1960), 5--64.

\bibitem[Har77]{Har77}
R. Hartshorne.
{\it Algebraic Geometry}.
Graduate Texts in Mathematics 52, Springer-Verlag,
New York--Heidelberg, 1977.

\bibitem[Hir75]{Hi75}
H. Hironaka.
{\it Flattening theorem in complex-analytic geometry}.
Amer. J. Math. 97 (1975), 503--547.

\bibitem[Hor73]{Hor73}
E. Horikawa.
{\it On deformations of holomorphic maps. I}.
J. Math. Soc. Japan 25 (1973), 372--396.

\bibitem[Kob87]{Kob87}
S. Kobayashi.
{\it Differential Geometry of Complex Vector Bundles}.
Publications of the Mathematical Society of Japan 15,
Iwanami Shoten, Tokyo, and Princeton University Press,
Princeton, 1987.

\bibitem[Kol96]{Kol96}
J. Kollár.
{\it Rational Curves on Algebraic Varieties}.
Ergebnisse der Mathematik und ihrer Grenzgebiete (3) 32,
Springer-Verlag, Berlin, 1996.

\bibitem[Leh13]{Leh13}
B. Lehmann.
{\it Comparing numerical dimensions}.
Algebra Number Theory 7 (2013), 1065--1100.

\bibitem[MM07]{MM07}
X. Ma, G. Marinescu.
{\it Holomorphic Morse Inequalities and Bergman Kernels}.
Progress in Mathematics 254, Birkhäuser, Boston, 2007.

\bibitem[MM86]{MM86}
Y. Miyaoka, S. Mori.
{\it A numerical criterion for uniruledness}.
Ann. of Math. (2) 124 (1986), 65--69.

\bibitem[Mor82]{Mor82}
S. Mori.
{\it Threefolds whose canonical bundles are not numerically effective}.
Ann. of Math. (2) 116 (1982), 133--176.

\bibitem[Nak04]{Na04}
N. Nakayama.
{\it Zariski-decomposition and abundance}.
MSJ Memoirs 14, Mathematical Society of Japan, Tokyo, 2004.

\bibitem[Ou25]{Ou25}
W. Ou.
{\it A characterization of uniruled compact Kähler manifolds}.
arXiv:2501.18088 (2025).

\bibitem[Pău98]{Paun98}
M. Păun.
{\it Sur l'effectivité numérique des images inverses de fibrés en
droites}.
Math. Ann. 310 (1998), 411--421.

\bibitem[Siu74]{Siu74}
Y.-T. Siu.
{\it Analyticity of sets associated to Lelong numbers and the
extension of closed positive currents}.
Invent. Math. 27 (1974), 53--156.

\bibitem[Stacks]{Stacks}
The Stacks Project Authors.
{\it The Stacks Project}.
\url{https://stacks.math.columbia.edu}.

\bibitem[Var89]{Var89}
J. Varouchas.
{\it Kähler spaces and proper open morphisms}.
Math. Ann. 283 (1989), 13--52.


\end{thebibliography}
\end{document}